\newtheorem{lemma}{Lemma}[section]
\newtheorem{proposition}[lemma]{Proposition}
\newtheorem{corollary}[lemma]{Corollary}
\newtheorem{theorem}[lemma]{Theorem}
\newtheorem{theorem_int}{Theorem} 
\newtheorem{example}[lemma]{Example}
\newtheorem{definition}[lemma]{Definition}
\newtheorem{remark}[lemma]{Remark}
\newtheorem*{Acknowledgement}{Acknowledgement}
\newcommand\cf{cf\@. }
\newcommand\ie{i\@.e\@. }
\newcommand\eg{e\@.g\@. }
\newcommand\pa{ \partial}
\newcommand\bbC{\mathbb C}
\newcommand\bbN{\mathbb N}
\newcommand\bbP{\mathbb P}
\newcommand\bbR{\mathbb R}
\newcommand\bbS{\mathbb S}
\renewcommand\Re{\operatorname{Re}}
\renewcommand\Im{\operatorname{Im}}
\newcommand\tX{\widetilde{X}}
\newcommand\bX{\overline{X}}
\newcommand\hX{\widehat{X}}
\newcommand\hH{\widehat{H}}
\newcommand\hE{\widehat{E}}
\newcommand\tD{\widetilde{D}}
\newcommand\bD{\overline{D}}
\newcommand\hD{\widehat{D}}
\newcommand\CI{\mathcal{C}^{\infty}}
\newcommand\U{\operatorname{U}}
\newcommand\KE{\operatorname{KE}}
\newcommand\fc{\operatorname{fc}}
\newcommand\Diff{\operatorname{Diff}}
\newcommand\Ric{\operatorname{Ric}}
\newcommand\av{\operatorname{av}}
\newcommand\ord{\operatorname{ord}}
\newcommand\cC{\mathcal{C}}
\newcommand\cA{\mathcal{A}}
\newcommand\cE{\mathcal{E}}
\newcommand\cF{\mathcal{F}}
\newcommand\db{\overline{\pa}}
\newcommand\tH{\widetilde{H}}
\newcommand\cV{\mathcal{V}}
\newcommand\cU{\mathcal{U}}
\newcommand\cI{\mathcal{I}}
\newcommand\pfc{\operatorname{fc}}
\newcommand\End{\operatorname{End}}
\newcommand\depth{\operatorname{depth}}
\newcommand\tV{\widetilde{V}}
\newcommand\tU{\widetilde{U}}
\newcommand\tPhi{\widetilde{\Phi}}
\newcommand\hPhi{\widehat{\Phi}}
\newcommand\tb{\widetilde{b}}
\newcommand\tc{\widetilde{c}}
\newcommand\bv{\overline{v}}
\newcommand\bb{\overline{b}}
\newcommand\pr{\operatorname{pr}}
\newcommand\st{\operatorname{st}}
\newcommand\phg{\operatorname{phg}}
\newcommand\Spec{\operatorname{Spec}}
\newcommand\Id{\operatorname{Id}}
\newcommand\rank{\operatorname{rk}}
\newcommand\hu{\widehat{u}}
\newcommand\homega{\widehat{\omega}}
\newcommand\hDelta{\widehat{\Delta}}
\newcommand\halpha{\widehat{\alpha}}
\newcommand\tbeta{\widetilde{\beta}}
\newcommand\tphi{\widetilde{\phi}}
\newcommand\tcU{\widetilde{\mathcal{U}}}
\begin{document}
\title[asymptotics of complete K\"ahler metrics of finite volume]
{Asymptotics of complete K\"ahler metrics of finite volume on quasiprojective manifolds }

\author{Fr\'ed\'eric Rochon}
\address{Department of mathematics, Australian National University\newline
Current address: Universit\'e du Qu\'ebec \`a Montr\'eal}
\email{rochon.frederic@uqam.ca}
\author{Zhou Zhang}
\address{The school of Mathematics and Statistics, University of Sydney}
\email{zhangou@maths.usyd.edu.au}
\begin{abstract}   
Let $X$ be a quasiprojective manifold given by the complement of a divisor $\bD$ with normal crossings in a smooth projective manifold $\bX$.  Using a natural compactification of $X$ by a manifold with corners $\tX$, we describe the full asymptotic behavior at infinity of certain complete K\"ahler metrics of finite volume on $X$.  When these metrics evolve according to the Ricci flow, we prove that such asymptotic behaviors persist at later times by showing that  the associated potential function is smooth up to the boundary on the compactification $\tX$.  However, when the divisor $\bD$ is smooth with $K_{\bX}+[\bD]>0$ so that the Ricci flow converges to a K\"ahler-Einstein metric, we show that this K\"ahler-Einstein metric has a rather different asymptotic behavior at infinity, since its associated potential function is polyhomogeneous with, in general, some logarithmic terms occurring in its  expansion at the boundary.  
\end{abstract}
\maketitle

\tableofcontents

\section*{Introduction}

On non-compact complete Riemannian manifolds, the study of the spectral properties of the associated Laplacian usually requires a very good understanding of the asymptotic behavior of the metric at infinity, for instance on conformally compact manifolds \cite{Mazzeo-Melrose} or on manifolds with infinite cylindrical ends \cite{MelroseAPS}.  In complex geometry, an important example of this phenomenon is given by strictly pseudoconvex domains \cite{EMM}, where the natural complete metrics to consider are the Bergman metric and the K\"ahler-Einstein metric of Cheng and Yau \cite{Cheng-Yau}.  Indeed, the potential functions used to define these metrics are typically not smooth up to the boundary, as their expansions there also involve logarithmic terms, see \cite{Fefferman74} in the case of the Bergman metric and \cite{Fefferman76}, \cite{Cheng-Yau}, \cite{Lee-Melrose} in the case of the K\"ahler-Einstein metric of Cheng and Yau.  The detailed description of the possible terms occurring in the expansion at the boundary is crucial in the resolvent construction of the associated Laplacian given in \cite{EMM}.  

The focus of the present paper is on the asymptotic behavior at infinity of complete K\"ahler metrics on another important class of non-compact complex manifolds:  quasiprojective manifolds.  We will restrict our attention to quasiprojective manifolds $X$ taking the form $X=\bX\setminus \bD$, where $\bX$  
is a smooth projective manifold of dimension $n$ and $\bD$ is a divisor with normal crossings, that is, the irreducible components $\bD_1, \ldots, \bD_\ell$ of $\bD$ are smooth and intersect transversely.  Let $L\to \bX$ be a positive holomorphic line bundle and $h_L$ be a choice of Hermitian metric inducing a positive curvature form.  For $i=1,\ldots,\ell$, let also $s_i \in H^0 (\bX; [\bD_i])$ be a choice of section such that $s_i^{-1}(0)= \bD_i$ and let $\|\cdot\|_{\bD_i}$ be   
a choice of Hermitian metric for $[\bD_i]$.  From an idea of Carlson and Griffiths \cite[Proposition~2.1]{Carlson-Griffiths}, we then know that  for $\epsilon>0$ sufficiently small, the $(1,1)$-form
\begin{equation}
\begin{aligned}
\omega &=\sqrt{-1} \Theta_{L} + \sqrt{-1}\ \db\pa \log\left( \prod_{i=1}^{\ell} (-\log \epsilon \|s_{i}\|^{2}_{\bD_{i}})^{2} \right) \\
 &= \sqrt{-1}\Theta_{L} + 2\sqrt{-1}\sum_{i=1}^{\ell} \left(\frac{ \Theta_{\bD_{i}}}{\log\epsilon \|s_{i}\|_{\bD_{i}}^{2} }\right)  \\
 & \quad \quad \quad + 2\sqrt{-1}\sum_{i=1}^{\ell} \left( \frac{ (\pa \log \epsilon \|s_{i}\|^{2}_{\bD_{i}})\wedge (\db \log \epsilon \|s_{i}\|^{2}_{\bD_{i}}) }{( \log \epsilon \|s_{i}\|^{2}_{\bD_{i}})^{2}} 
 \right).\end{aligned}
\label{int.1}\end{equation}
is the K\"ahler form of a complete K\"ahler metric $g_{\omega}$ of finite volume on $X$.  The metric $g_{\omega}$ is the prototypical example of an asymptotically tame polyfibred cusp metric, a notion introduced in Definition~\ref{fc.27} below.  When $\dim_{\bbC}X=1$ and $\bD=\{p_1,\ldots,p_\ell\}$ is a set of distinct points, the metric $g_{\omega}$ asymptotically looks like the Poincar\'e metric $\frac{  |d\zeta|^2}{ |\zeta|^2 (\log |\zeta|)^2 }$ in local holomorphic coordinates near $p_i$, that is, the metric $g_{\omega}$ asymptotically looks like a cusp near each point of $\bD$.  When $\dim_{\bbC}X>1$, a similar phenomenon occurs, the metric $g_{\omega}$ approaching the Poincar\'e metric in the direction transversal to the irreducible component $\bD_i$.  When the divisor $\bD$ is smooth, polyfibred cusp metrics correspond to a special example of $d$-metrics, a notion introduced in \cite{Vaillant}.  In general, asymptotically tame polyfibred cusp metrics descend well to the divisors to give metrics of the same type, a feature that allows to proceed by induction to study their properties.        

When $K_{\bX}+[\bD]>0$, we can take $L=K_{\bX}+[\bD]$ to be our positive holomorphic line bundle.  From the work of Yau \cite{Yau1978}, Cheng and Yau \cite{Cheng-Yau}, Kobayashi \cite{Kobayashi}, Tsuji \cite{Tsuji}, Tian and Yau \cite{Tian-Yau} and Bando \cite{Bando}, we know that there exists a K\"ahler-Einstein metric $g_{\KE}$ bi-Lipschitz to $g_{\omega}$.  The K\"ahler form of the  K\"ahler-Einstein metric $g_{\KE}$ is of the form $\omega_{\KE}= \omega+ \sqrt{-1}\ \pa\db u$ with the function $u$  obtained by solving the complex Monge-Amp\`ere equation 
 \begin{equation}
    \log \left( \frac{ (\omega+ \sqrt{-1} \pa \db u)^{n} }{ \omega^n } \right) -u = F,
\label{int.2}\end{equation}
for some appropriate function $F$.  Alternatively, the K\"ahler-Einstein metric can be obtained by using the Ricci flow.   Indeed, as shown in \cite{Chau2004} or \cite[Example~6.18]{Lott-Zhang}, the Ricci flow with initial metric $g_{\omega}$ exists for all time and converges to the K\"ahler-Einstein metric $g_{\KE}$.  

Given its explicit description, the asymptotic behavior at infinity of the prototypical K\"ahler metric $g_{\omega}$ is certainly well-understood.  Studying the asymptotic behavior at infinity of the K\"ahler-Einstein metric $g_{\KE}$ is thus reduced to understanding the asymptotic behavior of the potential function $u$.  The first result in that direction was obtained by Schumacher, who showed in \cite{Schumacher} (see also the subsequent work \cite{Wu06}) that when the divisor $\bD$ is smooth, the K\"ahler-Einstein metric $g_{\KE}$ asymptotically approaches the corresponding K\"ahler-Einstein metric on $\bD$ in the directions tangent to $\bD$, while it approaches the Poincar\'e metric in the directions transversal to $\bD$.  
In \cite{Lott-Zhang}, Lott and the second author introduced the notion of standard spatial asymptotics for complete K\"ahler metrics on $X$ of finite volume and showed via the use of \'etale groupoids that such metrics continue to have standard spatial asymptotics at later time when they evolve according to the Ricci flow.  These results can be understood as giving the $0$th order term of the asymptotic behavior of the metric at infinity.  

By analogy with what happens for the Bergman metric or the K\"ahler-Einstein metric of Cheng and Yau on strictly pseudoconvex domains, a compactification of $X$ by a manifold with boundary, or more generally, by a manifold with corners, would be needed to give a proper description of higher order terms in the asymptotic behavior of the metric.  This is the starting point of this paper.  When the divisor $\bD$ is smooth, we obtain such a compactification by blowing up $\bD$ in $\bX$ in the sense of Melrose \cite{MelroseAPS},
\begin{equation}
   \hX= [\bX;\bD]= \bX\setminus \bD \bigsqcup S (N\bD),
\label{int.3}\end{equation} 
where $S(N\bD)$ is the unit normal bundle of $\bD$ in $\bX$.  The set $\hX$ is naturally a manifold with boundary $\pa \hX= S(N\bD)$.  In particular, its boundary has an induced circle fibration $\hPhi: \pa \hX\to \bD$.  If $\rho\in \CI(\hX)$ is a choice of boundary defining function for $\pa \hX$, that is, $\rho^{-1}(0)= \pa \hX$, $\rho$ is positive on $\hX\setminus \pa\hX$ and the differential $d\rho$ is nowhere zero on $\pa \hX$, then the compactification we are looking for, which we call the \textbf{logarithmic compactification} $\tX$ of $X$, is obtained by declaring it homeomorphic to $\hX$ with ring of smooth functions $\CI(\tX)$ generated by $\CI(\hX)$ and the function $x= \frac{-1}{\log \rho}$, assuming without loss of generality that $\rho$ is always less than $1$.  The function $x\in \CI(\tX)$ is then a boundary defining function for $\tX$.  The reason for considering $\tX$ instead of $\hX$ is that the works of Schumacher \cite{Schumacher} and Wu \cite{Wu06} strongly suggest it is with respect to the function $x$ that the asymptotic behavior of the K\"ahler-Einstein metric $g_{\KE}$ should be described.

When $\bD$ has normal crossings, we proceed in a similar way by successively blowing up in the sense of Melrose each irreducible component of $\bD$,
\begin{equation}
  \hX= [\bX;\bD_1,\ldots, \bD_{\ell}].
\label{int.4}\end{equation}
The space $\hX$ is then naturally a manifold with corners with a boundary hypersurface $\hH_i$ associated to each irreducible component $\bD_i$ of $\bD$.  The logarithmic compactification $\tX$ of $X$ is then obtained by suitably enlarging the ring of smooth functions of $\hX$.  Similarly, when $D_i= \bD_i \setminus \left( \bigcup_{j\ne i} \bD_i\cap \bD_j\right) $ is non-compact, we can consider its logarithmic compactification $\tD_i$.  It is such that the boundary hypersurface  $\tH_i\subset \tX$ associated to $\bD_i$ comes with a natural circle fibration $\tPhi_i: \tH_i\to \tD_i$.  

On $X$, we can consider two natural spaces of smooth functions, one being the restriction of $\CI(\tX)$ to $X$, and the other being the Cheng-Yau H\"older ring $\CI_{\fc}(X)$ of bounded smooth functions having their covariant derivatives with respect to the metric $g_{\omega}$ bounded on $X$.  Since neither of them is contained in the other, we can also consider the space given by their intersection,
\begin{equation}
      \CI_{\fc}(\tX)= \CI(\tX)\cap \CI_{\fc}(X).  
\label{int.4}\end{equation}
Since a function in $\CI_{\fc}(\tX)$ is in particular in $\CI(\tX)$, it has a Taylor series at each boundary hypersurface $\tH_i$ of $\tX$.  The key fact motivating the introduction of the space $\CI_{\fc}(\tX)$ is that requiring the function to be in $\CI_{\fc}(\tX)$ forces the Taylor series of $f$ at $\tH_i$ to be of the form
\begin{equation}
       f\sim \sum_{k=0}^{\infty} \tPhi_i^{*}(a_i) x_i^k, \quad a_i\in \CI_{\fc}(\tD_i),
\label{int.5}\end{equation}
where $x_i$ is a choice of boundary defining function for $\tH_i$ in $\tX$.  As a consequence, a function $f\in \CI_{\fc}(\tX)$ has a well-defined restriction to $\tD_i$ and its full asymptotic behavior at infinity is completely described by its Taylor series at each boundary hypersurface of $\tX$.  The space $\CI_{\fc}(\tX)$ provides the right framework to describe the asymptotic behavior of complete K\"ahler metrics on $X$ bi-Lipschitz to $g_\omega$.  

Using the logarithmic compactification $\tX$ and the space $\CI_{\fc}(\tX)$, our first result concerns the evolution of the asymptotic behavior of K\"ahler metrics under the Ricci flow, answering a question of \cite{Lott-Zhang} (see Theorem~\ref{ha.1} below for a more precise statement).   

\begin{theorem_int}
If $g_{\omega}$ is an asymptotically tame polyfibred cusp K\"ahler metric on $X$ and $\widetilde\omega_t= \omega_t + \sqrt{-1}\pa\db u(t,\cdot)$, with 
$\omega_t= -\Ric(\omega) + e^{-t}(\omega+ \Ric(\omega))$, is the solution to the normalized Ricci flow for $t\in [0,T)$ with
\[
          \frac{\pa u}{\pa t}= \log \left(  \frac{ (\omega_t+ \sqrt{-1}\pa\db u)^n}{\omega_0^n} \right) -u, \quad u(0,\cdot)=0,
\] 
then $g_{\widetilde{\omega}_t}$ is an asymptotically tame polyfibred cusp K\"ahler metric and $u(t,\cdot)\in \CI_{\fc}(\tX)$ for all $t\in [0,T)$.  
\label{int.6}\end{theorem_int}

When $\dim_{\bbC}X=1$, this result was obtained in \cite{Albin-Aldana-Rochon}.  We refer also to \cite{Bahuaud} for a related result for conformally compact metrics. Our general strategy to prove the result when $\dim_{\bbC}X>1$ is similar to the one of \cite{Albin-Aldana-Rochon}, namely, we restrict the evolution equation of the potential function to $\tD_i$ and solve it to get a candidate $u_i$ for what should be the restriction of $u$ to $\tD_i$.  Using a suitable decay estimate proved using a barrier function and the maximum principle (see Proposition~\ref{de.3} below), we then check $u_i$ is indeed the restriction of $u$ to $u_i$.  We can then proceed recursively in the same fashion to build up the whole Taylor series of $u$ at $\tD_i$ for each $i$ and show $u(t,\cdot)\in \CI_{\fc}(\tX)$.  At first, we can only show that this is the case for $t<\tau$ for some small $\tau$.  However, since we can get a uniform lower bound on $\tau$ for each compact subinterval of $[0,T)$, we can repeat the argument finitely many times for the theorem to be true on any compact subinterval of $[0,T)$, establishing the result.      

Our second result consists in generalizing the work of Schumacher \cite{Schumacher} to the case where $\bD$ has normal crossings, namely, we show that near each irreducible component $\bD_i$ of $\bD$, the K\"ahler-Einstein metric $g_{\KE}$ approaches the Poincar\'e metric in the directions transversal to $D_i$ and the corresponding K\"ahler-Einstein metric $g_{\KE,i}$ on $D_i$ in the directions tangent to $D_i$.  This can be formulated in terms of the potential function solving the Monge-Amp\`ere equation \eqref{int.2} (see  Theorem~\ref{ake.10} below for a more precise statement).      
\begin{theorem_int}
Suppose $K_{\bX}+[\bD]>0$ and let $\omega$ be the K\"ahler form of \eqref{int.1} with $L=K_{\bX}+[\bD]$.  Let $u$ be the solution to the complex Monge-Amp\`ere equation \eqref{int.2} so that $\omega_{\KE}= \omega+\sqrt{-1}\pa\db u$ is the K\"ahler form of the K\"ahler-Einstein metric $g_{\KE}$ bi-Lipschitz to $g_{\omega}$.  If $\omega$ has standard spatial asymptotics associated to $\{\omega_I\}$ and $\{c_i\}$ with $c_i=1$ for all $i\in\{1,\ldots,\ell\}$,  then there exists $\nu>0$ such that for
each $i\in \{1,\dots,\ell\}$,  
   \[
                        u -\tPhi_i^{*}( u_i)  \in x_i^{\nu}\CI_{\fc}(X)
   \]
in a collar neighborhood of $\tH_i$,    
 where $x_i$ is a boundary defining function for $\tH_i$ and  
  $u_i$ is such that $\omega_i+\sqrt{-1}\pa\db u_i$ is the K\"ahler form of the K\"ahler-Einstein metric on $D_i$ bi-Lipschitz to $g_{\omega_i}$.    
 \label{int.7}\end{theorem_int}

 From Theorem~\ref{int.6}, we would naively expect the K\"ahler-Einstein metric  to be also an asymptotically tame polyfibred cusp K\"ahler metric.  When $\dim_{\bbC}X=1$, this is indeed the case as described in \cite{Albin-Aldana-Rochon}.  However, when $\dim_\bbC X>1$ and the divisor $\bD$ is smooth, this is no longer the case as our next result shows (see Theorem~\ref{ake.20} below for a more precise statement).  The same phenomenon occurs for the Ricci flow of conformally compact metrics, see \cite{Fefferman_Graham2012} and \cite{Bahuaud} in that context.   
\begin{theorem_int}
Suppose the divisor $\bD$ is smooth and $K_{\bX}+[\bD]>0$.  Let $u$ be the solution to the complex Monge-Amp\`ere equation \eqref{int.2} so that $\omega_{\KE}= \omega+\sqrt{-1}\pa\db u$ is the K\"ahler form of the K\"ahler-Einstein metric $g_{\KE}$ bi-Lipschitz to $g_{\omega}$.  Then there exists an index set $E\subset [0,\infty)\times \bbN_0$ such that $u$ has an asymptotic expansion at $\pa \tX$ of the form
\[
      u \sim \sum_{(z,k)\in E} \tPhi^{*}(a_{z,k}) x^z (\log x)^{k}, \quad a_{z,k}\in \CI(\bD).
\]
Moreover, the index set $E$ is such that 
\[
   (z,k)\in E, \; z\le 1 \quad \Longrightarrow \quad (z,k)\in \{(0,0), (1,0), (1,1)\}.  
\]
\label{int.8}\end{theorem_int}

This result refines the asymptotic expansion of \cite{Wu06}.  Notice that the asymptotic expansion in \cite{Wu06} predicts no $x\log x$ term.  We were informed by Damin Wu that this will be addressed in an erratum to \cite{Wu06} that should appear soon.  
     
In \cite{Lee-Melrose}, the main ingredient is a boundary regularity result for a corresponding linear elliptic equation.
For edge singularities, the approach of Lee and Melrose was subsequently systematized by Mazzeo in \cite{MazzeoEdge}, \cite{Mazzeo1991} and this was used recently in \cite{Jeffres-Mazzeo-Rubinstein} to establish the polyhomogeneity of incomplete K\"ahler-Einstein metrics with an edge singularity along a divisor.   In our case, we use a boundary regularity result which is adapted to the geometry of the problem  (see Theorem~\ref{br.31} below).  Our strategy to obtain the latter relies on an observation (Lemma~\ref{br.6}) that allows us to deduce the result from a corresponding boundary regularity for elliptic $b$-operators \cite{MelroseAPS}.   

In contrast to the result of Lee and Melrose \cite{Lee-Melrose}, notice that non-integer powers of the boundary defining function might also occur in the asymptotic expansion of the potential function.  The first possible logarithmic term is of the form $x\log x$.  Unless such a term does not occur, the optimal H\"older regularity of the potential function on the logarithmic compactification $\tX$ is therefore easily seen to be
\[
 u\in \cC^{0,\delta}(\tX), \quad  \mbox{for all} \; 0<\delta<1.
 \]

In Theorem~\ref{clt.6}, we provide a topological criterion determining when such logarithmic term actually occurs. In complex dimension 2, this criterion is particularly simple to describe:  there is a term $x\log x$ in the asymptotic expansion of $u$ if and only if the (complex) normal bundle of $\bD$ in $\bX$ is non-trivial.  Thus, an easy example of a K\"ahler-Einstein metric with such a logarithmic term is obtained by taking $\bX=\bbC\bbP_2$ with $\bD\subset \bbC\bbP_2$ a smooth curve of degree at least 4 (see Example~\ref{clt.7} below).  This can also be used to construct an example where $\bD$ has normal crossings and the solution $u$ to the complex Monge-Amp\`ere equation is not in $\CI_{\fc}(\tX)$ (see Example~\ref{clt.8}).  It seems natural to expect such solutions to be polyhomogeneous in some reasonable sense and we hope to investigate this matter in a future work.

The paper is organized as follows.  In \S~\ref{mwc.0}, we introduce the logarithmic compactification of the quasiprojective manifold $X$.  It is followed in \S~\ref{fc.0} by a description of the type of complete metrics of finite volume we will consider on $X$.  At the same time, we take the opportunity to introduce various spaces of functions, notably $\CI_{\fc}(\tX)$ and its corresponding polyhomogeneous version.  As a prelude to the study of the Ricci flow, we obtain in \S~\ref{de.0} some decay estimates and regularity results for linear (uniformly) parabolic equations on $X$.  This is used in \S~\ref{ha.0} to prove Theorem~\ref{int.6}.  We prove the generalization of Schumacher's result  in \S~\ref{ake.0}.  We then focus on the case where the divisor $\bD$ is smooth and obtain in \S~\ref{br.0} a boundary regularity result for solutions of linear (uniformly) elliptic equations on $X$.  This is used in \S~\ref{clt.0} to obtain our boundary regularity result for the K\"ahler-Einstein metric $g_{\KE}$.

\begin{Acknowledgement}
 The first author would like to thank the University of Sydney for its hospitality. The second author was partially  supported by ARC Discovery grant DP110102654 and is grateful to Xujia Wang for inviting him to visit the Australian National University.  The authors are grateful to Rafe Mazzeo and Richard Melrose for helpful discussions.
\end{Acknowledgement}

\numberwithin{equation}{section}
\section{The logarithmic compactification of a quasiprojective manifold} \label{mwc.0}  

Let $\bX$ be a smooth projective manifold and $\bD$ be a divisor on $\bX$ with normal crossings, that is, each irreducible component of $\bD$ is smooth and the irreducible components of $\bD$ intersect transversely.  We denote by $X=\bX\setminus \bD$ the corresponding quasiprojective manifold.  Write 
\begin{equation}
      \bD= \sum_{i=1}^{\ell} \bD_{i}
\label{mwc.1}\end{equation}
where the sum runs over irreducible components of $\bD$.  For each irreducible component $\bD_{i}$, let $s_{i}: \bX\to [\bD_{i}]$ be a holomorphic section defining $\bD_{i}$, namely $s^{-1}_{i}(0)= \bD_{i}$.  Let also $\|\cdot\|_{\bD_{i}}$ be a choice of Hermitian metric for the holomorphic line bundle $[\bD_{i}]$ and consider on $\bX$ the real-valued function $\rho_{i}= \|s_{i}\|_{\bD_{i}}$.  This function is smooth away from $\bD_{i}$ and such that $\rho_{i}^{-1}(0)= \bD_{i}$.  Denote by 
\begin{equation}
    \hX= [\bX; \bD_{1}, \ldots, \bD_{\ell}]
\label{mwc.2}\end{equation}
the manifold with corners obtained from $\bX$ by successively blowing up $\bD_{1}, \ldots, \bD_{\ell}$ in the sense of Melrose \cite{MelroseMWC}.  We denote by $\widehat{\beta}:\hX\to \bX$ the corresponding blow-down map.  Since the various irreducible components of $\bD$ intersect transversely, the diffeomorphism type of the manifold $\hX$ does not depend on the order in which the blow-ups are performed.  In fact, if $\hX'$ is obtained from $\bX$ by blowing up the irreducible components of $\bD$ in a different order, then the natural identification $\hX'\setminus \pa\hX'= X= \hX\setminus \pa\hX$ extends uniquely to give a diffeomorphism $\hX'\cong \hX$, see Proposition~5.8.2 in \cite{MelroseMWC} or \cite[p.21]{Mazzeo-MelroseETA}.    

The manifold with corners $\hX$ has $\ell$ distinct boundary hypersurfaces,
\begin{equation}
     \hH_{i}= \widehat{\beta}^{-1}(\bD_{i}), \quad i\in \{1,\ldots,\ell\}.
\label{mwc.3}\end{equation}
A useful way of measuring the complexity of the manifold with corners $\hX$ is via its \textbf{depth}, 
\begin{equation}
    \depth(\hX) = \max\{ \; | I | \; | \; I\subset\{1,\ldots,\ell\}, \; \hH_I= \bigcap_{i\in I} \hH_i \ne \emptyset\}.
\label{mwc.3b}\end{equation}
It is the highest possible codimension of a boundary face of $\hX$. 

Under the blow-down map, the function $\rho_{i}$ lifts naturally to a smooth function $\rho_{i}\in \CI(\hX)$ which is a boundary defining function of $\hH_{i}$, that is, $\rho_{i}^{-1}(0)= \hH_{i}$, $\rho_{i}>0$ on $\hX\setminus \hH_{i}$ and the differential $d\rho_{i}$ is nowhere zero on $\hH_{i}$.

Consider the intersection $\bD_I= \bigcap_{i\in I} \bD_i$ for a subset $I\subset \{1,\ldots,\ell\}$.  When $\bD_I$ is non-empty, it is a complex submanifold of $\bX$ of codimension $|I|$.  It comes with a    
 natural divisor given by 
\begin{equation}
      C_{I}= \sum_{j \notin I} \bD_{I}\cap \bD_{j}.  
\label{mwc.4}\end{equation}    
By transversality of the irreducible components of $\bD$,  the divisor $\bD_{I}\cap \bD_{j}$ is a  disjoint union of smooth irreducible divisors of $\bD_{I}$.  Moreover, for distinct $j,k\notin I$, we have that
$\bD_{I}\cap \bD_{j}$ and $\bD_{I}\cap \bD_{k}$ intersect transversely in $\bD_{I}$.   Thus, if we write
\begin{equation}
  C_{I} =\sum_{j=1}^{\ell_{I}} C_{Ij},
\label{mwc.5}\end{equation}
where the sum runs over the irreducible components of $C_{I}$,  we can consider the manifold with corners
\begin{equation}
       \hD_{I}= [ \bD_{I}; C_{I1}, \ldots, C_{I \ell_{I}}]
\label{mwc.6}\end{equation} 
obtained by successively blowing up the submanifolds $C_{I1}, \ldots, C_{I\ell_{I}}$ in the sense of Melrose.  As for $\hX$, the diffeomorphism type of $\hD_{I}$ does not depend on the order in which the blow-ups are performed.  When $I=\{i\}$ consists of one element, we will use the notation $\hD_i := \hD_{\{i\}}$.  

Let $\U_{i}\to \bX$ be the unit circle bundle associated to the Hermitian line bundle $([\bD_{i}], \|\cdot\|_{\bD_{i}})$.  Let $\left.  \U_{i}\right|_{\bD_{i}}$ be its restriction to $\bD_{i}$ and consider its pull-back 
\begin{equation}
    \widehat{\U}_{i}= \widehat{\beta}_{i}^{*}\left( \left. \U_{i}\right|_{ \bD_{i}} \right)  
\label{mwc.7}\end{equation}
under the blow-down map $\widehat{\beta}_{i}: \hD_{i}\to \bD_{i}$.  More generally, for $I\subset\{1,\ldots,\ell\}$, let $\U_I\to \bX$ be the torus bundle obtained by taking the fibre product of the circle bundles $\U_i$ for $i\in I$.  Let $\left.\U_I\right|_{\bD_I}$ be the restriction of $\U_I$ to $\bD_I$ and consider its pull-back
\begin{equation}
   \widehat{\U}_I= \widehat{\beta}_I^* ( \left.\U_I\right|_{\bD_I})
\label{mwc.7b}\end{equation}
under the blow-down map $\widehat{\beta}_I: \hD_I\to \bD_I$.

\begin{lemma}
When non-empty, the boundary face $\hH_{I}= \bigcap_{i\in I} \hH_i \subset \hX$ is naturally diffeomorphic to the total space of the torus bundle $\widehat{U}_{I}\to \hD_{I}$.  In particular, there exists an induced torus fibration 
$\hPhi_{I}: \hH_{I}\to \hD_{I}$.  
\label{mwc.6}\end{lemma}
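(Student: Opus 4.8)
The plan is to reduce the statement to a local computation near each component of the divisor, exploiting the fact that blow-up in the sense of Melrose is a local construction and commutes with restriction to submanifolds that meet the blown-up centers cleanly. First I would recall that, away from the deeper corners, a neighborhood of a point of $\bD_i$ in $\bX$ looks like $V\times \bbC$, where $V$ is an open set in $\bD_i$ (possibly itself containing pieces of the other $\bD_j$), and $s_i$ corresponds to the last coordinate $z_i$. Blowing up $\bD_i$ introduces polar coordinates $z_i = r_i e^{\sqrt{-1}\theta_i}$ in the normal $\bbC$-factor, so $\hH_i$ over $V$ is identified with $V\times S^1$, the circle factor being exactly the fibre of $\U_i|_{\bD_i}$ trivialized by the local frame of $[\bD_i]$ dual to $s_i$. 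Carrying this out simultaneously for all $i\in I$ and noting that the normal directions to the distinct $\bD_i$ are independent, I get that over a coordinate patch meeting $\bD_I$ the boundary face $\hH_I$ is $(\bD_I\cap \text{patch})\times (S^1)^{|I|}$, with the torus factor canonically the fibre of $\U_I|_{\bD_I}$.

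Next I would address the remaining blow-ups. On $\hX$ we have blown up all the $\bD_j$, not just those in $I$; the extra blow-ups of $\bD_j$ for $j\notin I$ cut $\hH_I$ along the intersections $\hH_I\cap \hH_j$, which under the identification above correspond precisely to $(C_{Ij})\times (S^1)^{|I|}$. Performing those blow-ups therefore has the effect of replacing the base $\bD_I$ by the iterated blow-up $[\bD_I; C_{I1},\ldots,C_{I\ell_I}] = \hD_I$, while leaving the torus factor untouched because the centers $\bD_j$, $j\notin I$, are transverse to $\bD_I$ and hence meet $\hH_I$ in a way that does not involve the circle directions. Pulling the torus bundle $\U_I|_{\bD_I}$ back along $\widehat{\beta}_I\colon \hD_I\to \bD_I$ gives exactly $\widehat{\U}_I$, and the local trivializations assemble to a global diffeomorphism $\hH_I \cong \widehat{\U}_I$ commuting with projection to $\hD_I$; this yields the induced torus fibration $\hPhi_I\colon \hH_I\to \hD_I$. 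One should also check that the order in which the blow-ups are carried out does not matter, which is already granted by the discussion preceding the lemma (Proposition 5.8.2 of \cite{MelroseMWC}).

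The main subtlety, and the step I expect to require the most care, is the bookkeeping showing that blowing up the centers $\bD_j$ with $j\notin I$ indeed manufactures precisely the iterated blow-up $\hD_I$ on the base and does not interfere with the torus fibres — in other words, that the blow-up of $\bX$ along $\bD_j$ restricts, on the boundary hypersurface $\hH_I$ coming from the earlier blow-ups, to the blow-up of its base along $C_{Ij}$, times the identity on the torus. This is a standard but slightly delicate compatibility: one needs that $\bD_j$ is transverse to all of $\bD_i$, $i\in I$ (so that $\bD_j$ meets each earlier blow-up center cleanly and the order is immaterial), and that $\bD_j\cap \bD_I = C_{Ij}$ is precisely the center one blows up downstairs. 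Once this commutation of blow-up with restriction is in hand, the rest is the gluing of the local models described above, and the identification of the structure group of the resulting torus bundle with that of $\U_I|_{\bD_I}$ via the frames dual to the $s_i$.
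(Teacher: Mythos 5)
Your proposal is correct and follows essentially the same route as the paper's proof: work in local holomorphic coordinates in which the $s_i$, $i\in I$, become coordinate functions, introduce polar coordinates so that the face created by blowing up the $\bD_i$, $i\in I$, is identified with the torus bundle $\left.\U_I\right|_{\bD_I}$ (the structure-group identification being exactly the check that the angular variables transform by $f_i/|f_i|$, the transition functions of the circle bundles), and then use transversality of the remaining centers $\bD_j$, $j\notin I$, to see that the later blow-ups only replace the base $\bD_I$ by $\hD_I$, yielding the pull-back bundle $\widehat{\U}_I$ and the fibration $\hPhi_I$. The "main subtlety" you flag is handled in the paper by precisely this coordinate bookkeeping, so nothing further is needed.
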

\begin{proof}
Changing the order in which we blow up if needed, we can assume $I=\{1,\ldots,k\}\subset \{1,\ldots,\ell\}$.  For $p\in \bD_I$, let us choose for each $i\in I$ a local holomorphic trivialization of $[\bD_i]$ in a small neighborhood $\cU$ of $p$.  The section $s_i$ can then be seen as a local holomorphic function $\zeta_i$ on $\cU$ vanishing on $\bD_i$.  Taking $\cU$ smaller if needed, we can complete $\zeta=(\zeta_1,\ldots,\zeta_k)$ by holomorphic functions $z=(z_{k+1}, \ldots, z_n)$ such that 
$(\zeta,z)$ form holomorphic coordinates on $\cU$.  In these coordinates, blowing up $\bD_i$ amounts to introduce polar coordinates $(\rho_i',\theta_i)$ such that $\zeta_i= \rho_i' e^{\sqrt{-1}\theta_i}$.  Thus, if we denote by $\halpha_I$ the blow-down map for the manifold with corners
$\hX_I =[\bX;\bD_1,\ldots,\bD_k]$, then on the open set $\halpha_I^{-1}(\cU)$, we can use the coordinates  $(\rho_1',\theta_1,\ldots,\rho_k',\theta_k,z)$, in which case the blow-down map $\halpha_I$ is locally given by:
\begin{equation}
   (\rho_1',\theta_1,\ldots,\rho_k',\theta_k,z)\mapsto (\rho_1e^{\sqrt{-1}\theta_1}, \ldots, \rho_k' e^{\sqrt{-1}\theta_k},z).
\label{tf.1a}\end{equation}
On the boundary face $\halpha_I^{-1}(\bD_I)$, we can use the coordinates $(\theta_1,\ldots,\theta_k,z)$.  In these coordinates, the restriction of the blow-down map $\halpha_I$ to $\halpha_I^{-1}(\bD_I)$ is given by
\begin{equation}
  (\theta_1,\ldots,\theta_k,z)\mapsto z.
\label{tf.1}\end{equation}
If we choose different local holomorphic trivializations of $[\bD_i]$ for $i\in I$ on an open set $\cV$ with $\cV\cap \bD_I\ne\emptyset$, then the sections $s_i$ will give a different tuple $\zeta'=(\zeta_1',\ldots,\zeta_k')$ of holomorphic functions.  On the intersection $\cU\cap \cV$, we can assume, taking the support of $\cU$ and $\cV$ to be closer to $\bD_I$ if needed, that $(\zeta',z)$ still form holomorphic coordinates.  In particular, this will induce coordinates $(\theta_1',\ldots,\theta_k',z)$ on $\halpha_I^{-1}(\bD_I)$ with the restriction of the blow-down map $\halpha_I$ taking the form
\begin{equation}
   (\theta_1',\ldots, \theta_k',z) \mapsto z. 
\label{tf.2}\end{equation} 
If $\zeta_i'= \zeta_i f_i(\zeta,z)$, for $f_i$ a non-vanishing holomorphic function, is the change of coordinates from $(\zeta,z)$ to $(\zeta',z)$, then on $\halpha_I^{-1}(\bD_I)$, the corresponding change of coordinates is given by
\begin{equation}
  e^{\sqrt{-1}\theta_i'}= e^{\sqrt{-1}\theta_i} \frac{f_i(0,z)}{|f_i(0,z)|}, \quad i\in I.
\label{tf.3}\end{equation} 
The function $ \frac{f_i(0,z)}{|f_i(0,z)|}$ takes value on the unit circle and can be interpreted as the transition function between local trivializations of the circle bundle $\left.\U_i\right|_{\bD_I}$.  This means the fibration $\halpha_I: \halpha_I^{-1}(\bD_I)\to \bD_I$ is a torus bundle isomorphic to the torus bundle $\left. \U_I\right|_{\bD_I}\to \bD_I$.

Now, when we blow up the lifts of $\bD_{k+1}, \ldots, \bD_n$ in $\hX_I$ to obtain $\hX$, the boundary face $\halpha_I^{-1}(\bD_I)\subset \hX_I$ lifts to give the boundary face $\hH_I\subset \hX$.  Since the lifts of $\bD_{k+1},\ldots,\bD_n$  to $\hX_I$ are transversal to $\halpha_I^{-1}(\bD_I)$, we see using local coordinates as in \eqref{tf.1a} that the torus bundle $\halpha_I: \halpha_I^{-1}(\bD_I)\to \bD_I$ lifts to a fibration
\[
    \hPhi_I: \hH_I\to \hD_I
\]
which is just the pull-back of the torus bundle $\halpha^{-1}_I(\bD_I)\to \bD_I$ under the blow-down map $\widehat{\beta}_I: \hD_I\to \bD_I$.
\end{proof}

Recall from \S~5.14 in \cite{MelroseMWC} that the \textbf{logarithmic blow-up} $[\hX;\hH_{i}]_{\log}$ of a boundary hypersurface $\hH_{i}$ with boundary defining function $\rho_{i}$ is the manifold with corners $\hX$ with $\CI$-structure generated by $\CI(\hX)$ and the new boundary defining function
\begin{equation}
  x_{i}= \frac{1}{\log \left( \frac{1}{\rho_{i}}\right)},
\label{mwc.7c}\end{equation}
where we assume without loss of generality that $\rho_{i}<1$ everywhere on $\hX$.
As shown in \cite{MelroseMWC}, the definition of $[\hX;\hH_{i}]_{\log}$ does not depend on the choice of the boundary defining function $\rho_{i}$ and is such that the identity map $[\hX;\hH_{i}]_{\log}\to \hX$ is smooth.  More generally, we can perform such a logarithmic blow-up at each boundary hypersurface with the different blow-ups commuting.  We get in this way the total logarithmic blow-up 
\begin{equation}
  \hX_{\log}= \hX_{\ell}, \quad \mbox{with} \;\;\hX_{0}= \hX \; \;\mbox{and} \; \; \hX_{j}= [\hX_{j-1};\hH_{j}]_{\log}, \; j\in\{1,\ldots,\ell\},
\label{mwc.8}\end{equation}
whose diffeomorphism type is independent of the order in which the logarithmic blow-ups are performed.  The identity map plays the role of a blow-down map $\widehat{\beta}_{\log}: \hX_{\log}\to \hX$.  Under this map, the pull-back of polynomially decaying functions on $\hX$ gives rapidly decaying functions on $\hX_{\log}$.  Moreover, the  boundary face $\tH_{I}= \widehat{\beta}_{\log}^{-1}(\hH_{I})$ of $\hX_{\log}$ is canonically diffeomorphic to the total logarithmic blow-up of the boundary hypersurface $\hH_{I}$.  The smooth torus fibration $\hPhi_{I}: \hH_{I}\to \widehat{D}_{I}$ induces  a corresponding smooth torus fibration $\widetilde{\Phi}_{I}: \tH_{I}\to \widetilde{D}_{I}$, where $\tD_I= (\hD_I)_{\log}$. 
\begin{definition}
The \textbf{logarithmic compactification} of the quasiprojective manifold $X=\bX\setminus \bD$ is the manifold with corners $\tX:= \hX_{\log}$.  It has a natural blow-down map 
\[
        \widetilde{\beta}=  \widehat{\beta}\circ \widehat{\beta}_{\log}: \tX\to \overline{X}.
\]
For each boundary hypersurface $\tH_{i}= \widetilde{\beta}^{-1}(\bD_{i})$, a choice of boundary defining function is given by
\[
          x_{i}= \frac{1}{\log \left( \frac{1}{\rho_{i}} \right) } = \frac{-1}{\log \| s_{i}\|_{\bD_{i}} }\in \CI(\tX).
\] 
\label{mwc.9}\end{definition}

The boundary defining function $x_{i}$ induces a non-vanishing section $\left. dx_{i} \right|_{\tH_{i}}$ of the conormal bundle of $\tH_{i}$ in $\tX$.  As the next lemma shows, this section, which induces a trivialization of the conormal bundle, is canonical in the sense that it does not depend on the choice of the boundary defining function $\rho_{i}\in \CI(\hX)$. 

\begin{lemma}
Let $\|\cdot \|_{\bD_{i}}'$ be another choice of Hermitian metric for the bundle $[\bD_{i}]$ and suppose that the corresponding boundary defining function $\rho_{i}'= \|s_{i}\|'_{\bD_{i}}$ 
for the boundary hypersurface $\hH_{i}$ is such that  $0\le \rho_{i}'<1$.  Then the new boundary function
\[
     x_{i}'= \frac{1}{\log \frac{1}{\rho_{i}'} }\in \CI(\tX)    
\]
for the boundary hypersurface $\tH_{i}$ in $\tX$ is such that $x_{i}'= x_{i}(1+x_{i}\bb_{i}+x_{i}^{2}\tb_{i})$ for some smooth functions $\bb_{i}\in \CI(\bX)$ and $\tb_{i}\in \CI(\tX)$.  In particular,
\[
       \left. dx_{i}'\right|_{\tH_{i}}= \left. dx_{i}\right|_{\tH_{i}}.
\]
\label{mwc.10}\end{lemma}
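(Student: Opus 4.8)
The plan is to use the elementary fact that any two Hermitian metrics on the fixed line bundle $[\bD_{i}]$ differ by a globally defined positive smooth factor, and then to translate this into a relation between the two logarithmic boundary defining functions. First I would observe that, in any local holomorphic frame $e_{i}$ of $[\bD_{i}]$, writing $s_{i}= \zeta_{i}e_{i}$ we have $\rho_{i}= |\zeta_{i}|\,\|e_{i}\|_{\bD_{i}}$ and $\rho_{i}'= |\zeta_{i}|\,\|e_{i}\|_{\bD_{i}}'$, so the ratio $f_{i}:= \rho_{i}'/\rho_{i}= \|e_{i}\|_{\bD_{i}}'/\|e_{i}\|_{\bD_{i}}$ is frame-independent, smooth and strictly positive on all of $\bX$ (in particular across $\bD_{i}$ itself). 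Hence $\rho_{i}'= f_{i}\rho_{i}$ on $\bX$, and after lifting by $\widehat{\beta}$ this identity holds on $\hX$ and on $\tX$. Set $\varphi_{i}= \log f_{i}\in \CI(\bX)$, which is legitimate since $f_{i}$ is nowhere zero.

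Next I would pass to the logarithmic defining functions. From $\rho_{i}'= f_{i}\rho_{i}$ and the definition $x_{i}= 1/\log(1/\rho_{i})$, $x_{i}'= 1/\log(1/\rho_{i}')$ we get, on $\tX\setminus \tH_{i}$ where everything is positive,
\[
   \frac{1}{x_{i}'}= \log\frac{1}{\rho_{i}'}= \log\frac{1}{\rho_{i}}- \log f_{i}= \frac{1}{x_{i}}- \varphi_{i}.
\]
Clearing denominators gives the identity $x_{i}'= x_{i}+ \varphi_{i}\,x_{i}\,x_{i}'$, which, since $\varphi_{i}\in \CI(\bX)\subset\CI(\tX)$ and $x_{i},x_{i}'\in \CI(\tX)$ (the smooth structure of $\tX$ being independent of the choice of boundary defining function used to perform the logarithmic blow-up, as recalled before Definition~\ref{mwc.9}), now holds on all of $\tX$ by continuity. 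Substituting this expression for $x_{i}'$ back into its own right-hand side twice yields
\[
   x_{i}'= x_{i}\Bigl(1+ \varphi_{i}x_{i}+ \bigl(\varphi_{i}^{2}+ \varphi_{i}^{3}x_{i}'\bigr)x_{i}^{2}\Bigr).
\]
Taking $\bb_{i}= \varphi_{i}\in \CI(\bX)$ and $\tb_{i}= \varphi_{i}^{2}+ \varphi_{i}^{3}x_{i}'\in \CI(\tX)$ gives exactly the asserted form $x_{i}'= x_{i}(1+ x_{i}\bb_{i}+ x_{i}^{2}\tb_{i})$.

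For the statement about the conormal bundle I would simply differentiate $x_{i}'= x_{i}+ \bb_{i}x_{i}^{2}+ \tb_{i}x_{i}^{3}$ using the Leibniz rule:
\[
   dx_{i}'= \bigl(1+ 2\bb_{i}x_{i}+ 3\tb_{i}x_{i}^{2}\bigr)\,dx_{i}+ x_{i}^{2}\,d\bb_{i}+ x_{i}^{3}\,d\tb_{i}.
\]
Every term on the right other than $dx_{i}$ carries a positive power of $x_{i}$ with a smooth coefficient, and $x_{i}$ vanishes on $\tH_{i}$; restricting to $\tH_{i}$ therefore gives $\left.dx_{i}'\right|_{\tH_{i}}= \left.dx_{i}\right|_{\tH_{i}}$.

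I do not anticipate a real obstacle here; the only points meriting care are the observation that the two Hermitian metrics differ by a factor that is smooth and nonvanishing on the \emph{whole} of $\bX$ (so that $\varphi_{i}= \log f_{i}$ is genuinely in $\CI(\bX)$), and the mild bookkeeping required to produce precisely the form $x_{i}(1+ x_{i}\bb_{i}+ x_{i}^{2}\tb_{i})$ with $\bb_{i}$ pulled back from $\bX$ and $\tb_{i}$ only smooth on $\tX$ — both handled by the two-fold iteration of $x_{i}'= x_{i}+ \varphi_{i}x_{i}x_{i}'$ above.
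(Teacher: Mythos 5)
Your argument is correct and follows essentially the same route as the paper: both reduce to writing $\rho_i'=\phi_i\rho_i$ with $\phi_i$ smooth and positive on $\bX$, passing to the relation $\frac{1}{x_i'}=\frac{1}{x_i}-\log\phi_i$, and then doing algebra to exhibit the form $x_i'=x_i(1+x_i\bb_i+x_i^2\tb_i)$, after which the conormal statement follows by differentiating and restricting to $\tH_i$. The only real difference is in the bookkeeping: the paper solves explicitly, $x_i'=x_i/(1-x_i\log\phi_i)$, and takes $\tb_i=(\log\phi_i)^2/(1-x_i\log\phi_i)$, whose smoothness on $\tX$ is self-contained once one checks $1-x_i\log\phi_i>0$ (this in particular re-derives $x_i'\in\CI(\tX)$), whereas your iterated substitution yields $\tb_i=\varphi_i^2+\varphi_i^3x_i'$ and therefore relies on knowing $x_i'\in\CI(\tX)$ beforehand, which you legitimately get from the independence of the logarithmic blow-up on the choice of boundary defining function already quoted from Melrose before Definition~\ref{mwc.9}.
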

\begin{proof}
By assumption, we have that $\rho_{i}'= \rho_{i} \phi_{i}$ for some function $\phi_{i}\in \CI(\bX)$ such that $0 < \phi_{i}$ everywhere on $X$. Since $0\le \rho_i< 1$ and $0\le \rho_i' <1$, notice that
\[
    1- x_i \log \phi_i = 1+ \frac{\log \phi_i}{\log \rho_i} = \frac{\log \rho_i +\log \phi_i}{\log \rho_i} = \frac{\log \rho_i'}{\log \rho_i} >0.  
\]
Thus, we have,
\begin{equation}
\begin{aligned}
x_{i}' &= \frac{-1}{\log \rho_{i} + \log \phi_{i}} = x_{i}\left( \frac{1}{ 1-x_{i}\log\phi_{i}}   \right)  \\
&=  x_{i} \left( 1+ \frac{x_{i} \log \phi_{i}}{ 1- x_{i}\log\phi_{i}}  \right),  \\
&= x_{i}\left( 1 + x_{i}\log\phi_{i} \left( \frac{1-x_{i}\log\phi_{i} + x_{i}\log\phi_{i}}{1-x_{i}\log \phi_{i}}\right) \right) \\
&= x_{i}\left(  1+ x_{i}\log\phi_{i} + x_{i}^{2}\left( \frac{(\log\phi_{i})^{2}}{1-x_{i}\log\phi_{i}}\right) \right),
\end{aligned}
\label{mwc.11}\end{equation}
and the result follows by taking $\bb_{i}=\log\phi_{i}$ and $\tb_{i}= \frac{(\log\phi_{i})^2}{1-x_{i}\log\phi_{i}}$.
\end{proof}

\section{Polyfibred cusp metrics on quasiprojective manifolds} \label{fc.0}

On $\tX$, consider the subspace of smooth vector fields
\begin{multline}
  \cV_{\tPhi}(\tX)= \{ \xi\in \CI(\tX;T\tX) \quad | \quad \xi x_{i} \in x_{i}^{2}\CI(\tX) \\
     \mbox{and} \; (\tPhi_{i})_{*} (\left.\xi\right|_{\tH_{i}})=0 \quad \forall i\in \{1,\ldots,\ell\} \}.
\label{fc.1}\end{multline}
When the divisor $\bD$ is smooth, this is a special case of the Lie algebra of vector fields introduced by Mazzeo and Melrose in \cite{Mazzeo-MelrosePhi}.
As can be checked directly, the subspace $\cV_{\tPhi}(\tX)$ is closed under Lie bracket, so forms a Lie subalgebra of $\CI(\tX;T\tX)$.  Thanks to Lemma~\ref{mwc.10}, the condition 
\[
       \xi x_{i} \in x_{i}^{2}\CI(\tX)    
\]
does not depend on the choice of the boundary defining function $\rho_{i}\in \CI(\hX)$ used to define $x_{i}$.   This means the Lie subalgebra $\cV_{\tPhi}(\tX)$ is canonically associated to the logarithmic compactification $\tX$.  

To describe $\cV_{\tPhi}(\tX)$ in local coordinates,  let $I\subset \{1,\ldots,\ell\}$ be a non-empty subset such that the boundary face
\begin{equation}
    \tH_{I}= \bigcap_{i\in I} \tH_{i}
\label{fc.2}\end{equation}
is non-empty.  Relabelling the boundary hypersurfaces if needed, we can assume $I= \{1,\ldots,k\}$ for some $k\le \ell$.  Given a point $p\in \tH_{I}$,  consider the image $\widetilde{\beta}(p)$ of $p$ in $\bX$.  By considering a local trivialization of $[\bD_{i}]$ near $\widetilde{\beta}(p)$ for $i\in I$, we can regard the section $s_{i}$ as a holomorphic function $\zeta_{i}$ near $\widetilde{\beta}(p)$.  We complete these functions by holomorphic functions $z_{k+1},\ldots,z_{n}$ to get holomorphic coordinates 
\begin{equation}
   (\zeta,z)=(\zeta_{1},\ldots,\zeta_{k},z_{k+1},\ldots,z_{n})
\label{fc.16}\end{equation}  
near $\widetilde{\beta}(p)$.  Without loss of generality, we will assume that the open set $\cU$ where these coordinates are defined only intersects with $\bD_i$ for $i\in I$.  In terms of the polar coordinates 
\begin{equation}
        \rho_{i}'= |\zeta_{i}|, \; \theta_{i}= \arg(\zeta_{i}), \quad \zeta_{i}= \rho_{i}'e^{\sqrt{-1}\theta_{i}}, 
\label{fc.17}\end{equation}
we can consider $\rho_{i}'= |\zeta_{i}|$ instead of $\rho_{i}= \|s_{i}\|_{\bD_{i}}$ as a (local) boundary defining function for $\hH_{i}$ in $\hX$.  Writing $z_{j}=u_{j}+\sqrt{-1}v_{j}$ where $u_{j}$ and $v_{j}$ are real-valued functions, and setting 
\[
w=(w_1, \ldots, w_{2q}):=(u_{k+1}, v_{k+1},\ldots,u_n,v_n),  \quad  q=n-k,
\]
we obtain in this way real coordinates near $p$ 
\begin{equation}
  (x_{1}',\theta_{1},\ldots, x_{k}',\theta_{k}, w_1,\ldots, w_{2q}),
\label{fc.18}\end{equation}
with $x_{i}'= \frac{-1}{\log \rho_{i}'}$ for $i\in\{1,\ldots,k\}$.     Let $\tU=\tbeta^{-1}(\cU)\subset \tX$ be the open set where these coordinates are defined.  By our choice of $\tU$, we   have that $\tH_i\cap \tU=\emptyset$ unless $i\in I$. 

The open set $\tU$ is such that for each $i\in \{1,\ldots,k\}$, the fibration $\tPhi_{i}: \tH_{i}\to \tD_{i}$ restricts to be trivial on $\tU\cap \tH_{i}$.  In fact, for each $i\in I$, 
\[
(x_1',\ldots\widehat{x_{i}'}, \ldots , x_k',\theta_{1}, \ldots, \widehat{\theta_{i}},\ldots,\theta_{k}, w)
\]
 forms a coordinate system on $\tD_{i}$ such that the projection $\tPhi_{i}$ is given by
\begin{equation}
 (x_1',\ldots\widehat{x_{i}'}, \ldots , x_k',\theta_{1},\ldots,\theta_{k}, w) \mapsto  (x_1',\ldots\widehat{x_{i}'}, \ldots , x_k',\theta_{1}, \ldots, \widehat{\theta_{i}},\ldots,\theta_{k}, w).  
 \label{fc.4}\end{equation} 
Here, the notation \ $\widehat{}$ \ above a variable means it is omitted.  By Lemma~\ref{mwc.10}, the coordinate $x_i'$ is also a valid choice of local boundary defining function for $\tH_i$.  Thus, in the coordinate system \eqref{fc.18}, a vector field $\xi\in \cV_{\tPhi}(\tX)$ is of the form
\begin{equation}
  \xi= \sum_{i=1}^{k} \left( a_{i} x_{I}'x_{i}' \frac{\pa}{\pa x_{i}'} + \frac{b_{i} x_{I}'}{x_{i}'} \frac{\pa}{\pa \theta_{i}}\right) + \sum_{j=1}^{2q} c_{j} x_{I}' \frac{\pa}{\pa w_{j}} ,
\label{fc.5}\end{equation}
where $x_{I}'= \prod_{i\in I} x_{i}'$ and $a_{i},b_{i}, c_{j} \in \CI(\tX)$.

Since $\cV_{\tPhi}(\tX)$ is a $\CI(\tX)$-module, there exists a smooth vector bundle ${}^{\tPhi}T\tX\to \tX$ and a natural map $\iota_{\tPhi}: {}^{\tPhi}T\tX\to T\tX$ which restricts to an isomorphism on $\tX \setminus \pa \tX$ with the property that 
\begin{equation}
     \cV_{\tPhi}(\tX)= \iota_{\tPhi} (\CI (\tX;{}^{\tPhi}T\tX)).
\label{fc.6}\end{equation}
At a point $p\in \tX$, the fibre of ${}^{\tPhi}T\tX$ above $p$ can be defined by 
\begin{equation}
   {}^{\tPhi}T_{p}\tX= \cV_{\tPhi}(\tX)/ \cI_{p} \cV_{\tPhi}(\tX),
\label{fc.7}\end{equation}
where $\cI_{p} \subset \CI(\tX)$ is the ideal of all smooth functions vanishing at $p$.  
\begin{definition}
  A \textbf{polyfibred boundary metric} on $\tX$ is a smooth metric $g_{\tPhi}$ for the real vector bundle ${}^{\tPhi}T\tX\to \tX$.  Under the canonical identification between $T\tX$ and ${}^{\tPhi}T\tX$ over $X= \tX\setminus \pa \tX$, a polyfibred boundary metric induces a Riemannian metric on $X= \tX\setminus \pa \tX$ which we also refer to as a polyfibred boundary metric.    
\label{fc.8}\end{definition}

In the local coordinates \eqref{fc.18}, an example of polyfibred boundary metric is given by 
\begin{equation}
   g_{\tPhi}= \sum_{i=1}^{k} \left( a_{i} \frac{d(x_{i}')^{2}}{(x_{i}'x_{I}')^{2}} + \frac{b_{i}(x_{i}')^{2}}{(x_{I}')^{2}} d\theta_{i}^{2} \right) + \sum_{\alpha=1}^{2q} \sum_{\beta=1}^{2q} c_{\alpha \beta} \frac{dw_{\alpha}\otimes dw_\beta}{(x_{I}')^{2}},
\label{fc.10}\end{equation}
where $a_{i}, b_{i} \in \CI(\tX)$ are positive smooth functions and $c_{\alpha \beta}\in \CI(\tX)$ gives the coefficients of a positive definite symmetric matrix.  If $g_{\tPhi}$ is any choice of polyfibred boundary metric, then notice that the Lie algebra $\cV_{\tPhi}(\tX)$ admits the following alternative description,
\begin{equation}
  \cV_{\tPhi}(\tX)= \{ \xi\in \CI(\tX, T\tX) \quad | \quad 
      \exists c>0 \; \mbox{such that} \; g_{\tPhi}(\xi_{p},\xi_{p})< c \; \forall p\in X\}. 
\label{fc.10b}\end{equation}

The Riemannian metrics we are interested in are not polyfibred boundary metrics, but metrics conformal to them.  

\begin{definition}
A \textbf{polyfibred cusp metric} on $X$ is a Riemannian metric of the form
\[
           g_{\pfc}= x^{2} g_{\tPhi}
\]
where $x= \prod_{i=1}^{\ell} x_{i}$ and $g_{\tPhi}$ is a polyfibred boundary metric.  
\label{fc.9}\end{definition}

 When the divisor $\bD$ is smooth so that the logarithmic compactification $\tX$ is a manifold with fibred boundary, polyfibred cusp metrics constitute a special kind of $d$-metrics, a notion introduced by Vaillant in \cite{Vaillant}. In the local coordinates \eqref{fc.18}, an example of polyfibred cusp metric is given by
\begin{equation}
g_{\pfc}= \sum_{i=1}^{k} \left( a_{i} \frac{d(x_{i}')^{2}}{(x_{i}')^{2}} + b_{i}(x_{i}')^{2} d\theta_{i}^{2} \right) +\sum_{\alpha=1}^{2q} \sum_{\beta=1}^{2q} c_{\alpha \beta} dw_{\alpha}\otimes dw_\beta.
\label{fc.11}\end{equation}
More generally, a polyfibred cusp metric may also involve mixed terms of the form 
\begin{equation}
  \frac{dx_{i}'}{x_{i}'}\otimes \frac{dx_{j}'}{x_{j}'}, \quad x_{i}'d\theta_{i}\otimes x_{j}'d\theta_{j}, \quad  \frac{dx_{i}'}{x_{i}'} \otimes x_{j}' d\theta_{j}, \quad \frac{dx_{i}'}{x_{i}'} \otimes dw_{l}, \quad  x_{j}'d\theta_{j}\otimes dw_{l}.
\label{fc.12}\end{equation}
However, there is an unambiguous way to say that a polyfibred cusp metric does not asymptotically involve such mixed terms.  

\begin{definition}
An \textbf{exact} polyfibred cusp metric is a polyfibred cusp metric $g_{\fc}$ such that for any $I\subset \{1,\ldots,\ell\}$ and for any $p\in \tH_I\setminus\pa \tH_I$,  
\begin{equation}
g_{\fc} =\sum_{i=1}^{k} \left( a_{i} \frac{d(x_{i}')^{2}}{(x_{i}')^{2}} + b_{i}(x_{i}')^{2} d\theta_{i}^{2} \right) +  \pr_I^* h_I + \sum_{i\in I} E_i
\label{ex.4b}\end{equation}
in local coordinates as in \eqref{fc.18}, where $a_{i}$ and $b_i$ are positive constants,  $E_i\in x_i\CI(\tX;{}^{\fc}T^*\tX\otimes {}^{\fc}T^*\tX)$, $h_{I}$ is a metric on $\bD_{I}\cap \cU$ (\ie in the coordinates $w$) and
$\pr_{I}: \tU \to \bD_I\cap\cU$ is the projection
\[
    (x_1',\theta_1,\ldots, x_k',\theta_k,w) \mapsto w.  
\] 
\label{ex.4}\end{definition}
For this definition to be sensible, we need a polyfibred cusp metric locally of the form \eqref{ex.4b} to stay of this form under changes of coordinates as in \eqref{fc.18}.  This is indeed the case.  
\begin{lemma}
Let 
$(\hat{\zeta}_1,\ldots,\hat{\zeta}_{k}, \hat{z}_{k+1},\ldots, \hat{z}_n)$ be another choice of coordinates near $\tbeta(p)\in\bX$ with $\left.\hat{\zeta}_{i}\right|_{\bD_i}=0$.  Let $(\hat{x}_1',\ldots,\hat{x}_{k}', \hat{\theta}_1,\ldots, \hat{\theta}_{k},\hat{w})$ be the corresponding real coordinates as in \eqref{fc.18}.   Then a metric $g_{\fc}$ of the form \eqref{ex.4b} in the coordinates $(\hat{x}',\hat{\theta}, \hat{w})$ is also of this form in the coordinates $(x', \theta,w)$.  
\label{excc.1}\end{lemma}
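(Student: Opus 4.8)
The plan is to pass through the holomorphic change of coordinates underlying the two real coordinate systems, use holomorphy to decouple, to infinite order at $\tH_I$, the angular and base variables from the boundary-defining ones, and then to substitute into \eqref{ex.4b} and collect every correction into the $E_i$. After relabelling we may take $I=\{1,\dots,k\}$, and after shrinking $\cU$ we may assume both coordinate systems are defined on $\cU$, that $\cU$ is a ball, and that $\cU$ meets no $\bD_j$ with $j\notin I$. Since $\zeta_i$ and $\hat\zeta_i$ are holomorphic on $\cU$ and each cuts out $\bD_i\cap\cU$ to first order, one may write $\hat\zeta_i=\zeta_i\,g_i(\zeta,z)$ with $g_i$ holomorphic and nowhere vanishing on $\cU$, and $\hat z_j=\hat z_j(\zeta,z)$ holomorphic with $z\mapsto\hat z(0,z)$ a biholomorphism of $\bD_I\cap\cU$ (the Jacobian of $(\zeta,z)\mapsto(\hat\zeta,\hat z)$ being block triangular along $\{\zeta=0\}$). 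Fixing a holomorphic branch of $\log g_i$ and putting $\phi_i=\Re\log g_i$, $\psi_i=\Im\log g_i$ (smooth on $\cU$, hence on $\tU$), one has, exactly as in the computation \eqref{mwc.11} of Lemma~\ref{mwc.10},
\[
  \hat x_i'=\frac{x_i'}{1-x_i'\phi_i}=x_i'\bigl(1+x_i'\phi_i+(x_i')^2(\cdots)\bigr),\qquad \hat\theta_i=\theta_i+\psi_i,\qquad \hat w_\alpha=\hat w_\alpha(\zeta,z),
\]
on $\tU$, with $1-x_i'\phi_i>0$; in particular $\hat x_i'\CI(\tX)=x_i'\CI(\tX)$, which by Lemma~\ref{mwc.10} equals $x_i\CI(\tX)$ near $p$.

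The crux is a decoupling statement. Expanding the holomorphic functions $\log g_i(\zeta,z)$ and $\hat z_j(\zeta,z)$ in $\zeta=(\zeta_1,\dots,\zeta_k)$ about $\zeta=0$ and recalling $\zeta_m=\rho_m'e^{\sqrt{-1}\theta_m}$ with $\rho_m'=e^{-1/x_m'}$, one finds $\phi_i=\phi_i^0(w)+r_i$, $\psi_i=\psi_i^0(w)+r_i'$, $\hat w_\alpha=\hat w_\alpha^0(w)+s_\alpha$, where $\phi_i^0,\psi_i^0,\hat w_\alpha^0$ depend only on $w$ and $r_i,r_i',s_\alpha$ (together with all their derivatives) vanish to infinite order at each $\tH_m$, $m\in I$ — each carries a factor $\rho_m'$, which is preserved by $\pa_{x_m'},\pa_{\theta_m},\pa_{w_l}$. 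What is actually used is that $\pa_{\theta_m}\phi_i,\pa_{\theta_m}\psi_i,\pa_{\theta_m}\hat w_\alpha$ are $O(\rho_m')$ for $m\in I$, because $\pa_{\theta_m}$ applied to a function holomorphic in $\zeta_m$ brings down a factor $\zeta_m$; consequently $d\phi_i$, $d\psi_i$, $d\hat w_\alpha$ are smooth sections of ${}^{\fc}T^*\tX$, no $1/x_m'$ arising from the $d\theta_m$-directions.

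It remains to substitute \eqref{ex.4b}, written in the hatted coordinates, into $(x',\theta,w)$. Using $d\hat x_i'/\hat x_i'=dx_i'/x_i'+(\phi_i\,dx_i'+x_i'\,d\phi_i)/(1-x_i'\phi_i)$, $d\hat\theta_i=d\theta_i+d\psi_i$, and $(\hat x_i')^2=(x_i')^2/(1-x_i'\phi_i)^2$, a direct expansion yields
\[
  \hat a_i\frac{(d\hat x_i')^2}{(\hat x_i')^2}+\hat b_i(\hat x_i')^2\,d\hat\theta_i^2=a_i\frac{(dx_i')^2}{(x_i')^2}+b_i(x_i')^2\,d\theta_i^2+x_i'\cdot(\text{smooth section of }{}^{\fc}T^*\tX\otimes{}^{\fc}T^*\tX),
\]
with $a_i=\hat a_i$, $b_i=\hat b_i$ still positive constants, since every cross- or higher-order term acquires an explicit factor $x_i'$ (from $\hat x_i'/x_i'-1$, from $x_i'\,d\phi_i$, or because $d\theta_i\otimes d\psi_i$ and $d\psi_i^{\otimes2}$ occur only inside $(\hat x_i')^2(\cdots)$ and hence come paired with $x_i'd\theta_i$ or $(x_i')^2$). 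Similarly $\sum_{\alpha,\beta}\hat h_{I,\alpha\beta}(\hat w)\,d\hat w_\alpha\otimes d\hat w_\beta=\pr_I^*h_I+(\text{tensor flat at }\tH_I)$, where $h_I$ is the pullback of the metric $\hat h_I$ by the biholomorphism $w\mapsto\hat w^0(w)$ and is thus again a metric on $\bD_I\cap\cU$; and $\hat E_i$ already lies in $\hat x_i'\,\CI(\tX;{}^{\fc}T^*\tX\otimes{}^{\fc}T^*\tX)=x_i\,\CI(\tX;{}^{\fc}T^*\tX\otimes{}^{\fc}T^*\tX)$ near $p$, the bundle ${}^{\fc}T^*\tX$ being intrinsic to $\tX$. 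Every correction carries a factor $x_i'$ or $\rho_i'$ for some $i\in I$ and so lies in $\sum_{i\in I}x_i\,\CI(\tX;{}^{\fc}T^*\tX\otimes{}^{\fc}T^*\tX)$; collecting these as $\sum_{i\in I}E_i$ gives \eqref{ex.4b} in the coordinates $(x',\theta,w)$. The step I expect to be the main obstacle is the decoupling statement of the second paragraph; granted that, what remains is the elementary manipulation of Lemma~\ref{mwc.10} and the routine check that each correction term picks up a factor $x_i'$ or $\rho_i'$ for some $i\in I$.
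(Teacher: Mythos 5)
Your proposal is correct and follows essentially the same route as the paper: write $\hat\zeta_i=\zeta_i f_i$ with $f_i$ holomorphic and non-vanishing, use the exponential decay of $\rho_i'=e^{-1/x_i'}$ (together with the Lemma~\ref{mwc.10} computation for $\hat x_i'$) to show that $d\hat x_i'/\hat x_i'$, $\hat x_i'\,d\hat\theta_i$ and the base part agree with their unhatted counterparts modulo $x_i\,\CI(\tX;{}^{\fc}T^*\tX)$, and absorb all corrections into the $E_i$. The only cosmetic difference is that you split a local branch of $\log f_i$ into $\phi_i,\psi_i$ where the paper works with $f_i/|f_i|=a_0+\rho_i a_1$, which does not change the substance of the argument.
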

\begin{proof}
By Lemma~\ref{mwc.10}, we have $\hat{x}_{i}'= x_{i}'(1+ \overline{b}_{i}x_{i}'+ \widetilde{b}_{i}(x_{i}')^{2})$ for some $\bb_{i}\in \CI(\bX)$ and $\tb_{i}\in \CI(\tX)$.  Since $d\bb_{i}\in \CI(\bX;T^{*}\bX)\subset \CI(\tX;{}^{\fc}T^{*}\tX)$, a direct computation shows that 
\begin{equation}
      \frac{d\hat{x}_{i}'}{\hat{x}_{i}'} = \frac{dx_{i}'}{x_{i}'} + F_{i}, \quad F_{i}\in x_{i}\CI(\tX;{}^{\fc}T^*\tX).
\label{ex.6}\end{equation}
Since we have $\hat{\zeta_i}= \zeta_if_i(\zeta,z)$ for some local non-vanishing holomorphic function $f_i$, we have
\begin{equation}
\begin{aligned}
   \hat{\theta}_i &= \frac{1}{\sqrt{-1}} \log \left( \frac{\hat{\zeta}_i}{|\hat{\zeta}_i|}\right)=  \frac{1}{\sqrt{-1}} \log \left( \frac{\zeta_i}{|\zeta_{i}|}\right)+ \frac{1}{\sqrt{-1}} \log \left( \frac{f_i}{|f_i|}\right) \\
       & = \theta_i + \frac{1}{\sqrt{-1}} \log \left( \frac{f_i}{|f_i|}\right).
\end{aligned}
\label{ex.6b}\end{equation}   
Writing $\frac{f_i}{|f_i|}= a_0+ \rho_{i}a_1$ with $a_0$ and $a_1$ smooth local functions on $\bX$ with $a_0$ non-vanishing and independent of $\zeta_i, \overline{\zeta}_i$, we see that 
\begin{equation}
  d\hat{\theta}_i= d\theta_i + d\log a_0 + d\log \left( 1 +\frac{\rho_i a_1}{a_0}\right),
\label{ex.6c}\end{equation}
so that $\hat{x}_{i}'d\hat{\theta}_i = x_{i}'d\theta_i$ modulo $x_i\CI(\tX;{}^{\fc}T^{*}\tX)$.  Finally, from the change of coordinates $\hat{z}_k= g_k(\zeta,z)$ for a holomorphic function $g_k$, we see that
\begin{equation}
         d\hat{z}_k= \sum_{i=1}^{k} \frac{\pa g_k}{\pa \zeta_i} d\zeta_i + \sum_{j=k+1}^n \frac{\pa g_k}{\pa z_j} dz_j .
\label{ex.6d}\end{equation}
Since 
\begin{equation}
\begin{aligned}
d\zeta_{i} &= e^{\sqrt{-1}\theta_{i}} d\rho_{i}' + \sqrt{-1} \rho_{i}' e^{\sqrt{-1}\theta_{i}} d\theta_{i}\\
&=  \rho_{i}' e^{\sqrt{-1}\theta_{i}} \left(  \frac{dx_{i}'}{(x_{i}')^{2}}+ \sqrt{-1}d\theta_{i}, \right),
\end{aligned} 
\label{fc.23}\end{equation}
it follows from the fact that $\rho_{i}'= e^{-\frac{1}{x_{i}'}}$ decays exponentially fast as $x_{i}'$ tends to $0$ that 
\begin{equation}
    \hat{\pr}_I^{*}h_I = \pr_I^* h_I + \sum_{i\in I} G_i, \quad G_i \in x_i\CI(\tX;{}^{\fc}T^{*}\tX\otimes{}^{\fc}T^* \tX), 
\label{ex.6e}\end{equation}
where $\hat{\pr}_I$ is the projection $(\hat{x}',\hat{\theta},\hat{w})\mapsto \hat{w}$.  Thus, combining \eqref{ex.6}, \eqref{ex.6c} and \eqref{ex.6e}, the result follows.  
\end{proof}

As for polyfibred boundary metrics, there is a natural subspace of smooth vector fields associated to a polyfibred cusp metric $g_{\pfc}$,   
\begin{equation}
 \cV_{\pfc}(\tX)= \{ \xi\in \CI(\tX, T\tX) \quad | \quad 
      \exists c>0 \; \mbox{such that} \; g_{\pfc}(\xi_{p},\xi_{p})<c \; \forall p\in X\}. 
      \label{fc.12b}\end{equation}
Clearly, we have that
\begin{equation}
      \cV_{\pfc}(\tX) = \frac{1}{x} \cV_{\tPhi}(\tX),
\label{fc.12c}\end{equation}
so that the definition of $\cV_{\pfc}(\tX)$ does not depend on the choice of $g_{\pfc}$.  In contrast to $\cV_{\tPhi}(\tX)$, notice however that $\cV_{\pfc}(\tX)$ is not closed under the Lie bracket.  Nevertheless, since it is a $\CI(\tX)$-module, there exists a smooth vector bundle ${}^{\pfc}T\tX\to \tX$ and a natural map $\iota_{\pfc}: {}^{\pfc}T\tX\to T\tX$ restricting to an isomorphism on $\tX\setminus \pa\tX$ with the property that 
\begin{equation}
   \cV_{\pfc}(\tX)= \iota_{\pfc}( \CI(\tX; {}^{\pfc}T\tX)).
\label{fc.12d}\end{equation}
The fibre of ${}^{\pfc}T\tX$ above a point $p\in \tX$ is defined by
\begin{equation}
  {}^{\pfc}T_{p}\tX=  \cV_{\pfc}(\tX)/ \cI_{p}\cV_{\pfc}(\tX),
\label{fc.12e}\end{equation}
where $\cI_{p}\subset \CI(\tX)$ is the ideal of all smooth functions vanishing at $p$.  In terms of the bundle ${}^{\fc}T\tX$, a polyfibred cusp metric can be seen as a Euclidean structure for the 
real vector bundle ${}^{\fc}T\tX$.  

\begin{lemma}
If $g_{\pfc}$ and $g_{\pfc}'$ are two polyfibred cusp metrics, then there exists a constant $C>0$ such that  
\[
       \frac{g_{\pfc}}{C} \le g_{\pfc}' \le Cg_{\pfc},
\]
that is to say, $g_{\pfc}$ and $g_{\pfc}'$ are bi-Lipschitz equivalent.
\label{fc.12f}\end{lemma}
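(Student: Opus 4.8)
The plan is to reduce the statement to a compactness argument on the manifold with corners $\tX$, exploiting the fact that both metrics, when viewed through the lens of the $\pfc$-structure, extend to genuine Euclidean structures on the vector bundle ${}^{\fc}T\tX\to\tX$. Concretely, by Definition~\ref{fc.9} we may write $g_{\pfc}=x^2 g_{\tPhi}$ and $g_{\pfc}'=x^2 g_{\tPhi}'$ for two polyfibred boundary metrics $g_{\tPhi}, g_{\tPhi}'$; since the conformal factor $x=\prod_i x_i$ is common to both, it suffices to establish the bi-Lipschitz equivalence of $g_{\tPhi}$ and $g_{\tPhi}'$. Each of these is, by Definition~\ref{fc.8}, a smooth fibre metric for the single fixed vector bundle ${}^{\tPhi}T\tX$ over the \emph{compact} space $\tX$.

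First I would invoke the standard linear-algebra fact that on a finite-dimensional real vector space, any two positive-definite inner products $b_1,b_2$ satisfy $C^{-1}b_1\le b_2\le C b_1$ with $C=C(p)$ equal to the maximal ratio of eigenvalues of one relative to the other; equivalently $C(p)=\max\{\|v\|_{b_2}^2/\|v\|_{b_1}^2, \|v\|_{b_1}^2/\|v\|_{b_2}^2 : v\neq 0\}$. Applying this pointwise on the fibres of ${}^{\tPhi}T\tX$ gives a function $p\mapsto C(p)$ on $\tX$ with $C(p)^{-1}(g_{\tPhi})_p\le (g_{\tPhi}')_p\le C(p)(g_{\tPhi})_p$. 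The key point is that $C(p)$ is continuous on $\tX$: it is obtained as a continuous function (a max of ratios, or alternatively the larger of the two operator norms of the mutual comparison endomorphism) of the coefficients of $g_{\tPhi}$ and $g_{\tPhi}'$ in any local frame of ${}^{\tPhi}T\tX$, and these coefficients are smooth on $\tX$ by Definition~\ref{fc.8} together with the nondegeneracy of both metrics. Since $\tX$ is compact, $C:=\sup_{p\in\tX}C(p)<\infty$, and this $C$ works uniformly. Pulling back through $\iota_{\tPhi}$ (an isomorphism over $X=\tX\setminus\pa\tX$), we get $C^{-1}g_{\tPhi}\le g_{\tPhi}'\le Cg_{\tPhi}$ on $X$, and multiplying by $x^2$ yields the claim for $g_{\pfc},g_{\pfc}'$.

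The only genuine subtlety — and the step I would be most careful about — is making sure that an \emph{arbitrary} polyfibred cusp metric, not merely one of the explicit model forms such as \eqref{fc.11}, is indeed a smooth bundle metric on ${}^{\fc}T\tX$ (equivalently, that $g_{\tPhi}$ is smooth and nondegenerate on ${}^{\tPhi}T\tX$). This is built into Definition~\ref{fc.9} via Definition~\ref{fc.8}, and the characterizations \eqref{fc.10b}, \eqref{fc.12b}, \eqref{fc.12c} of the associated vector fields confirm that the bundle ${}^{\fc}T\tX$ is intrinsic, independent of the chosen metric; so strictly speaking no real obstacle remains, but I would spell out explicitly that in a local $\pfc$-frame (read off from \eqref{fc.11}--\eqref{fc.12}) the matrix of any $g_{\pfc}$ has smooth entries up to the boundary and positive-definite limiting value, which is exactly what the compactness argument above requires.
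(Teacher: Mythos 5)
Your proposal is correct and follows essentially the same route as the paper: the paper's proof simply views $g_{\pfc}$ and $g_{\pfc}'$ as Euclidean structures on the bundle ${}^{\pfc}T\tX$ over the compact space $\tX$ and invokes compactness, then restricts to $X=\tX\setminus\pa\tX$. Your factoring out of the conformal factor $x^2$ to compare the boundary metrics on ${}^{\tPhi}T\tX$, together with the explicit pointwise eigenvalue-ratio argument, is just a more detailed spelling-out of the same compactness argument.
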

\begin{proof}  
Seen as Euclidean structures on the real vector bundle ${}^{\pfc}T\tX$, we can certainly find such a constant $C>0$ since $\tX$ is compact.  We get the corresponding result for the Riemannian metrics by restricting to $X= \tX\setminus \pa \tX$.  
\end{proof}
Polyfibred cusp metrics also satisfy the following properties.  
\begin{proposition}
Any polyfibred cusp metric $g_{\pfc}$ is complete and has finite volume.
\label{fv.1}\end{proposition}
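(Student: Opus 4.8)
The strategy is to verify both statements by a local computation near $\pa\tX$, the complement of a collar of $\pa\tX$ being a compact subset of $X$ and hence harmless for either property. (Since finiteness of the volume and metric completeness are both bi-Lipschitz invariants, Lemma~\ref{fc.12f} would equally well allow one to replace $g_{\pfc}$ by a convenient model, but this is not needed.) So I would fix a chart as in \eqref{fc.18} adapted to a boundary face $\tH_I$ with $I=\{1,\dots,k\}$, and recall from \eqref{fc.5} and \eqref{fc.12c} that there ${}^{\fc}T\tX$ has the local frame $x_i'\pa_{x_i'}$ and $\tfrac1{x_i'}\pa_{\theta_i}$ ($i\le k$), $\pa_{w_j}$ ($j\le 2q$), with dual coframe $\tfrac{dx_i'}{x_i'}$, $x_i'\,d\theta_i$, $dw_j$; in particular these are smooth sections of ${}^{\fc}T^*\tX$.

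\textbf{Finite volume.} By \eqref{fc.11}--\eqref{fc.12}, in such a chart $g_{\pfc}$ is a sum of products of two of these coframe elements with coefficients in $\CI(\tX)$; equivalently $g_{\pfc}=\sum_{a,b}G_{ab}\,\phi^a\phi^b$ in the coframe $\{\phi^a\}$, with $(G_{ab})$ a bounded, positive-definite, $\CI(\tX)$-valued symmetric matrix. The matrix expressing $\{\phi^a\}$ in terms of the coordinate coframe $\{dx_i',d\theta_i,dw_j\}$ is diagonal with entries $(x_1')^{-1},\dots,(x_k')^{-1},x_1',\dots,x_k',1,\dots,1$, hence of determinant $1$, so the Riemannian density of $g_{\pfc}$ in the coordinates $(x_1',\theta_1,\dots,x_k',\theta_k,w)$ equals $\sqrt{\det(G_{ab})}$, a bounded function. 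Its integral over the bounded coordinate region is finite; covering $\tX$ by finitely many such charts together with finitely many charts relatively compact in $X$ yields $\operatorname{Vol}(X,g_{\pfc})<\infty$. (It is exactly this cancellation of powers of $x_i'$ that makes the conformal factor $x^2$ in Definition~\ref{fc.9} the correct one.)

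\textbf{Completeness.} By the standard criterion it suffices to show that any smooth curve $\gamma:[0,\infty)\to X$ that eventually leaves every compact subset of $X$ has infinite $g_{\pfc}$-length. Since $\tX$ is compact, the compact sets $\{x\ge\delta\}$ ($\delta>0$) exhaust $X=\tX\setminus\pa\tX$, so $x(\gamma(t))=\prod_i x_i(\gamma(t))\to 0$; as there are only finitely many factors, $x_i(\gamma(t_n))\to 0$ for some fixed index $i$ and some sequence $t_n\to\infty$. On a collar $\overline{\{x_i\le\delta_i\}}$ of $\tH_i$ the one-form $d\log x_i=dx_i/x_i$ is a smooth section of ${}^{\fc}T^*\tX$ (equal to $\tfrac{dx_i'}{x_i'}$ up to an element of $x_i\CI(\tX;{}^{\fc}T^*\tX)$ by Lemma~\ref{mwc.10} and \eqref{ex.6}), hence of bounded $g_{\pfc}^*$-norm on this compact collar, say $|d\log x_i|_{g_{\pfc}^*}\le M_i$. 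Restricting $\gamma$ to the last subarc $[s_n,t_n]$ on which it stays in the collar (with $s_n=0$ if $\gamma$ already starts there), the fundamental theorem of calculus gives
\[
 -\log x_i(\gamma(t_n))+\log x_i(\gamma(s_n))\ \le\ M_i\!\int_{s_n}^{t_n}\!|\dot\gamma|_{g_{\pfc}}\,dt\ \le\ M_i\,L(\gamma),
\]
and since $\log x_i(\gamma(s_n))$ stays bounded below (it is $\log\delta_i$ or $\log x_i(\gamma(0))$) while $-\log x_i(\gamma(t_n))\to+\infty$, we get $L(\gamma)=\infty$, so $g_{\pfc}$ is complete.

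\textbf{Main point.} The proposition is essentially routine; the only things one really needs to check are the determinant cancellation in the volume computation and, for completeness, that $\tfrac{dx_i'}{x_i'}$ (equivalently $d\log x_i$) is a bounded $\fc$-coform near $\tH_i$ — both immediate from the local description \eqref{fc.5} together with Lemma~\ref{mwc.10} — so no genuine obstacle is expected.
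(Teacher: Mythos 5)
Your proof is correct and follows essentially the same route as the paper: finite volume comes from the cancellation of the $x_i'$ factors in $\frac{dx_i'}{x_i'}$ and $x_i'\,d\theta_i$, making the volume density smooth (bounded) on the compact $\tX$, and completeness comes from the fact that $\frac{dx_i}{x_i}$ is a bounded $\fc$-coform, so that $\log x_i$ is Lipschitz with respect to $g_{\pfc}$. The only cosmetic difference is that you conclude via the divergent-curve criterion while the paper bounds $d_{g_{\pfc}}(p,q)$ below by $c_i\left|\log\bigl(x_i(q)/x_i(p)\bigr)\right|$ and deduces compactness of metric balls; both rest on the same key estimate.
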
  
\begin{proof}
Let $g_{\pfc}$ be a polyfibred cusp metric.  For each $i\in\{1,\ldots,\ell\}$, we can find a small positive constant $c_{i}$ such that
\[
      g_{\pfc} > c^{2}_{i} \frac{dx_{i}}{x_{i}}\otimes \frac{dx_{i}}{x_{i}}
\]
seen as sections of $\CI(\tX; {}^{\pfc}T\tX\otimes {}^{\pfc}T\tX)$.  If $d_{g_{\pfc}}(\cdot,\cdot)$ denote the distance function on $X$ defined by $g_{\pfc}$, then this implies that for any points $p,q\in X$,
\[
  d_{g_{\pfc}}(p,q)\ge \left| \int_{x_{i}(p)}^{x_{i}(q)} c_{i}\frac{dx_{i}}{x_{i}}\right| = 
  c_{i} \left|  \log\left( \frac{x_{i}(q)}{x_{i}(p)}\right) \right|.
\]
Thus, for $p$ fixed, this distance tends to infinity as $q$ is approaching $\tH_{i}$.  More precisely, for any point $p\in X$ and any $r>0$, the ball 
\[
B_{d_{g_{\pfc}}}(p,r)= \{ q\in X \quad | \quad d_{g_{\pfc}}(p,q)\le r\}
\]
is a compact subset of $\tX\setminus \pa \tX$, which shows that the metric space $(X, d_{g_{\pfc}})$ is complete.  To see that the metric $g_{\pfc}$ has finite volume, it suffices to notice that in the local coordinates \eqref{fc.18}, the $x_{i}'$ factors of $\frac{dx_{i}'}{x_{i}'}$ and of $x_{i}'d\theta_{i}$ cancel, so that the volume form of $g_{\pfc}$ is canonically an element of $\CI(\tX; |\Omega(\tX)|)$, where 
$|\Omega(\tX)|$ is the density bundle of $T\tX$.  In particular, the integral over $\tX$, and hence, over $X$, is finite.

\end{proof}

There are natural spaces of functions associated to polyfibred cusp metrics.  To describe them, 
fix a polyfibred cusp metric $g_{\pfc}$ and let $\nabla$ be its Levi-Civita connection.  This induces a corresponding Euclidean metric and Euclidean connection (also denoted $g_{\pfc}$ and $\nabla$) on the vector bundle
\begin{equation}
   T^{r}_{s}X = \underbrace{TX\otimes \cdots \otimes TX}_{r \; \mbox{times}} \otimes \underbrace{T^{*}X\otimes \cdots \otimes T^{*}X}_{s \; \mbox{times}}
   \label{fs.1}\end{equation}
of $(r,s)$-tensors.  More generally, if $V\to X$ is a smooth Euclidean vector bundle with a Euclidean connection, then we have a corresponding Euclidean metric and Euclidean connection on the vector bundle $T^{r}_{s}X\otimes V$ for all $r,s\in \bbN_{0}$.  For such a vector bundle $V$ and for $k\in \bbN_0$, we denote by $\cC^{k}_{\pfc}(X;V)$ the space of continuous sections $f$ of $V$ such that $\nabla^{j}f\in \cC^{0}(X; T^{0}_{j}X\otimes V)$ with
\begin{equation}
    \sup_{p\in X} |\nabla^{j}f(p)| < \infty, \quad \forall j\in \{0,\ldots, k\},
\label{fs.2}\end{equation}
where $|\cdot |$ is the fibrewise norm given by the Euclidean metric on $T^{r}_{s}X\otimes V$.  The space $\cC^{k}_{\fc}(X;V)$ is naturally a Banach space with norm given by
\begin{equation}
      \| f\|_{k} = \sum_{j=0}^{k} \sup_{p\in X} |\nabla^{j}f(p)|.
\label{fs.3}\end{equation}
The intersection of these spaces for $k\in \bbN$, 
\begin{equation}
    \cC^{\infty}_{\pfc}(X;V) = \bigcap_{k\in \bbN} \cC^{k}_{\fc}(X;V),
\label{fs.4}\end{equation}
is a Fr\'echet space with semi-norms given by \eqref{fs.3} for $k\in \bbN$.  In the local coordinates \eqref{fc.18}, a function $f\in \CI_{\fc}(X)$ is a function such that
\begin{equation}
\sup_{\tU\cap X } \left| \prod_{i=1}^k \left( \left( x_i'\frac{\pa}{\pa x_i'} \right)^{\alpha_i}  \left( \frac{1}{x_i'} \frac{\pa}{\pa \theta_i}  \right)^{\beta_i} \right) \prod_{j=1}^{2q} \left(  \frac{\pa}{\pa w_j} \right)^{\gamma_j} f \right| <\infty, \quad \forall\; \alpha,\beta\in \bbN_0^{k}, \; \gamma\in \bbN_0^{2q}.
\label{fs.4b}\end{equation}  

When $V= T^{r}_{s}X$, it is tacitly assumed that the Euclidean metric and connection taken on $T^{r}_{s}X$ are those induced by $g_{\pfc}$ and its Levi-Civita connection.  By Lemma~\ref{fc.12f}, the definition of $\cC^{k}_{\fc}(X;T^{r}_{s}X)$ does not depend on the choice of the polyfibred cusp metric $g_{\pfc}$.  For the trivial real line bundle $T^{0}_{0}X= \underline{\bbR}$, we obtain a corresponding space of functions
\begin{equation}
        \cC^{k}_{\fc}(X)= \cC^{k}(X;T^{0}_{0}X).  
\label{fs.5}\end{equation}
In the notation of \cite{Wu06}, the Fr\'echet space $\CI_{\fc}(X)$ corresponds to the Cheng-Yau H\"older ring $\mathcal{R}(X)$.  For $k\in \bbN_0$ and $\alpha\in (0,1)$, one can also consider the H\"older space $\cC^{k,\alpha}_{\fc}(X)$ of \cite{Kobayashi} using good quasi-coordinates (see also \cite{Wu06}).

To study the asymptotic behavior of solutions to the Ricci flow, it is also useful to introduce the parabolic version of the space $\cC^{k}_{\fc}(X;V)$.  Denoting also by $V$ the pull-back of $V$ under the projection $[0,T]\times X\to X$, we define 
$\cC^{k,\frac{k}{2}}_{\fc}([0,T]\times X;V)$ to be the space of sections $f$ of $V$ such that for all
$j,l\in \bbN_0$ with $2j+l\le k$, 
\begin{equation}
  \pa_t^j \nabla^l f \in \cC^0([0,T]\times X;T^0_l X\otimes V) \quad \mbox{and} \quad 
     \sup_{t\in [0,T]} \sup_{p\in X} |\pa_t^j \nabla^l f (t,p)|< \infty.
\label{fs.5b}\end{equation} 
The space $\cC^{k,\frac{k}{2}}_{\fc}([0,T]\times X;V)$ is naturally a Banach space with norm given by 
\begin{equation}
  \| f\|_{k,\frac{k}{2}}= \sum_{2j+l\le k} \sup_{t\in [0,T]} \sup_{p\in X} | \pa_t^j \nabla^l f(t,p) |.
\label{fs.5c}\end{equation}
The intersection of these spaces, 
\begin{equation}
  \CI_{\fc}([0,T]\times X; V)= \bigcap_{k\in \bbN} \cC_{\fc}^{k,\frac{k}{2}}([0,T]\times X;V),
\label{fs.5d}\end{equation}
is a Fr\'echet space with semi-norms given by \eqref{fs.5c}.

Suppose now that the bundle $V\to X$ is the restriction of a smooth vector bundle $\tV\to \tX$.   An example to keep in mind is $V= T^{r}_{s}X$,  in which case we can take $\tV= {}^{\pfc}T^{r}_{s}X$.  Equip $\tV$ with a Euclidean metric and a Euclidean connection $\widetilde{\nabla}$.  Similarly, choose a Euclidean metric     
on $T\tX\to \tX$ and let also $\widetilde{\nabla}$ denote its Levi-Civita connection.  We can then define
$\cC^{k}(\tX;\tV)$ to be the space of continuous sections $f\in \cC^{0}(\tX;\tV)$ such that 
\begin{equation}
\widetilde{\nabla}^{j}f\in \cC^{0}(\tX;T^{0}_{j}\tX\otimes \tV), \quad \forall j\in \{0,\ldots,k\}.
\label{fs.6}\end{equation}
  The space $\cC^{k}(\tX;\tV)$ is a Banach space with norm given by
\begin{equation}
     \| f\|_{k} = \sum_{j=0}^{k} \sup_{p\in \tX} | \widetilde{\nabla}^{j}f(p)|,
\label{fs.7}\end{equation}  
where $|\cdot |$ is the pointwise norm of the corresponding Euclidean metric.  The space $\cC^k(\tX;\tV)$ also has a parabolic version $\cC^{k,\frac{k}{2}}([0,T]\times \tX;\tV)$ which consists of sections $f\in \cC^0([0,T]\times \tX;\tV)$ such that for all $2j+l\le k$, 
\begin{equation}
   \pa_t^j \widetilde{\nabla}^{l}f\in \cC^0([0,T]\times \tX; T^0_l \tX \otimes \tV).  
\label{fs.7b}\end{equation}

We are interested in the subspace given by
\begin{equation}
  \cC^{k}_{\fc}(\tX;\tV) = \cC^{k}(\tX;\tV)\cap \cC^{k}_{\fc}(X;V),  
\label{fs.8}\end{equation}
as well as its parabolic counterpart
\begin{equation}
 \cC^{k,\frac{k}{2}}_{\fc}([0,T]\times \tX;\tV) = \cC^{k,\frac{k}{2}}([0,T]\times \tX;\tV)\cap \cC^{k,\frac{k}{2}}_{\fc}([0,T]\times X;V). 
 \label{fs.8b}\end{equation}
They are  Banach spaces with norms respectively given by the sum of the norms of $\cC^{k}(\tX;\tV)$ and
$\cC^{k}_{\pfc}(X;V)$  and the sum of the norms of $\cC^{k,\frac{k}{2}}([0,T]\times \tX;\tV)$ and 
$\cC^{k,\frac{k}{2}}_{\fc}([0,T]\times X; V)$.  We have also the corresponding Fr\'echet spaces
\begin{equation}
  \CI_{\pfc}(\tX;\tV)= \bigcap_{k\in \bbN} \cC^{k}_{\pfc}(\tX;\tV) \quad\mbox{and} \quad 
    \CI_{\fc}([0,T]\times \tX;\tV)= \bigcap_{k\in\bbN} \cC^{k,\frac{k}{2}}_{\fc}(\tX;\tV).
\label{fs.9}\end{equation}

For the trivial line bundle $\tV=\underline{\bbR}$ over $\tX$, we will use the notation $\CI_{\fc}(\tX)= \CI_{\fc}(\tX;\underline{\bbR})$. A function $f\in \CI_{\fc}(\tX)$ is in particular in $\CI(\tX)$, so it has a Taylor series at the boundary hypersurface $\tH_i$.
However, the fact that the function $f$ is also in $\CI_{\fc}(X)$ imposes some restrictions on the coefficients of its Taylor series.  
\begin{proposition}
If $f\in\CI_{\fc}(\tX)$, then in the local coordinates \eqref{fc.18}, it has a Taylor series at $\tH_{i}$ of the form 
\begin{equation}
     f\sim \sum_{k=0}^{\infty} (\tPhi_{i}^{*}a_k)  (x_{i}')^{k}, \quad a_k\in \CI_{\fc}(\tD_i).  
\label{ext.1}\end{equation}
\label{ext.2}\end{proposition}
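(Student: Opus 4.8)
The plan is to extract the claimed structure of the Taylor series from the two conditions defining $\CI_{\fc}(\tX)$ separately: the condition $f\in\CI(\tX)$ produces a classical Taylor expansion at $\tH_i$, while the condition $f\in\CI_{\fc}(X)$, read through the local characterisation \eqref{fs.4b}, forces the coefficients to be $\theta_i$-independent and to lie in $\CI_{\fc}(\tD_i)$. So I would work in the local coordinates \eqref{fc.18} (so that $i\in I$) and, using $f\in\CI(\tX)$ alone, write $f\sim\sum_{k\ge0}b_k(x_i')^k$ at $\tH_i$, with $b_k\in\CI(\tH_i)$ a smooth function of the remaining coordinates $(\theta_i,x_j',\theta_j,w)$; recall that such an expansion may be differentiated term by term in any of the coordinate directions.

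Next I would show that each $b_k$ is independent of $\theta_i$. Since $\frac{1}{x_i'}\pa_{\theta_i}$ is one of the vector fields occurring in \eqref{fs.4b}, for every $m$ the function $\bigl(\frac{1}{x_i'}\pa_{\theta_i}\bigr)^m f=\frac{1}{(x_i')^m}\pa_{\theta_i}^m f$ is bounded on $X$; term-by-term differentiation gives it the asymptotic expansion $\sum_k(\pa_{\theta_i}^m b_k)(x_i')^{k-m}$, whose terms with $k<m$ are strictly negative powers of $x_i'$ and must therefore have vanishing coefficients. Taking $m=k+1$ yields $\pa_{\theta_i}^{k+1}b_k=0$, so that for fixed values of the other coordinates $b_k$ is a polynomial of degree at most $k$ in $\theta_i$; but $\theta_i=\arg\zeta_i$ is an angular coordinate, so $b_k$ is $2\pi$-periodic in $\theta_i$, and a periodic polynomial is constant. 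Hence $b_k$ is constant along the fibres of the torus fibration $\tPhi_i:\tH_i\to\tD_i$ and, $\tPhi_i$ being a submersion, descends to some $a_k\in\CI(\tD_i)$ with $b_k=\tPhi_i^*a_k$; in particular $\pa_{\theta_i}f$ vanishes to infinite order at $\tH_i$.

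It remains to upgrade $a_k\in\CI(\tD_i)$ to $a_k\in\CI_{\fc}(\tD_i)=\CI(\tD_i)\cap\CI_{\fc}(D_i)$, which I would prove by induction on $k$. For $k=0$, let $P$ be any composition of the generators $x_j'\pa_{x_j'}$, $\frac{1}{x_j'}\pa_{\theta_j}$ $(j\ne i)$ and $\pa_{w_l}$ of $\cV_{\fc}(\tD_i)$; regarded as an operator on $\tX$ it differentiates in none of $x_i',\theta_i$, so it commutes with the $x_i'$-expansion, giving $Pf\sim\sum_k(Pb_k)(x_i')^k$, while $Pf$ is bounded on $X$ by \eqref{fs.4b}. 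Letting $x_i'\to0$ over the interior of $\tD_i$ gives $|Pa_0|=|Pb_0|\le\sup_X|Pf|<\infty$, whence $a_0\in\CI_{\fc}(\tD_i)$. For the inductive step, with $a_0,\dots,a_{k-1}$ already known, I would set $g=f-\sum_{j<k}(x_i')^j\,\tPhi_i^*a_j$, observe that $g\in\CI_{\fc}(\tX)$, that $g$ vanishes to order $k$ at $\tH_i$, and that $\pa_{\theta_i}g=\pa_{\theta_i}f$ still vanishes to infinite order there, then show $g/(x_i')^k\in\CI_{\fc}(\tX)$ and apply the $k=0$ case to it to conclude $a_k\in\CI_{\fc}(\tD_i)$.

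The step I expect to be the real obstacle is this last division, namely showing that $h\in\CI_{\fc}(\tX)$ with $h|_{\tH_i}=0$ implies $h/x_i'\in\CI_{\fc}(\tX)$ (then iterated $k$ times). That $h/x_i'\in\CI(\tX)$ is immediate, but controlling its $g_{\fc}$-covariant derivatives is not, and I would handle it by a secondary induction on the number of derivatives using the commutator identities $x_i'\pa_{x_i'}\frac{h}{x_i'}=\frac{1}{x_i'}(x_i'\pa_{x_i'}-1)h$ and $\frac{1}{x_i'}\pa_{\theta_i}\frac{h}{x_i'}=\frac{1}{(x_i')^2}\pa_{\theta_i}h$, together with the facts that $(x_i'\pa_{x_i'}-1)h$ again lies in $\CI_{\fc}(\tX)$ and vanishes at $\tH_i$, that $\pa_{\theta_i}h$ vanishes to infinite order at $\tH_i$ (so division by any power of $x_i'$ is harmless), and that the generators in the $\tD_i$-directions commute with multiplication by $1/x_i'$ and preserve the class of functions of the same type as $h$. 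Each such bound for $h/x_i'$ is thereby reduced to a bound of one lower order for a function satisfying the same hypotheses as $h$, closing the induction. This is the analogue, for the $\fc$-structure, of the standard fact that Taylor coefficients of a conormal function are conormal on the boundary face, familiar from the $b$-calculus of \cite{MelroseAPS}; alternatively one can simply verify the required bounds directly in the coordinates \eqref{fc.18}.
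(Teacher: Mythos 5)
Your proof is correct and follows essentially the same route as the paper: the uniform bounds on iterated $\frac{1}{x_i'}\pa_{\theta_i}$-derivatives coming from $f\in\CI_{\fc}(X)$ force the $\theta_i$-derivatives of the Taylor coefficients to vanish at $\tH_i$, and periodicity in $\theta_i$ then makes each coefficient constant along the fibres of $\tPhi_i$, so it descends to $\tD_i$. The only difference is that you work out in detail, via the induction built on the division step $h\in\CI_{\fc}(\tX)$, $h|_{\tH_i}=0\Rightarrow h/x_i'\in\CI_{\fc}(\tX)$, the final claim $a_k\in\CI_{\fc}(\tD_i)$, which the paper asserts in a single sentence; that elaboration is sound.
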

\begin{proof}
From the fact  that $f\in \CI(\tX)$, we know that  it has a Taylor series at $\tH_i$ of the form
\[
          f\sim \sum_{k=0}^{\infty} \frac{1}{k!} \left( \left.  \frac{\pa^{k} f}{\pa (x_i')^{k}}\right|_{\tH_i}\right) (x_i')^k.
\] 
Since $f$ is also in $\CI_{\fc}(X)$, we have uniform control on its derivatives with respect to the vector field $\frac{1}{x_i'} \frac{\pa}{\pa{\theta_i}}$.  This means that 
\[
  \left( \frac{1}{x_i'}\frac{\pa}{\pa \theta_i}\right)^{k+1}\frac{\pa^{k} f}{\pa (x_i')^{k}}=        \frac{1}{(x_i')^{k+1}}  \frac{\pa^{k+1}}{\pa \theta_i^{k+1}}\frac{\pa^{k} f}{\pa (x_i')^{k}}
\]
is uniformly bounded, which implies that 
\[
\left.  \frac{\pa^{k+1}}{\pa \theta_i^{k+1}}\frac{\pa^{k} f}{\pa (x_i')^{k}}\right|_{\tH_i}=0.
\]
Integrating $k+1$ times in $\theta_i$, we find that 
$\left. \frac{\pa^{k} f}{\pa (x_i')^{k}}\right|_{\tH_i}$ 
must be constant in $\theta_i$ in order to be globally defined on each circle of the fibration $\tPhi_i$. 
Thus, the coefficients of the Taylor series are constant on the fibres of the fibration $\tPhi_i$ and are therefore given by the pull-back of functions $a_k$ on $\tD_i$.  From the fact $f\in \CI_{\fc}(\tX)$, we then conclude that each $a_k$ must be in $\CI_{\fc}(\tD_i)$.    
\end{proof}
We infer from this proposition that there is a well-defined restriction map
\begin{equation}
      r_{i}: \CI_{\fc}(\tX)\to \CI_{\fc}(\tD_i) \quad \mbox{such that} \; \left. f\right|_{\tH_i}= \tPhi_i^* (r_i(f)).
\label{ext.3}\end{equation}
In fact, more generally, for $I\subset \{1,\ldots,\ell\}$ with $\tH_{I}\ne \emptyset$,  we have a well-defined restriction map
\begin{equation}
  r_{I}: \CI_{\fc}(\tX) \to \CI_{\fc}(\tD_{I}) \quad \mbox{such that} \;  \left. f\right|_{\tH_I}= \tPhi_I^* (r_I(f)).
 \label{ext.4}\end{equation}
Conversely, it is also possible to construct an extension map, that is, a right inverse for the restriction map $r_i$.  We will construct one by proceeding locally with the help of a partition of unity.  First, in $\bX$, cover $\bD_i$ by finitely many open sets $\cU_{1}, \ldots, \cU_{k_i}$ such that on each of them, we have holomorphic coordinates as in \eqref{fc.16}.  On $\bD_i$, let $\overline{\phi}_{q}\in \CI(\bD_i)$ be a partition of unity subordinate to the cover
$\overline{V}_{q}= \left. \cU_{q}\right|_{\bD_i}$.  We can lift these to obtain an open cover
$\tV_i= \tbeta_i^{-1}(\overline{V}_i)$ of $\tD_i$ with partition of unity $\tphi_q= \tbeta_i^* \overline{\phi}_q$.  Since each $\tphi_q$ is the pull-back of a smooth function on $\bD_i$, we have automatically that $\tphi_q\in \CI_{\fc}(\tD_i)$.  Similarly, the cover $\cU_q$ can be lifted to an open cover $\tcU_q$ of $\tH_i$ in $\tX$ with each open set having coordinates as in \eqref{fc.18}.    In particular, the fibration $\tPhi_i$ restricts to give a circle fibration
\begin{equation}
      \tPhi_i: \left. \tcU_q \right|_{\tH_i} \to \tV_i. 
\label{ext.5}\end{equation} 
There is also a natural projection 
\begin{equation}
   \pr_q: \tcU_q\to \left. \tcU_q \right|_{\tH_i}
\label{ext.5}\end{equation}
obtained by removing the coordinate $x_i'$.  Finally, for $\epsilon_i>0$, let $\chi_i\in \CI_c([0,\infty))$ be a cut-off function such that $\chi_i(t)=1$ for $t<\frac{\epsilon_i}{2}$ and $\chi_i(t)=0$ for $t>\epsilon_i$.  Provided we choose $\epsilon_i$ small enough, we can insure that 
\begin{equation}
  \chi_i(x_i) \pr_q^{*}\tPhi_i^{*}\tphi_q \in \CI_{c}(\tU_q), \quad \forall q.   
\label{ext.6}\end{equation}  
With all this data, we can define an extension map 
\begin{equation}
     \Xi_i: \CI_{\fc}(\tD_i)\to \CI_{\fc}(\tX), \quad\mbox{by} \quad \Xi_i(f)= \sum_q \chi_i(x_i) \pr_q^{*}\tPhi_i^* (\tphi_q f). 
\label{ext.7}\end{equation}

Proposition~\ref{ext.2} has also some implications on the restrictions to $\tH_I$ of the derivatives of a function $f$ in $\CI_{\fc}(\tX)$.  We will in particular be interested in the restriction of $\pa\db f$ to each of the boundary faces of $\tX$.

\begin{proposition}
Given $f\in \CI_{\fc}(\tX)$, the restriction of $\pa\db f\in \CI(\tX; \Lambda^{2}({}^{\fc}T^*\tX)\otimes \bbC)$ to $\tH_{I}$, seen as section of $\Lambda^{2}({}^{\fc}T^*\tX)\otimes \bbC$, is
\[
              \tPhi_I^*(\pa\db r_I(f)) \in 
              \CI(\tH_I; \left. \Lambda^{2}({}^{\fc}T^*\tX)\otimes \bbC\right|_{\tH_I}).
\] 
\label{ext.8}\end{proposition}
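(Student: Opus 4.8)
The statement is local on $\tH_I\setminus\pa\tH_I$ (together with a continuity argument), so the plan is to fix a point $p\in\tH_I\setminus\pa\tH_I$, take $I=\{1,\ldots,k\}$, and work in the coordinates \eqref{fc.18}, in which $\CI(\tX;{}^{\fc}T^*\tX)$ is spanned by the coframe $\bigl\{\,\tfrac{dx_i'}{x_i'},\ x_i'\,d\theta_i\,\bigr\}_{i\in I}\cup\{dw_1,\ldots,dw_{2q}\}$, with dual frame $\bigl\{\,x_i'\pa_{x_i'},\ \tfrac{1}{x_i'}\pa_{\theta_i}\,\bigr\}_{i\in I}\cup\{\pa_{w_1},\ldots,\pa_{w_{2q}}\}$ for $\cV_{\fc}(\tX)$. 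First I would rewrite $\pa\db f$ in holomorphic coordinates, $\pa\db f=\sum_{a,b}(\pa_{\zeta_a}\pa_{\bar\zeta_b}f)\,d\zeta_a\wedge d\bar\zeta_b$ (writing $\zeta_a=z_a$ for $a>k$), and re-express each term in the coframe above. From \eqref{fc.23} one gets, for $i\in I$, the identity $d\zeta_i=\tfrac{\rho_i'\,e^{\sqrt{-1}\theta_i}}{x_i'}\bigl(\tfrac{dx_i'}{x_i'}+\sqrt{-1}\,x_i'\,d\theta_i\bigr)$, whose dual identity is $\pa_{\zeta_i}=\tfrac{e^{-\sqrt{-1}\theta_i}\,x_i'}{2\rho_i'}\,W_i$ with $W_i:=x_i'\pa_{x_i'}-\tfrac{\sqrt{-1}}{x_i'}\pa_{\theta_i}$ (and the conjugate statements for $d\bar\zeta_i,\pa_{\bar\zeta_i}$ involving $\bar W_i:=x_i'\pa_{x_i'}+\tfrac{\sqrt{-1}}{x_i'}\pa_{\theta_i}$), both $W_i,\bar W_i$ lying in $\cV_{\fc}(\tX)\otimes\bbC$; the $z_j$-directions $dz_j,\pa_{z_j}$ are already of the above type. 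Substituting, the exponentially small factors $\rho_i'=e^{-1/x_i'}$, the powers of $x_i'$, and the phases $e^{\pm\sqrt{-1}\theta_i}$ produced by the $d\zeta$'s cancel exactly against those produced by the $\pa_\zeta$'s, so that $\pa\db f$ is written in the coframe with coefficients that are universal constant-coefficient combinations of $f$ and its $W_i$-, $\bar W_i$-, $\pa_{z_j}$-, $\pa_{\bar z_j}$-derivatives: for example the coefficient of $dz_a\wedge d\bar z_b$ ($a,b>k$) is $\pa_{z_a}\pa_{\bar z_b}f$, that of $d\zeta_i\wedge d\bar\zeta_{i'}$ ($i\ne i'$ in $I$) is $\tfrac14\bar W_{i'}W_i f$, that of $d\zeta_i\wedge d\bar z_j$ is $\tfrac12 W_i\pa_{\bar z_j}f$, and that of $d\zeta_i\wedge d\bar\zeta_i$ is $\tfrac14(\bar W_i f+W_i\bar W_i f)$ — the anomalous first-order term $\bar W_i f$ appears because $W_i$ fixes (with eigenvalue $1$), rather than annihilates, the rescaling function $\tfrac{e^{\sqrt{-1}\theta_i}x_i'}{2\rho_i'}$.

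The engine of the argument is Proposition~\ref{ext.2}: since the Taylor series of any $g\in\CI_{\fc}(\tX)$ at $\tH_i$ has coefficients pulled back from $\tD_i$, the derivative $\pa_{\theta_i}g$ vanishes to infinite order at $\tH_i$; combined with $g\in\CI(\tX)$, this shows that $W_i,\bar W_i,\pa_{z_j},\pa_{\bar z_j}$ all map $\CI_{\fc}(\tX)$ into itself (the $\CI_{\fc}(X)$-membership via the local description \eqref{fs.4b}), whence every coefficient of $\pa\db f$ lies in $\CI_{\fc}(\tX)$ and so $\pa\db f\in\CI(\tX;\Lambda^2({}^{\fc}T^*\tX)\otimes\bbC)$. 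The same observation gives $(W_i g)|_{\tH_i}=0$ for $g\in\CI_{\fc}(\tX)$ (the $x_i'\pa_{x_i'}$-part vanishes because $x_i'=0$ there, the $\tfrac{1}{x_i'}\pa_{\theta_i}$-part because $\pa_{\theta_i}g$ vanishes to infinite order), and a short lemma — if $h\in\CI_{\fc}(\tX)$ with $h|_{\tH_i}=0$ then $h\in x_i\CI_{\fc}(\tX)$, again via Proposition~\ref{ext.2} — then yields $W_i,\bar W_i:\CI_{\fc}(\tX)\to x_i\CI_{\fc}(\tX)$. Using $W_i x_i'=\bar W_i x_i'=x_i'$ and $\bar W_{i'}x_i'=0$ for $i'\ne i$, one checks that the full coefficient of every term of $\pa\db f$ carrying a factor $d\zeta_i$ or $d\bar\zeta_i$ with $i\in I$ lies in $x_i\CI_{\fc}(\tX)$, hence vanishes on $\tH_I$. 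So, restricted to $\tH_I$, $\pa\db f$ reduces to $\sum_{a,b>k}\bigl(\pa_{z_a}\pa_{\bar z_b}f\bigr)\big|_{\tH_I}\,dz_a\wedge d\bar z_b$.

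To finish, I would note that $\pa_{z_a},\pa_{\bar z_b}$ involve only the $w$-coordinates and are therefore tangent to $\tH_I=\{x_1'=\cdots=x_k'=0\}$, so $\bigl(\pa_{z_a}\pa_{\bar z_b}f\bigr)\big|_{\tH_I}=\pa_{z_a}\pa_{\bar z_b}\bigl(f|_{\tH_I}\bigr)$; by \eqref{ext.4} we have $f|_{\tH_I}=\tPhi_I^*(r_I(f))$, which in these coordinates is the function $w\mapsto r_I(f)(w)$ (constant along the torus fibres of $\tPhi_I$), so the right-hand side equals $\pa_{z_a}\pa_{\bar z_b}(r_I(f))$. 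Hence $\pa\db f|_{\tH_I}=\sum_{a,b>k}\pa_{z_a}\pa_{\bar z_b}(r_I(f))\,dz_a\wedge d\bar z_b=\tPhi_I^*\bigl(\pa\db\,r_I(f)\bigr)$, where on the right one uses the natural identification of $\tPhi_I^*\bigl(\Lambda^2({}^{\fc}T^*\tD_I)\otimes\bbC\bigr)$ with a sub-bundle of $\left.\Lambda^2({}^{\fc}T^*\tX)\otimes\bbC\right|_{\tH_I}$ (in coordinates, $dz_a$ and $d\bar z_b$ denote the same covectors on either side). Both sides being continuous sections of $\left.\Lambda^2({}^{\fc}T^*\tX)\otimes\bbC\right|_{\tH_I}$, the identity extends from the dense set $\tH_I\setminus\pa\tH_I$ to all of $\tH_I$.

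The step I expect to be the main obstacle is the transverse bookkeeping in the first paragraph: written naively in the singular one-forms $\tfrac{d\rho_i'}{\rho_i'}=\tfrac{1}{x_i'}\tfrac{dx_i'}{x_i'}$ and $d\theta_i=\tfrac{1}{x_i'}(x_i'\,d\theta_i)$, the $(1,1)$-form $\pa\db f$ carries inverse powers of $x_i'$ as well as the exponentially small factors $\rho_i'=e^{-1/x_i'}$, and one must verify carefully both that these reorganize into a genuine smooth section of $\Lambda^2({}^{\fc}T^*\tX)\otimes\bbC$ and that its ``transverse'' components vanish on $\tH_I$; the delicate case is the diagonal term $\tfrac14(\bar W_i f+W_i\bar W_i f)$, whose extra summand $\bar W_i f$ has to be recognized as lying in $x_i\CI_{\fc}(\tX)$. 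The conceptual ingredient making all of this work is Proposition~\ref{ext.2}, which forces $\pa_{\theta_i}f$ — and hence the rescaled holomorphic derivative $W_i f$ — to vanish to high order at $\tH_i$.
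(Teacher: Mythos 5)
Your proposal is correct and follows essentially the same route as the paper's proof: work in the holomorphic coordinates \eqref{fc.16}, rewrite $\pa$ and $\db$ via \eqref{ext.9}--\eqref{ext.10} in terms of the vector fields $x_i'\pa_{x_i'}\mp\tfrac{\sqrt{-1}}{x_i'}\pa_{\theta_i}$ and the smooth nonvanishing coframe $\tfrac{x_i'd\zeta_i}{\zeta_i}$, use Proposition~\ref{ext.2} to show every term carrying a $d\zeta_i$ or $d\overline{\zeta}_i$, $i\in I$, restricts to zero on $\tH_I$, and identify the surviving $dz\wedge d\overline{z}$ block with $\tPhi_I^*(\pa\db\, r_I(f))$. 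The only difference is that you carry out explicitly the second-order bookkeeping (including the anomalous diagonal term $\bar W_i f$ and the mapping property $W_i,\bar W_i:\CI_{\fc}(\tX)\to x_i\CI_{\fc}(\tX)$) that the paper compresses into ``repeating this argument''.
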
   
\begin{proof}
Let $p\in \bD_I$ be given.  Relabelling the boundary hypersurfaces if necessary, we can assume $I=\{1,\ldots,k\}$.  Let $(\zeta_1,\ldots, \zeta_k, z_{k+1}, \ldots, z_n)$ be holomorphic coordinates near $p$ as in \eqref{fc.16}.    In the corresponding real coordinates \eqref{fc.18} on $\tX$, we have,
\begin{equation}
  \frac{\pa}{\pa \zeta_i}= \frac{1}{2} \left(    e^{-\sqrt{-1}\theta_i} \frac{\pa}{\pa\rho_i'} - \frac{\sqrt{-1}e^{-\sqrt{-1}\theta_i}}{\rho_i'} \frac{\pa}{\pa \theta_i}  \right) 
  =   \frac{x_i'}{2\zeta_i} \left( x_i' \frac{\pa}{\pa x_i'} - \frac{\sqrt{-1}}{x_i'} \frac{\pa}{\pa \theta_i}\right).
\label{ext.9}\end{equation}
Thus, for a function $f\in \CI_{\fc}(\tX)$, we have
\begin{equation}
    \frac{\pa f}{\pa \zeta_i} d\zeta_i= \left( x_i' \frac{\pa f}{\pa x_i'} - \frac{\sqrt{-1}}{x_i'} \frac{\pa f}{\pa \theta_i}\right)  \frac{x_i'd\zeta_i}{2\zeta_i}. 
\label{ext.10}\end{equation}
Now, from Proposition~\ref{ext.2}, we see that 
\[
     \left. \left( x_i' \frac{\pa f}{\pa x_i'} - \frac{\sqrt{-1}}{x_i'} \frac{\pa f}{\pa \theta_i}\right)\right|_{\tH_{I}}=0.
     \]
Since $ \frac{x_i'd\zeta_i}{\zeta_i}= \frac{dx_i'}{x_i'} + \sqrt{-1} x_i' d\theta_i$ is a local smooth non-vanishing section of ${}^{\fc}T^{*}\tX$, this shows that, as a section of ${}^{\fc}T^*\tX$, 
\[
         \left.  \frac{\pa f}{\pa \zeta_i} d\zeta_i \right|_{\tH_I}=0.
\]
Similarly, we have that $ \left.  \frac{\pa f}{\pa \overline{\zeta}_i} d\overline{\zeta}_i \right|_{\tH_I}=0.$  Repeating this argument, we find that as local sections of $\Lambda^{2}( {}^{\fc}T^{*}\tX)\otimes \bbC$, the forms
\[
   \frac{\pa^2 f}{\pa \zeta_i\pa \overline{\zeta}_j} d\zeta_i\wedge d\overline{\zeta}_{j}, \;
    \frac{\pa^2 f}{\pa \zeta_i\pa \overline{z}_l} d\zeta_i\wedge d\overline{z}_{l} \; \mbox{and} \;
 \frac{\pa^2 f}{\pa z_l\pa \overline{\zeta}_j} dz_l\wedge d\overline{\zeta}_{j}
 \]
 all restrict to zero on $\tH_{I}$.  Thus, the restriction of $\pa\db f$ to $\tH_{I}$ is given locally by the restriction of 
 \[
   \sum_{i,j}  \frac{\pa^2 f}{\pa z_i\pa \overline{z}_j} dz_i\wedge d\overline{z}_{j},
 \]
 from which the result follows.  
\end{proof}

For certain applications, notably the study of the asymptotic behavior of K\"ahler-Einstein metrics on quasiprojective manifolds, the space of functions $\CI_{\fc}(\tX)$ is too small and need to be enlarged to a suitable space of polyhomogeneous functions.  To introduce this space, recall from \cite{Melrose1992} that an \textbf{index set} is a countable discrete subset $E$ of $\bbC\times \bbN_0$ such that 
\begin{gather}
   \label{phg.1a} (z_j, k_j)\in E, \quad |(z_j,k_j)|\to \infty \quad \Longrightarrow \quad \Re z_j \to \infty, \\ 
 \label{phg.1b} (z,k)\in E \quad \Longrightarrow \quad (z+p, k)\in E \; \forall p\in \bbN, \; \mbox{and}  \\
 \label{phg.1c}  (z,k)\in E \quad \Longrightarrow \quad (z,p)\in E \; \forall p\in \bbN_0 \; \mbox{with}\; p\le k.   
\end{gather} 
For us, an index set will prescribe the type of asymptotic behavior a function should have near a boundary hypersurface.  Suppose first that $\bD$ is smooth so that $\tX$ is a smooth manifold with boundary.  Suppose also that $\bD$ has only one irreducible component $\bD_1$, so that $\pa\tX$ is connected.  Then given an index set $E$, recall from \cite{MelroseAPS} that we define the associated space $\cA^{E}_{\phg}(\tX)$ of polyhomogeneous functions to consist of functions $f\in \CI(X)$ having an asymptotic expansion near $\pa\tX$ of the form
\begin{equation}
  f\sim \sum_{(z,k)\in E} a_{(z,k)} x_1^{z} (\log x_1)^{k}, \quad a_{(z,k)} \in \CI(\tX),
\label{phg.2}\end{equation} 
where the symbol $\sim$ means that for all $N\in\bbN$, we have that 
\begin{equation}
   f-  \underset{\Re z\le N}{\sum_{(z,k)\in E}} a_{(z,k)} x_1^{z}(\log x_1)^{k} \in \dot{\cC}^{N}(\tX),
\label{phg.3}\end{equation}
where $\dot{\cC}^{N}(\tX)$ is the space of functions which are $N$ times differentiable on $\tX$ with all their derivatives vanishing up to order $N$ along $\pa\tX$.  An important example is when $E=\bbN_{0}\times\{0\}$, in which case $\cA^{E}_{\phg}(\tX)$ is simply the space $\CI(\tX)$.  Another important example is when $E=\emptyset$, in which case $\cA^{E}_{\phg}(\tX)= \dot{\cC}^{\infty}(\tX)$ is the space of smooth functions on $\tX$ vanishing on $\pa \tX$ together with all their derivatives.   

In our context,  we will particularly be interested in the subspace of polyhomogeneous functions $\cA^{E}_{\fc}(\tX)$ consisting of smooth functions having an asymptotic expansion near $\tX$ of the form
\begin{equation}
 f\sim \sum_{(z,k)\in E} a_{(z,k)} x_1^{z}(\log x_1)^{k}, \quad a_{(z,k)} \in \CI_{\fc}(\tX),
\label{phg.5}\end{equation} 
which means that for all $N\in \bbN$, we have that 
\begin{equation}
   f-  \underset{\Re z\le N}{\sum_{(z,k)\in E}} a_{(z,k)} x_1^{z}(\log x_1)^{k} \in x^{N}\cC^{N}_{\fc}(X).
\label{phg.6}\end{equation}    

When $\bD$ is not smooth, so that $\tX$ is a manifold with corners, the space of polyhomogeneous functions $\cA_{\fc}^{E}(\tX)$ can be generalized as follows.  First, recall from \cite{Melrose1992} that an \textbf{index family} $\cE=(E_1 ,\ldots, E_\ell )$ for the compact manifold with corners $\tX$ is the assignment of an index set $E_i$ to each boundary hypersurface $\tH_i$ of $\tX$.    Notice that if $\cE$ is an index family for $\tX$, then it naturally induces an index family $\cE_i$ for $\tD_i$ which to 
a non-empty boundary hypersurface of the form $\tPhi_i( \tH_i \cap \tH_j)\subset \tD_i$ associate the index set $E_j$.  Thus, proceeding by induction on the  depth of $\tX$, we can define the space $\cA^{\cE}_{\fc}(\tX)$ associated to the index family $\cE=(E_{1},\ldots, E_{\ell})$  to consist of functions $f$ such that for all $i\in \{1,\ldots, \ell\}$, 
\begin{equation}
    f\sim \sum_{(z,k)\in E_i} \Xi_i ( a_{(z,k)}) x_{i}^{z} (\log x_i )^{k}, \quad a_{(k,z)}\in \cA_{\fc}^{\cE_i} (\tD_i),
\label{phg.7}\end{equation}
 which means that for all $N\in \bbN$, 
\begin{equation}
  f-  \underset{\Re z\le N}{\sum_{(z,k)\in E_i}} \Xi_i ( a_{(z,k)} ) x_i^z (\log x_i)^{k} \in x_i^N \cC^{N}_{\fc}(X).  
\label{phg.8}\end{equation}
Here, the map $\Xi_i$ is defined as in \eqref{ext.7} by 
\begin{equation}
  \Xi_i (a_{(z,k)}) = \sum_q \chi_i(x_i) \pr_q^* \tPhi_i^* (\tphi_q a_{(z,k)}).
\label{phg.9}\end{equation}
In that more general sense, notice that this defines a map 
\begin{equation}
  \Xi_i : \cA^{\cE_i}_{\fc}(\tD_i) \to \cA^{\cF_i}_{\fc}(\tX), \quad \cF_i=(F_1,\ldots, F_\ell ) \quad \mbox{with} \;
    F_j= \left\{ \begin{array}{ll}
           E_j, & j \ne i, \\
           \bbN_0\times \{0\}, & j=i.
        \end{array} \right.   
\label{phg.10}\end{equation}

Notice that thanks to property~\eqref{phg.1b} of index sets, the space $\cA^{\cE}_{\fc}(\tX)$ is naturally a $\CI_{\fc}(\tX)$-module.  When $\tV\to \tX$ is a smooth vector bundle on $\tX$ for which the space $\CI_{\fc}(\tX;\tV)$ has been defined, this means we can more generally define the space of polyhomogeneous sections of $\tV$ associated to the index family $\cE$ by
\begin{equation}
   \cA_{\fc}^{\cE}(\tX;\tV)= \cA^{\cE}_{\fc}(\tX) \otimes_{\CI_{\fc}(\tX)} \CI_{\fc}(\tX;\tV).  
\label{phg.11}\end{equation}

We are interested in polyfibred cusp metrics that are K\"ahler with respect to the complex structure of $X$.  Examples of such metrics are not hard to construct.  Indeed, let $L\to \bX$ be a positive holomorphic Hermitian line bundle with Hermitian metric $\|\cdot\|_{L}$.  Locally, the curvature of its Chern connection is given by 
\begin{equation}
    \Theta_{L}= \db\pa \log \| \lambda \|^{2}_{L}
\label{fc.13}\end{equation}  
where $\lambda$ is any local holomorphic section of $L$.  Since $L$ is positive, the curvature form $\sqrt{-1}\Theta_{L}$ is a K\"ahler form on $\bX$.  To obtain a K\"ahler form on the quasiprojective manifold $X$, we consider instead for $\epsilon>0$ the closed $2$-form
\begin{equation}
\begin{aligned}
\omega &=\sqrt{-1} \Theta_{L} + \sqrt{-1}\ \db\pa \log\left( \prod_{i=1}^{\ell} (-\log \epsilon \|s_{i}\|^{2}_{\bD_{i}})^{2} \right) \\
&= \sqrt{-1}\Theta_{L}+ 2\sqrt{-1}\sum_{i=1}^{\ell} \left( \frac{ (\pa \log \epsilon \|s_{i}\|^{2}_{\bD_{i}})\wedge (\db \log \epsilon \|s_{i}\|^{2}_{\bD_{i}}) }{( \log \epsilon \|s_{i}\|^{2}_{\bD_{i}})^{2}} + \frac{ \db\pa \log \epsilon \|s_{i}\|_{\bD_{i}}^{2}}{\log\epsilon \|s_{i}\|_{\bD_{i}}^{2} }
 \right) \\
 &= \sqrt{-1}\Theta_{L} + 2\sqrt{-1}\sum_{i=1}^{\ell} \left(\frac{ \Theta_{\bD_{i}}}{\log\epsilon \|s_{i}\|_{\bD_{i}}^{2} }\right)  \\
 & \quad \quad \quad + 2\sqrt{-1}\sum_{i=1}^{\ell} \left( \frac{ (\pa \log \epsilon \|s_{i}\|^{2}_{\bD_{i}})\wedge (\db \log \epsilon \|s_{i}\|^{2}_{\bD_{i}}) }{( \log \epsilon \|s_{i}\|^{2}_{\bD_{i}})^{2}} 
 \right).\end{aligned}
\label{fc.14}\end{equation}
Since the middle term can be made arbitrarily small by taking $\epsilon>0$ sufficiently small, we see that the form $\omega$ is a K\"ahler form provided $\epsilon$ is chosen small enough.

\begin{proposition}
The K\"ahler metric $g_{\omega}$ associated to the K\"ahler form $\omega$ in \eqref{fc.14} is an exact polyfibred cusp metric on $X$ with $g_{\omega}\in \CI_{\fc}(\tX; {}^{\fc}T^{*}\tX\otimes {}^{\fc}T^{*}\tX)$.
\label{fc.15}\end{proposition}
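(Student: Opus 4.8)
The plan is to reduce the statement to an explicit local computation of $\omega$ near each boundary face of $\tX$. Once $g_{\omega}$ is known to be a polyfibred cusp metric, its completeness and finiteness of volume are automatic by Proposition~\ref{fv.1}, and by Lemma~\ref{excc.1} it suffices to check, for each nonempty $I\subset\{1,\ldots,\ell\}$ and each $p\in\tH_I\setminus\pa\tH_I$, that in some coordinate system of the form \eqref{fc.18} the metric $g_{\omega}$ has the shape \eqref{ex.4b} with $a_i,b_i$ positive constants, $h_I$ a genuine metric on $\bD_I\cap\cU$, and $E_i\in x_i\CI_{\fc}(\tX;{}^{\fc}T^*\tX\otimes{}^{\fc}T^*\tX)$; granted this, membership of $g_{\omega}$ in $\CI_{\fc}(\tX;{}^{\fc}T^*\tX\otimes{}^{\fc}T^*\tX)$ is read off from \eqref{ex.4b} in the coframe $\{ dx_i'/x_i',\ x_i'd\theta_i,\ dw_j\}$ for ${}^{\fc}T^*\tX$, all of whose coefficients there lie in $\CI_{\fc}(\tX)$. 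So relabel $I=\{1,\ldots,k\}$, choose holomorphic coordinates \eqref{fc.16} near $\tbeta(p)$ with $\cU\cap\bD_j=\emptyset$ for $j\notin I$, and in the trivializations write $\|s_i\|^2_{\bD_i}=|\zeta_i|^2 h_i$ with $h_i$ smooth and positive on the chart; by Lemma~\ref{mwc.10} the function $x_i'=-1/\log|\zeta_i|$ is a valid boundary defining function for $\tH_i$.

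The computation rests on two elementary observations about $\psi_i:=\log\epsilon\|s_i\|^2_{\bD_i}$ for $i\in I$. Since $\log|\zeta_i|=-1/x_i'$, we have $\psi_i=-2/x_i'+\log(\epsilon h_i)$, whence
\[
   \frac{1}{\psi_i^2}=\frac{(x_i')^2}{4}\Bigl(1-\tfrac{x_i'}{2}\log(\epsilon h_i)\Bigr)^{-2}=\frac{(x_i')^2}{4}+(x_i')^3 r_i, \qquad r_i\ \text{smooth near}\ p;
\]
and, since $\log\zeta_i=-1/x_i'+\sqrt{-1}\,\theta_i$,
\[
   \pa\psi_i=\frac{d\zeta_i}{\zeta_i}+\pa\log h_i, \qquad \frac{d\zeta_i}{\zeta_i}=\frac{dx_i'}{(x_i')^2}+\sqrt{-1}\,d\theta_i, \qquad \db\pa\psi_i=\Theta_{\bD_i},
\]
the last being a smooth $(1,1)$-form on the chart. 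Combined with \eqref{fc.23}, these yield the bookkeeping rule we use repeatedly: a bare factor $d\zeta_i$ or $d\overline{\zeta}_i$ carries an overall factor $\rho_i'=e^{-1/x_i'}$, which vanishes faster than any power of $x_i'$, and the same is true of the $\zeta_i$-dependence of a function pulled back from $\bX$ (such as $h_i$ or a coefficient of $\Theta_L$); consequently, after being paired with at most one extra $1/x_i'$ coming from $1/\psi_i\sim -x_i'/2$, any such term lies in $x_i\CI_{\fc}(\tX;{}^{\fc}T^*\tX\otimes{}^{\fc}T^*\tX)$ (indeed in $x_i^N\CI_{\fc}$ for every $N$).

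Substituting into \eqref{fc.14}, the terms split into three groups. The form $\sqrt{-1}\Theta_L$, together with the summands $2\sqrt{-1}\bigl(\psi_i^{-2}\pa\psi_i\wedge\db\psi_i+\psi_i^{-1}\db\pa\psi_i\bigr)$ for $i\notin I$, is a sum of smooth $(1,1)$-forms on the chart (the latter have no poles, since $\zeta_i\neq0$ there for $i\notin I$); their components involving some $d\zeta_j$ or $d\overline{\zeta}_j$ with $j\in I$ are rapidly vanishing at $\tH_j$, while the remaining part restricts on $\tH_I$ to $\pr_I^* h_I$ with $h_I$ the induced Kähler metric on $\bD_I\cap\cU$, a genuine metric; so this group contributes $\pr_I^*h_I+\sum_{i\in I}E_i$. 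For $i\in I$, the summand $2\sqrt{-1}\,\psi_i^{-1}\db\pa\psi_i=2\sqrt{-1}\,\Theta_{\bD_i}/\psi_i$ equals, since $\psi_i^{-1}\sim -x_i'/2$ and $\Theta_{\bD_i}$ is smooth, $x_i'$ times a section of $\CI_{\fc}(\tX;{}^{\fc}T^*\tX\otimes{}^{\fc}T^*\tX)$, hence joins $\sum_{i\in I}E_i$. Finally, for $i\in I$, expand $\pa\psi_i\wedge\db\psi_i$ in $2\sqrt{-1}\,\psi_i^{-2}\pa\psi_i\wedge\db\psi_i$ via $\pa\psi_i=\tfrac{d\zeta_i}{\zeta_i}+\pa\log h_i$: the three pieces involving $\pa\log h_i$ or $\db\log h_i$, after multiplication by $\psi_i^{-2}$, lie in $x_i\CI_{\fc}$ — the two cross-pieces because $\psi_i^{-2}\tfrac{d\zeta_i}{\zeta_i}\in x_i\CI_{\fc}(\tX;{}^{\fc}T^*\tX)$, the last because $\psi_i^{-2}\in(x_i')^2\CI_{\fc}(\tX)$ and $\pa\log h_i,\db\log h_i$ are bounded $\fc$-covectors — so they too join $\sum_{i\in I}E_i$; and the leading piece $\tfrac{d\zeta_i}{\zeta_i}\wedge\tfrac{d\overline{\zeta}_i}{\overline{\zeta}_i}=-2\sqrt{-1}\,\tfrac{dx_i'}{(x_i')^2}\wedge d\theta_i$, multiplied by $2\sqrt{-1}$ and by $\psi_i^{-2}=\tfrac{(x_i')^2}{4}+(x_i')^3r_i$, gives $\tfrac{dx_i'}{x_i'}\wedge(x_i'\,d\theta_i)$ plus a term in $x_i\CI_{\fc}$. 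As this last Kähler $2$-form corresponds precisely to the Riemannian block $\tfrac{(dx_i')^2}{(x_i')^2}+(x_i')^2\,d\theta_i^2$, we get the $i$-th block of \eqref{ex.4b} with $a_i=b_i=1$. Collecting the three groups, $g_{\omega}$ has the form \eqref{ex.4b} near each boundary face of $\tX$; its boundary-value part being block-diagonal with positive-definite blocks, $g_{\omega}$ is positive definite as a Euclidean structure on ${}^{\fc}T\tX$ in a collar of each $\tH_I$, and on the compact complement in $\tX$ it is positive definite since there it restricts to a Riemannian metric on $X$. Hence $g_{\omega}$ is an exact polyfibred cusp metric lying in $\CI_{\fc}(\tX;{}^{\fc}T^*\tX\otimes{}^{\fc}T^*\tX)$.

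The heart of the argument, and its only genuine difficulty, is the bookkeeping in the last paragraph: separating the three structurally distinct contributions and verifying that every remainder truly lands in $x_i\CI_{\fc}$ rather than merely being a bounded section of ${}^{\fc}T^*\tX\otimes{}^{\fc}T^*\tX$. The one conceptual input making this work — and the reason the answer is a metric that is smooth on $\tX$ rather than merely polyhomogeneous — is the identity $\psi_i=-2/x_i'+(\text{smooth})$, which turns $1/\psi_i^2$ into $(x_i')^2$ times a function smooth in $x_i'$ up to the boundary.
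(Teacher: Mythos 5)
Your proof is correct and follows essentially the same route as the paper's: local holomorphic coordinates \eqref{fc.16}, the identities $\|s_i\|^2_{\bD_i}=h_i|\zeta_i|^2$, smoothness of $1/\log\epsilon\|s_i\|^2_{\bD_i}$ in $x_i'$, the formula $d\zeta_i/\zeta_i=dx_i'/(x_i')^2+\sqrt{-1}\,d\theta_i$, and the rapid decay of $\rho_i'=e^{-1/x_i'}$, leading to the same leading boundary blocks as in \eqref{fc.25}. The only difference is organizational — you verify the exact form \eqref{ex.4b} with $a_i=b_i=1$ directly, whereas the paper first uses smoothness of the complex structure on ${}^{\fc}T\tX$ to reduce to the $(1,1)$-form $\omega$ and then reads exactness off its boundary restriction — but the substance is the same.
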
  
\begin{proof}
Away from $\pa \tX$, there is nothing to prove.  Thus, let $p\in \pa \tX$ be given.  Near $\tbeta(p)$, choose complex coordinates $(\zeta_{1},\ldots,\zeta_{k},z_{k+1},\ldots,z_{n})$ as in \eqref{fc.16} and let  $(x_{1}',\theta_{1},\ldots, x_{k}',\theta_{k}, u_{k+1}, v_{k+1}, \ldots,u_{n},v_{n})$ be the corresponding coordinates on $\tX$ as in \eqref{fc.18}.  
In these coordinates, we see by direct computation that multiplication by $\sqrt{-1}$ sends $x_{i}' \frac{\pa}{\pa x_{i}'}$ to $\frac{1}{x_{i}'}\frac{\pa}{\pa \theta_{i}}$ and 
$\frac{\pa}{\pa u_{j}}$ to $\frac{\pa}{\pa v_{j}}$.  Since $p\in \pa \tX$ is arbitrary, this shows that complex multiplication induces a smooth map
\begin{equation}
  \sqrt{-1}: \CI_{\fc}(\tX; {}^{\fc}T\tX\otimes \bbC) \to \CI_{\fc}(\tX; {}^{\fc}T\tX\otimes \bbC).
\label{fc.19}\end{equation}  
Thus, to show $g_{\omega}$ is a polyfibred cusp metric with $g_{\omega}\in \CI_{\fc}(\tX;{}^{\fc}T^{*}\tX\otimes{}^{\fc}T^{*}\tX)$, it suffices to show that 
\begin{equation}
   \omega\in \CI(\tX; \Lambda^{2}({}^{\fc}T^{*}\tX)\otimes \bbC)
\label{fc.20}\end{equation}
and that the restriction of $\omega$ to $\pa \tX$ as a section
of $  \Lambda^{2}({}^{\fc}T^{*}\tX)\otimes \bbC$ is positive definite.  This can be checked by writing \eqref{fc.14} in the local holomorphic coordinates \eqref{fc.16}.  Indeed, notice first that 
\begin{equation}
     \|s_{i}\|^{2}_{\bD_{i}}= h_{i} |\zeta_{i}|^{2}
\label{fc.21}\end{equation}
for some positive smooth function $h_{i}$ in the holomorphic coordinates \eqref{fc.16}.  Thus,
\begin{equation}
   \frac{1}{\log\epsilon\|s_{i}\|^{2}_{\bD_{i}} }= \frac{1}{\log\epsilon + \log h_{i} + \log |\zeta_{i}|^{2} }  = \frac{x_{i}'}{x_{i}'(\log\epsilon + \log h_{i}) -2}
\label{fc.22}\end{equation}
is clearly smooth as a function on $\tX$.  Since $\Theta_{L}$ and $\Theta_{\bD_{i}}$ are smooth forms on $\bX$, it follows from \eqref{fc.23} and the fact that $\rho_{i}'= e^{-\frac{1}{x_{i}'}}$ decays exponentially fast as $x_{i}'$ tends to $0$ that the first two terms in \eqref{fc.14} are in
$\CI_{\fc}(\tX; \Lambda^{2}( {}^{\fc}T^{*}\tX)\otimes \bbC)$.  For the last term in
\eqref{fc.14}, notice that 
\begin{equation}
\begin{aligned}
  \pa \log \epsilon \|s_{i}\|^{2}_{\bD_{i}} &= \pa \log h_{i} + \pa \log |\zeta_{i}|^{2} \\
    &= \pa \log h_{i} + \frac{d\zeta_{i}}{\zeta_{i}} = \pa \log h_{i} + \frac{d\rho_{i}'}{\rho_{i}'} + \sqrt{-1}d\theta_{i}.
    \end{aligned}   
\label{fc.24}\end{equation}
Since $\pa \log h_{i}$ is a smooth form on $\bX$, we see from \eqref{fc.23} and \eqref{fc.24} that the last term of \eqref{fc.14} is also in $\CI_{\fc}(\tX; \Lambda^{2}( {}^{\fc}T^{*}\tX)\otimes \bbC)$, so that \eqref{fc.20} holds.  On the boundary face $\tH_{I}$ near the point $p$, the restriction of $\omega$, as a section of
$\Lambda^{2}({}^{\fc}T^{*}\tX)\otimes \bbC$ is given by
\begin{multline}
    \sqrt{-1} \tPhi^{*}_{I} \iota^{*}_{D_{I}}\left(\Theta_{L} +2 \sum_{i\notin I} \left(\frac{\Theta_{\bD_i}}{\log \epsilon \|s_i\|_{\bD_i}}  + \frac{ (\pa \log \epsilon \|s_{i}\|^{2}_{\bD_{i}})\wedge (\db \log \epsilon \|s_{i}\|^{2}_{\bD_{i}}) }{( \log \epsilon \|s_{i}\|^{2}_{\bD_{i}})^{2}}\right) \right)  \\
    + 
      2\sqrt{-1}\sum_{i\in I} \left(\frac{x_{i}'}{2}\right)^{2}\frac{d\zeta_{i}\wedge d\overline{\zeta}_{i}}{|\zeta_{i}|^{2}},
\label{fc.25}\end{multline} 
where $\iota_{D_{I}}: D_{I}\hookrightarrow X$ is the natural inclusion 
of $D_{I}= \bigcap_{i\in I} D_{i}$ in $X$.  In particular, this restriction is clearly positive definite, which shows that $g_{\omega}$ is a polyfibred cusp metric.  It is also clear from \eqref{fc.25} that $g_\omega$ is an exact polyfibred cusp metric.     
\end{proof}

This proposition suggests the following reformulation of the notion of standard spatial asymptotics introduced in \cite{Lott-Zhang}.  As in \cite{Lott-Zhang}, to lighten the presentation, we will often not distinguish between a K\"ahler metric $g_{\omega}$ and its K\"ahler form $\omega$.    

\begin{definition}
Let $\{\omega_{I}\}$ be exact polyfibred cusp K\"ahler metrics on $\{\tD_{I}\}$  for $I\subset \{1,\ldots,\ell\}$ such that $\bigcap_{i\in I} \bD_{i}\ne \emptyset$ and let $\{c_{i}\}_{i=1}^{\ell}$ be positive numbers.  Then an exact polyfibred cusp K\"ahler metric $\omega$ has \textbf{standard spatial asymptotics} associated to $\{\omega_{I}\}$ and $\{c_{i}\}_{i=1}^{\ell}$ if for every $p\in \pa \tX$, the restriction of $\omega$ at $p$ seen as a section of $\CI(\tX;\Lambda^{2}({}^{\fc}T^{*}\tX)\otimes \bbC)$   is given by
\[
     \tPhi_{I}^{*} \omega_{I}+ \frac{\sqrt{-1}}{2} \sum_{i\in I} \frac{ c_i d\zeta_{i}\wedge d\overline{\zeta}_{i}}{|\zeta_{i}|^{2} (\log|\zeta_{i}|)^{2}},
\] 
where $I=\{i_{1}, \ldots,i_{k}\}\subset\{1,\ldots,\ell\}$ is the largest subset such that $p$ is contained in $\tH_{I}= \bigcap_{i\in I} \tH_{i}$ and $(\zeta_{i_{1}},\ldots,\zeta_{i_{k}},z_{k+1}, \ldots, z_{n})$ are holomorphic coordinates near $\widetilde{\beta}(p)\in \bX$ as in \eqref{fc.16}.  
\label{fc.26}\end{definition}

The other property of the K\"ahler metric of Proposition~\ref{fc.15}, namely that $g_{\omega}\in \CI_{\fc}(\tX;{}^{\fc}T^{*}\tX\otimes{}^{\fc}T^{*}\tX)$, will play a predominant role in our study of the 
  K\"ahler-Ricci flow. 

\begin{definition}
An \textbf{asymptotically tame} polyfibred cusp metric on the quasiprojective manifold $X$ is an exact polyfibred cusp metric $g_{\fc}$ such that 
\[g_{\fc}\in \CI_{\fc}(\tX; {}^{\fc}T^{*}\tX\otimes {}^{\fc}T^{*}\tX).
\]
\label{fc.27}\end{definition}  
Thus, the K\"ahler metric of Proposition~\ref{fc.15} is an example of asymptotically tame polyfibred cusp K\"ahler metric.  

We conclude this section by describing the asymptotic behavior of the Ricci form of an asymptotically tame polyfibred cusp K\"ahler metric.  

\begin{proposition}
Let $\omega$ be an asymptotically tame polyfibred cusp K\"ahler metric with standard spatial asymptotic associated to $\{\omega_I\}$ and $\{c_i\}$.  Then its Ricci form $\varrho$ is an element of $\CI_{\fc}(\tX; \Lambda^{2}({}^{\fc}T^{*}\tX)\otimes \bbC)$.  Furthermore, in the holomorphic coordinates \eqref{fc.16}, the restriction of $\varrho$ at $\tH_I$ seen as a section of $\CI(\tX;\Lambda^{2}({}^{\fc}T^{*}\tX)\otimes \bbC)$ is given by 
\[
       \tPhi_I^{*} \varrho_I -\frac{\sqrt{-1}}{2} \sum_{i\in I} \frac{d\zeta_i \wedge d\overline{\zeta}_{i}  }{|\zeta_i |^{2} (\log |\zeta_i |)^{2}}
\]
where $\varrho_I$ is the Ricci form of $\omega_I$.  
\label{fc.28}\end{proposition}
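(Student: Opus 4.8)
\emph{Proof proposal.}
The plan is to reduce everything to the standard local identity $\varrho=-\sqrt{-1}\,\pa\db\log\det g$ for the Ricci form of a K\"ahler metric and to evaluate it in the $\fc$-coframe adapted to the logarithmic compactification. I would fix $I$ with $\tH_I\neq\emptyset$, relabel so that $I=\{1,\dots,k\}$, and work near a point $p\in\tH_I\setminus\pa\tH_I$ in holomorphic coordinates $(\zeta_1,\dots,\zeta_k,z_{k+1},\dots,z_n)$ as in \eqref{fc.16}, with associated real coordinates as in \eqref{fc.18} on $\tU=\tbeta^{-1}(\cU)$; recall $\cU$ meets $\bD_j$ only for $j\in I$, so $\bD_I\cap\cU$ is a coordinate chart for $\tD_I$ disjoint from $\pa\tD_I$. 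Put $e^i:=\frac{x_i'\,d\zeta_i}{\zeta_i}=\frac{dx_i'}{x_i'}+\sqrt{-1}\,x_i'\,d\theta_i$ for $i\in I$; by \eqref{fc.23} the $e^i$ together with $dz_{k+1},\dots,dz_n$ form a local frame of $({}^{\fc}T^*\tX)^{1,0}$. Writing $\omega=\sqrt{-1}\sum_{a,b}g_{a\bar b}\,e^a\wedge\overline{e^b}$, the hypothesis that $\omega$ is an asymptotically tame polyfibred cusp K\"ahler metric with standard spatial asymptotics associated to $\{\omega_I\}$, $\{c_i\}$ says precisely that $g_{a\bar b}\in\CI_{\fc}(\tU)$ (using \eqref{fc.19}), that the Hermitian matrix $(g_{a\bar b})$ is positive definite everywhere on $\tX$, and, by Definition~\ref{fc.26}, that its restriction to $\tH_I$ is block diagonal with $g_{i\bar i}|_{\tH_I}=c_i/2$ for $i\in I$ and $(g_{l\bar m})_{l,m>k}|_{\tH_I}$ equal to the coefficient matrix $((\omega_I)_{l\bar m})$ of $\omega_I$ in the $z$-coordinates. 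In particular $\log\det(g_{a\bar b})\in\CI_{\fc}(\tU)$.

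The first step is to note that, since $e^i=(x_i'/\zeta_i)\,d\zeta_i$, the coefficient matrix $g^{\mathrm{hol}}$ of $\omega$ in the holomorphic coordinates satisfies $\log\det g^{\mathrm{hol}}=\log\det(g_{a\bar b})+\sum_{i\in I}\bigl(2\log x_i'-\log|\zeta_i|^2\bigr)$. Hence, on $X\cap\tU$,
\[
\varrho=-\sqrt{-1}\,\pa\db\log\det g^{\mathrm{hol}}=-\sqrt{-1}\,\pa\db\log\det(g_{a\bar b})-2\sqrt{-1}\sum_{i\in I}\pa\db\log x_i',
\]
because each $\log|\zeta_i|^2$ is pluriharmonic on $X$. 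A short computation using $x_i'=-2/\log|\zeta_i|^2$ gives $\pa\db\log x_i'=\tfrac14\,e^i\wedge\overline{e^i}$, so the second term equals $-\tfrac{\sqrt{-1}}{2}\sum_{i\in I}e^i\wedge\overline{e^i}=-\tfrac{\sqrt{-1}}{2}\sum_{i\in I}\frac{d\zeta_i\wedge d\overline{\zeta}_i}{|\zeta_i|^2(\log|\zeta_i|)^2}$, a constant-coefficient section of $\Lambda^2({}^{\fc}T^*\tX)\otimes\bbC$. For the first term one checks, exactly as in the proof of Proposition~\ref{ext.8}, that $\pa\db$ maps $\CI_{\fc}$ into $\CI_{\fc}(\tX;\Lambda^2({}^{\fc}T^*\tX)\otimes\bbC)$: $\pa f=\sum_{i\in I}\tfrac12\bigl(x_i'\tfrac{\pa f}{\pa x_i'}-\tfrac{\sqrt{-1}}{x_i'}\tfrac{\pa f}{\pa\theta_i}\bigr)e^i+\sum_{j>k}\tfrac{\pa f}{\pa z_j}\,dz_j$ has $\CI_{\fc}$ coefficients because $x_i'\pa_{x_i'}$, $\tfrac1{x_i'}\pa_{\theta_i}$ and $\pa_{z_j}$ are $\fc$-vector fields, while $\db e^i=-\tfrac12 e^i\wedge\overline{e^i}$ and $\db\,dz_j=0$. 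Thus $\varrho\in\CI_{\fc}(\tU;\Lambda^2({}^{\fc}T^*\tX)\otimes\bbC)$, and since the right-hand side $-\sqrt{-1}\,\pa\db\log\det g^{\mathrm{hol}}$ does not depend on the choice of holomorphic coordinates, covering $\pa\tX$ by such charts and using that $\varrho$ is smooth on $X$ gives $\varrho\in\CI_{\fc}(\tX;\Lambda^2({}^{\fc}T^*\tX)\otimes\bbC)$.

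For the boundary value I would apply Proposition~\ref{ext.8} to $f=\log\det(g_{a\bar b})$: its restriction to $\tH_I$ is $\tPhi_I^*\bigl(-\sqrt{-1}\,\pa\db\,r_I(\log\det(g_{a\bar b}))\bigr)$. Since $r_I$ is a ring homomorphism and $r_I(g_{a\bar b})$ is the block matrix above, $r_I(\log\det(g_{a\bar b}))=\sum_{i\in I}\log(c_i/2)+\log\det((\omega_I)_{l\bar m})$ on the chart $\bD_I\cap\cU$ of $\tD_I$, so applying $-\sqrt{-1}\,\pa\db$ kills the constants and returns the Ricci form $\varrho_I=-\sqrt{-1}\,\pa\db\log\det((\omega_I)_{l\bar m})$ of $\omega_I$. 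Combining this with the computation of the second term yields
\[
\bigl.\varrho\bigr|_{\tH_I}=\tPhi_I^*\varrho_I-\frac{\sqrt{-1}}{2}\sum_{i\in I}\frac{d\zeta_i\wedge d\overline{\zeta}_i}{|\zeta_i|^2(\log|\zeta_i|)^2},
\]
as asserted. (That $\varrho_I$ extends smoothly to all of $\tD_I$, which is needed to make global sense of $\tPhi_I^*\varrho_I$ on $\tH_I$, follows by induction on $\depth(\tX)$, $\omega_I$ being itself an asymptotically tame polyfibred cusp K\"ahler metric with standard spatial asymptotics on the lower-depth manifold $\tD_I$.)

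The hard part will really only be the bookkeeping in passing between the holomorphic coframe and the $\fc$-coframe — establishing $\log\det g^{\mathrm{hol}}=\log\det(g_{a\bar b})+\sum_{i\in I}(2\log x_i'-\log|\zeta_i|^2)$ and the identity $\pa\db\log x_i'=\tfrac14\,e^i\wedge\overline{e^i}$ — together with extracting the mapping property $\pa\db\colon\CI_{\fc}(\tX)\to\CI_{\fc}(\tX;\Lambda^2({}^{\fc}T^*\tX)\otimes\bbC)$ cleanly from the local computations already carried out in the proof of Proposition~\ref{ext.8}; the rest is routine.
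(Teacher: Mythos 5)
Your argument is correct and is essentially the paper's proof: writing $\varrho=-\sqrt{-1}\,\pa\db\log\det(g_{a\bar b})-2\sqrt{-1}\sum_{i\in I}\pa\db\log x_i'$ is the same decomposition as the paper's $\varrho=\varrho_{\st}-\sqrt{-1}\,\pa\db\log(\omega^n/\omega_{\st}^n)$, since $\log\det(g_{a\bar b})$ and $\log(\omega^n/\omega_{\st}^n)$ differ by a constant and $-2\sqrt{-1}\sum_{i\in I}\pa\db\log x_i'$ is exactly $\varrho_{\st}$. Both proofs then invoke the $\CI_{\fc}$-regularity of this potential and Proposition~\ref{ext.8} together with the standard spatial asymptotics to identify the restriction at $\tH_I$, so your proposal matches the paper's route.
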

\begin{proof}
Let us work in the local holomorphic coordinates $(\zeta_1,\ldots,\zeta_k, z_{k+1},\ldots,z_n)$ of \eqref{fc.16}.  In these coordinates, we have a standard asymptotically tame polyfibred cusp K\"ahler metric,
\begin{equation}
 \omega_{\st}= \frac{\sqrt{-1}}{2} \sum_{i=1}^{k}  \frac{d\zeta_i \wedge d\overline{\zeta}_{i}  }{|\zeta_i |^{2} (\log |\zeta_i |)^{2}} + \frac{\sqrt{-1}}{2}
   \sum_{j=k+1}^{n} dz_j \wedge d\overline{z}_j, 
\label{fc.29}\end{equation}
with Ricci form
\begin{equation}
  \varrho_{\st}= \sum_{i=1}^{k} \sqrt{-1} \pa \db \log \left( |\zeta_i|^{2} (\log |\zeta_i |)^{2} \right) = - \frac{\sqrt{-1}}{2} \sum_{i=1}^{k}  \frac{d\zeta_i \wedge d\overline{\zeta}_{i}  }{|\zeta_i |^{2} (\log |\zeta_i |)^{2}}.
\label{fc.30}\end{equation}
Then the Ricci form of $\omega$ is given by 
\begin{equation}
  \varrho= \varrho_{\st} - \sqrt{-1} \pa \db \log \left( \frac{ \omega^{n}}{\omega_{\st}^{n}} \right).
\label{fc.31}\end{equation}
Since $\frac{\omega^n}{\omega_{\st}^n}$ is locally in $\CI_{\fc}(\tX)$ and does not vanish, we see that $f=\log \left( \frac{ \omega^{n}}{\omega_{\st}^{n}} \right)$ is locally an element of $\CI_{\fc}(\tX)$.  Furthermore, a straightforward computation shows that the restriction of $f$ to $\tH_I$ is given by
\begin{equation}
     \left. f\right|_{\tH_I} = \log \left( \frac{\omega^{n-k}_{I}}{ \left( \frac{\sqrt{-1}}{2} \sum_{j=k+1}^{n}  dz_j\wedge d\overline{z}_j   \right)^{n-k}} \right) + \sum_{i=1}^{k} \log c_i.
\label{fc.32}\end{equation}
 The result then follows from \eqref{fc.31}, \eqref{fc.32} and Proposition~\ref{ext.8}.
 
\end{proof}

\begin{remark}

Actually, the Riemannian curvature tensor of 
the above metric has a similar splitting form 
at $\tH_I$. The easiest way to see that is 
to introduce the local (in $\theta_i$) holomorphic 
coordinates $\{\log(\log\zeta_1), \cdots, \log
(\log\zeta_k), z_{k+1}, \cdots, z_{n}\}$, 
where $\log\zeta_i=-(x'_i)^{-1}+\sqrt{-1}
\theta_i$ and $(\zeta,z)$ are holomorphic coordinates as in \eqref{fc.16}. The corresponding holomorphic 
vector fields are bounded with respect to 
the metric. Hence one can make use of the 
standard form of the Riemannian curvature 
tensor for a K\"ahler metric (in Section 1.2 
of \cite{tiannotes} for example) to obtain 
the splitting. This coordinate system 
represents the bounded geometry for this 
metric as discussed in \cite{Kobayashi} (see also \cite{Wu06}). The classic Cheng-Yau's 
function spaces considered there are 
equivalent to the space $C^k_{\fc}(X)$ considered
here.  

\label{Rm_asymp} \end{remark}

\section{Decay estimates for linear uniformly  parabolic equations} \label{de.0}

A natural class of differential operators of order $m$ acting on $\cC^{\infty}_{\fc}(X;V)$ is given by the space $\Diff^{m}_{\fc}(X;V)$ of differential operators $P$ of the form
\begin{equation}
  Pf=  \sum_{j=0}^{m} a_{j} \cdot \nabla^{j}f, \quad a_j \in \cC^{\infty}_{\fc}(X;T^{j}_{0}X\otimes \End(V)),  \quad f\in \cC^{\infty}_{\fc}(X;V),
\label{fs.10}\end{equation}
where $a_{j}\cdot \nabla^{j}f$ is seen as an element of $\CI_{\fc}(X;V)$ after the natural contractions are performed and $\nabla$ is the Levi-Civita connection of some polyfibred cusp metric.  We can also consider the subspace $\Diff^{m}_{\fc}(\tX;\tV)\subset \Diff^{m}_{\fc}(X;V)$ of operators which can be written as in \eqref{fs.10}, but with
$a_{j}\in \CI_{\fc}(\tX;{}^{\pfc}T^{j}_{0}\tX\otimes \End(\tV))$ for all $j\in \{0,\ldots,m\}$.

\begin{definition}
An operator $L\in\Diff^{2}_{\fc}(X;V)$ is said to be \textbf{uniformly  elliptic} if there exist a positive constant $c$ and a polyfibred cusp metric $g_{\fc}$  such that for all $p \in X$,
\[
     |\det(\sigma_{2}(L)(\xi,\xi))| >c |\xi|^{2\rank(V)}_{g_{\fc}} \quad \forall \, \xi\in T_{p}^{*}X\setminus\{0\},
\]
where $\sigma_{2}(L)\in \CI(X; TX\otimes TX\otimes \End(V))$ is the principal symbol of $L$ and $\rank(V)$ is the dimension of the fibres of $V$ .  In particular, an operator $L\in \Diff^{2}_{\fc}(X)$ with negative principal symbol (\eg the negative Laplacian) is uniformly elliptic if there exists a positive constant $c$ such that 
\[
        \sigma_{2}(L)< -cg^{*}_{\fc},
\] 
where $g^{*}_{\fc}$ is the metric dual to $g_{\fc}$ on the cotangent bundle.  \label{de.1}\end{definition}

\begin{definition}
 For $t\in [0,T]$, let $t\mapsto  L_{t}\in \Diff^{2}_{\fc}(X;V)$ be a smooth family of operators.  Then the operator 
 \[
       \frac{\pa}{\pa t} - L_{t}
 \]
 acting on $\CI_{\fc}([0,T]\times X)$ is said to be \textbf{uniformly  parabolic} if there exists a constant $c>0$ and a polyfibred cusp metric $g_{\fc}$ such that for all $p\in X$,
 \[
    |\det(\sigma_{2}(L_{t})(\xi,\xi))|> c |\xi|^{2\rank(V)}_{g_{\fc}}, \quad \forall\; t\in[0,T], \quad \forall\; \xi \in T^{*}_{p}X\setminus \{0\}.
 \]
 Alternatively, we will say the family $L_{t}$ is \textbf{uniformly elliptic}.  
 \label{de.2}\end{definition}
 
To study the asymptotic behavior of solutions to a uniformly parabolic equation, we need some preparation.  

\begin{proposition}
For $t\in [0,T]$, let $t\mapsto L_{t}\in \Diff^{2}_{\fc}(X)$ be a smooth family of operators with negative principal symbols such that
$\pa_{t}- L_{t}$ is uniformly  parabolic.  Suppose that $u\in \cC^{2,1}_{\fc}([0,T]\times X)$ is  a solution to the initial value problem
\[
    \frac{\pa u}{\pa t} - L_{t}u \le E, \quad u(0,\cdot)= u_{0}(\cdot),
\]
with $u_{0}\in x^{\gamma}\cC^{2}_{\fc}(X)$ and $E\in x^{\gamma}\cC_{\fc}^{0,0}([0,T]\times X)$, where $x^{\gamma} = \prod_{i=1}^{\ell} x_i^{\gamma_i}$ and $\gamma=(\gamma_1,\ldots,\gamma_\ell)$ with $\gamma_i\ge 0$. Then there exist positive constants $K$ and $c$ such that for all $t\in [0,T]$,
\[
     u\le  Ke^{ct} x^{\gamma}. 
\]
\label{de.4}\end{proposition}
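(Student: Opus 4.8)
\section*{Proof proposal for Proposition~\ref{de.4}}

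The plan is to compare $u$ with the explicit supersolution $w:=Ke^{ct}x^{\gamma}$ and then to invoke a maximum principle adapted to the non-compact manifold $X$. Write $u_{0}=x^{\gamma}g_{0}$ with $g_{0}\in\cC^{2}_{\fc}(X)$ bounded and $E=x^{\gamma}\widetilde{E}$ with $\widetilde{E}\in\cC^{0,0}_{\fc}([0,T]\times X)$ bounded. The key analytic input is the uniform estimate
\[
    |L_{t}(x^{\gamma})|\le C_{0}\,x^{\gamma}\quad\text{on }[0,T]\times X
\]
for some constant $C_{0}$ independent of $t$. In the local coordinates \eqref{fc.18} near a boundary face $\tH_{I}$, Lemma~\ref{mwc.10} lets one write $x_{i}=x_{i}'\varphi_{i}$ with $\varphi_{i}$ positive and bounded above and below, so that $x^{\gamma}=\varphi\,\prod_{i\in I}(x_{i}')^{\gamma_{i}}$ with $\varphi\in\CI_{\fc}(X)$ positive and bounded away from $0$; since the generating $\fc$-vector fields act on $\prod_{i\in I}(x_{i}')^{\gamma_{i}}$ by $x_{i}'\pa_{x_{i}'}\mapsto\gamma_{i}$ and $\tfrac{1}{x_{i}'}\pa_{\theta_{i}},\pa_{w_{j}}\mapsto 0$, every term of $L_{t}(x^{\gamma})$ — a sum of at most two $\fc$-derivatives with $\CI_{\fc}$ coefficients, uniformly bounded for $t\in[0,T]$ by smoothness of the family and compactness of $\tX$ — is $O(x^{\gamma})$. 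In particular $x^{\gamma}\in\CI_{\fc}(X)$, so $w\in\cC^{2,1}_{\fc}([0,T]\times X)$ and $L_{t}w=Ke^{ct}L_{t}(x^{\gamma})$ makes sense.

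Granting this, set $c:=C_{0}+1$ and $K:=\max\{\sup_{X}|g_{0}|,\ \sup_{[0,T]\times X}|\widetilde{E}|\}$. Then $w(0,\cdot)=Kx^{\gamma}\ge u_{0}$, and
\[
    \pa_{t}w-L_{t}w=Ke^{ct}\bigl(c\,x^{\gamma}-L_{t}(x^{\gamma})\bigr)\ge Ke^{ct}(c-C_{0})x^{\gamma}\ge Kx^{\gamma}\ge E ,
\]
so $w$ is a supersolution dominating $u$ at $t=0$. Hence $v:=u-w\in\cC^{2,1}_{\fc}([0,T]\times X)$ satisfies $(\pa_{t}-L_{t})v\le 0$, $v(0,\cdot)\le 0$, and $v$ is bounded above since $u$ is bounded and $w\ge 0$; the whole matter is now to deduce $v\le 0$. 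I would handle the zeroth order term and the non-compactness with two standard devices. First, write $L_{t}=\widehat{L}_{t}+a_{0}$ with $a_{0}\in\CI_{\fc}(X)$ the (bounded) zeroth order coefficient and $\widehat{L}_{t}$ of order $\le 2$ without zeroth order term, and set $\widetilde{v}:=e^{-\lambda t}v$ with $\lambda:=\sup_{[0,T]\times X}|a_{0}|+1$; then at every point where $\widetilde{v}>0$ one has $(\pa_{t}-\widehat{L}_{t})\widetilde{v}\le(a_{0}-\lambda)\widetilde{v}\le-\widetilde{v}<0$. Second, let $\phi\in\CI(X)$ be a proper exhaustion function with $\phi\ge 1$ whose $\fc$-covariant derivatives of orders $1$ and $2$ are bounded on $X$ — for instance $\phi=-\log x+\mathrm{const}$, or a smoothing of the $g_{\fc}$-distance to a fixed point, whose existence follows from the bounded geometry of $g_{\fc}$ (Remark~\ref{Rm_asymp}). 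Then $|\widehat{L}_{t}\phi|$ is bounded on $[0,T]\times X$, so one may fix $C_{1}>0$ with $C_{1}\phi-\widehat{L}_{t}\phi\ge 1$, and for $\epsilon>0$ put $\widetilde{v}_{\epsilon}:=\widetilde{v}-\epsilon e^{C_{1}t}\phi$.

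Since $\widetilde{v}$ is bounded above and $\phi\to+\infty$ at $\pa\tX$ uniformly in $t$, the super-level sets of $\widetilde{v}_{\epsilon}$ in $[0,T]\times X$ are compact, so $\widetilde{v}_{\epsilon}$ attains its supremum. If that supremum is $\le 0$ then $\widetilde{v}\le\epsilon e^{C_{1}T}\phi$; otherwise it is attained at some $(t_{0},p_{0})$ with $p_{0}\in X$, and since $\widetilde{v}_{\epsilon}(0,\cdot)=v(0,\cdot)-\epsilon\phi\le-\epsilon<0$ we must have $t_{0}>0$. As $\pa_{t}-\widehat{L}_{t}$ is parabolic, at $(t_{0},p_{0})$ we get $\pa_{t}\widetilde{v}_{\epsilon}\ge 0$ (a maximum in $t$ over $[0,T]$) and $\widehat{L}_{t_{0}}\widetilde{v}_{\epsilon}\le 0$ (at an interior maximum in the space variable the first order part vanishes and the principal part has the favorable sign, $L_{t}$ having negative principal symbol), hence $(\pa_{t}-\widehat{L}_{t_{0}})\widetilde{v}_{\epsilon}\ge 0$ there. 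On the other hand $\widetilde{v}(t_{0},p_{0})=\widetilde{v}_{\epsilon}(t_{0},p_{0})+\epsilon e^{C_{1}t_{0}}\phi(p_{0})>0$, so by the previous paragraph $(\pa_{t}-\widehat{L}_{t_{0}})\widetilde{v}_{\epsilon}=(\pa_{t}-\widehat{L}_{t_{0}})\widetilde{v}-\epsilon e^{C_{1}t_{0}}(C_{1}\phi-\widehat{L}_{t_{0}}\phi)<0$ at $(t_{0},p_{0})$, a contradiction. Therefore $\widetilde{v}_{\epsilon}\le 0$ on $[0,T]\times X$ for every $\epsilon>0$; letting $\epsilon\downarrow 0$ gives $\widetilde{v}\le 0$, i.e. $u=v+w\le w=Ke^{ct}x^{\gamma}$. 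The main obstacle is the uniform estimate $|L_{t}(x^{\gamma})|\le C_{0}x^{\gamma}$ together with the construction of the barrier $\phi$ with bounded $\fc$-covariant derivatives — both of which reduce to bookkeeping with the coordinates \eqref{fc.18}, Lemma~\ref{mwc.10}, and the bounded geometry of $g_{\fc}$ — while in the maximum principle step one must be slightly careful about the attainment of the supremum on the non-compact domain and about the possible endpoint $t_{0}=T$.
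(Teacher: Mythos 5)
Your proof is correct, and the core comparison idea is the same as the paper's: dominate $u$ by the barrier $Ke^{ct}x^{\gamma}$, using the (correct, and indeed necessary also for the paper's argument) bookkeeping estimate that an operator in $\Diff^2_{\fc}$ with coefficients uniformly bounded in $t$ sends $x^{\gamma}$ to $O(x^{\gamma})$. Where you genuinely diverge is in the treatment of non-compactness. The paper conjugates by $x$: it sets $\psi=xu$, $\widetilde L_t=x\circ L_t\circ x^{-1}$ (still uniformly elliptic since the principal symbol is unchanged) and uses the barrier $Ke^{ct}x\,x^{\gamma}$; the extra factor of $x$ forces the difference $v-\psi$ to extend continuously by zero to $\pa\tX$, so a single application of the ordinary maximum principle on the compact space $[0,T]\times\tX$ finishes the proof. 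You instead compare $u$ directly with $Ke^{ct}x^{\gamma}$ and compensate for the non-compact end with an Omori--Yau-type device: an exhaustion function $\phi$ (such as $-\log x$ plus a constant) with bounded $\fc$-derivatives, the perturbation $\widetilde v-\epsilon e^{C_1 t}\phi$ to force attainment of the supremum, and the $e^{-\lambda t}$ factor to absorb the zeroth-order term. This is essentially Yau's generalized maximum principle, which the paper itself invokes in later arguments (e.g.\ the uniqueness part of Theorem~\ref{de.18} and the proof of Theorem~\ref{ake.10}), so your route is consistent with the paper's toolkit; it buys you a self-contained argument that handles the boundary behaviour, attainment of the supremum, and the possible endpoint $t_0=T$ explicitly, at the price of more machinery, whereas the paper's multiplication-by-$x$ trick is shorter but leaves the verification of \eqref{de.7} (the analogue of your estimate $|L_t(x^{\gamma})|\le C_0x^{\gamma}$, applied to $x\,x^{\gamma}$) and the precise maximum-principle statement implicit. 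One small point to keep in mind: your estimate on $L_t(x^{\gamma})$ for non-integer $\gamma_i$ should be phrased via $x_i^{\gamma_i}=(x_i')^{\gamma_i}\varphi_i^{\gamma_i}$ with $\varphi_i$ positive, bounded above and below, and with bounded $\fc$-derivatives (which follows from Lemma~\ref{mwc.10} and the fact that pullbacks of functions in $\CI(\bX)$ lie in $\CI_{\fc}(X)$), exactly as you indicate; with that spelled out the argument is complete.
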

\begin{proof}
The result will follow by applying the maximum principle.  Consider the new function 
\[
\psi= xu,
\]
where we recall that $x=\prod_{i=1}^{\ell} x_{i}$.  Its evolution equation is given by
\begin{equation}
  \frac{\pa\psi}{\pa t}- \widetilde{L}_{t} \psi \le xE, \quad \psi(0,\cdot)= xu_{0}(\cdot),
\label{de.5}\end{equation}
where $t\mapsto \widetilde{L}_{t}\in \Diff^{2}_{\fc}(X)$ is the smooth family of operators defined by 
\[
        \widetilde{L}_{t}= x\circ L_{t}\circ x^{-1}.  
\]
Since $\sigma_{2}(\widetilde{L}_{t})= \sigma_{2}(L_{t})$, the family $\widetilde{L}_{t}$ is also uniformly elliptic.  For some positive constants $K$ and $c$, consider then the barrier function
\begin{equation}
    v= Ke^{ct} x x^{\gamma}.
\label{de.6}\end{equation}
Provided the constants $K$ and $c$ are large enough, we will have that for all
$t\in [0,T]$, 
\begin{equation}
  \frac{\pa v}{\pa t} \ge \widetilde{L}_{t} v + x E,  \quad v(0,\cdot)\ge \psi(0,\cdot).
\label{de.7}\end{equation}
This means that 
\begin{equation}
     \frac{\pa}{\pa t}(v-\psi) \ge \widetilde{L}_{t}(v-\psi),  \quad (v-\psi)(0,\cdot)\ge 0.
\label{de.8}\end{equation}
Since, thanks to the factor $x$, $(v-\psi)$ tends to zero as one approaches $\pa\tX$, we can apply the maximum principle to \eqref{de.8} to conclude that 
\[
              \psi\le v = Ke^{ct} x x^{\gamma},
\]
from which the result follows.

\end{proof}

\begin{corollary}
For $t\in [0,T]$, let $t\mapsto L_{t}\in \Diff^{2}_{\fc}(X)$ be a smooth family of operators with negative principal symbols such that
$\pa_{t}- L_{t}$ is uniformly  parabolic.  Suppose that $u\in \cC^{2,1}_{\fc}([0,T]\times X)$ is a solution to the initial value problem
\[
    \frac{\pa u}{\pa t} - L_{t}u = E, \quad u(0,\cdot)= u_{0}(\cdot),
\]
with $u_{0}\in x^{\gamma}\cC^{2}_{\fc}(X)$ and $E\in x^{\gamma}\cC_{\fc}^{0,0}([0,T]\times X)$ for some $\gamma=(\gamma_1,\ldots,\gamma_{\ell})$ with $\gamma_i\ge 0$.  Then we have that
\[
     u \in x^{\gamma}\cC^{0,0}_{\fc}([0,T]\times X).
\]
\label{de.9}\end{corollary}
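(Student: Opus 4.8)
The plan is to deduce the corollary from Proposition~\ref{de.4} by applying that estimate to both $u$ and $-u$, together with the observation that $\cC^{0,0}_{\fc}([0,T]\times X)$ is simply the space of bounded continuous functions on $[0,T]\times X$. Thus membership of $u$ in $x^{\gamma}\cC^{0,0}_{\fc}([0,T]\times X)$ amounts to establishing the uniform two-sided bound $|u|\le C x^{\gamma}$ on $[0,T]\times X$, the continuity of $u/x^{\gamma}$ on the interior being automatic.

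First I would apply Proposition~\ref{de.4} directly to $u$. Since $u\in \cC^{2,1}_{\fc}([0,T]\times X)$ solves $\frac{\pa u}{\pa t}-L_{t}u=E$, in particular $\frac{\pa u}{\pa t}-L_{t}u\le E$, and since $u_{0}\in x^{\gamma}\cC^{2}_{\fc}(X)$, $E\in x^{\gamma}\cC^{0,0}_{\fc}([0,T]\times X)$, while the hypotheses on the family $L_{t}$ (negative principal symbols, $\pa_{t}-L_{t}$ uniformly parabolic) are exactly those required, the proposition yields constants $K_{+},c_{+}>0$ with $u\le K_{+}e^{c_{+}t}x^{\gamma}$, hence $u\le K_{+}e^{c_{+}T}x^{\gamma}$ on all of $[0,T]\times X$.

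Next I would apply the same proposition to $-u$. It lies in $\cC^{2,1}_{\fc}([0,T]\times X)$, solves $\frac{\pa(-u)}{\pa t}-L_{t}(-u)=-E$, in particular $\le -E$, with $-u_{0}\in x^{\gamma}\cC^{2}_{\fc}(X)$ and $-E\in x^{\gamma}\cC^{0,0}_{\fc}([0,T]\times X)$, and the family $L_{t}$ and the weight $x^{\gamma}$ are unchanged, so Proposition~\ref{de.4} gives constants $K_{-},c_{-}>0$ with $-u\le K_{-}e^{c_{-}T}x^{\gamma}$ on $[0,T]\times X$. Setting $C=\max(K_{+}e^{c_{+}T},K_{-}e^{c_{-}T})$ we obtain $|u|\le C x^{\gamma}$ uniformly on $[0,T]\times X$, so that $u/x^{\gamma}$ is uniformly bounded. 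Since $u$ is continuous on $[0,T]\times X$ and $x$ is positive there, $u/x^{\gamma}$ is continuous on $[0,T]\times X$, and therefore $u\in x^{\gamma}\cC^{0,0}_{\fc}([0,T]\times X)$.

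There is essentially no obstacle here: the analytic content of the corollary is entirely contained in Proposition~\ref{de.4}, and all one has to check is that the hypotheses transfer verbatim from $(u,E)$ to $(-u,-E)$ — which they do, since the operators $L_{t}$ and the weight $x^{\gamma}$ are the same — and that a two-sided pointwise bound against $x^{\gamma}$ is exactly what $x^{\gamma}\cC^{0,0}_{\fc}([0,T]\times X)$ requires. (If one wanted the stronger statement that $u/x^{\gamma}$ extends continuously or smoothly up to the compactification $\tX$, that would need the later boundary regularity arguments rather than the maximum principle alone, but this is not what is asserted here.)
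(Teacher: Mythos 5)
Your proposal is correct and follows exactly the paper's argument: the paper's proof of this corollary consists precisely of applying Proposition~\ref{de.4} to $u$ and to $-u$. Your additional remarks spelling out why the two-sided bound $|u|\le Cx^{\gamma}$ together with interior continuity gives membership in $x^{\gamma}\cC^{0,0}_{\fc}([0,T]\times X)$ are accurate but merely make explicit what the paper leaves implicit.
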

\begin{proof}
It suffices to apply Proposition~\ref{de.4} to $u$ and $-u$.  
\end{proof}

To get a similar estimate on the derivatives of $u$, we need to know their corresponding evolution equations.  
\begin{lemma}
Let $V\to X$ be a smooth Euclidean vector bundle and let $t\mapsto L_{t}\in \Diff^{2}_{\fc}(X;V)$ be a uniformly elliptic smooth family of operators.  Let $\nabla^{V}$ be a choice of Euclidean connection for the bundle $V\to X$.  If a section
$u\in \cC^{k,\frac{k}{2}}_{\fc}([0,T]\times X; V)$ satisfies the evolution equation
\[
    \frac{\pa u}{\pa t}=  L_{t}u +E,  \quad E\in \cC^{k-2,\frac{k}{2}-1}_{\fc}([0,T]\times X;V),
\]  
then the evolution equation of $\nabla^{V}u$ is given by
\[
   \frac{\pa}{\pa t} \nabla^{V} u= \widetilde{L}_{t} \nabla^{V}u + \nabla^{V} E + f u,
\]
where $t\mapsto \widetilde{L}_{t}\in \Diff^{2}_{\fc}(X;T^{*}X\otimes V)$ is a uniformly elliptic smooth family of operators and 
$f\in \CI_{\fc}(X; T^{*}X\otimes \End(V))$. 
\label{de.10}\end{lemma}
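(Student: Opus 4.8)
The plan is to differentiate the evolution equation $\pa_t u = L_t u + E$ using the connection $\nabla^V$ and keep track of the commutator terms. First I would note that the statement is essentially local and purely algebraic in nature: since $\nabla^V$ is a fixed connection that does not depend on $t$, we may freely interchange $\pa_t$ and $\nabla^V$, so that
\[
  \frac{\pa}{\pa t}\nabla^V u = \nabla^V\left( \frac{\pa u}{\pa t}\right) = \nabla^V(L_t u) + \nabla^V E.
\]
The task is thus to rewrite $\nabla^V(L_t u)$ in the desired form. Writing $L_t u = \sum_{j=0}^2 a_j\cdot \nabla^j u$ with $a_j\in \CI_{\fc}(X;T^j_0 X\otimes\End(V))$ as in \eqref{fs.10} (here $\nabla$ combines the Levi-Civita connection of a polyfibred cusp metric $g_{\fc}$ with $\nabla^V$), one applies $\nabla^V$ to each term using the Leibniz rule. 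The derivatives landing on the coefficients $a_j$, together with the terms where $\nabla^V$ lands on some but not all of the factors $\nabla^k u$ with $k< j$, produce contributions that are differential operators of order at most $2$ applied to $\nabla^V u$ with coefficients in $\CI_{\fc}(X; {}^{\fc}T^*_{\bullet}\tX\otimes\End(\cdot))$; collecting the top-order part gives the family $\widetilde L_t$. One must also commute $\nabla^V$ past the second covariant derivative, which by definition of curvature introduces the Riemann curvature tensor of $g_{\fc}$ and the curvature of $\nabla^V$; since $g_{\fc}$ has bounded geometry in the sense made precise by Remark~\ref{Rm_asymp} (its curvature tensor lies in $\CI_{\fc}$) and $V$ carries a fixed Euclidean connection, these curvature terms have $\CI_{\fc}$ coefficients and, being of order zero in $u$, get absorbed into the $fu$ term.

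The second point to check is that $\widetilde L_t$ is again a uniformly elliptic family in $\Diff^2_{\fc}(X; T^*X\otimes V)$. This is immediate from the computation: the second-order part of $\nabla^V(L_t u)$ in $\nabla^V u$ has principal symbol equal, up to identifying $T^*X\otimes V$-valued sections appropriately, to $\sigma_2(L_t)\otimes \Id_{T^*X}$, since the lower-order Leibniz terms and the curvature corrections are of order $\le 1$ in $\nabla^V u$. Hence $\det\bigl(\sigma_2(\widetilde L_t)(\xi,\xi)\bigr)$ is a power of $\det\bigl(\sigma_2(L_t)(\xi,\xi)\bigr)$, and the uniform lower bound for $L_t$ from Definition~\ref{de.2} transfers directly to $\widetilde L_t$ with a possibly adjusted constant. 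The coefficients of $\widetilde L_t$ are built from $a_j$, their $\nabla$-derivatives, and the curvature tensors, all of which are in $\CI_{\fc}(X;\cdots)$, and they depend smoothly on $t$ because the $a_j$ do; so $\widetilde L_t\in \Diff^2_{\fc}(X;T^*X\otimes V)$ as required. Likewise $f\in\CI_{\fc}(X;T^*X\otimes\End(V))$ since it is assembled from the curvature of $g_{\fc}$, the curvature of $\nabla^V$, and the $\nabla$-derivatives of the coefficients $a_j$, none of which involve $t$ after the estimates are made uniform.

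Finally, one verifies the regularity bookkeeping: if $u\in \cC^{k,k/2}_{\fc}([0,T]\times X;V)$ then $\nabla^V u\in \cC^{k-1,(k-1)/2}_{\fc}([0,T]\times X; T^*X\otimes V)$, and with $E\in\cC^{k-2,k/2-1}_{\fc}$ the right-hand side $\widetilde L_t\nabla^V u + \nabla^V E + fu$ lies in the appropriate space so that the equation for $\nabla^V u$ makes sense in the $\cC_{\fc}$ category. The main obstacle --- really the only nontrivial point --- is making sure the commutation of $\nabla^V$ with the second covariant derivative in $L_t$, and with the metric contractions implicit in \eqref{fs.10}, produces only $\CI_{\fc}$-bounded curvature coefficients rather than terms that blow up near $\pa\tX$; this is exactly where the bounded-geometry property of polyfibred cusp metrics (Remark~\ref{Rm_asymp}, and the fact established in \S\ref{fc.0} that such metrics lie in $\CI_{\fc}(\tX;{}^{\fc}T^*\tX\otimes{}^{\fc}T^*\tX)$) is used, and it is what guarantees that the correction terms genuinely have coefficients in $\CI_{\fc}(X;\cdots)$.
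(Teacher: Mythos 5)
Your proposal is correct and follows essentially the same route as the paper: differentiate the equation with the (time-independent) connection, rewrite $\nabla^V(L_t u)$ as $L_t$ acting on $\nabla^V u$ plus the commutator $[\nabla, L_t]u$, observe that the commutator is of order at most two in $u$ (hence first order in $\nabla^V u$ plus a zero-order term $fu$) with $\CI_{\fc}$ coefficients, and note that the new family has the same principal symbol as $L_t$, so uniform ellipticity is preserved. Your explicit appeal to the curvature terms and the bounded geometry of Remark~\ref{Rm_asymp} simply makes precise what the paper handles more tersely via local trivializations and $[\pa_i,\pa_j]=0$.
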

\begin{proof}
Let $g_{\fc}$ be a choice of polyfibred cusp metric.  Using its Levi-Civita connection and the connection $\nabla^{V}$ of $V$, we have an induced connection on any tensor product of copies of $V$, $TX$ and their duals.  To simplify the notation, we will denote all these induced connections by $\nabla$.  In terms of these connections, the family of operators $L_{t}$ is of the form
\begin{equation}
         L_{t} u= a\cdot \nabla\nabla u + b\cdot\nabla u + cu
\label{de.10b}\end{equation}
where $a\in \CI_{\fc}(X; TX\otimes TX\otimes \End(V))$, 
$b\in \CI_{\fc}(X; TX\otimes \End(V))$ and $c\in \CI_{\fc}(X;\End(V))$.  From that perspective, the operator $L_{t}$ not only acts on sections of $V$, but also on sections of any bundle given by the tensor product of copies of $V$, $TX$ and their duals.  It suffices then to notice that  
\[
     \nabla L_{t} u= L_{t} \nabla u + [\nabla, L_{t}]u,
\]
and consequently that
\[
     \frac{\pa}{\pa t} \nabla u = L_{t}\nabla u + [\nabla, L_{t}] u + \nabla E.
\]

Since in local coordinates where the various bundles are trivialized, the covariant derivative $\nabla_i$ is the same as $\pa_i$ (the trivial covariant derivative) modulo terms of order zero, we see from the identity $[\pa_i, \pa_j]=0$ that the term $[\nabla, L_{t}] u$ involves at most two derivatives of $u$.  On the other hand, $L_{t}$, seen as an element of $\Diff^{2}_{\fc}(X; V\otimes T^{*}X)$, is still uniformly elliptic, so the result follows.
\end{proof}

With this lemma, we can now obtain the following decay estimate.

\begin{proposition}
For $t\in [0,T]$, let $t\mapsto L_{t}\in \Diff^{2}_{\fc}(X)$ be a smooth family of operators with negative principal symbols such that
$\pa_{t}- L_{t}$ is uniformly  parabolic.  Suppose that $u\in \cC^{k,\frac{k}{2}}_{\fc}([0,T]\times X)$ is  a solution to the initial value problem
\[
    \frac{\pa u}{\pa t} - L_{t}u = E, \quad u(0,\cdot)= u_{0}(\cdot),
\]
with $u_{0}\in x^{\gamma}\cC^{k}_{\fc}(X)$ and $E\in x^{\gamma}\cC_{\fc}^{k-2,\frac{k}{2}-1}([0,T]\times X)$ for some $k\ge 2$ and $\gamma=(\gamma_1,\ldots,\gamma_\ell)$ with $\gamma_i\ge 0$.  Then it follows that
\[
     u \in x^{\gamma}\cC^{k-2,\frac{k}{2}-1}_{\fc}([0,T]\times X).
\]
\label{de.3}\end{proposition}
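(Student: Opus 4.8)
The plan is to deduce this from a bundle-valued generalization, proved by induction on $k$: if $V\to X$ is a smooth Euclidean vector bundle equipped with a metric-compatible Euclidean connection and $t\mapsto L_t\in\Diff^2_{\fc}(X;V)$ is a uniformly elliptic smooth family whose principal symbol is a negative multiple of the identity (the natural bundle analogue of having negative principal symbol in the sense of Definition~\ref{de.1}), then a solution $u\in\cC^{k,\frac{k}{2}}_{\fc}([0,T]\times X;V)$ of $\pa_t u-L_t u=E$ with $u(0,\cdot)=u_0\in x^{\gamma}\cC^{k}_{\fc}(X;V)$ and $E\in x^{\gamma}\cC^{k-2,\frac{k}{2}-1}_{\fc}([0,T]\times X;V)$ satisfies $u\in x^{\gamma}\cC^{k-2,\frac{k}{2}-1}_{\fc}([0,T]\times X;V)$. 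Taking $V=\underline{\bbR}$ recovers the proposition.

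For the base case $k=2$ I would first establish the bundle-valued analogue of Corollary~\ref{de.9}. Here $E\in x^{\gamma}\cC^{0,0}_{\fc}$, and a Bochner-type computation shows that $|u|^2\in\cC^{2,1}_{\fc}([0,T]\times X)$ satisfies a scalar differential inequality $\pa_t|u|^2-\hat L_t|u|^2\le|E|^2$: differentiating produces a term $-2a^{ij}\langle\nabla_i u,\nabla_j u\rangle$ (with $a$ the positive-definite leading coefficient of $L_t$) into which the first-order part of $L_t$ is absorbed via Cauchy--Schwarz and the arithmetic-geometric-mean inequality, at the cost of a large zeroth-order constant; thus $\hat L_t\in\Diff^2_{\fc}(X)$ can be taken to have the same principal symbol as $L_t$ plus such a constant, so that $\pa_t-\hat L_t$ is uniformly parabolic with $\hat L_t$ of negative principal symbol. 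Since $|u|^2(0,\cdot)\in x^{2\gamma}\cC^{2}_{\fc}(X)$ and $|E|^2\in x^{2\gamma}\cC^{0,0}_{\fc}([0,T]\times X)$, Proposition~\ref{de.4} applied with the weight $x^{2\gamma}$ gives $|u|^2\le Ke^{ct}x^{2\gamma}$, hence $|u|\le K'x^{\gamma}$ and $u\in x^{\gamma}\cC^{0,0}_{\fc}([0,T]\times X;V)$.

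For the inductive step, assume the claim for $k$ and let $u$ satisfy the hypotheses for $k+1$. By Lemma~\ref{de.10}, $w:=\nabla u\in\cC^{k,\frac{k}{2}}_{\fc}([0,T]\times X;T^*X\otimes V)$ satisfies $\pa_t w=\widetilde L_t w+\widetilde E$ with $\widetilde L_t\in\Diff^2_{\fc}(X;T^*X\otimes V)$ uniformly elliptic and, by its construction in the proof of that lemma, with the same negative-multiple-of-identity principal symbol as $L_t$, and $\widetilde E=\nabla E+fu$, $f\in\CI_{\fc}(X;T^*X\otimes\End(V))$. Applying the inductive hypothesis to $u$ itself (legitimate since $u\in\cC^{k,\frac{k}{2}}_{\fc}$, $u_0\in x^{\gamma}\cC^{k+1}_{\fc}\subset x^{\gamma}\cC^{k}_{\fc}$, and $E\in x^{\gamma}\cC^{k-1,\frac{k-1}{2}}_{\fc}\subset x^{\gamma}\cC^{k-2,\frac{k-2}{2}}_{\fc}$) yields $u\in x^{\gamma}\cC^{k-2,\frac{k-2}{2}}_{\fc}$, whence $fu\in x^{\gamma}\cC^{k-2,\frac{k-2}{2}}_{\fc}$; together with $\nabla E\in x^{\gamma}\cC^{k-2,\frac{k-2}{2}}_{\fc}$ this gives $\widetilde E\in x^{\gamma}\cC^{k-2,\frac{k-2}{2}}_{\fc}([0,T]\times X;T^*X\otimes V)$, and $w(0,\cdot)=\nabla u_0\in x^{\gamma}\cC^{k}_{\fc}$. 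The inductive hypothesis applied to $w$ then gives $\nabla u=w\in x^{\gamma}\cC^{k-2,\frac{k-2}{2}}_{\fc}([0,T]\times X;T^*X\otimes V)$.

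It remains to bound every mixed derivative $\pa_t^j\nabla^l u$ of parabolic order $2j+l\le k-1$ in $x^{\gamma}\cC^{0,0}_{\fc}$: those with $l\ge1$ equal $\pa_t^j\nabla^{l-1}w$ and are controlled by the bound just obtained on $w$, while a pure time derivative $\pa_t^j u$ with $j\ge1$ is rewritten via the equation as $\pa_t^{j-1}(L_t u+E)$ and expanded by the Leibniz rule, every resulting term being $\pa_t^{j-1}E$ (controlled since $E\in x^{\gamma}\cC^{k-1,\frac{k-1}{2}}_{\fc}$) or a $\CI_{\fc}$-coefficient times a derivative $\pa_t^r\nabla^i u$ of parabolic order at most $k-2$, again controlled by $w$ (or, when $i=0$, by the bound on $u$). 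Hence $u\in x^{\gamma}\cC^{k-1,\frac{k-1}{2}}_{\fc}([0,T]\times X;V)$, which is the case-$(k+1)$ conclusion, and the induction closes. I expect the main obstacle to be the base case: one must carry out the bundle-to-scalar reduction for $|u|^2$ correctly and verify that the error terms coming from the sub-principal part of $L_t$ (and from Lemma~\ref{de.10} in later steps) are genuinely absorbable, so that $|E|^2$ remains the only inhomogeneous term to which Proposition~\ref{de.4} is applied; the inductive bookkeeping of the parabolic regularity indices is then routine.
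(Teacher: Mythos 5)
Your proposal is correct and follows essentially the same route as the paper's: the weighted sup bounds are obtained from Lemma~\ref{de.10} together with a squared-norm (Bochner-type) reduction that feeds the scalar maximum principle of Proposition~\ref{de.4}, and the pure time derivatives are then recovered directly from the equation. The only difference is organizational — you package the argument as an induction on $k$ for a bundle-valued statement, confining the Bochner computation to the base case and passing from $u$ to $\nabla u$ via Lemma~\ref{de.10}, whereas the paper fixes $k$, inducts on the order of the spatial derivative by applying Proposition~\ref{de.4} to $|\nabla^{m-1}u|^{2}$ at each stage, and handles time derivatives in a second step; unwinding your induction yields the same computations.
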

\begin{proof}
We need to show that  
\begin{equation}
    \sup_{[0,T]\times X} \frac{| \pa_t^j \nabla^l u|}{x^\gamma} <\infty  
\label{de.3b}\end{equation}
for all  $j,l\in \bbN_0$ such that $2j+l\le k-2$.  We will proceed in two steps. \\

\noindent \textbf{Step 1:} The estimate \eqref{de.3b} holds for $j=0$ and $l\le k-2$.  \\

Our strategy is to proceed by induction on $k$ using Proposition~\ref{de.4} and Lemma~\ref{de.10}.  The case $k=2$ is given by Corollary~\ref{de.9}.  Thus, assume the result is true for $k=m$.  We need to prove it holds for $k=m+1$.  

Fix a polyfibred cusp metric $g_{\fc}$ and let $\nabla$ denote the corresponding Levi-Civita connection.  Proceeding by recurrence using Lemma~\ref{de.10}, we know that the evolution equation of the $(m-1)$th covariant derivative of $u$,
\[
    \nabla^{m-1}u= \underbrace{\nabla\cdots \nabla}_{m-1 \; \mbox{times}} u,
\]  
is given by
\begin{equation}
   \frac{\pa}{\pa t} \nabla^{m-1}u = L_{t}^{m-1} \nabla^{m-1}u + \nabla^{m-1}E +
     \sum_{j=0}^{m-2} b_{j} \nabla^{j}u,
\label{de.11}\end{equation}
where $L_{t}^{m-1}$ is a uniformly elliptic family of operators and 
\[
b_j \in\CI_{\fc}(X; \End( T^j_0X, T^{m}_0X )).
\]
Thus, we have
\begin{equation}
\begin{aligned}
   \frac{\pa}{\pa t} | \nabla^{m-1} u |^{2} &= 2\langle \frac{\pa}{\pa t} \nabla^{m-1}u,  \nabla^{m-1}u\rangle  \\
   &= 2\langle L_{t}^{m-1} \nabla^{m-1}u + \nabla^{m-1}E + \sum_{j=0}^{m-2}b_{j}\nabla^{j}u, \nabla^{m-1}u\rangle,
\end{aligned}
\label{de.12}\end{equation}
where the symbol $\langle\cdot,\cdot\rangle$ denote the inner product between tensors induced by the polyfibred cusp metric $g_{\fc}$.  Writing the operator $L_{t}^{m-1}$ as in \eqref{de.10b},
\begin{equation}
   L_{t}^{m-1} v= a^{\alpha\beta}\cdot \nabla_{\alpha}\nabla_{\beta} v + b^{\alpha}\cdot \nabla_{\alpha} v + c v,
\label{de.13}\end{equation}
where the sum is taken over repeated indices,   
we see using the ellipticity of $L_{t}^{m-1}$ that,
\begin{equation}
\begin{aligned}
  L_{t}^{m-1}| \nabla^{m-1}u|^{2} &= 2a^{\alpha\beta}\cdot \langle\nabla_{\alpha}\nabla^{m-1}u,\nabla_{\beta}\nabla^{m-1}u\rangle + 2\langle L_{t}^{m-1}\nabla^{m-1}u, \nabla^{m-1}u\rangle  \\
&\ge   2\langle L_{t}^{m-1}\nabla^{m-1}u, \nabla^{m-1}u\rangle.
\end{aligned}
\label{de.14}\end{equation}
Combining \eqref{de.12} with \eqref{de.14} and using the Cauchy-Schwarz inequality, we obtain
\begin{equation}
\begin{aligned}
\frac{\pa}{\pa t} | \nabla^{m-1} u |^{2}&\le L^{m-1}_{t} | \nabla^{m-1} u |^{2} +
2\langle \nabla^{m-1}E + \sum_{j=0}^{m-2}b_{j}\nabla^{j}u, \nabla^{m-1}u\rangle    \\
&\le L^{m-1}_{t} | \nabla^{m-1} u |^{2} +  |\nabla^{m-1}E|^{2} + |\nabla^{m-1}u|^{2} \\
&\quad \quad + \sum_{j=0}^{m-2} \left(| b_{j}\nabla^{j}u|^{2} + |\nabla^{m-1}u|^{2}\right).
\end{aligned}
\label{de.15}\end{equation}
Since we assume by induction that the estimate \eqref{de.3b} holds for $j=0$ and $l\le m-2$, we conclude that $|\nabla^{m-1}u|^{2}$ satisfies an evolution equation of the form
\begin{equation}
\frac{\pa}{\pa t} | \nabla^{m-1} u |^{2}\le L^{m-1}_{t} | \nabla^{m-1} u |^{2} + m|\nabla^{m-1} u|^2 +
  E^{m-1}, 
\label{de.16}\end{equation}  
with  $E^{m-1}\in x^{2\gamma}\cC^{0}_{\fc}([0,T]\times X)$.  
By Proposition~\ref{de.4}, this means there exist positive constants $K$ and $c$ such that
\begin{equation}
   0 \le |\nabla^{m-1} u|^{2} \le  Ke^{ct} x^{2\gamma}, \quad \forall\; t\in [0,T],
\label{de.17}\end{equation}
which completes \textbf{Step 1}. \\

\noindent \textbf{Step 2:} For fixed $j\in\bbN_0$ with $2j\le k-2$, the estimate \eqref{de.3b} holds for all $l\in \bbN_0$ such that
$l\le k-2-2j$. \\

 We will proceed by induction on $j$.  The case $j=0$ is \textbf{Step 1}.  From the evolution equation of $u$, we see that
 \begin{equation}
        \sup_{[0,T]\times X} \frac{1}{x^{\gamma}}\left| \nabla^l \frac{\pa u}{\pa t} \right| = \sup_{[0,T]\times X} \frac{\left| \nabla^l (L_t u +E) \right|}{x^{\gamma}} <\infty   \quad \forall \ l\le k-4,
 \label{de.17b}\end{equation}
which is the case $j=1$.  More generally, since $\pa_t^{j} u$ satisfies the evolution equation
\begin{equation}
  \frac{\pa}{\pa t} ( \pa^j_t u) = L_t(\pa^{j}_t u) + \pa_t^jE + 
    \sum_{q=0}^{j-1} \left( \frac{j!}{q!(j-q)!} \right) (\pa_t^{j-q}L_t) (\pa_t^q u),
\label{de.17c}\end{equation}
 we see from this equation that if \eqref{de.3b} holds for $2j=2m\le k-4$ and $l\le k-2-2m$, then
 it also holds for $j=m+1$ and $l\le k-4-2m$.  This shows that \eqref{de.3b} holds for all $j,l\in\bbN_0$ with $2j+l\le k-2$.
\end{proof}

As a prelude to our study of the Ricci flow asymptotics, we can use Proposition~\ref{de.3} to describe the asymptotic behavior of solutions to linear uniformly parabolic equations.

\begin{theorem}
For $t\in [0,T]$, let $t\mapsto L_t \in \Diff^2_{\fc}(\tX)$ be a smooth family of uniformly elliptic operators with negative principal symbols.  Then for $E\in \CI_{\fc}([0,T]\times \tX)$, the equation
\begin{equation}
    \frac{\pa u}{\pa t} = L_t u +E , \quad u(0,\cdot)=u_0\in \CI_{\fc}(\tX),
\label{de.18b}\end{equation}
has a unique solution $u$ in $\CI_{\fc}([0,T]\times \tX)$.  
\label{de.18}\end{theorem}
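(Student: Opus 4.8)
\emph{Proof (plan).}
The plan is to first produce a solution $u$ in the larger space $\CI_{\fc}([0,T]\times X)$, and then to promote it to $\CI_{\fc}([0,T]\times\tX)$ by constructing, order by order at each boundary hypersurface $\tH_i$, the Taylor expansion that $u$ ought to have there, the matching of this formal expansion with the genuine boundary behaviour being controlled by the decay estimate of Proposition~\ref{de.3}. For the first step, since $\pa_t-L_t$ is uniformly parabolic, the coefficients of $L_t$ are bounded (with all derivatives) with respect to a fixed polyfibred cusp metric, whose geometry is bounded in the sense of Cheng--Yau quasi-coordinates; standard parabolic Schauder theory in such charts then yields a solution $u\in\CI_{\fc}([0,T]\times X)$ of \eqref{de.18b} (\cf \cite{Kobayashi}, \cite{Wu06}). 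Uniqueness follows from the maximum principle: if $u_1,u_2$ are two solutions then $w=u_1-u_2$ solves the homogeneous equation with $w(0,\cdot)=0$, and for $c$ large enough (independently of $K$) the barrier $v=Ke^{ct}x$ from the proof of Proposition~\ref{de.4} satisfies $\pa_t v\ge\widetilde L_t v$ and $v(0,\cdot)\ge xw(0,\cdot)=0$ for every $K>0$; letting $K\to0^{+}$ forces $xw\le0$, and symmetrically $xw\ge0$, so $w\equiv0$. It therefore remains only to show that this $u$ is smooth up to $\pa\tX$ in the $\tX$-structure.

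I would do this by induction on $d=\depth(\tX)$, the case $d=0$ being classical linear parabolic theory on the closed manifold $\tX=\bX=X$. Assume $d\ge1$ and that the theorem holds for all compactifications of depth $<d$. For each $i$, the operator $L_t\in\Diff^2_{\fc}(\tX)$ has a well-defined normal operator at $\tH_i$, obtained by restricting its $\fc$-coefficients to $\tH_i$ and pushing forward along $\tPhi_i$ (an argument analogous to Proposition~\ref{ext.8}); more generally, conjugating by $x_i^k$ and taking the normal operator produces a smooth family $t\mapsto L^{(k)}_{t,i}\in\Diff^2_{\fc}(\tD_i)$. Because conjugation by the smooth positive function $x_i^k$ only shifts terms of order zero, and because the principal symbol of $L_t$ restricts to a negative definite form on $T^*\tD_i$, each $L^{(k)}_{t,i}$ is again uniformly elliptic with negative principal symbol, so $\pa_t-L^{(k)}_{t,i}$ is uniformly parabolic on $\tD_i$. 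A short combinatorial check (using transversality of the $\bD_j$) shows $\depth(\tD_i)\le d-1$, so the inductive hypothesis applies to every $L^{(k)}_{t,i}$.

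Now for the recursion. By Proposition~\ref{ext.2}, $u_0$ has a Taylor series $u_0\sim\sum_{k\ge0}\tPhi_i^*(a_{k,i})x_i^k$ at $\tH_i$ with $a_{k,i}\in\CI_{\fc}(\tD_i)$. Seeking a function $G$ whose Taylor series at $\tH_i$ is $\sum_{k\ge0}\tPhi_i^*(c_{k,i})x_i^k$ and substituting formally into $\pa_tG=L_tG+E$, matching the coefficient of $x_i^k$ shows that $c_{k,i}$ must solve a parabolic initial value problem on $\tD_i$,
\[
\frac{\pa c_{k,i}}{\pa t}=L^{(k)}_{t,i}c_{k,i}+E_{k,i},\qquad c_{k,i}(0,\cdot)=a_{k,i},
\]
where $E_{k,i}\in\CI_{\fc}([0,T]\times\tD_i)$ is built from $E$ and $c_{0,i},\dots,c_{k-1,i}$. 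By the inductive hypothesis each such problem has a unique solution $c_{k,i}\in\CI_{\fc}([0,T]\times\tD_i)$, so all the $c_{k,i}$ are determined recursively. These formal expansions at the different $\tH_i$ are mutually compatible along the corners $\tH_i\cap\tH_j$ (forced by the nesting of the indicial equations and the inductive structure on the $\tD_i$), so by a standard Borel-type summation on the manifold with corners $\tX$, using the extension maps $\Xi_i$ of \eqref{ext.7}, there is $G\in\CI_{\fc}([0,T]\times\tX)$ realizing all of them.

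Finally, put $v=u-G\in\CI_{\fc}([0,T]\times X)$; then $\pa_tv=L_tv+E'$ with $E'=E+L_tG-\pa_tG$, and by construction both $E'$ and $v(0,\cdot)=u_0-G(0,\cdot)$ vanish to infinite order at $\pa\tX$, i.e. lie in $x^{\gamma}\CI_{\fc}$ for every multi-index $\gamma$. Applying Proposition~\ref{de.3} for arbitrary $k$ and arbitrary $\gamma$ gives $v\in x^{\gamma}\CI_{\fc}([0,T]\times X)$ for all $\gamma$; such a $v$, together with all its $\fc$-covariant derivatives, is bounded and vanishes at $\pa\tX$ in the smooth structure of $\tX$, hence lies in $\CI_{\fc}([0,T]\times\tX)$. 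Therefore $u=G+v\in\CI_{\fc}([0,T]\times\tX)$, and uniqueness was already established. I expect the main obstacle to be the recursive step: pinning down the normal/indicial operators $L^{(k)}_{t,i}$, verifying that they remain uniformly parabolic on the lower-depth compactifications $\tD_i$ so that the induction closes, and checking that the order-by-order expansions produced at the various boundary hypersurfaces are compatible at the corners.
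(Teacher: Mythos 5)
Your proposal is essentially sound and draws on the same toolkit as the paper's proof: induction on the depth of $\tX$, restriction to $\tD_i$ of operators conjugated by powers of the boundary defining functions, the extension maps $\Xi_i$ of \eqref{ext.7}, and the decay estimate of Proposition~\ref{de.3}. The organization, however, is genuinely different. You construct a full formal solution at each $\tH_i$ (independently of $u$), Borel-sum it to a single $G\in\CI_{\fc}([0,T]\times\tX)$, and then apply Proposition~\ref{de.3} at infinite order to the remainder $u-G$. The paper instead runs an induction on the multi-index weight $|\gamma|$: at each step it forms the rescaled remainder $w=(u-v_\gamma)/x^{\gamma}$ of the \emph{actual} solution, restricts its evolution equation to $\tD_j$, solves there by the depth induction, extends by $\Xi_j$, and applies Proposition~\ref{de.3} once to gain a single factor of $x_j$. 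Your scheme is the classical ``formal solution plus rapidly vanishing remainder'' argument and uses the decay estimate only once; the paper's scheme extracts every Taylor coefficient from $u$ itself, so compatibility across the different boundary hypersurfaces is automatic and no summation of prescribed expansions on a manifold with corners is ever needed. Your uniqueness argument (barrier $Ke^{ct}x$, $K\to 0^+$) is fine and is in effect the paper's maximum-principle argument; note only that existence of $u\in\CI_{\fc}([0,T]\times X)$ requires the exhaustion-plus-maximum-principle bound before interior Schauder and Arzela-Ascoli, not Schauder theory alone, and that when $\dim\tD_i=0$ the induced model problems degenerate to ODEs (the paper treats this case explicitly).

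The one place where your argument is not yet a proof is precisely the sentence asserting that ``these formal expansions at the different $\tH_i$ are mutually compatible along the corners \dots so by a standard Borel-type summation there is $G\in\CI_{\fc}([0,T]\times\tX)$ realizing all of them.'' In the normal crossings case this is the delicate point, and it needs two separate arguments: (i) at a corner $\tH_i\cap\tH_j$ you must show that expanding first in $x_i$ and then in $x_j$ gives the same double array as the other order; this can be done by expanding each coefficient $c_{k,i}\in\CI_{\fc}([0,T]\times\tD_i)$ at the hypersurface $\tPhi_i(\tH_i\cap\tH_j)$, observing that both families of corner coefficients satisfy the same recursively defined parabolic problems on $\tD_{\{i,j\}}$, and invoking the uniqueness part of the inductive hypothesis there — but this has to be set up and checked, it is not automatic from ``nesting of the indicial equations''; and (ii) you need a Borel/Whitney-type extension statement in the $\CI_{\fc}$-category producing one function with prescribed fibre-constant expansions at \emph{all} hypersurfaces simultaneously; the maps $\Xi_i$ handle one hypersurface at a time, so one must iterate and correct order by order, at which point the construction essentially reproduces the paper's induction on $|\gamma|$. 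So the gap is fixable, but closing it honestly either requires these two additional lemmas or collapses your argument back into the paper's interleaved induction.
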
 
\begin{proof}
Replacing $u$ by $u-u_0$ if needed, we can assume $u_0=0$.  
Proceeding by induction on the depth of $\tX$, we can assume the theorem holds on $\tD_i$ for all $i\in \{1,\ldots,\ell\}$.  Indeed, when the depth of $\tX$ is zero, that is, when $\tX$ is a closed manifold, the theorem is a standard result.    
Using Schauder theory, the differential equation \eqref{de.18b} has a solution $u\in \CI_{\fc}(X)$.  More precisely, let $\{\Omega_j\}$ be an exhaustion of $X$ by compact subdomains and let $\phi_j\in \CI([0,T]\times \Omega_i)$ be the unique solution to the equation
\[
     \frac{\pa \phi_j}{\pa t} = L_t \phi_j +E,  \quad  \phi_j(0,\cdot)= 0, \quad 
         \left. \phi_j \right|_{\pa\Omega_j} =0.
\]
By the maximum principle, we can find a constant $K>0$ depending on $T$, $\sup_{[0,T]\times X} |E|$ and $L_t$, but not on $j$, such that 
\[
                   \sup_{[0,T]\times \Omega_j} |\phi_j| < K.
\]
Combining this with the interior parabolic Schauder estimate (see for instance \cite{Krylov}) in good quasi-coordinates as in \cite{Kobayashi} or \cite{Wu06} gives uniform control on the $\cC^{k}_{\fc}$-norms of $\phi_j$, so that by Arzela-Ascoli, there exists a function $u\in \CI_{\fc}(X)$ such that $\phi_j\to u$  in $\cC^{k}_{\fc}(X)$ uniformly on each compact subset and for each $k\in \bbN_0$.  In particular, $u\in \CI_{\fc}(X)$ is a solution to \eqref{de.18b}.

We need to show that it is in fact in $\CI_{\fc}(\tX)$.  This amounts to show that for all $\gamma\in \bbN_0^{\ell}$, there exists $v_{\gamma}\in \CI_{\fc}([0,T]\times \tX)$ such that 
\begin{equation}
  u-v_{\gamma} \in x^{\gamma}\CI_{\fc}([0,T]\times X).  
\label{de.19}\end{equation}
Clearly, for $|\gamma|=0$, it suffices to take $v_\gamma =0$ for \eqref{de.19} to hold.  To find 
$v_{\gamma}$ for all $\gamma\in \bbN_0^{\ell}$, we can proceed by induction on $k=|\gamma |$.  

Thus, assume that for some $k\in \bbN_0$ and for all $\gamma\in \bbN_0^{\ell}$ with $|\gamma|\le k$, we can find $v_{\gamma}\in \CI_{\fc}([0,T]\times \tX)$ such that \eqref{de.19} holds.  Let 
$\alpha\in \bbN_0^{\ell}$ with $|\alpha|=k+1$ be given.   We need to show that we can find
$v_{\alpha}\in \CI_{\fc}([0,T]\times \tX)$ such that 
\[
               u-v_{\alpha}\in x^{\alpha} \CI_{\fc}([0,T]\times X).  
\]
Choose $\gamma,\beta\in \bbN^{\ell}_{0}$ such that $\alpha=\gamma+\beta$ with $|\gamma|=k$ and $|\beta|=1$.  By our inductive hypothesis, we can find $v_{\gamma}\in \CI_{\fc}([0,T]\times \tX)$ such that \eqref{de.19} holds.  This means the function 
\[
            w= \frac{u-v_\gamma}{x^{\gamma}}
\]
is in $\CI_{\fc}([0,T]\times X)$.  From the evolution equation of $u$, we see that the function $w$ satisfies the evolution equation
\begin{equation}
  \frac{\pa w}{\pa t} = L_t^{\gamma} w+ E^\gamma, \quad w(0,\cdot)= w_0 =\frac{u_0- v_{\gamma}(0,\cdot)}{x^{\gamma}}\in \CI_{\fc}(\tX),
\label{de.20}\end{equation}             
with $L_t^{\gamma}= x^{-\gamma}\circ L_t \circ x^{\gamma}$ and $E^{\gamma}= x^{-\gamma}(E+L_t v- \frac{\pa v_{\gamma}}{\pa t})$.  Since $\sigma_2(L_t^{\gamma})= \sigma_2(L_t)$, the family $L_t^{\gamma}\in \Diff^2_{\fc}(\tX)$ is also uniformly elliptic.  Moreover, since $\frac{\pa w}{\pa t}$ and 
$L_t^{\gamma}w$ are in $\CI_{\fc}([0,T]\times X)$, we infer from \eqref{de.20} that $E^{\gamma}\in \CI_{\fc}([0,T]\times X)$.  On the other hand, since $E$, $\frac{\pa v_{\gamma}}{\pa t}$  and $L_t v_{\gamma}$ are in $\CI_{\fc}([0,T]\times \tX)$, this means that in fact 
\[
         E^{\gamma}\in \CI_{\fc}([0,T]\times \tX).  
\]
      
Now, because $|\beta|=1$, there exists $i$ such that $\beta_i=1$ and $\beta_j=0$ for $j\ne i$.  For this $i$, we can look at the restriction of \eqref{de.20} to $\bD_i$, which is given by
\begin{equation}
   \frac{\pa w_i}{\pa t} = L^{\gamma}_{i,t} w_i +E^{\gamma}_i, \quad w_i(0,\cdot)= w_{0,i}\in \CI_{\fc}(\tD_i),
\label{de.21}\end{equation}      
where $E^{\gamma}_i = \left. E^{\gamma}\right|_{\tD_i}$ and $L^{\gamma}_{i,t}\in \Diff^2_{\fc}(\tD_i)$ is the family of differential operators such that 
\[
            L^{\gamma}_{i,t} ( \left.f\right|_{\tD_i}) = \left. L_t^{\gamma} f \right|_{\tD_i}, \quad 
            \forall f\in \CI_{\fc}(\tX).  
\]
When $\dim \tD_i >0$, this family is uniformly elliptic, while when $\dim \tD_i =0$, the operator $L^{\gamma}_{i,t}$ is of order zero and the evolution equation \eqref{de.21} is an ordinary differential equation.  

In any case, by our inductive hypothesis on the depth of $\tX$, we know that 
the evolution equation \eqref{de.21} has a unique solution $w_i\in \CI_{\fc}([0,T]\times \tD_i)$.  This suggests to consider the function $w_{\Xi_i}= \Xi_i(w_i)\in \CI_{\fc}([0,T]\times \tX)$, where $\Xi_i$ is the map introduced in \eqref{ext.7}.  Since it satisfies the evolution equation
\[
        \frac{\pa w_{\Xi_i}}{\pa t} = \Xi_i \left( L^{\gamma}_{i,t} w_i + E_i^{\gamma}\right),
\]
we see that 
\begin{equation}
    \frac{\pa}{\pa t} ( w-w_{\Xi_i})= L_t^{\gamma} ( w-w_{\Xi_i}) +F
\label{de.22}\end{equation}
with 
\[
  F= L_t w_{\Xi_i} - \Xi_i (L^{\gamma}_{i,t} w_i) + E^{\gamma} - \Xi_i ( E^{\gamma}_i ) \in \CI_{\fc}([0,T]\times \tX).
\]
From the definition of the operator $L^{\gamma}_{i,t}$, we see that the restriction of $F$ to $\tD_i$ vanishes, so that in fact $F\in x_i\CI_{\fc}([0,T]\times \tX)$.  Applying Proposition~\ref{de.3} to the evolution equation 
\eqref{de.22}, we therefore conclude that 
\[
        w-w_{\Xi_i} \in x_i\CI_{\fc}([0,T]\times \tX).  
\]
Consequently, if we pick $v_{\alpha}= v_{\gamma} + x^{\gamma}w_{\Xi_i}\in \CI_{\fc}([0,T]\times \tX)$, we have as desired that 
\[
     u-v_{\alpha}=  x^{\gamma}(w-w_{\Xi_i}) \in x^{\alpha}\CI_{\fc}([0,T]\times \tX).
\]

Finally, to show the solution is unique, we can simply apply Yau's generalized maximum principle.  Alternatively,  suppose $u'\in \CI_{\fc}(\tX)$ is another solution.  Then its restriction to $\tD_i$ satisfies the same parabolic equation as the one of $u$.  Since the theorem holds on $\tD_i$ by our inductive assumption, this means $v=u- u'$ vanishes on $\pa \tX$.  Moreover, it satisfies the evolution equation
\[
          \frac{\pa v}{\pa t} = L_t v, \quad v(0,\cdot)= 0.  
\] 
Applying the maximum principle on an exhaustion of $X$ by compact sets, we thus conclude $v\equiv 0$, establishing uniqueness.  

\end{proof}

\section{Evolution of spatial asymptotics along the Ricci flow} \label{ha.0}

Let $\omega_{0}$ be the K\"ahler form of an asymptotically tame polyfibred cusp K\"ahler metric on the quasiprojective manifold $X= \bX\setminus \bD$ with standard spatial asymptotics $\{\omega_{I,0}\}$ and $\{c_{i}\}$.  Consider the normalized K\"ahler-Ricci flow associated to this metric, 
\begin{equation}
    \frac{\pa \widetilde{\omega}_{t}}{\pa t} = - \Ric( \widetilde{\omega}_{t})- \widetilde{\omega}_{t} , \quad \widetilde{\omega}_{0}= \omega_{0}.
\label{sch.1}\end{equation}
As in \cite{Lott-Zhang}, consider the ansatz $\widetilde{\omega}_{t}= \omega_t + \sqrt{-1} \pa\db u$
with 
\begin{equation}
    \omega_{t}= -\Ric (\omega_{0}) + e^{-t}(\omega_{0}+ \Ric(\omega_{0})).
\label{sch.2}\end{equation}
Then the 
evolution equation of the potential function $u$ is given by
\begin{equation}
  \frac{\pa u}{\pa t}= \log \left(  \frac{ (\omega_{t} + \sqrt{-1} \pa \db u)^{n} }{ \omega_{0}^{n}}\right)- u
  , \quad u(0,\cdot)=0.
\label{sch.3}\end{equation}
\begin{remark}
When $n=1$, it is more effective to describe the evolution of the metric in terms of a conformal factor, since the evolution equation is then quasi-linear instead of fully nonlinear.  However, to keep a uniform treatment independent of the dimension of $\bX$, we will refrain from doing so and refer to \cite{Albin-Aldana-Rochon} for a study of the spatial asymptotics along the Ricci flow in complex dimension $1$ using conformal factors.      
\label{sch.3b}\end{remark}

In this section, we will prove the following result about the asymptotic behavior of a solution to
\eqref{sch.3}.  

\begin{theorem}
Suppose that $\omega_{0}$ is the K\"ahler form of an asymptotically tame polyfibred cusp K\"ahler metric.  If $u\in\CI_{\fc}([0,T]\times X)$ is a solution to the evolution equation \eqref{sch.3}, then in fact $u\in \CI_{\fc}([0,T]\times \tX)$.
\label{ha.1}\end{theorem}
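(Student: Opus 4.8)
\emph{Strategy.} The plan is to induct on $\depth(\tX)$, the base case $\depth(\tX)=0$ being trivial since then $\tX$ is closed and parabolic regularity is classical. So assume the theorem holds on each $\tD_i$. In view of Proposition~\ref{ext.2} and this inductive hypothesis, to conclude $u\in\CI_{\fc}([0,T]\times\tX)$ it suffices to produce, for each $i\in\{1,\ldots,\ell\}$, an asymptotic expansion
\[
 u\sim\sum_{k\ge0}\Xi_i\bigl(a^{(k)}_i\bigr)\,x_i^{\,k},\qquad a^{(k)}_i\in\CI_{\fc}([0,T]\times\tD_i),
\]
with $\Xi_i$ as in \eqref{ext.7}; compatibility of these expansions along the corners is automatic since the $a^{(k)}_i$ will be produced by the canonical (unique) solution operators of Theorem~\ref{de.18} on the $\tD_i$. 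Fix $i$.

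\emph{The leading coefficient.} Since $\omega_0$ is asymptotically tame, $\omega_t=-\Ric(\omega_0)+e^{-t}(\omega_0+\Ric(\omega_0))$ lies in $\CI_{\fc}(\tX;\Lambda^2({}^{\fc}T^*\tX)\otimes\bbC)$ and restricts to $\tD_i$ (Propositions~\ref{fc.15} and~\ref{fc.28}), so the restriction of \eqref{sch.3} to $\tD_i$ is again a scalar parabolic Monge--Amp\`ere equation of the same type on $D_i$ — it is the Ricci-flow potential equation on $D_i$ up to an additive function of $t$ coming from the transversal Poincar\'e cusp factor. Its solution $u_i$ exists and, by the inductive hypothesis, $u_i\in\CI_{\fc}([0,T]\times\tD_i)$; set $a^{(0)}_i=u_i$. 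Put $\eta_t=\omega_t+\sqrt{-1}\pa\db\,\Xi_i(u_i)$, so $\widetilde\omega_t=\omega_t+\sqrt{-1}\pa\db u=\eta_t+\sqrt{-1}\pa\db\bigl(u-\Xi_i(u_i)\bigr)$, and integrate $\tfrac{d}{ds}\log\det$ along $(1-s)\eta_t+s\widetilde\omega_t$: then $w:=u-\Xi_i(u_i)$ solves a \emph{linear} equation
\[
 \frac{\pa w}{\pa t}=\mathcal L_t w-w+F,\qquad \mathcal L_t=\int_0^1\Delta_{(1-s)\eta_t+s\widetilde\omega_t}\,ds\in\Diff^2_{\fc}(X),\qquad w(0,\cdot)=0,
\]
where, using Proposition~\ref{ext.8} to restrict $\sqrt{-1}\pa\db\,\Xi_i(u_i)$ to $\tH_i$ together with Proposition~\ref{fc.28} and the bookkeeping of the transversal cusp terms, one checks $\left.F\right|_{\tH_i}=0$, i.e.\ $F\in x_i\CI_{\fc}([0,T]\times\tX)$.

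\emph{Decay and bootstrap.} The operator $\mathcal L_t-1$ has principal symbol $-\int_0^1 g^*_{(1-s)\eta_t+s\widetilde\omega_t}\,ds$, which is uniformly negative-definite — so $\pa_t-\mathcal L_t+1$ is uniformly parabolic — as soon as $\eta_t$ is uniformly equivalent to $g_\omega$ (then so is every convex combination with $\widetilde\omega_t$, which is itself uniformly equivalent to $g_\omega$ on compact time intervals by the a priori estimates for the flow). Since $\Xi_i(u_i)$ vanishes at $t=0$, it is globally $\cC^2$-small for $t<T'$, hence $\eta_t>0$ uniformly there; Proposition~\ref{de.3}, applied with $\gamma=e_i$ and vanishing initial data, then gives $w\in x_i\CI_{\fc}([0,T']\times X)$. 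Dividing that equation by $x_i$, the function $x_i^{-1}w\in\CI_{\fc}([0,T']\times X)$ solves a linear uniformly parabolic equation with operator in $\Diff^2_{\fc}(X)$ — which now admits a well-defined restriction to $\tD_i$, because $\sqrt{-1}\pa\db w$ vanishes on $\tH_i$ — and with forcing in $\CI_{\fc}([0,T']\times\tX)$; solving that restriction on $\tD_i$ by Theorem~\ref{de.18}, extending by $\Xi_i$, subtracting, and applying Proposition~\ref{de.3} again peels off $a^{(1)}_i\in\CI_{\fc}([0,T']\times\tD_i)$. Iterating yields all $a^{(k)}_i$ and the full expansion of $u$ at $\tH_i$, whence $u\in\CI_{\fc}([0,T']\times\tX)$.

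\emph{From $[0,T']$ to $[0,T]$, and the obstacle.} The usable time length $T'$ depends only on quantities bounded uniformly over $[0,T]$ (the Ricci-flow metric-equivalence constants and $\|\sqrt{-1}\pa\db\,\Xi_i(u_i)\|_{\cC^0}$), so it admits a positive lower bound; restarting at $t=T'$ — where $u(T',\cdot)\in\CI_{\fc}(\tX)$ is now available and a $t$-independent modification $u(T',\cdot)-\Xi_i\bigl(r_i(u(T',\cdot))\bigr)$ of the approximant restores the $\cC^2$-smallness at the restart time — and repeating finitely often extends the conclusion to all of $[0,T]$. The main obstacle is exactly arranging that the linearization is uniformly parabolic on \emph{all} of $X$: this is what forces the passage through short time intervals and the iteration, and it rests on the global $\cC^2$-closeness of $u$ to its boundary approximants, hence on the a priori $\cC^2$ estimates for the K\"ahler--Ricci flow. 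A secondary point is the identity $\left.F\right|_{\tH_i}=0$, i.e.\ the verification that the restricted flow is the correct boundary model, where Propositions~\ref{fc.28} and~\ref{ext.8} and the transversal-term bookkeeping enter.
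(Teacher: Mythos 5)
Your proposal follows essentially the same route as the paper's proof: induction on the depth of $\tX$, restriction of the flow to each $\tD_i$ to produce the candidate boundary potential, linearization of the difference $u-\Xi_i(u_i)$ into a linear uniformly parabolic equation whose forcing vanishes at $\tH_i$, the decay estimate of Proposition~\ref{de.3} together with the solvability result of Theorem~\ref{de.18} on the boundary faces to peel off the Taylor coefficients, and a restart argument with a uniformly bounded time step (your ``$t$-independent modification of the approximant'' at the restart time is exactly the paper's shifted potential $\hu(t,\cdot)=u(t+t_0,\cdot)-u(t_0,\cdot)$, with the uniformity of $\tau$ coming from the same bounds as in \eqref{ha.15}--\eqref{ha.15b}). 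The only genuinely different detail is cosmetic: you linearize by integrating $\frac{d}{ds}\log\det$ along the path $(1-s)\eta_t+s\widetilde\omega_t$, while the paper uses the expansion $\log(1+x)=x+x^2G(x)$; both produce an operator of the same class and require the same uniform positivity of the interpolating forms.

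There is, however, one genuine gap: you simply assert that the restricted flow on $D_i$ ``exists'' and, by the inductive hypothesis, lies in $\CI_{\fc}([0,T]\times\tD_i)$ on the whole interval $[0,T]$. Existence of the boundary flows up to time $T$ is not automatic: equation \eqref{ha.3} is a twisted normalized K\"ahler--Ricci flow on $D_i$ which could a priori become singular at some $T_i<T$, and your entire expansion at $\tH_i$ presupposes the boundary data up to time $T$. The paper only assumes the boundary solutions exist on $[0,T_0]$ for some $T_0\le T$, runs the argument there, and then proves $T_0=T$, either by invoking \cite[Remark~7.4]{Lott-Zhang} or by contradiction: if the curvature of some $\omega_{I,t}$ blew up at a time $T_I\le T$, then since $\widetilde\omega_t\in\CI_{\fc}(\tX;\Lambda^2({}^{\fc}T^*\tX))$ restricts to $\widetilde\omega_{I,t}$ for $t<T_I$, the splitting of the curvature tensor at $\tH_I$ (Remark~\ref{Rm_asymp}) would force the curvature of $\widetilde\omega_t$ itself to blow up, contradicting $u\in\CI_{\fc}([0,T]\times X)$. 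You need this (or an equivalent) argument to close the proof. A secondary, fixable looseness: declaring corner compatibility of the expansions ``automatic'' hides the joint regularity at the corners; the paper's simultaneous induction over multi-indices $\gamma\in\bbN_0^{\ell}$ in Proposition~\ref{ha.30}, which tracks the boundary regularity of the conjugated operators $L^{\gamma}_{i,t}$ at all hypersurfaces at once, is the rigorous version of that claim, and your bootstrap should be run in that joint form so that the restrictions to $\tD_j$ used at each stage are actually well defined.
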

\noindent It has the following immediate consequence.
\begin{corollary}
Suppose that $\omega_{0}$ is the K\"ahler form of an asymptotically tame polyfibred cusp K\"ahler metric.  If $\widetilde{\omega}_{t}$ is the solution to the normalized K\"ahler-Ricci flow \eqref{sch.1} on the time interval $[0,T]$, then for all $t\in [0,T]$, $\widetilde{\omega}_{t}$ is the K\"ahler form of an asymptotically tame polyfibred cusp K\"ahler metric on $X$.  
\label{ha.2}\end{corollary}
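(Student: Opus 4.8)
The plan is to read off Corollary~\ref{ha.2} from Theorem~\ref{ha.1} together with the structural results of Section~\ref{fc.0}. By the ansatz \eqref{sch.2}--\eqref{sch.3} and the Cheng--Yau H\"older estimates for the flow (\cf the discussion in the introduction), the solution of \eqref{sch.1} on $[0,T]$ may be written $\widetilde\omega_t = \omega_t + \sqrt{-1}\pa\db u(t,\cdot)$ with $u \in \CI_{\fc}([0,T]\times X)$ solving \eqref{sch.3}; this places us under the hypotheses of Theorem~\ref{ha.1}, which then upgrades the regularity to $u \in \CI_{\fc}([0,T]\times\tX)$, so that $u(t,\cdot)\in\CI_{\fc}(\tX)$ for each fixed $t$. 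What remains is to verify that, for each such $t$, the form $\omega_t + \sqrt{-1}\pa\db u(t,\cdot)$ meets Definition~\ref{fc.27}: its K\"ahler metric should be an exact polyfibred cusp metric lying in $\CI_{\fc}(\tX;{}^{\fc}T^*\tX\otimes{}^{\fc}T^*\tX)$.

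First I would treat the background form $\omega_t$. Rewriting \eqref{sch.2} as $\omega_t = e^{-t}\omega_0 - (1-e^{-t})\varrho_0$ with $\varrho_0 = \Ric(\omega_0)$, Proposition~\ref{fc.28} provides $\varrho_0 \in \CI_{\fc}(\tX;\Lambda^2({}^{\fc}T^*\tX)\otimes\bbC)$ along with its boundary restriction at each $\tH_I$; since $\omega_0$ is asymptotically tame polyfibred cusp K\"ahler with standard spatial asymptotics $\{\omega_{I,0}\}$, $\{c_i\}$, this gives $\omega_t \in \CI_{\fc}(\tX;\Lambda^2({}^{\fc}T^*\tX)\otimes\bbC)$ with
\[
\left.\omega_t\right|_{\tH_I} = \tPhi_I^*\omega_{I,t} + \frac{\sqrt{-1}}{2}\sum_{i\in I}\big(e^{-t}c_i + 1 - e^{-t}\big)\,\frac{d\zeta_i\wedge d\overline\zeta_i}{|\zeta_i|^2(\log|\zeta_i|)^2},
\]
where $\omega_{I,t} = e^{-t}\omega_{I,0} - (1-e^{-t})\varrho_{I,0}$, $\varrho_{I,0}$ being the Ricci form of $\omega_{I,0}$, and each coefficient $e^{-t}c_i + 1 - e^{-t}$ being strictly positive. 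Then I would add the potential term: because $u(t,\cdot)\in\CI_{\fc}(\tX)$, the local computation behind Proposition~\ref{fc.28} (using \eqref{ext.9} and \eqref{fc.23}) shows $\sqrt{-1}\pa\db u(t,\cdot)\in\CI_{\fc}(\tX;\Lambda^2({}^{\fc}T^*\tX)\otimes\bbC)$, while Proposition~\ref{ext.8} identifies its restriction to $\tH_I$ with the pullback $\tPhi_I^*(\pa\db\, r_I(u(t,\cdot)))$ of a form from $\tD_I$ --- contributing no $d\zeta_i$ with $i\in I$ and none of the mixed terms \eqref{fc.12}. Combining the two contributions, $\widetilde\omega_t \in \CI_{\fc}(\tX;\Lambda^2({}^{\fc}T^*\tX)\otimes\bbC)$, and near any $p\in\tH_I\setminus\pa\tH_I$ its restriction has exactly the shape demanded by Definition~\ref{ex.4}, with $a_i=b_i=e^{-t}c_i+1-e^{-t}$ and $h_I$ given by $\omega_{I,t}+\sqrt{-1}\pa\db\, r_I(u(t,\cdot))$ read off in the transverse coordinates $w$; hence $\widetilde\omega_t$ is an exact polyfibred cusp K\"ahler form. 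As in the proof of Proposition~\ref{fc.15}, invariance of $\CI_{\fc}(\tX;{}^{\fc}T\tX\otimes\bbC)$ under multiplication by $\sqrt{-1}$ then gives $g_{\widetilde\omega_t}\in\CI_{\fc}(\tX;{}^{\fc}T^*\tX\otimes{}^{\fc}T^*\tX)$.

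The remaining, and really the only delicate, point is to confirm that $g_{\widetilde\omega_t}$ is a genuine polyfibred cusp metric, i.e. positive-definite on ${}^{\fc}T\tX$ over all of $\tX$, not merely on $X$. On $X$ this is immediate since $\widetilde\omega_t$ is a K\"ahler metric and positivity is preserved by the flow; at a point of $\tH_I\setminus\pa\tH_I$, exactness makes the restriction block-diagonal, the cusp block $\operatorname{diag}(e^{-t}c_i+1-e^{-t})_{i\in I}$ being positive and the complementary block being $\omega_{I,t}+\sqrt{-1}\pa\db\, r_I(u(t,\cdot))$. To see the latter is positive, I would use that $r_I(u(t,\cdot))$ is precisely the potential of the induced normalized K\"ahler--Ricci flow on $D_I$ --- which is what the proof of Theorem~\ref{ha.1} establishes --- and then argue by induction on $\depth(\tX)$, the base case $\depth(\tX)=0$ being the classical fact that the K\"ahler--Ricci flow on a closed manifold preserves positivity. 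Since $\tX = X\cup\bigcup_I(\tH_I\setminus\pa\tH_I)$, positive-definiteness propagates to all of $\tX$, so $g_{\widetilde\omega_t}$ is an asymptotically tame polyfibred cusp K\"ahler metric, as claimed. I expect everything except this final positivity argument at the deeper corner faces to be routine bookkeeping; the substantive content of the corollary is entirely carried by Theorem~\ref{ha.1}.
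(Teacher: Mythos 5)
Your argument is correct and follows the route the paper intends: the corollary is stated there as an immediate consequence of Theorem~\ref{ha.1}, with the regularity $u(t,\cdot)\in\CI_{\fc}(\tX)$ combined with Proposition~\ref{fc.28} for $\omega_t$, Proposition~\ref{ext.8} for $\sqrt{-1}\pa\db u$, and the identification (inside the proof of Theorem~\ref{ha.1}, via \eqref{ha.4}--\eqref{ha.7}) of the boundary restrictions with the induced flows $\widetilde\omega_{I,t}$ and the coefficients $c_{i,t}=1+e^{-t}(c_i-1)>0$, which is exactly how you obtain exactness, tameness and positivity at the boundary faces. Your extra care about strict positivity at $\tH_I$ via the flows on $D_I$ simply makes explicit what the paper leaves implicit, so there is nothing to correct.
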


To indicate the main idea in the proof  of this result, suppose first that we have a solution
$u\in\CI_{\fc}([0,T]\times \tX)$ to \eqref{sch.3} with the desired regularity.  In that case, the restriction of $u$ to the boundary hypersurface $\tH_{i}$ makes sense.  In fact, it will be constant along the fibres of the fibration $\tPhi_{i}:\tH_{i}\to \tD_{i}$, so its restriction to $\tD_{i}$ is also well-defined.  On the other hand, by Proposition~\ref{fc.28}, $\omega_{t}$ is asymptotically tame, so it makes sense to restrict the evolution equation \eqref{sch.3} to $\tD_{i}$.  If $u_{i}$ denotes the restriction of $u$ to $\tD_{i}$, a simple computation shows that \eqref{sch.3} restricts on $\tD_{i}$ to give
\begin{equation}
   \frac{\pa u_{i}}{\pa t} = \log \left( \frac{ (\omega_{i,t} + \sqrt{-1}\pa \db u_{i})^{n-1}}{ \omega_{i,0}^{n-1}}
   \right)  - u_{i} + \log \left( \frac{1+ e^{-t}(c_{i}-1)}{c_{i}}\right), \quad u_{i}(0,\cdot)=0,
\label{ha.3}\end{equation}
where $\omega_{i,t}= -\Ric(\omega_{i,0})+ e^{-t}( \omega_{i,0} + \Ric(\omega_{i,0}))$.  Thus, except for the last term in this equation, this is basically \eqref{sch.3} on the quasiprojective manifold $\tD_{i}\setminus \pa \tD_{i}$.  
  More generally, for $I\subset\{ 1,\ldots, \ell\}$, the restriction of $u$ to
$\tH_{I}$ and $\tD_{I}$ is well-defined, as well as the restriction of \eqref{sch.3}, which gives,
\begin{equation}
     \frac{\pa u_{I}}{\pa t} = \log \left( \frac{ (\omega_{I,t} + \sqrt{-1}\pa \db u_{I})^{n-|I|}}{ \omega_{I,0}^{n-|I|}}
   \right)  - u_{I} + \sum_{i\in I} \log \left( \frac{1+ e^{-t}(c_{i}-1)}{c_{i}}\right), \ u_{I}(0,\cdot)=0,
\label{ha.4}\end{equation}
where $\omega_{I,t}= -\Ric(\omega_{I,0}) + e^{-t}(\omega_{I,0} + \Ric(\omega_{I,0}))$.  When $\bD_I$ is zero dimensional, notice the evolution equation \eqref{ha.4} is really just an ordinary differential equation,
\begin{equation}
\frac{\pa u_{I}}{\pa t} =  - u_{I} + \sum_{i\in I} \log \left( \frac{1+ e^{-t}(c_{i}-1)}{c_{i}}\right), \quad u_{I}(0,\cdot)=0.
\label{ha.3b}\end{equation}

Using Proposition~\ref{fc.28}, we can also consider the restriction of \eqref{sch.1} to $\tD_{I}$ and the normal directions, which gives
\begin{gather}
   \frac{\pa \widetilde{\omega}_{I,t}}{\pa t} = - \Ric( \widetilde{\omega}_{I,t})- \widetilde{\omega}_{I,t} , \quad \widetilde{\omega}_{I,0}= \omega_{I,0},  \label{ha.5} \\
   \frac{\pa c_{i,t}}{\pa t}= 1- c_{i,t}, \quad c_{i,0}= c_{i},  \label{ha.6}
 \end{gather}
where $\{\widetilde{\omega}_{I,t}\}$ and $\{c_{i,t}\}$ are the spatial asymptotics of $\widetilde{\omega}_{t}$.  Solving for $c_{i,t}$ we find
\begin{equation}
   c_{i,t}= 1+ e^{-t}(c_{i}-1).  
\label{ha.7}\end{equation}

To summarize, if the restriction of $u$ to $\tD_{I}$ exists, it has to be the solution to \eqref{ha.4}.   Thus, if we are given a solution $u\in \CI_{\fc}([0,T]\times X)$ to \eqref{sch.3} with a priori no known regularity at the boundary $\pa \tX$, then the solution to \eqref{ha.4} is necessarily the natural candidate for what should be the restriction of $u$ to $\tD_{I}$.  This observation is the starting point of our proof.  Our strategy will be to use a barrier function, that is, Proposition~\ref{de.3} for a suitable linear parabolic equation, to show that $u$ does indeed restrict to give the solution to \eqref{ha.3} on $\tD_{i}$.  This argument can then be repeated to give the full asymptotics of the solution at each of the boundary hypersurfaces.    

Initially however, we will only be able to establish such a result for a small time interval.  Provided we can control the size of this time interval in a uniform way, we will then be able to apply this argument a finite number of times to cover the full interval $[0,T]$.  This requires to consider a shifted version of the evolution equation~\eqref{sch.3}.  Namely, fix $t_{0}\in [0,T)$ and consider the new function
\begin{equation}
    \hu(t,\cdot)=  u(t+t_{0},\cdot)- u(t_{0},\cdot),  
\label{ha.8}\end{equation} 
satisfying the evolution equation
\begin{equation}
\frac{\pa \hu}{\pa t}= \frac{\pa u(t+t_{0},\cdot)}{\pa t}= 
  \log \left(  \frac{(\omega_{t+t_{0}} + \sqrt{-1}\pa \db u(t+t_{0},\cdot))^{n}}{\omega_{0}^{n}}
  \right) - u(t+t_{0},\cdot), \quad \hu(0,\cdot)=0.
\label{ha.9}\end{equation}
This can be rewritten in the more suggestive form
\begin{equation}
 \frac{\pa \hu}{\pa t}= \log \left( \frac{(\homega_{t}+\sqrt{-1}\pa \db \hu)^{n}}{\omega_{0}^{n} }
 \right) - \hu - u(t_{0},\cdot), \quad \hu(0,\cdot)=0,
\label{ha.10}\end{equation}
where $\homega_{t}= \omega_{t+t_{0}} + \sqrt{-1}\pa \db u(t_{0},\cdot)$.  In particular, notice that 
by assumption, 
\[
       \homega_{t}+ \sqrt{-1} \pa \db \hu= \widetilde{\omega}_{t+t_{0}}
\]
is a K\"ahler form, so that the logarithmic term in \eqref{ha.10} is well-defined.  

Suppose now that $u(t_{0},\cdot)\in \CI_{\fc}(\tX)$ and that $\widetilde{\omega}_{t_{0}}$ is an asymptotically tame polyfibred cusp metric.  Our goal will be to show that under this assumption,
 the function $\hu(t,\cdot)$ will also be in $\CI_{\fc}(\tX)$ provided $t$ is small enough.  This will require a few steps.  First, consider the restriction of the evolution equation \eqref{ha.10} to $\tD_{I}$, 
\begin{equation}
\begin{aligned}
\frac{\pa \hu_{I}}{\pa t} &= \log\left(  \frac{(\homega_{I,t}+ \sqrt{-1} \pa \db \hu_{I} )^{n-|I|}}{\homega_{I,0}^{n-|I|}}
\right)   \\
&  \quad - \hu_{I} -u_{I}(t_{0},0)+ \sum_{i\in I} \log\left(  \frac{1+ e^{-t-t_{0}}(c_{i}-1)}{c_{i}}
\right), \quad \hu_{I}(0,\cdot)=0.
\end{aligned}
\label{ha.11}\end{equation}
Even if we do not know if $\hu$ is in $\CI_{\fc}([0,T-t_{0}]\times \tX)$, the evolution equation \eqref{ha.11} still makes sense and we know that a solution exists for a short period of time.  We will in fact assume that it has a solution $\hu_{I}\in \CI_{\fc}([0,T-t_{0}]\times D_I)$ on the time interval $[0,T-t_{0}]$.  As we will see, this can be justified a posteriori.   Since we will proceed by induction on the depth of $\tX$ to prove Theorem~\ref{ha.1}, we  might as well assume $\hu_{I}$ is in fact a solution in
$\CI_{\fc}([0,T-t_{0}]\times \tD_I)$.  

Using the extension map $\Xi_i$ of \eqref{ext.7}, consider the function
\begin{equation}
   \hu_{\Xi_{i}}=  \Xi_i(\hu_{i}).
\label{ha.12}\end{equation}
Since the extension map $\Xi_i$ does not depend on time, the evolution equation of $\hu_{\Xi_{i}}$ is given by
\begin{multline}
   \frac{\pa \hu_{\Xi_i}}{\pa t} = \Xi_i  \left(\log\left(  \frac{( \homega_{i,t}+ \sqrt{-1}\pa\db \hu_{i})^{n-1}}{\omega_{i,0}^{n-1}} \right) -\hu_{i} -u_{i}(t_{0},\cdot) \right) \\
   +\Xi_i\left( \log \left( \frac{1+ e^{-t-t_{0}}(c_{i}-1)}{c_{i}}\right)\right), \quad \hu_{\Xi_i}(0,\cdot)=0.
\label{ha.13}\end{multline}
By construction, the restriction of the function $\hu_{\Xi_i}$ to $\tD_i$ is $\hu_{i}$.  Thus, if $\hu$ has the same restriction on $\tD_i$, then we expect the function 
\begin{equation}
   v_{i}= \hu- \hu_{\Xi_i}
\label{ha.13b}\end{equation}
to vanish on $\tH_i$.  From the evolution equations of $\hu$ and $\hu_{\Xi_{i}}$, the evolution equation of $v_{i}$ can be seen to be
\begin{equation}
\frac{\pa v_{i}}{\pa t} = \log\left( \frac{(\homega_{t}+\sqrt{-1}\pa\db \hu)^{n}}{\omega_{0}^{n}}\right) - \log\left( \frac{(\homega_{t}+\sqrt{-1}\pa\db \hu_{\Xi_{i}})^{n}}{\omega_{0}^{n}}\right) -v_{i} +E_{i}
\label{ha.13c}\end{equation}
with $E_{i}$ given by 
\begin{equation}
\begin{aligned}
E_{i} &= 
-\Xi_i \left(
    \log\left(  \frac{( \homega_{i,t}+ \sqrt{-1}\pa\db \hu_{i})^{n-1}}{\omega_{i,0}^{n-1}} \right) - u_{i}(t_{0},\cdot)  \right) \\
    &  \quad - \Xi_i\left( \log \left( \frac{1+ e^{-t-t_{0}}(c_{i}-1)}{c_{i}}  \right)\right) \\
    & \quad +\left( \log\left( \frac{(\homega_{t}+\sqrt{-1}\pa \db \hu_{\Xi_i})^{n}}{\omega_{0}^{n}}\right) - u(t_0,\cdot) \right). 
\end{aligned}  
\label{ha.13d}\end{equation}
For the logarithmic term involving $\hu_{\Xi_i}$ to make sense, the 2-form $\homega_t+ \sqrt{-1}\pa\db \hu_{\Xi_i}$ must be non-degenerate.  To insure this is the case, notice first that there exist positive  constants $c$ and $C$ depending on the solution $\widetilde{\omega}_{t}\in \CI_{\fc}([0,T]\times X; \Lambda^{2}(T^{*}X))$ of \eqref{sch.1} such that 
\begin{equation}
  c\omega_{0} < \widetilde{\omega}_{t}< C\omega_{0}, \quad \left\| \frac{\pa \widetilde{\omega}_{t}}{\pa t} \right\|_{\cC^{0}_{\fc}(X; \Lambda^{2}(T^{*}X))} < C, \quad \forall \, t\in [0,T].  
\label{ha.15}\end{equation} 
Taking the constant $C>0$ bigger if necessary and depending also on the solutions $u_{I}\in\CI_{\fc}(D_{I})$ to \eqref{ha.4}, we can also assume that it is such that
\begin{equation}
\left\| \frac{\pa}{\pa t} \pa\db u \right\|_{\cC^{0}_{\fc}(X; \Lambda^{2}(T^{*}X))} < C, \quad \left\| \frac{\pa}{\pa t} \pa\db u_{I} \right\|_{\cC^{0}_{\fc}(X; \Lambda^{2}(T^{*}X))} < C, \quad \forall\, t\in [0,T].
\label{ha.15b}\end{equation}  
Recalling that $\homega_{0}= \widetilde{\omega}_{t_0}$, we see that, in view of \eqref{ha.15}, we can find $\tau>0$ depending only on $c$ and $C$, so in particular independent of $t_0$, such that
\begin{equation}
     \homega_{t} > \frac{c\omega_{0}}{2}, \quad \forall t\in [0,\tau_{t_0}], \quad \tau_{t_0}:= \min\{\tau, T-t_0\}.
\label{ha.16}\end{equation}
From \eqref{ha.15b}, we see that choosing $\tau>0$ smaller if needed, but still only depending on the constants $c$ and $C$, so independent of the choice of $t_0$, we can assume $\pa\db u_{\Xi_i}$ is sufficiently small so that 
\begin{equation}
     \homega_{t} +\sqrt{-1} \pa\db \hu_{\Xi_i} > \frac{c\omega_{0}}{4}, \quad \forall t\in [0,\tau_{t_0}], \quad \tau_{t_0}=\min\{\tau, T-t_0\}.
\label{ha.16b}\end{equation}
Thus, at least for $t\in [0,\tau_{t_0}]$, we can write the evolution equation of $v_i$ as in \eqref{ha.13c}.  

\begin{lemma}
  If $\hu_{i}$ is in  $\CI_{\fc}([0,\tau_{t_0}]\times \tD_{i})$ and $\homega_{0}$ is an asymptotically tame polyfibred cusp K\"ahler metric, then the term $E_{i}$ in \eqref{ha.13d} is in 
  $x_{i}\CI_{\fc}([0,\tau_{t_0}]\times \tX)$.  
\label{ha.14}\end{lemma}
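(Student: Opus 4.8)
The plan is to establish the lemma in two stages: first show that $E_{i}$ already lies in $\CI_{\fc}([0,\tau_{t_0}]\times\tX)$, and then show that its restriction to the boundary hypersurface $\tH_{i}$ vanishes identically. Granting both, the sharper conclusion $E_{i}\in x_{i}\CI_{\fc}([0,\tau_{t_0}]\times\tX)$ is automatic: by Proposition~\ref{ext.2}, a function in $\CI_{\fc}(\tX)$ whose restriction $r_{i}$ to $\tD_{i}$ is zero lies in $x_{i}\CI_{\fc}(\tX)$ (exactly the step used at the end of the proof of Theorem~\ref{de.18}), and the argument is uniform in $t\in[0,\tau_{t_0}]$.

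For the first stage I would go through the three lines of \eqref{ha.13d} in turn. By Proposition~\ref{fc.28} the form $\omega_{t+t_{0}}=-\Ric(\omega_{0})+e^{-(t+t_{0})}(\omega_{0}+\Ric(\omega_{0}))$ is asymptotically tame, and since $u(t_{0},\cdot)\in\CI_{\fc}(\tX)$ by hypothesis, so is $\homega_{t}=\omega_{t+t_{0}}+\sqrt{-1}\pa\db u(t_{0},\cdot)$; restricting to $\tD_{i}$ by means of \eqref{ext.3}, Proposition~\ref{ext.8} and Proposition~\ref{fc.28} gives $u_{i}(t_{0},\cdot)\in\CI_{\fc}(\tD_{i})$ and exhibits the tangential part $\homega_{i,t}$ of $\homega_{t}$ along $\tH_{i}$ as an asymptotically tame metric on $D_{i}$. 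As $\hu_{i}\in\CI_{\fc}([0,\tau_{t_0}]\times\tD_{i})$, the extension $\hu_{\Xi_{i}}=\Xi_{i}(\hu_{i})$ lies in $\CI_{\fc}([0,\tau_{t_0}]\times\tX)$, hence so does $\sqrt{-1}\pa\db\hu_{\Xi_{i}}$. The estimate \eqref{ha.16b}, and its restriction to $D_{i}$, keeps $\homega_{t}+\sqrt{-1}\pa\db\hu_{\Xi_{i}}$ and $\homega_{i,t}+\sqrt{-1}\pa\db\hu_{i}$ positive definite and comparable to $\omega_{0}$ and $\omega_{i,0}$ respectively, so the two Monge--Amp\`ere quotients appearing in \eqref{ha.13d} are positive functions bounded away from $0$ and $\infty$; since $\omega_{0}^{n}$ and $\omega_{i,0}^{n-1}$ are nonvanishing density sections (as in Proposition~\ref{fv.1}), these quotients and their logarithms lie in $\CI_{\fc}([0,\tau_{t_0}]\times\tX)$, respectively $\CI_{\fc}([0,\tau_{t_0}]\times\tD_{i})$. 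The remaining term $\log\bigl((1+e^{-t-t_{0}}(c_{i}-1))/c_{i}\bigr)$ depends only on $t$ and has positive argument ($c_{i}>0$ and $1+e^{-t-t_{0}}(c_{i}-1)>0$). Applying the extension map $\Xi_{i}\colon\CI_{\fc}(\tD_{i})\to\CI_{\fc}(\tX)$ to the first two lines then puts all of $E_{i}$ in $\CI_{\fc}([0,\tau_{t_0}]\times\tX)$.

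The step I expect to be the main obstacle is the second stage, $\left.E_{i}\right|_{\tH_{i}}=0$, which is a compatibility statement between the complex Monge--Amp\`ere operator on $\tX$ and the one on $\tD_{i}$, analogous to \eqref{fc.32}. Using $r_{i}\circ\Xi_{i}=\mathrm{id}$, the restrictions of the first two lines of \eqref{ha.13d} to $\tH_{i}$ are the pullbacks under $\tPhi_{i}$ of $-\log\bigl((\homega_{i,t}+\sqrt{-1}\pa\db\hu_{i})^{n-1}/\omega_{i,0}^{n-1}\bigr)+u_{i}(t_{0},\cdot)$ and $-\log\bigl((1+e^{-t-t_{0}}(c_{i}-1))/c_{i}\bigr)$. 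For the third line I would apply Proposition~\ref{ext.8} to get $\left.\sqrt{-1}\pa\db\hu_{\Xi_{i}}\right|_{\tH_{i}}=\tPhi_{i}^{*}(\sqrt{-1}\pa\db\hu_{i})$ and $\left.\sqrt{-1}\pa\db u(t_{0},\cdot)\right|_{\tH_{i}}=\tPhi_{i}^{*}(\sqrt{-1}\pa\db u_{i}(t_{0},\cdot))$, so that, combined with Proposition~\ref{fc.28} and the definition of standard spatial asymptotics, $\homega_{t}+\sqrt{-1}\pa\db\hu_{\Xi_{i}}$ restricts along $\tH_{i}$ to $\tPhi_{i}^{*}(\homega_{i,t}+\sqrt{-1}\pa\db\hu_{i})$ plus the transversal Poincar\'e block with coefficient $c_{i,t+t_{0}}=1+e^{-(t+t_{0})}(c_{i}-1)$, while $\omega_{0}$ restricts to $\tPhi_{i}^{*}\omega_{i,0}$ plus the transversal block with coefficient $c_{i}$. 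Since there is only one transversal complex direction at $\tH_{i}$, taking $n$-th powers shows the quotient in the third line restricts to $\tPhi_{i}^{*}\bigl((c_{i,t+t_{0}}/c_{i})\,(\homega_{i,t}+\sqrt{-1}\pa\db\hu_{i})^{n-1}/\omega_{i,0}^{n-1}\bigr)$, whose logarithm minus $\tPhi_{i}^{*}u_{i}(t_{0},\cdot)$ cancels exactly against the restrictions of the first two lines, giving $\left.E_{i}\right|_{\tH_{i}}=0$. The only delicate point is getting the transversal coefficient right, namely that it is $c_{i,t+t_{0}}$ rather than the transversal factor of $\homega_{0}=\widetilde\omega_{t_{0}}$: this is forced because $\pa\db u(t_{0},\cdot)$ and $\pa\db\hu_{\Xi_{i}}$ contribute nothing transversally, so that the transversal factor of $\homega_{t}+\sqrt{-1}\pa\db\hu_{\Xi_{i}}$ along $\tH_{i}$ is that of $\omega_{t+t_{0}}=-\Ric(\omega_{0})+e^{-(t+t_{0})}(\omega_{0}+\Ric(\omega_{0}))$, computed via Proposition~\ref{fc.28}.
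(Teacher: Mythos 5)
Your proposal is correct and follows essentially the same route as the paper: reduce to showing $E_i\in\CI_{\fc}([0,\tau_{t_0}]\times\tX)$ with vanishing restriction to $\tH_i$, compute the restriction of the first two terms through $\Xi_i$, and use Proposition~\ref{ext.8} together with the standard spatial asymptotics of $\homega_t$ to identify the restriction of $\homega_t+\sqrt{-1}\pa\db\hu_{\Xi_i}$ as $\tPhi_i^*(\homega_{i,t}+\sqrt{-1}\pa\db\hu_i)$ plus the transversal Poincar\'e block with coefficient $1+e^{-t-t_0}(c_i-1)$, forcing the cancellation. The only difference is that you spell out the preliminary membership $E_i\in\CI_{\fc}([0,\tau_{t_0}]\times\tX)$, which the paper simply asserts; your identification of the transversal coefficient via Proposition~\ref{fc.28} matches the paper's computation.
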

\begin{proof}
 Since $E_i\in \CI_{\fc}([0,\tau_{t_0}]\times\tX)$, it suffices to verify that its restriction to $\tH_i$ vanishes.  By the definition of the extension map $\Xi_i$, the restriction of the first two terms in \eqref{ha.13d} is given by 
 \begin{equation}
    -\log\left(  \frac{( \homega_{i,t}+ \sqrt{-1}\pa\db \hu_{i})^{n-1}}{\omega_{i,0}^{n-1}} \right) + u_{i}(t_{0},\cdot)  
    - \log \left( \frac{1+ e^{-t-t_{0}}(c_{i}-1)}{c_{i}}  \right).
\label{ext.11}\end{equation} 
For the third term, since $\hu_{\Xi_i}\in \CI_{\fc}(\tX\times [0,\tau_{t_0}])$, we know from Proposition~\ref{ext.8} that the restriction of $\pa\db \hu_{\Xi_i}$ to $\tH_i$ is given by $\tPhi_i^*(\pa\db \hu_i)$.  From the standard spatial asymptotics of $\homega_t$, we thus see that, as a section of $\Lambda^2({}^{\fc}T^{*}\tX)$, the restriction of $\homega_t+ \sqrt{-1}\pa\db \hu_{\Xi_i}$ to $\tH_i$  is given by
\begin{equation}
  \tPhi_i^{*}( \homega_{i,t} + \sqrt{-1}\pa\db \hu_i) + \frac{\sqrt{-1}}{2} \frac{   (1+ e^{-t-t_0}(c_i-1)) d\zeta_i\wedge d\overline{\zeta}_i}{|\zeta_i|^{2}(\log|\zeta_i |)^2}.
\label{ext.12}\end{equation} 
One can then conclude from this that the restriction of the third term in \eqref{ha.13d} exactly cancels the restriction \eqref{ext.11} of the first two terms.  Thus, the restriction $E_i$ to $\tH_i$ is zero.
\end{proof}

To show that $v_{i}$ vanishes on $\tH_{i}$, we want to rewrite the difference of the two logarithmic terms in \eqref{ha.13c} as a linear expression in $v_{i}$ which would allow us to use Proposition~\ref{de.3}.  From \eqref{ha.16}, we see that  for $t\in[0,\tau_{t_0}]$, we can rewrite the first logarithmic term in \eqref{ha.13c} as
\begin{equation}
\begin{aligned}
\log \left( \frac{ (\homega_{t}+ \sqrt{-1}\pa\db \hu)^{n}}{\omega_0^{n}}\right) &=
  \log \left( \frac{ (\homega_{t}+ \sqrt{-1}\pa\db \hu)^{n}}{\homega_{t}^{n}}\right)
    + \log \left( \frac{\homega^{n}_{t}}{\omega_{0}^{n}}\right) \\
    &= \log\left(  1 + \hDelta_t \hu + F(\pa\db \hu, \homega_t) \right)   + \log \left( \frac{\homega^{n}_{t}}{\omega_{0}^{n}}\right),\end{aligned}    
  \label{ha.17}\end{equation}
where $\hDelta_t$ is the $\db$-Laplacian associated to $\homega_t$ and the function $F$ is a polynomial of degree $n$ in the first variable with no constant and linear terms.  Let $G(x)$ be the smooth function such that 
\[
     \log(1+x) = x + x^{2}G(x), \quad \forall x\in (-1,\infty).  
\]
Applying this relation to \eqref{ha.17}, we obtain
\begin{equation}
\begin{aligned}
\log \left( \frac{ (\homega_{t}+ \sqrt{-1}\pa\db \hu)^{n}}{\omega_0^{n}}\right) &=
 \hDelta_t \hu + F(\pa \db \hu, \homega_t)  \\
 & \quad + (\hDelta_t \hu + F(\pa\db \hu,\homega_t))^{2} G(\hDelta_t \hu + F(\pa \db\hu,\homega_t)) + \log \left( \frac{\homega^{n}_{t}}{\omega_{0}^{n}}\right) \\
 &= \hDelta_t \hu + H(\pa\db \hu,\homega_t) + \log \left( \frac{\homega^{n}_{t}}{\omega_{0}^{n}}\right),
\end{aligned}  
\label{ha.18}\end{equation}
with $H$ a smooth function which vanishes at least quadratically in the first variable.  For the other logarithmic term, we have similarly
\begin{equation}
\log \left( \frac{ (\homega_{t}+ \sqrt{-1}\pa\db \hu_{\Xi_{i}})^{n}}{\omega_0^{n}}\right) = \hDelta_t \hu_{\Xi_i} + H(\pa\db \hu_{\Xi_i},\homega_t) + \log \left( \frac{\homega^{n}_{t}}{\omega_{0}^{n}}\right).
\label{ha.19}\end{equation}
Because $H$ vanishes quadratically in the first variable, we can rewrite the difference of the two logarithmic terms as
\begin{multline}
\log \left( \frac{ (\homega_{t}+ \sqrt{-1}\pa\db \hu)^{n}}{\omega_0^{n}}\right)- \log \left( \frac{ (\homega_{t}+ \sqrt{-1}\pa\db \hu_{\Xi_{i}})^{n}}{\omega_0^{n}}\right) = \\    
\hDelta_t v_i + K(\pa \db \hu, \pa\db \hu_{\Xi_i}, \homega_t,\pa\db v_i),
\label{ha.20}\end{multline}
where $K$ is a smooth function defined in terms of the function $H$ that can be chosen to be linear in the last variable and vanishing at least linearly in the first two variables.  Notice that the definitions of  $K$ and $H$ only depend on the function
\[
       (f, \Omega) \mapsto \log \left( \frac{ (\Omega+ \sqrt{-1}\pa\db f)^{n}}{\Omega^{n}} \right)
\]   
and not on the particular solutions $\hu$, $\hu_{\Xi_i}$ and $\homega_t$.  In light of  \eqref{ha.15} and \eqref{ha.15b},  this means that taking the constant $\tau>0$ smaller if needed, and still only depending on $c$ and $C$, we can assume that for each $i\in\{1,\ldots,\ell\}$, the family of operators $t\mapsto L_{i,t}\in \Diff^{2}_{\fc}(X)$ defined by 
\begin{equation}
  L_{i,t} f= \hDelta_t f + K(\pa\db\hu, \pa\db \hu_{\Xi_i}, \homega_{t},\pa\db f)
\label{ha.21}\end{equation}  
is uniformly elliptic for $t\in [0,\tau_{t_0}]$, say
\begin{equation}
    \sigma_{2}(L_{i,t}) < -\frac{cg_{\omega_{0}}}{4}, \quad \forall \, t\in [0,\tau_{t_0}], \quad \tau_{t_0}= \min\{\tau, T-t_0\}.  
\label{ha.22}\end{equation} 
In terms of the family of operators $L_{i,t}$, the evolution equation of the function $v_{i}$ can be rewritten
\begin{equation}
    \frac{\pa v_{i}}{\pa t} = L_{i,t} v_{i} - v_{i} +E_{i}, \quad v_{i}(0,\cdot)=0, \quad 
    E_{i}\in x_{i}\CI_{\fc}([0,\tau_{t_0}]\times \tX).
\label{ha.23}\end{equation}
\begin{lemma}
Let $u\in \CI_{\fc}([0,T]\times X)$ be a solution to \eqref{sch.3} and assume that the associated evolution equations \eqref{ha.4} have solutions $u_{I}$ in
$\CI_{\fc}([0,T]\times \tD_I)$.  Then there exists a constant $\tau>0$ depending on
$u$, $\{u_{I}\}$ and $\omega_{0}$, but not on $t_{0}\in [0,T)$, such that  if $u(t_{0},\cdot)\in \CI_{\fc}(\tX)$, then on the time interval $[t_0, t_0 +\tau_{t_0}]$ and for each $i\in\{1,\ldots,\ell\}$, 
\[
      u\in x_{i}\CI_{\fc}([t_{0},t_{0}+\tau]\times X) + \CI_{\fc}([t_{0}, t_{0}+\tau_{t_0}]\times \tX), \quad \tau_{t_0}= \min\{\tau, T-t_0\},
\]
with restriction to $\tD_i$ given by $u_{i}$.
\label{ha.24}\end{lemma}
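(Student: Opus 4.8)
The plan is to read off the statement by applying the decay estimate of Proposition~\ref{de.3} to the linear parabolic equation \eqref{ha.23} for $v_i=\hu-\hu_{\Xi_i}$, all of whose ingredients have already been assembled. Fix $t_0\in[0,T)$ with $u(t_0,\cdot)\in\CI_{\fc}(\tX)$, and let $\tau>0$ be the constant furnished by \eqref{ha.16}, \eqref{ha.16b} and \eqref{ha.22}; by construction it depends only on the constants $c$ and $C$ of \eqref{ha.15}--\eqref{ha.15b}, and since those bounds hold uniformly over all of $[0,T]$, $\tau$ is independent of $t_0$. Set $\tau_{t_0}=\min\{\tau,T-t_0\}$. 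Proceeding by induction on the depth of $\tX$, I may assume the solution $\hu_i$ of the restricted shifted equation \eqref{ha.11} lies in $\CI_{\fc}([0,\tau_{t_0}]\times\tD_i)$, so that $\hu_{\Xi_i}=\Xi_i(\hu_i)\in\CI_{\fc}([0,\tau_{t_0}]\times\tX)$. Since $u\in\CI_{\fc}([0,T]\times X)$ and $u(t_0,\cdot)\in\CI_{\fc}(\tX)\subset\CI_{\fc}(X)$, the shifted potential $\hu$ belongs to $\CI_{\fc}([0,\tau_{t_0}]\times X)$, and therefore so does $v_i$.

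Next I would verify the hypotheses of Proposition~\ref{de.3}. Writing $\widetilde L_{i,t}:=L_{i,t}-\Id$ with $L_{i,t}$ the family from \eqref{ha.21}, the bound \eqref{ha.22} shows $\widetilde L_{i,t}\in\Diff^2_{\fc}(X)$ has negative principal symbol and that $\pa_t-\widetilde L_{i,t}$ is uniformly parabolic on $[0,\tau_{t_0}]$. By Lemma~\ref{ha.14}, the forcing term obeys $E_i\in x_i\CI_{\fc}([0,\tau_{t_0}]\times\tX)\subset x_i\CI_{\fc}([0,\tau_{t_0}]\times X)$, and $v_i(0,\cdot)=0$ trivially lies in $x_i\cC^k_{\fc}(X)$ for every $k$. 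Thus \eqref{ha.23}, rewritten as $\pa_t v_i=\widetilde L_{i,t}v_i+E_i$, fits exactly the framework of Proposition~\ref{de.3} with the multi-index $\gamma$ having $\gamma_i=1$ and $\gamma_j=0$ for $j\neq i$ (so $x^\gamma=x_i$); applying that proposition for every $k\ge 2$ gives $v_i\in x_i\CI_{\fc}([0,\tau_{t_0}]\times X)$. Hence $\hu=v_i+\hu_{\Xi_i}\in x_i\CI_{\fc}([0,\tau_{t_0}]\times X)+\CI_{\fc}([0,\tau_{t_0}]\times\tX)$, and adding the time-independent term $u(t_0,\cdot)\in\CI_{\fc}(\tX)$ and undoing the shift $t\mapsto t-t_0$ yields the asserted decomposition of $u$ on $[t_0,t_0+\tau_{t_0}]$.

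For the statement about the restriction to $\tD_i$, note that $v_i\in x_i\CI_{\fc}(X)$ vanishes on $\tH_i$, while $\Xi_i$ is a right inverse to $r_i$, so the boundary value of $\hu$ at $\tH_i$ is that of $\hu_{\Xi_i}$, namely $\tPhi_i^*(\hu_i)$; in particular the restriction of $\hu$ to $\tD_i$ is $\hu_i$. On the other hand, $t\mapsto u_i(t+t_0,\cdot)-u_i(t_0,\cdot)$ solves \eqref{ha.11} — this being the $\tD_i$-analogue of the passage from \eqref{sch.3} to \eqref{ha.10} — so by uniqueness $\hu_i(t,\cdot)=u_i(t+t_0,\cdot)-u_i(t_0,\cdot)$. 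Combining this with the identity $r_i(u(t_0,\cdot))=u_i(t_0,\cdot)$, which holds automatically at the base step $t_0=0$ (both sides vanish) and is propagated as part of the induction thereafter, one concludes that the restriction of $u(t+t_0,\cdot)$ to $\tD_i$ equals $\hu_i(t,\cdot)+u_i(t_0,\cdot)=u_i(t+t_0,\cdot)$, i.e. the restriction of $u$ to $\tD_i$ is $u_i$ on $[t_0,t_0+\tau_{t_0}]$.

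The substantive work here has in fact been done in the preparatory discussion: deriving the linear equation \eqref{ha.23} for $v_i$, showing through Lemma~\ref{ha.14} that $E_i$ acquires the extra factor $x_i$, and arranging in \eqref{ha.15}--\eqref{ha.22} that the ellipticity constants, and hence $\tau$, can be chosen uniformly in $t_0$. Granting these, the lemma is a single invocation of Proposition~\ref{de.3}, so I do not expect a serious obstacle. The only points demanding care are bookkeeping: that $v_i$ is a priori in $\CI_{\fc}([0,\tau_{t_0}]\times X)$, so the decay estimate applies, and that the candidate restriction produced by the extension $\Xi_i$ genuinely coincides with $u_i$, which rests on uniqueness for \eqref{ha.11}/\eqref{ha.4} together with propagating $r_i(u(t_0,\cdot))=u_i(t_0,\cdot)$ through the induction on the depth of $\tX$.
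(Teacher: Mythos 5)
Your proposal is correct and follows essentially the same route as the paper: choose $\tau$ from \eqref{ha.16}--\eqref{ha.22} (uniform in $t_0$ via the constants $c,C$), apply Proposition~\ref{de.3} to the uniformly parabolic equation \eqref{ha.23} with forcing $E_i\in x_i\CI_{\fc}$ from Lemma~\ref{ha.14} to get $v_i\in x_i\CI_{\fc}([0,\tau_{t_0}]\times X)$, and conclude from $u(t+t_0,\cdot)=v_i+\hu_{\Xi_i}+u(t_0,\cdot)$. Your extra care in identifying $\hu_i(t,\cdot)=u_i(t+t_0,\cdot)-u_i(t_0,\cdot)$ via uniqueness for \eqref{ha.11} and propagating $r_i(u(t_0,\cdot))=u_i(t_0,\cdot)$ is a useful spelling-out of what the paper leaves implicit, not a deviation.
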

\begin{proof}
Take $\tau>0$ as above so that \eqref{ha.16} and \eqref{ha.22} hold.  The evolution equation \eqref{ha.23} is then a uniformly parabolic equation for $t\in [0,\tau_{t_0}]$.  Applying Proposition~\ref{de.3}, we conclude that 
\[
       v_{i}\in x_{i}\CI_{\fc}([0,\tau_{t_0}]\times X),
\]
and the result follows by noticing that
\[
  u(t+t_0 ,\cdot)= v_{i}(t,\cdot) + \hu_{\Xi_i}(t,\cdot) + u(t_0 , 0)
\]
with the last two terms in $\CI_{\fc}([0,\tau_{t_0}]\times \tX)$ by assumption.
\end{proof}
The previous lemma gives us, for $t\in[0,\tau_{t_0}]$,  the first term in the Taylor series of $\hu(t,\cdot)$ at each boundary hypersurface.  Proceeding recursively, we will now construct all the higher order terms in the Taylor series of $\hu$ at each boundary hypersurface.  To this end, we need to consider the family 
$t\mapsto L^{\gamma}_{i,t}\in\Diff^{2}_{\fc}(X)$ defined for $\gamma= (\gamma_{1},\ldots, \gamma_{\ell})\in \bbN_{0}^{\ell}$ to be
\begin{equation}
L_{i,t}^{\gamma} f= (x^\gamma) ^{-1}L_{i,t} \left(
  (x^\gamma)f\right), \quad f\in \CI_{\fc}(X), \;\;
  x^\gamma = \prod_{j=1}^{\ell} x_{j}^{\gamma_{j}}.
  \label{ha.25}\end{equation}
As can be seen by a direct computation,
\begin{equation}
   \sigma_{2}(L_{i,t}^{\gamma})= \sigma_{2}(L_{i,t}),
\label{ha.26}\end{equation}
so that the family $L^{\gamma}_{i,t}$ is uniformly elliptic for $t\in [0,\tau_{t_0}]$.  Another important feature for our proof by induction is that the family of operators $L^{\gamma}_{i,t}$ has the same regularity at the boundary as the one of $\hu$, namely, for any $\beta\in \bbN_{0}^\ell$, we have that 
\begin{equation}
  \hu(t,\cdot)\in x^\beta  \CI_{\fc}(X)+ \CI_{\fc}(\tX)\quad \Longrightarrow \quad L_{i,t}^{\gamma} \in  x^\beta  \Diff^{2}_{\fc}(X)+ \Diff^{2}_{\fc}(\tX).
  \label{ha.27}\end{equation}  
In particular, Lemma~\ref{ha.24} also implies that
\begin{equation}
  L^{\gamma}_{i,t}\in x_{j}\Diff^{2}_{\fc}(X)+ \Diff_{\fc}^{2}(\tX)
\label{ha.28}\end{equation}
for $t\in [0,\tau_{t_0}]$ and $j\in \{1,\ldots,\ell\}$, and consequently, that the restriction
of $L^{\gamma}_{i,t}$ to $\tD_I$ is well-defined, namely, there exists a unique family of operators $t\mapsto L^{\gamma}_{i,I,t}\in \Diff^{2}_{\fc}(\tD_I)$ such that
for $f\in \CI_{\fc}(\tX)$ with restriction $f_{I}$ to $\tD_I$, we have (\cf Proposition~\ref{ext.8}),
\begin{equation}
  \left.\left(L^{\gamma}_{i,t} f\right)\right|_{\tD_{I}}= L^{\gamma}_{i,I,t} f_{I}.
\label{ha.29}\end{equation}
In the particular case where $\dim \tD_I=0$, the operator $L^{\gamma}_{i,I,t}$ is just multiplication by a function, but otherwise, the family of operators
$t\mapsto L^{\gamma}_{i,I,t}$ is clearly uniformly elliptic.   
\begin{proposition}
Let $u\in \CI_{\fc}([0,T]\times X)$ be a solution to \eqref{sch.3} and assume that for each $I$, the associated evolution equation \eqref{ha.4} has a solution 
$u_{I}$  in $\CI_{\fc}([0,T]\times \tD_I)$.  Then there exists a constant $\tau>0$ depending on $u$, $u_{I}$ and $\omega_{0}$, but not on $t_0 \in [0,T)$, such that if $u(t_{0},\cdot)\in \CI_{\fc}(\tX)$, then,
\[
       u\in \CI_{\fc}([t_0, t_0 + \tau_{t_0}]\times \tX), \quad \tau_{t_0}= \min\{\tau, T-t_0\},
\]
with restriction to $\tD_I$ given by $u_I$.  
\label{ha.30}\end{proposition}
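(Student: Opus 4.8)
The plan is to reduce the nonlinear problem to the linear equation \eqref{ha.23} and then transcribe the argument of Theorem~\ref{de.18}, building the Taylor series of $\hu$ at the boundary term by term. First I would fix $\tau>0$ once and for all as in Lemma~\ref{ha.24}, so that \eqref{ha.16} and \eqref{ha.22} hold and $\tau$ depends only on the constants $c,C$ of \eqref{ha.15}--\eqref{ha.15b}, hence on $u$, $\{u_{I}\}$, $\omega_{0}$, but not on $t_{0}$; put $\tau_{t_0}=\min\{\tau,T-t_0\}$. Assuming $u(t_{0},\cdot)\in\CI_{\fc}(\tX)$, the shifted potential $\hu$ of \eqref{ha.8} lies in $\CI_{\fc}([0,\tau_{t_0}]\times X)$ and solves \eqref{ha.10}; since \eqref{ha.4} has a solution $u_{I}\in\CI_{\fc}([0,T]\times\tD_I)$ by hypothesis, its shift $\hu_{I}$ solves \eqref{ha.11} and belongs to $\CI_{\fc}([0,\tau_{t_0}]\times\tD_I)$, so $\hu_{\Xi_1}=\Xi_1(\hu_1)\in\CI_{\fc}([0,\tau_{t_0}]\times\tX)$ is well defined. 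It therefore suffices to prove $v_{1}=\hu-\hu_{\Xi_1}\in\CI_{\fc}([0,\tau_{t_0}]\times\tX)$ for this single choice $i=1$, since then $\hu\in\CI_{\fc}([0,\tau_{t_0}]\times\tX)$, whence $u\in\CI_{\fc}([t_0,t_0+\tau_{t_0}]\times\tX)$; and the restriction of $\hu$ to $\tD_I$ then solves \eqref{ha.11} with zero initial data, so by uniqueness it equals $\hu_I$, \ie the restriction of $u$ to $\tD_I$ is $u_I$. Recall that $v_1$ satisfies the \emph{linear} uniformly parabolic equation \eqref{ha.23}, $\pa_t v_1= L_{1,t}v_1-v_1+E_1$, $v_1(0,\cdot)=0$, with $E_1\in x_1\CI_{\fc}([0,\tau_{t_0}]\times\tX)$.

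To obtain $v_{1}\in\CI_{\fc}([0,\tau_{t_0}]\times\tX)$ I would show by induction on $k=|\gamma|$ that for every $\gamma\in\bbN_{0}^{\ell}$ there is $\tilde v_{\gamma}\in\CI_{\fc}([0,\tau_{t_0}]\times\tX)$ with $v_{1}-\tilde v_{\gamma}\in x^{\gamma}\CI_{\fc}([0,\tau_{t_0}]\times X)$ --- the same characterization of $\CI_{\fc}(\tX)$-membership used in the proof of Theorem~\ref{de.18}. The case $k=0$ is $\tilde v_{0}=0$. For the step $\gamma\mapsto\alpha=\gamma+e_{j}$, set $w=(v_{1}-\tilde v_{\gamma})/x^{\gamma}\in\CI_{\fc}([0,\tau_{t_0}]\times X)$. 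Dividing \eqref{ha.23} by $x^{\gamma}$ gives
\[
   \frac{\pa w}{\pa t}= L_{1,t}^{\gamma}w - w + G^{\gamma},\qquad w(0,\cdot)= -x^{-\gamma}\tilde v_{\gamma}(0,\cdot)\in\CI_{\fc}(\tX),
\]
where $L_{1,t}^{\gamma}$ is the uniformly elliptic family of \eqref{ha.25} with negative principal symbol (by \eqref{ha.22} and \eqref{ha.26}), $G^{\gamma}=x^{-\gamma}(E_{1}+L_{1,t}\tilde v_{\gamma}-\tilde v_{\gamma}-\pa_{t}\tilde v_{\gamma})\in\CI_{\fc}([0,\tau_{t_0}]\times X)$, and the membership of $w(0,\cdot)$ follows from $v_1(0,\cdot)=0$ together with the inductive hypothesis at $t=0$. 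By Lemma~\ref{ha.24} and \eqref{ha.27}--\eqref{ha.28}, $L_{1,t}^{\gamma}=P_{j,t}+x_{j}R_{j,t}$ with $P_{j,t}\in\Diff^{2}_{\fc}(\tX)$ and $R_{j,t}\in\Diff^{2}_{\fc}(X)$, and a routine bookkeeping (keeping track that the non-$\CI_{\fc}(\tX)$ part of $L_{1,t}$ is $O(x_{j})$) shows $G^{\gamma}=\overline{G}^{\gamma}+x_{j}\check G^{\gamma}$ with $\overline{G}^{\gamma}\in\CI_{\fc}([0,\tau_{t_0}]\times\tX)$ and $\check G^{\gamma}\in\CI_{\fc}([0,\tau_{t_0}]\times X)$. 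Restricting the equation to $\tD_{j}$ via \eqref{ha.29} then yields a linear, uniformly parabolic equation on $\tD_{j}$ (an ODE when $\dim\tD_{j}=0$) with operator in $\Diff^{2}_{\fc}(\tD_{j})$, negative symbol, forcing $\overline{G}^{\gamma}_{j}\in\CI_{\fc}([0,\tau_{t_0}]\times\tD_{j})$ and initial datum in $\CI_{\fc}(\tD_{j})$; by Theorem~\ref{de.18} applied on $\tD_{j}$ it has a unique solution $w_{j}\in\CI_{\fc}([0,\tau_{t_0}]\times\tD_{j})$.

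Next I would set $w_{\Xi_{j}}=\Xi_{j}(w_{j})\in\CI_{\fc}([0,\tau_{t_0}]\times\tX)$ and examine $w-w_{\Xi_{j}}$, which solves $\pa_{t}(w-w_{\Xi_{j}})=L_{1,t}^{\gamma}(w-w_{\Xi_{j}})-(w-w_{\Xi_{j}})+F$. As in Theorem~\ref{de.18}, the construction of $w_{j}$ together with \eqref{ha.29} forces the restriction of $F$ to $\tD_{j}$ to vanish, so that $F\in x_{j}\CI_{\fc}([0,\tau_{t_0}]\times X)$, while $w(0,\cdot)-w_{\Xi_{j}}(0,\cdot)\in x_{j}\CI_{\fc}(X)$. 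Since $L_{1,t}^{\gamma}-1$ is uniformly parabolic on $[0,\tau_{t_0}]$, Proposition~\ref{de.3} gives $w-w_{\Xi_{j}}\in x_{j}\CI_{\fc}([0,\tau_{t_0}]\times X)$, and hence $\tilde v_{\alpha}=\tilde v_{\gamma}+x^{\gamma}w_{\Xi_{j}}\in\CI_{\fc}([0,\tau_{t_0}]\times\tX)$ satisfies $v_{1}-\tilde v_{\alpha}=x^{\gamma}(w-w_{\Xi_{j}})\in x^{\alpha}\CI_{\fc}([0,\tau_{t_0}]\times X)$, closing the induction. Combining with the reduction in the first paragraph finishes the proof.

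I expect the main obstacle to be exactly the ``routine bookkeeping'' in the inductive step: verifying that $G^{\gamma}$ and the later forcing $F$ split cleanly into a genuine $\CI_{\fc}(\tX)$-part and an $O(x_{j})$ Cheng--Yau remainder. The only genuinely new feature relative to Theorem~\ref{de.18} is that the operator $L_{1,t}$ in \eqref{ha.23} is manufactured from $\hu$, which is a priori only Cheng--Yau regular, so $L_{1,t}\notin\Diff^{2}_{\fc}(\tX)$ in advance; but Lemma~\ref{ha.24} guarantees $\hu$ is $\CI_{\fc}(\tX)$ modulo $O(x_{j})$ for every $j$, and property \eqref{ha.27} transfers this to the conjugated operators $L_{1,t}^{\gamma}$, so all the stray $O(x_{j})$ error terms are precisely of the type that Proposition~\ref{de.3} absorbs and never obstruct the recursion. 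Everything else is a direct transcription of the proof of Theorem~\ref{de.18}, the reduction to the lower-dimensional $\tD_{j}$ being supplied by invoking that theorem there.
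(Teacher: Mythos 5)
Your overall strategy is the paper's: fix $\tau$ via Lemma~\ref{ha.24}, pass to $v_i=\hu-\hu_{\Xi_i}$ and the linear equation \eqref{ha.23}, and build the Taylor series of $v_i$ by induction on $|\gamma|$, restricting to $\tD_j$, solving there by Theorem~\ref{de.18}, extending by $\Xi_j$ and absorbing the remainder with Proposition~\ref{de.3}; the reduction in your first paragraph is fine. The genuine gap is exactly at the step you yourself call ``routine bookkeeping'', and it is not routine: the coefficients of $L_{1,t}$ are manufactured from $\pa\db\hu$ and $\pa\db\hu_{\Xi_1}$, so $L_{1,t}\notin\Diff^{2}_{\fc}(\tX)$, and the only unconditional information you feed in is Lemma~\ref{ha.24}, i.e. via \eqref{ha.27} a defect from $\Diff^{2}_{\fc}(\tX)$ of size $O(x_m)$ for each single $m$. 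In your inductive step you divide by $x^{\gamma}$ with $|\gamma|=k$ and claim $G^{\gamma}=\overline{G}^{\gamma}+x_j\check{G}^{\gamma}$ with $\overline{G}^{\gamma}\in\CI_{\fc}([0,\tau_{t_0}]\times\tX)$. But the defect term inside $L_{1,t}\tilde v_{\gamma}$ contributes $x^{-\gamma}\cdot O(x_m)$, which for $k\ge 1$ is not controlled termwise; and even using the better information your own induction supplies ($\hu=v_1+\hu_{\Xi_1}\in x^{\gamma}\CI_{\fc}(X)+\CI_{\fc}(\tX)$, hence defect $O(x^{\gamma})$), after dividing by $x^{\gamma}$ you only obtain a bounded remainder in $\CI_{\fc}(X)$, not something of the form $\CI_{\fc}(\tX)+x_j\CI_{\fc}(X)$. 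To get the splitting you need the defect of $\hu$ (hence of $L_{1,t}$ and of the accumulated error) to be $O(x^{\gamma}x_j)$, which is precisely the regularity being proved at that stage; so for $|\gamma|\ge 1$ your step is circular. Note also that in your formula $G^{\gamma}=x^{-\gamma}(E_1+L_{1,t}\tilde v_{\gamma}-\tilde v_{\gamma}-\pa_t\tilde v_{\gamma})$ no individual term is separately controlled (e.g. $x^{-\gamma}E_1$ with $E_1\in x_1\CI_{\fc}(\tX)$ need not be bounded); only the sum is, via $G^{\gamma}=\pa_t w-L^{\gamma}_{1,t}w+w$, which is one more reason a termwise bookkeeping cannot produce the decomposition.

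This is exactly what the additional structure in the paper's proof is designed to handle, and what your plan discards. There the induction is run simultaneously for all $i\in\{1,\ldots,\ell\}$, and the expansion statement \eqref{ha.31} is paired with the requirement \eqref{ha.32} that $v_{i,\gamma}$ satisfies an evolution equation whose error $E^{\gamma}_i$ has the conditional regularity \eqref{ha.32b}: its defect from $\CI_{\fc}(\tX)$ is tied, for every decomposition $\alpha=\gamma+\beta$, to the (as yet unknown) defect of $\hu$. Moreover the new error is produced one boundary factor at a time, $E^{\alpha}_i=-Q^{\gamma}_i/x_j$ with $Q^{\gamma}_i$ vanishing on $\tH_j$ by construction of the extension, rather than by dividing the original equation by $x^{\gamma}$ all at once; this is what allows the regularity of the forcing terms to be upgraded the moment the regularity of $\hu$ is, and upgrading $\hu$ exploits the decompositions of $v_i$ for all indices $i$, not just $i=1$. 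Working with the single index $i=1$ and invoking only Lemma~\ref{ha.24} and \eqref{ha.27}--\eqref{ha.28}, as you propose, loses this mechanism, and the induction does not close beyond the first step; to repair the argument you would have to reinstate the paired conditional statement \eqref{ha.32b} (or an equivalent bookkeeping device) for all $i$ and all decompositions, which is the actual content of the paper's proof.
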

\begin{proof}
We choose $\tau>0$ to be as in Lemma~\ref{ha.24}.  By this lemma, we then know that 
\[
    \hu\in x_{i}\CI_{\fc}([0,\tau_{t_0}]\times X)+ \CI_{\fc}([0,\tau_{t_0}]\times\tX)
\]
with restriction to $\tD_i $ given by $\hu_{i}$.  Since $\hu-v_i= u_{\Xi_i}\in \CI_{\fc}([0,\tau_{t_0}]\times \tX)$, it suffices to show that $v_i\in \CI_{\fc}([0,\tau_{t_0}]\times \tX)$, namely, that for all $\gamma\in \bbN_{0}^{\ell}$, there exists
$w_{i,\gamma}\in \CI_{\fc}([0,\tau_{t_0}]\times \tX)$ such that
\begin{equation}
   (v_{i}- w_{i,\gamma})\in x^{\gamma}\CI_{\fc}([0,\tau_{t_0}]\times X), \quad \mbox{where} \; x^{\gamma}= \prod_{j=1}^{\ell} x_{j}^{\gamma_{j}}. 
\label{ha.31}\end{equation}
To be able to proceed recursively, we will pair the statement \eqref{ha.31} with the requirement that the function 
\[
v_{i,\gamma}= \frac{ v_{i}- w_{i,\gamma}}{x^{\gamma}}
\]
obeys an evolution equation of the form
\begin{equation}
  \frac{\pa v_{i,\gamma}}{\pa t} = L^{\gamma}_{i,t} v_{i,\gamma}- v_{i,\gamma} + E_i^\gamma,
\label{ha.32}\end{equation}
with $E_{i}^{\gamma}$ having its regularity at the boundary determined by the one of $\hu$ as follows. For any $\alpha= \gamma+\beta$, we require that
\begin{multline}
 \hu\in x^{\alpha}\CI_{\fc}([0,\tau_{t_0}]\times X)+ \CI_{\fc}([0,\tau_{t_0}]\times \tX) \;\Longrightarrow \;  \\  E_{i}^{\gamma}\in x^\beta \CI_{\fc}([0,\tau_{t_0}]\times X) + \CI_{\fc}([0,\tau_{t_0}]\times \tX).
\label{ha.32b}\end{multline}   

Since the evolution equation of $v_{i}$ is given by \eqref{ha.23}, the statements \eqref{ha.31} and \eqref{ha.32} are trivially satisfied when $|\gamma|=0$ by taking $w_{i,\gamma} =0$.  Proceeding by induction on $|\gamma|$, assume therefore that \eqref{ha.31} and \eqref{ha.32} hold for all $i\in \{1,\ldots,\ell\}$ whenever $|\gamma|\le k$.  We want to show that these statements also hold for $|\gamma |=k+1$.     Given $\alpha\in \bbN^{\ell}_{0}$ with $|\alpha|=k+1$, write it as $\alpha= \gamma+\beta$ for some $|\gamma|=k$ and $|\beta|=1$, say $\beta_{j}= 1$ and $\beta_{m}=0$ whenever $m\ne j$.  By our induction hypothesis, 
\[
    \hu\in x^{\nu}\CI_{\fc}([0,\tau_{t_0}]\times X) + \CI_{\fc}(\tX), \quad \forall\, |\nu|=k+1.  
\]
If we also fix $i\in \{1,\ldots,\ell\}$, then this means the term $E_{i}^{\gamma}$ in the evolution equation of $v_{i,\gamma}$ is such that 
\[
     E_{i}^{\gamma} \in x_{j}\CI_{\fc}([0,\tau_{t_0}]\times X)+ \CI_{\fc}([0,\tau_{t_0}]\times \tX).
\]
Consequently, the evolution equation of $v_{i,\gamma}$ can be restricted to $\tD_j$ to give
\begin{equation}
    \frac{\pa f}{\pa t}= L^{\gamma}_{i,j,t}f -f + \left. E_{i}^{\gamma}\right|_{\tD_j}, \quad f(0,\cdot)=0,
\label{ha.33}\end{equation}
where $L^{\gamma}_{i,j,t}$ is the restriction of $L^{\gamma}_{i,t}$ to $\tD_j$.  
By Theorem~\ref{de.18}, this equation has a unique solution $f\in \CI_{\fc}([0,\tau]\times \tD_j)$ when $\dim \tD_j>0$.  When $\dim \tD_j=0$, this is still true since the evolution equation \eqref{ha.33} is just an ordinary differential equation and as such, it has a unique solution $f\in \CI([0,\tau]\times \bD_j)= \CI_{\fc}([0,\tau]\times \tD_j)$.   Consider then the function 
\begin{equation}
  f_{\Xi_j}= \Xi_j (f)\in \CI_{\fc}([0,\tau]\times \tX),
\label{ha.34}\end{equation}
defined using the extension map $\Xi_j$ of \eqref{ext.7}.
It satisfies the evolution equation
\begin{equation}
  \frac{\pa f_{\Xi_j}}{\pa t}= L^{\gamma}_{i,t}f_{\Xi_j} - f_{\Xi_j} + E_i^\gamma + Q_i^\gamma,
\label{ha.35}\end{equation}
with
\begin{equation}
Q_{i}^\gamma = \Xi_i( L^{\gamma}_{i,j,t}f-f+ \left.E_I^{\gamma}\right|_{\tD_j} ) - \left( L^{\gamma}_{i,t} f_{\Xi_j} -f_{\Xi_j} + E_{i}^\gamma \right) 
\label{ha.36}\end{equation}
in $x_{j}\CI_{\fc}([0,\tau_{t_0}]\times X)$  having the same regularity at the boundary as $E_i^{\gamma}$.  Consider the function
\[
      \bv_{i,\alpha}= v_{i,\gamma}- f_{\Xi_j}
\]
satisfying  the evolution equation
\[
     \frac{\pa \bv_{i,\alpha}}{\pa t}= L^{\gamma}_{i,t} \bv_{i,\alpha} - \bv_{i,\alpha} - Q_{i}^{\gamma}.
\] 
By Proposition~\ref{de.3}, we have that $\bv_{i,\alpha}\in x_j \CI_{\fc}([0,\tau]\times X)$.  This suggests to define $v_{i,\alpha}$ by
\begin{equation}
  v_{i,\alpha}= \frac{\bv_{i,\alpha}}{x_j}.
\label{ha.37}\end{equation} 
It then satisfy the evolution equation
\begin{equation}
  \frac{\pa v_{i,\alpha}}{\pa t} = L^{\alpha}_{i,t} v_{i,\alpha} - v_{i,\alpha} + E^\alpha_i,
    \quad E_i^\alpha= - \frac{Q_i^\gamma}{x_j}\in \CI_{\fc}([0,\tau]\times X),
\label{ha.38}\end{equation}
with $E_{i}^{\alpha}$ of the claimed regularity \eqref{ha.32b}.  Thus, it suffices to take 
\[
    w_{i,\alpha}= w_{i,\gamma}- f_{\Xi_j} x^\gamma.
\]
Since $i\in\{1,\ldots, \ell\}$ and $|\alpha|=k+1$ were arbitrary, this completes the proof by induction.
\end{proof}

We can now apply this result to prove the theorem.

\begin{proof}[Proof of Theorem~\ref{ha.1}]
Let $0<T_{0}\le T$ be  such that the evolution equation \eqref{ha.4} has a solution $u_I\in \CI_{\fc}([0,T_0]\times D_I)$ for all $I\subset \{1,\ldots,\ell\}$ such that $\bD_{I}= \bigcap_{i\in I} \bD_i\ne \emptyset$.  Since the result is trivial when $\tX$ is a closed manifold, we can proceed by induction on the depth of $\tX$ to prove Theorem~\ref{ha.1} and assume that $u_I\in \CI_{\fc}([0,T_0]\times \tD_I)$.         
             
Applying  Proposition~\ref{ha.30} with $t_{0}=0$, we then know that on the time interval $[0,\tau]$ (or $[0,T_0]$ if $\tau>T_0$), we have $u\in \CI_{\fc}([0,\tau]\times \tX)$.  Repeating this argument with $t_0= \tau, 2\tau,3\tau,$ et cetera, we can extend the result to the full interval to conclude that $u\in \CI_{\fc}([0,T_0]\times \tX)$.  To complete the proof, we need to show we can take $T_0=T$.  This follows from \cite[Remark~7.4]{Lott-Zhang}.  Alternatively, we can argue by contradiction and suppose that we cannot take $T_0=T$.  This means that the curvature tensor of one of the metrics $\omega_{I,t}$ blows up as $t$ tends to  $T_{I}$ for some $T_{I}\le T$.  Since for $t<T_{I}$, $\widetilde{\omega}_t$ is in $\CI_{\fc}(\tX; \Lambda^{2}({}^{\fc}T^{*}\tX))$ and its restriction to $\tD_{I}$ is $\widetilde{\omega}_{I}$, this means by Remark~\ref{Rm_asymp} that the curvature tensor of $\widetilde{\omega}_{t}$ also blows up as $t$ tends to $T_I$, a contradiction.  Thus, we can indeed take $T_0=T$.  
\end{proof}

\section{Asymptotics of K\"ahler-Einstein metrics on quasiprojective manifolds} \label{ake.0}

To find examples of K\"ahler-Einstein metrics on the quasiprojective manifold $X$, a natural condition to impose on the divisor $\bD$ is that 
\begin{equation}
      K_{\bX} + [\bD] >0.
\label{ake.1}\end{equation}
In this case, we can take $L= K_{\bX}+[\bD]$ to be the positive line bundle used to define the asymptotically tame polyfibred cusp K\"ahler metric of \eqref{fc.14}.  A Hermitian metric on $L$ is specified by the Hermitian metrics $\|\cdot \|_{\bD_i}$ and a choice of volume form $\Omega$ on $\bX$.  The condition \eqref{ake.1} ensures that we can choose $\Omega$ such that 
\begin{equation}
  \sqrt{-1} \Theta_{K_{\bX}+ [\bD]} = -\sqrt{1}\ \db \pa \log \left(  \frac{\Omega}{\prod_{i=1}^{\ell} \| s_i\|^{2}_{\bD_i}}\right) >0, 
\label{ake.2}\end{equation}
in which case 
\begin{equation}
\begin{aligned}
   \omega &= -\sqrt{-1} \ \db \pa \log \left(  \frac{\Omega}{\prod_{i=1}^{\ell} \| s_i\|^{2}_{\bD_i}}\right) +
   \sqrt{-1} \ \db\pa \log \left( \prod_{i=1}^{\ell} (-\log \epsilon \|s_i \|^{2}_{\bD_i})^{2}\right) \\
   &=  \sqrt{-1} \Theta_{K_{\bX}+[\bD]} + 2\sqrt{-1} \sum_{i=1}^{\ell} \left( \frac{\Theta_{\bD_i}}{ \log \epsilon \| s_i\|^{2}_{\bD_i}} \right) \\
   & \quad \quad \quad + 2\sqrt{-1} \sum_{i=1}^{\ell} 
     \left(  \frac{(\pa \log \epsilon \|s_i\|^{2}_{\bD_i}) \wedge (\db   \log \epsilon \|s_i\|^{2}_{\bD_i})}{  (\log \epsilon \|s_i\|^{2}_{\bD_i})^{2}} \right)
      \end{aligned}
\label{ake.3}\end{equation}
is a K\"ahler form provided $\epsilon$ is chosen small enough.  A K\"ahler-Einstein metric of the form 
\begin{equation}
     \omega_{\KE}= \omega + \sqrt{-1} \pa\db u
\label{ake.4}\end{equation}
is then obtained by solving the Monge-Amp\`ere equation
\begin{equation}
    \log \left( \frac{ (\omega+ \sqrt{-1} \pa \db u)^{n} }{ \omega^n } \right) -u = F= \log\left( \frac{\widetilde{\Omega}}{\omega^n}\right) \in \CI_{\fc}(\tX),
\label{ake.5}\end{equation}
where $\widetilde{\Omega}$ is the volume form
\begin{equation}
  \widetilde{\Omega}=  \frac{\Omega}{\prod_{i=1}^{\ell} \left( \|s_i\|^{2}_{\bD_i} (-\log (\epsilon \|s_i\|^{2}_{\bD_i}))^{2}\right) }  \in \CI_{\fc}(\tX; \Lambda^{2n}({}^{\fc}T^{*}\tX)). 
\label{ake.6}\end{equation}

From the work of Yau \cite{Yau1978}, Cheng and Yau \cite{Cheng-Yau}, Kobayashi \cite{Kobayashi}, Tsuji \cite{Tsuji}, Tian and Yau \cite{Tian-Yau} and Bando \cite{Bando}, we know  that the Monge-Amp\`ere equation \eqref{ake.6} has a unique solution $u\in \CI_{\fc}(X)$.  

By the adjunction formula, notice that for each $i\in\{1,\ldots,\ell\}$,
\begin{equation}
\left. \left( K_{\bX}+ [\bD] \right)\right|_{\bD_i}= K_{\bD_i}+ \sum_{j\ne i} \left.[\bD_j]\right|_{\bD_i}>0.
\label{ake.7}\end{equation}
If $\omega$ has standard spatial asymptotics associated to $\{\omega_I\}$ and $\{c_i\}_{i=1}^{\ell}$, where in this case $c_i=1$ for all $i$,  this means we can repeat the same procedure to find a complete K\"ahler-Einstein metric on $D_i$, and more generally on $D_I$.  Namely, we get a K\"ahler-Einstein metric on $D_I$ of the form
\begin{equation}
   \omega_{\KE,I} = \omega_I +\sqrt{-1} \pa \db u_I
\label{ake.8}\end{equation}
where $u_{I}\in \CI_{\fc}(D_I)$ is the unique solution to the Monge-Amp\`ere equation
\begin{equation}
 \log \left( \frac{ (\omega_I+ \sqrt{-1} \pa \db u_I)^{n-|I|} }{ \omega_I^{n-|I|} } \right) -u = F_I \in \CI_{\fc}(\tD_I),
 \label{ake.9}\end{equation}
where $F_I$ is the restriction of $F$ to $\tD_I$.

When the divisor $\bD$ is smooth, that is, when there is no normal crossing,  Schumacher showed in \cite{Schumacher} that as one approaches the irreducible component $\bD_i$, the solution $u$ to \eqref{ake.5} restricts on $\bD_i$ to give $u_i$.  When $\bD$ has normal crossings, the logarithmic compactification $\tX$ provides the right framework to generalize Schumacher's result.  To formulate this generalization, notice that proceeding as in \eqref{ext.7}, we can define an extension map 
\begin{equation}
  \Xi_{I,i}: \CI_{\fc}(D_{I\cup \{i\}} ) \to \CI(D_I)
\label{extn.1}\end{equation}
for $I\subset \{1,\ldots, \ell\}$ with $\bD_I\neq \emptyset$ and $i\notin I$ with $\bD_i\cap \bD_I \neq \emptyset$.

\begin{theorem}
Fix a K\"ahler form $\omega$ as in \eqref{ake.3}.  Then there exists $\nu>0$ such that for
each $i\in \{1,\dots,\ell\}$,  
   \[
                        u-\Xi_i( u_i) \in x_i^{\nu}\cC^{\infty}_{\fc}(X),
   \]
   where $u$ and $u_i$ are the solutions to the Monge-Amp\`ere equations \eqref{ake.5} and 
   \eqref{ake.9}.   Moreover, the constant $\nu>0$ can be chosen such that for all $I\subset \{1,\ldots,\ell\}$ with $\bD_I\neq \emptyset$ and for all $i\notin I$ with $\bD_i\cap \bD_I \neq \emptyset$, 
   \[
         u_I - \Xi_{I,i}( u_{I\cup \{i\}}) \in x_i^{\nu} \CI_{\fc}(D_I).
   \]
In particular, for all $I\subset \{1,\dots,\ell\}$ with $\bD_I\neq \emptyset$,  the restriction of $u$ to $D_I$ is well-defined and is given by $u_I$.     
\label{ake.10}\end{theorem}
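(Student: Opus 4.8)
The plan is to argue by induction on the depth of $\tX$, the base case being a closed manifold (depth zero), where there is nothing to prove. For the inductive step, fix $i\in\{1,\ldots,\ell\}$ and take as candidate for the restriction of $u$ the function $\Xi_i(u_i)\in\CI_{\fc}(X)$, where $u_i\in\CI_{\fc}(D_i)$ solves \eqref{ake.9} and $\Xi_i$ is the extension map of \eqref{ext.7} (applied to functions in $\CI_{\fc}(D_i)$). Setting $v_i=u-\Xi_i(u_i)$, so that $\omega+\sqrt{-1}\pa\db u=\omega_{\Xi_i}+\sqrt{-1}\pa\db v_i$ with $\omega_{\Xi_i}:=\omega+\sqrt{-1}\pa\db\Xi_i(u_i)$, I would first check that $\omega_{\Xi_i}$ is a genuine, uniformly non-degenerate K\"ahler metric: near $\tH_i$ the function $\Xi_i(u_i)$ is a pullback from $\tD_i$, so by Proposition~\ref{ext.8} and the standard spatial asymptotics of $\omega$ (here with $c_i=1$) one has $\omega_{\Xi_i}=\tPhi_i^*(\omega_i+\sqrt{-1}\pa\db u_i)+(\text{Poincar\'e cusp factor})+x_i\CI_{\fc}(\tX)$, while away from $\tH_i$ it equals $\omega$.

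The first genuine step is to show the error
\[
E_i:=\log\left(\frac{\omega_{\Xi_i}^n}{\omega^n}\right)-\Xi_i(u_i)-F
\]
lies in $x_i\CI_{\fc}(X)$. Near $\tH_i$ the Poincar\'e factors of $\omega_{\Xi_i}$ and $\omega$ cancel in the ratio, so $\omega_{\Xi_i}^n/\omega^n$ restricts on $\tH_i$ to $\tPhi_i^*\!\big((\omega_i+\sqrt{-1}\pa\db u_i)^{n-1}/\omega_i^{n-1}\big)$, while $F$ restricts to $F_i$ and $\Xi_i(u_i)$ to $u_i$; hence the restriction of $E_i$ to $\tH_i$ is precisely the left-hand side of \eqref{ake.9} and vanishes, giving $E_i\in x_i\CI_{\fc}(X)$ since $E_i\in\CI_{\fc}(X)$. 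Next, writing $\log\frac{(\omega_{\Xi_i}+\sqrt{-1}\pa\db v_i)^n}{\omega_{\Xi_i}^n}=\int_0^1\mathrm{tr}_{(1-t)\omega_{\Xi_i}+t\omega_{\KE}}(\sqrt{-1}\pa\db v_i)\,dt=:\widetilde\Delta_i v_i$ (the path metrics are all uniformly comparable to $\omega$, as both endpoints are), the Monge-Amp\`ere equation for $u$ minus the definition of $E_i$ becomes the linear equation
\[
\widetilde\Delta_i v_i-v_i=-E_i,\qquad E_i\in x_i\CI_{\fc}(X),
\]
with $\widetilde\Delta_i\in\Diff^2_{\fc}(X)$ uniformly elliptic of negative principal symbol.

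The heart of the matter is then a decay estimate converting "$v_i\in\CI_{\fc}(X)$ bounded'' and "$E_i\in x_i\CI_{\fc}(X)$'' into "$v_i\in x_i^\nu\CI_{\fc}(X)$'' for some $\nu>0$. The key computation is the indicial behaviour of $\widetilde\Delta_i-1$ at $\tH_i$: in the fibre ($\zeta_i$) direction the leading part is $\rho(x_i,\theta_i,w)\,\tfrac12\big((x_i\pa_{x_i})^2+x_i\pa_{x_i}\big)$ with $\rho$ bounded above and below, and the base Laplacian annihilates constant coefficients, so $(\widetilde\Delta_i-1)(Ax_i^\nu)=A\big(\tfrac12\rho\,\nu(\nu+1)-1\big)x_i^\nu+O(x_i^{\nu+\epsilon})$, which is $\le 0$ near $\tH_i$ once $\nu>0$ is small enough and dominates the forcing $E_i=O(x_i)=o(x_i^\nu)$. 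Combining this barrier with Yau's generalized maximum principle — valid since $g_\omega$ is complete with bounded Ricci curvature by Propositions~\ref{fv.1} and \ref{fc.28}, and exactly in the spirit of Proposition~\ref{de.4} — one gets $v_i=O(x_i^\nu)$; differentiating the equation as in Lemma~\ref{de.10}, so that $\nabla v_i$ satisfies a similar equation with forcing $-\nabla E_i+fv_i\in x_i^\nu\CI_{\fc}$, then bootstraps this to all fc-derivatives, i.e. $v_i\in x_i^\nu\CI_{\fc}(X)$. Applying the same argument to each of the finitely many faces $\tD_I$ in place of $X$ yields the statements for $u_I$ with a common $\nu$, and "$u$ restricts to $u_I$ on $D_I$'' follows by composing the restriction maps and using $\tH_I=\bigcap_{i\in I}\tH_i$.

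The main obstacle is this decay estimate, i.e. upgrading "$v_i$ bounded'' to "$v_i=O(x_i^\nu)$''. Several points require care: the non-compactness of $X$ forces one to use Yau's maximum principle (or a suitably localized barrier argument) rather than a naive comparison on a fixed sub-domain; the admissible barrier exponents are constrained to a small interval $(0,\nu_0)$ because the first positive indicial root of $\widetilde\Delta_i-1$ is at most $1$ (the resonance $\lambda=1$ being the source of the $x\log x$ term in Theorem~\ref{int.8}) and because $\nu_0$ depends on the a priori only bounded conformal-type factor relating $g_{\KE}$ to $g_\omega$ — which is exactly why only the existence of $\nu$, not a specific value, can be asserted; and in the bootstrap the first-order commutator $[\nabla,\widetilde\Delta_i]v_i$ must be reorganized as in Lemma~\ref{de.10} into a lower-order term in $v_i$ itself, so that differentiating does not degrade the decay rate.
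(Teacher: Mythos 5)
There is a genuine gap at the point where you declare $\omega_{\Xi_i}=\omega+\sqrt{-1}\pa\db\,\Xi_i(u_i)$ to be ``a genuine, uniformly non-degenerate K\"ahler metric''. Your justification only covers two regions: very close to $\tH_i$, where the error is $x_i\CI_{\fc}(\tX)$ and positivity is inherited from the model $\tPhi_i^*(\omega_i+\sqrt{-1}\pa\db u_i)+\text{cusp}$, and outside the support of the cut-off, where $\omega_{\Xi_i}=\omega$. In the transition region of the cut-off $\chi_i(x_i)$ (and more generally wherever $x_i$ is bounded below but $\Xi_i(u_i)\not\equiv 0$), the terms $\pa\chi_i\wedge\db(\cdot)$, $\pa\db\chi_i$, etc.\ are $O(1)$ in fc-norm with no smallness, and there is no reason for $\omega+\sqrt{-1}\pa\db\,\Xi_i(u_i)$ to remain positive there. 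Without global uniform positivity, the path $(1-t)\omega_{\Xi_i}+t\omega_{\KE}$ can leave the K\"ahler cone, the quotient $(\omega_{\Xi_i}+\sqrt{-1}\pa\db v_i)^n/\omega_{\Xi_i}^n$ need not be positive, and your operator $\widetilde\Delta_i$ is neither well defined nor uniformly elliptic on all of $X$ — so the linear equation on which your barrier argument rests is not available. This is precisely the difficulty the paper's proof is organized around: after building the combined approximate potential $v_\ell$ (correct at \emph{all} hypersurfaces, using the inductive hypothesis at the corners), Lemma~\ref{gsr.1} corrects it to $v$ with $v-v_\ell$ exponentially small, using the positivity of $L=K_{\bX}+[\bD]$, the section $s$, an auxiliary Hermitian metric with curvature positive on all of $X$, and Schumacher's ``harmonic mean of metrics'' lemma, so that $\omega/K\le\omega+\sqrt{-1}\pa\db v\le K\omega$ holds globally. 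Only then is the Monge--Amp\`ere equation linearized (as $(\Delta_w-1)w=F_v$ with $F_v\in x_i^{\nu}\CI_{\fc}(X)$) and the decay extracted — in the paper via an isomorphism of $(\Delta_w-1)$ on the weighted H\"older spaces $x_i^{\delta}\cC^{k,\alpha}_{\fc}(X)$ (a perturbation of the unweighted isomorphism coming from Yau's maximum principle and Schauder theory), rather than via an explicit barrier.

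Your remaining steps (restricting the equation to identify $u_i$, checking the error term vanishes on $\tH_i$, linearizing along a path of metrics, a decay estimate at weight $x_i^{\nu}$ for small $\nu$ below the first indicial root, bootstrap to fc-derivatives, induction on depth) are in the right spirit and parallel the paper's; the alternative of running a barrier/maximum-principle argument only on a small collar $\{x_i<\delta\}$ (which you hint at with ``a suitably localized barrier argument'') could in principle bypass the global positivity issue, but it is not carried out: you would need to handle the comparison at the inner boundary $\{x_i=\delta\}$, apply a maximum principle on a region that is still non-compact (other cusps and corners in the normal-crossings case), and redo the derivative bootstrap there — in the elliptic setting the Cauchy--Schwarz losses must be kept below the zeroth-order gap, since there is no time variable to absorb growth as in Proposition~\ref{de.4}. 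As written, the missing positivity/correction step (the content of Lemma~\ref{gsr.1}) is the crux, and the proof is incomplete without it.
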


When $\tX$ has depth zero, the theorem is trivial.  Thus, to prove this theorem, we can proceed by induction on the depth of $\tX$  and assume the theorem holds on $D_I$ for all $I$.  With this assumption, we can proceed recursively on $i\in\{1,\ldots,\ell\}$ to define
\[
       v_1= \Xi_1(u_1), \quad v_{k+1}= v_k + \Xi_{k+1} ( u_{k+1} - \left. v_k \right|_{D_{k+1}} ). 
\]
Then $v_{\ell}\in \CI_{\fc}(X)$ is such that its restriction to $D_I$ is well-defined for all $I$ and given by $u_I$.  Moreover, there exists $\nu>0$ such that 
\begin{equation}
      v_{\ell} - \Xi_{i}(u_i) \in x_i^{\nu} \CI_{\fc}(X) \quad \forall \ i \in \{1,\ldots,\ell\}.  
\label{blabla.1a}\end{equation}
Thus, to show the theorem holds on $X$, it suffices to show that for possibly a smaller $\nu>0$, we have that for all $i\in\{1,\ldots,\ell\}$, 
\begin{equation}
     u-v_{\ell} \in x_i^{\nu}\CI_{\fc}(X).  
\label{blabla.1}\end{equation}
To use the Monge-Amp\`ere equation, we will replace $v_{\ell}$ with a slightly better function.   

\begin{lemma}
There exists a function $v\in \CI_{\fc}(X)$ such that:
\begin{itemize}
\item[(i)] $v- v_{\ell} \in \frac{\rho^\frac{2}{m}}{x^2}\CI_{\fc}(X)$ for some $m\in \bbN$, where $\rho= \prod_{i=1}^{\ell}\rho_i$; 
\item[(ii)]  there exists a constant $K>0$ such that
\[
        \frac{\omega}{K} \le \omega + \sqrt{-1} \pa \db v \le K\omega \quad \mbox{on} \; X.
\]
\end{itemize}
\label{gsr.1}\end{lemma}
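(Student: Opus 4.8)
The plan is to leave $v_{\ell}$ untouched near $\pa\tX$ and to repair the positivity of $\omega+\sqrt{-1}\pa\db v_{\ell}$ over the compact interior by gluing $v_{\ell}$ to a globally K\"ahler potential via a regularized maximum. Throughout I write $\omega=\sqrt{-1}\Theta_{L}+\sqrt{-1}\pa\db\phi_{0}$ with $\phi_{0}=\log\bigl(\prod_{i=1}^{\ell}(-\log\epsilon\|s_{i}\|^{2}_{\bD_{i}})^{2}\bigr)$, recalling that $\sqrt{-1}\Theta_{L}$ is a K\"ahler form on all of $\bX$ while $\phi_{0}\to+\infty$ at $\pa\tX$; I also recall that $v_{\ell}\in\CI_{\fc}(\tX)$ (each $\Xi_{i}$ maps $\CI_{\fc}(\tD_{i})$ into $\CI_{\fc}(\tX)$) with $r_{I}(v_{\ell})=u_{I}$ for every $I$ with $\bD_{I}\ne\emptyset$. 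Set $\omega_{v_{\ell}}=\omega+\sqrt{-1}\pa\db v_{\ell}$.

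First I would show that $\omega_{v_{\ell}}$ is positive definite and bi-Lipschitz to $\omega$ on some neighborhood $\cU$ of $\pa\tX$. By Proposition~\ref{ext.8}, the restriction of $\sqrt{-1}\pa\db v_{\ell}$ to $\tH_{I}$, as a section of $\Lambda^{2}({}^{\fc}T^{*}\tX)\otimes\bbC$, is $\tPhi_{I}^{*}(\sqrt{-1}\pa\db u_{I})$; by Proposition~\ref{fc.15} and Definition~\ref{fc.26} (here $c_{i}=1$), the restriction of $\omega$ to $\tH_{I}$ is $\tPhi_{I}^{*}\omega_{I}+\frac{\sqrt{-1}}{2}\sum_{i\in I}\frac{d\zeta_{i}\wedge d\overline{\zeta}_{i}}{|\zeta_{i}|^{2}(\log|\zeta_{i}|)^{2}}$. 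Hence the restriction of $\omega_{v_{\ell}}$ to $\tH_{I}$ is $\tPhi_{I}^{*}\omega_{\KE,I}+\frac{\sqrt{-1}}{2}\sum_{i\in I}\frac{d\zeta_{i}\wedge d\overline{\zeta}_{i}}{|\zeta_{i}|^{2}(\log|\zeta_{i}|)^{2}}$, which is positive definite as a section of $\Lambda^{2}({}^{\fc}T^{*}\tX)\otimes\bbC$ over $\tH_{I}$: the first term is positive on the $\tD_{I}$-directions since $\omega_{\KE,I}$ is a K\"ahler metric, and the Poincar\'e terms are positive on the complementary $\{\,dx_{i}/x_{i},\,x_{i}d\theta_{i}\,\}_{i\in I}$-directions. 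Since $\omega_{v_{\ell}}$ is a continuous section of $\Lambda^{2}({}^{\fc}T^{*}\tX)\otimes\bbC$ up to $\tX$ and positivity is an open condition, $\omega_{v_{\ell}}>0$ on a neighborhood $\cU$ of $\pa\tX$; comparing the two positive sections $\omega_{v_{\ell}}$ and $\omega$ over the compact set $\overline{\cU}$ gives a constant $K_{0}$ with $K_{0}^{-1}\omega\le\omega_{v_{\ell}}\le K_{0}\omega$ on $\cU$.

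Next I would carry out the gluing. Choose open neighborhoods $\cU''\subset\cU'$ of $\pa\tX$ with $\overline{\cU'}\subset\cU$; since $\tX\setminus\cU'$ is a compact subset of $X$, $M_{0}:=\sup_{\tX\setminus\cU'}(\phi_{0}+v_{\ell})$ is finite, and, shrinking $\cU''$ if necessary, $\phi_{0}+v_{\ell}>M_{0}+2$ on $\cU''$ (possible since $\phi_{0}+v_{\ell}\to+\infty$ at $\pa\tX$). Let $M_{\eta}\colon\bbR^{2}\to\bbR$ be the regularized maximum with $\eta=\frac{1}{4}$: it is smooth and convex, $\pa_{s}M_{\eta}+\pa_{t}M_{\eta}\equiv1$ with $\pa_{s}M_{\eta},\pa_{t}M_{\eta}\ge0$, and $M_{\eta}(s,t)=\max(s,t)$ whenever $|s-t|\ge\frac{1}{2}$. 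Set
\[
   v=M_{\eta}(\phi_{0}+v_{\ell},\,M_{0}+1)-\phi_{0}.
\]
On $\cU''$ one has $(\phi_{0}+v_{\ell})-(M_{0}+1)>1$, so $v=v_{\ell}$ there; thus $v$ agrees with $v_{\ell}\in\CI_{\fc}(\tX)$ near $\pa\tX$ and is smooth on $X$, whence $v\in\CI_{\fc}(X)$, and $v-v_{\ell}$ vanishes on the neighborhood $\cU''$ of $\pa\tX$, so it lies in $\dot{\cC}^{\infty}(\tX)\subset\frac{\rho^{2/m}}{x^{2}}\CI_{\fc}(X)$ for every $m\in\bbN$, giving (i). For (ii) I would check positivity region by region. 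On $\cU''$: $\omega+\sqrt{-1}\pa\db v=\omega_{v_{\ell}}$, positive and $K_{0}$-bi-Lipschitz to $\omega$ by the previous step. On $\tX\setminus\cU'$: $(\phi_{0}+v_{\ell})-(M_{0}+1)\le-1$, so $M_{\eta}(\phi_{0}+v_{\ell},M_{0}+1)=M_{0}+1$ and $\omega+\sqrt{-1}\pa\db v=\sqrt{-1}\Theta_{L}>0$; these are two smooth positive forms on the compact set $\tX\setminus\cU'$, hence bi-Lipschitz there. On the remaining set $\cU'\setminus\cU''$ (a compact subset of $X$ contained in $\cU$), convexity of $M_{\eta}$ together with $\pa_{s}M_{\eta}+\pa_{t}M_{\eta}\equiv1$ gives $\sqrt{-1}\pa\db M_{\eta}(\phi_{0}+v_{\ell},M_{0}+1)\ge(\pa_{s}M_{\eta})\sqrt{-1}\pa\db(\phi_{0}+v_{\ell})$, so that
\[
   \omega+\sqrt{-1}\pa\db v\ \ge\ (\pa_{t}M_{\eta})\,\sqrt{-1}\Theta_{L}+(\pa_{s}M_{\eta})\,\omega_{v_{\ell}},
\]
a convex combination of the positive forms $\sqrt{-1}\Theta_{L}$ and $\omega_{v_{\ell}}$ (the latter positive inside $\cU$), hence positive definite, and bi-Lipschitz to $\omega$ on this compact set. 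Taking $K$ to be the maximum of $K_{0}$ and the two compact-set constants yields (ii).

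I expect the real obstacle to be the first step, namely verifying that $\omega+\sqrt{-1}\pa\db v_{\ell}$ is already positive near $\pa\tX$: this is where the structure of $v_{\ell}$ enters, through the computation of the boundary restriction of $\sqrt{-1}\pa\db v_{\ell}$ (Proposition~\ref{ext.8}) and the fact---supplied by the inductive hypothesis of Theorem~\ref{ake.10} together with the cited existence theory for K\"ahler-Einstein metrics---that each $\omega_{\KE,I}$ is an honest K\"ahler metric on $D_{I}$. Once near-boundary positivity is in hand, the regularized-maximum gluing is a soft device: it restores positivity on the compact interior without disturbing the boundary asymptotics, and in fact leaves $v$ equal to $v_{\ell}$ near $\pa\tX$, so (i) holds with room to spare.
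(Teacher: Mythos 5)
Your overall architecture is sound, but the first step, which you correctly identify as the crux, is justified by a premise that is not available and is in general false. You assert that $v_{\ell}\in \CI_{\fc}(\tX)$ on the grounds that each $\Xi_i$ maps $\CI_{\fc}(\tD_i)$ into $\CI_{\fc}(\tX)$; however, the functions being extended are the Monge--Amp\`ere solutions $u_I$, which are only known to lie in $\CI_{\fc}(D_I)$, not in $\CI_{\fc}(\tD_I)$ --- indeed, by Theorem~\ref{ake.20} and Example~\ref{clt.8} (applied one dimension down) they generically carry $x\log x$ terms, so $v_{\ell}$ is only in $\CI_{\fc}(X)$, which is all the paper claims. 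Consequently Proposition~\ref{ext.8} cannot be applied to $v_{\ell}$, and your argument ``restrict $\omega+\sqrt{-1}\pa\db v_{\ell}$ to $\tH_I$, then use continuity and openness of positivity over the compact $\tX$'' fails as stated, since $\pa\db v_{\ell}$ need not extend continuously to $\pa\tX$ as a section of $\Lambda^{2}({}^{\fc}T^{*}\tX)\otimes\bbC$. What you actually need is the uniform two-sided bound $C^{-1}\omega\le \omega+\sqrt{-1}\pa\db v_{\ell}\le C\omega$ on a neighborhood of $\pa\tX$, and to obtain it you must argue as the paper does: use \eqref{blabla.1a}, i.e.\ $v_{\ell}-\Xi_i(u_i)\in x_i^{\nu}\CI_{\fc}(X)$ (whose $\pa\db$ is $O(x_i^{\nu})$ in the fc sense), together with the fact from the cited existence theory and the inductive hypothesis of Theorem~\ref{ake.10} that $\omega_{\KE,I}=\omega_I+\sqrt{-1}\pa\db u_I$ is bi-Lipschitz to $\omega_I$, and the standard spatial asymptotics of $\omega$. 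This is not bookkeeping; it is where the content of the step lies.

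Granting that near-boundary comparability, your gluing is correct and genuinely different from the paper's. The paper works at the level of Hermitian metrics on $L=K_{\bX}+[\bD]$: it takes the ``harmonic mean'' $\widetilde h=\bigl(\widetilde h_1^{-1}+(c\,h_2)^{-1}\bigr)^{-1}$, where $h_2$ is manufactured from the positivity of $L^m-[\bD]$, and invokes Schumacher's curvature inequality; the resulting correction $v-v_{\ell}=-\log(\widetilde h/\widetilde h_1)$ only decays like $\rho^{2/m}/x^{2}$. Your regularized maximum of $\phi_0+v_{\ell}$ against the constant $M_0+1$ avoids both $L^m-[\bD]$ and Schumacher's lemma, and gives the stronger conclusion $v\equiv v_{\ell}$ near $\pa\tX$, so (i) holds for every $m$; the convexity computation and the region-by-region verification of (ii) are fine. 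One small repair: the inclusion $\dot{\cC}^{\infty}(\tX)\subset \frac{\rho^{2/m}}{x^{2}}\CI_{\fc}(X)$ you invoke is false (the containment goes the other way, since $\rho_i=e^{-1/x_i}$ decays faster than any power of $x_i$); but your $v-v_{\ell}$ is supported in a compact subset of $X$, where $\rho^{2/m}/x^{2}$ is bounded below, so the required membership holds trivially without that inclusion.
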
  
\begin{proof}
Let $h_{L}$ denote the Hermitian metric on $L=K_{\bX}+\bD$ induced by the choice of volume form $\Omega$ and the Hermitian metrics $\|\cdot\|_{\bD_i}$.  On $X$, we can consider instead a new Hermitian metric given by
\[
      \widetilde{h}_L = \left(\prod_{i=1}^{\ell} (-\log \epsilon \|s_i\|^{2}_{\bD_i})^2 \right) h_L,
\]
and \eqref{ake.3} means that $\omega= \sqrt{-1} \Theta_{\widetilde{h}_L}$, where $\Theta_{\widetilde{h}_L}$ is the curvature of the Chern connection associated to $\widetilde{h}_L$.  

Similarly, the form $\omega+\sqrt{-1}\pa\db v_{\ell}$ is
simply given by $\sqrt{-1}\Theta_{\widetilde{h}_1}$ where $\Theta_{\widetilde{h}_1}$ is the curvature of the Chern connection associated to the Hermitian metric $\widetilde{h}_1= e^{-v_{\ell}}\widetilde{h}_L$.  This curvature is not necessarily everywhere positive on $X$.  However, in local holomorphic coordinates \eqref{fc.16} near $\bD_I$, we know from \eqref{blabla.1a} that
\[
     (\omega+ \sqrt{-1}\pa\db v_{\ell}) -\left( \omega_{\KE,I} + \frac{\sqrt{-1}}{2}\sum_{i\in I} \frac{d\zeta_i\wedge d\overline{\zeta}_i }{|\zeta_i|^2 (\log |\zeta_i |)^2}   \right) \in \sum_{i\in I}x_i^{\nu}\CI_{\fc}(X;\Lambda^2(T^* X)\otimes \bbC).  
\]   

This means there is a positive constant $C>0$  and small open neighborhood neighborhood $\overline{\cU}\subset \bX$ of $\bD$ such that in $\cU= \overline{\cU}\cap X$, we have that 
\[
          \frac{\omega}{C} \le  \omega + \sqrt{-1} \pa\db v_{\ell} \le C\omega.  
\]     

On the other hand, since the line bundle $L$ is positive, we know there exists $m>0$ and a Hermitian metric $h_{L^m}$ on $L^m$ such that both $L^m$ and $L^m- [\bD]$ are positive Hermitian line bundles.  On $\bX\setminus \bD$, we can then consider the new Hermitian metric given by 
\begin{equation}
     \widehat{h}_{L^m}=  \frac{h_{L^m}}{\|s\|_{\bD}^{2}},  \quad \mbox{with}\; s= \bigotimes_{i=1}^{\ell} s_i \in H^{0}(\bX; [\bD]).
\label{gsr.2}\end{equation}
Its curvature is positive on $\bX\setminus \bD$, since
\[
     \sqrt{-1}\Theta_{\widehat{h}_{L^m}}= \sqrt{-1}\Theta_{h_{L^m}} - \sqrt{-1}\Theta_{\bD}= \sqrt{-1} \Theta_{L^m-[\bD]} >0.  
\]
If we denote by $h_2= (\widehat{h}_{L^m})^{\frac{1}{m}}$ the induced Hermitian metric on $L$, we also have that $\sqrt{-1}\Theta_{h_2}= \frac{\sqrt{-1}}{m}\Theta_{\widehat{h}_{L^m}}$ is positive on $\bX\setminus \bD$.       
Thanks to the holomorphic section $s$ in \eqref{gsr.2}, notice that
\begin{equation}
     \frac{\widetilde{h}_1}{h_2} = \frac{e^{-v_{\ell}} h_L \prod_{i=1}^{\ell} (-\log \epsilon \|s_i \|^{2}_{\bD_i})^{2}}{h_2} \in \frac{\rho^{\frac{2}{m}}}{x^2} \CI_{\fc}(X).
\label{gsr.3}\end{equation}

For $c>0$, consider then on $X$ the Hermitian metric on $L$ given by
\[
           \widetilde{h}= \frac{1}{\frac{1}{\widetilde{h}_1}+ \frac{1}{c h_2} }= \frac{\widetilde{h}_1}{ 1+ \frac{\widetilde{h}_1}{c \widetilde{h}_2} }.
\]
From \eqref{gsr.3}, we have that 
\begin{equation}
      \frac{\widetilde{h}}{\widetilde{h}_1}-1  \in \frac{\rho^\frac{2}{m}}{x^2} \CI_{\fc}(X).  
\label{gsr.4}\end{equation}
By \cite[Lemma~3]{Schumacher}, we know that 
\[
       \frac{\sqrt{-1}\Theta_{\widetilde{h}} }{\widetilde{h}} \ge \frac{\sqrt{-1}\Theta_{\widetilde{h}_1}}{\widetilde{h}_1} + \frac{\sqrt{-1}\Theta_{h_2}}{c h_2}.
\]
Since $\sqrt{-1}\Theta_{\widetilde{h}_1}\ge \frac{\omega}{C}$   on $\cU$ and $\sqrt{-1}\Theta_{h_2}>0$ on $\bX\setminus \bD$, by taking $c>0$ sufficiently small, we can ensure that for some positive constant $K$, 
\begin{equation}
       \frac{\omega}{K} <\sqrt{-1}\Theta_{\widetilde{h}} < K\omega.
\label{gsr.4b}\end{equation}
Thus, in light of \eqref{gsr.4b}, it suffices to take $v= v_{\ell} - \log\left( \frac{\widetilde{h}}{\widetilde{h}_{1}}\right)$, for then 
$\omega+ \sqrt{-1}\pa\db v= \sqrt{-1}\Theta_{\widetilde{h}}$ and 
\[
    v-v_\ell = -  \log \left( \frac{\widetilde{h}}{\widetilde{h}_1}\right)\in \frac{\rho^\frac{2}{m}}{x^2} \CI_{\fc}(X).
\]

  \end{proof}

  With this lemma, we can then proceed to the proof of Theorem~\ref{ake.10}.  
  
  \begin{proof}[Proof of Theorem~\ref{ake.10}]
  In light of Lemma~\ref{gsr.1}, we need to show that for some $\delta>0$ and for all $i\in \{1,\ldots,\ell\}$,  $w= u-v$ is in  $x_i^{\delta}\CI_{\fc}(X)$.  In terms of $w$ and the K\"ahler form $\omega_v= \omega +\sqrt{-1}\pa\db v$, the Monge-Amp\`ere equation \eqref{ake.5} can be rewritten as
 \begin{equation}
   \log \left( \frac{(\omega_v+ \sqrt{-1} \pa\db w)^n }{\omega_v^n}\right)- w= F_v
 \label{gsr.6}\end{equation}
  with 
  \[
     F_v=  F -\left(\log\left(  \frac{\omega_v^n}{\omega^n}\right)  -v\right).
  \]
  From Lemma~\ref{gsr.1} and \eqref{blabla.1}, we see that for some $\nu>0$, we have that $v-\Xi_i(u_i)\in x_i^{\nu}\CI_{\fc}(X)$ for all $i\in\{1,\ldots,\ell\}$.  In particular, this means that 
\[
F_v= \left( \log \left( \frac{(\omega+\sqrt{-1}\pa\db u)^n}{\omega^n}\right)-u   \right)  -   \left( \log \left( \frac{(\omega+\sqrt{-1}\pa\db v)^n}{\omega^n}\right)-v   \right)     
\] 
is in $x_{i}^{\nu}\CI_{\fc}(X)$.  On the other hand, writing  $\omega_{v,t}= \omega_v + t\sqrt{-1} \pa\db w$, 
 the Monge-Amp\`ere equation \eqref{gsr.6} can be rewritten as
\begin{equation}
\begin{aligned}
 F_v +w &=  \int_0^1  \frac{\pa}{\pa t} \log \left( \frac{ (\omega_v+ t\sqrt{-1}\pa\db w)^n}{\omega_v^n}\right) dt \\
   & = \int_0^1 \left( \frac{n\omega_{v,t}^{n-1} \wedge \sqrt{-1} \pa\db w}{\omega_{v,t}^{n}}\right) dt,
\end{aligned}    
\label{gsr.7}\end{equation}  
 that is, it can be written as a linear equation
 \[
      (\Delta_{w} -1)w= F_v
 \] 
where
\[
      \Delta_{w} f=  \int_0^1 \left( \frac{n\omega_{v,t}^{n-1} \wedge \sqrt{-1} \pa\db f}{\omega_{v,t}^{n}}\right) dt = \int_0^1 \Delta_{\omega_{v,t}} f dt,
\]    
 with $\Delta_{\omega_{v,t}}$ the $\db$-Laplacian associated to $\omega_{v,t}$.  This tacitly assumes that $\omega_{v,t}$ is a K\"ahler form.  Since we can find a positive constant $C$ such that 
 \[
      \frac{\omega_v}{C} \le \omega_v+\sqrt{-1} \pa\db w \le C\omega_v,
 \]  
this is indeed the case, as we have,
 \[
      \left( \frac{t}{C} + (1-t) \right) \omega_v \le \omega_{v,t} \le (Ct+1-t)\omega_v, \quad \forall \; t\in [0,1]. 
 \]
 In particular, the operator $\Delta_w \in \Diff^{2}_{\fc}(X)$ is uniformly elliptic.  By Yau's generalized maximum principle and Schauder's theory, this means that for all $k\in \bbN_0$,  the operator $(\Delta_w -1)$ induces an isomorphism of Banach spaces
 \[
         (\Delta_w -1): \cC^{k+2,\alpha}_{\fc}(X)\to \cC^{k,\alpha}_{\fc}(X),
 \] 
 where $\cC^{k,\alpha}_{\fc}(X)$ is the H\"older space considered  in \cite{Kobayashi} (see also
 \cite{Wu06}).
 Provided $\delta>0$ is chosen small enough, we will show that it also induces an isomorphism
 \begin{equation}
 (\Delta_w -1): x_i^{\delta}\cC^{k+2,\alpha}_{\fc}(X)\to x_i^{\delta}\cC^{k,\alpha}_{\fc}(X)
 \label{gsr.7b}\end{equation}
 for all $k\in \bbN_0$.  Since $F_v\in x_i^{\delta}\CI_{\fc}(X)$ for $\delta>0$ small enough, this will imply that $w\in x_i^\delta\CI_{\fc}(X)$ as desired.  
 
 To find $\delta$, consider the new operator
 \begin{equation}
      x_i^{-\delta}\circ (\Delta_w -1)\circ x_i^{\delta}= \Delta_{w}-1 + x_i^{-\delta}[ \Delta_w , x_i^{\delta}].
 \label{gsr.8}\end{equation}
 Since the family $\delta\mapsto x^{-\delta} [\Delta_{w},x^{\delta}]\in \Diff_{\fc}^{2}(X)$ is continuous and vanishes for $\delta=0$, we see that for $\delta>0$ sufficiently small and for all $k\in \bbN_0$, the operator  \eqref{gsr.8} induces an isomorphism
 \[
 x_i^{-\delta}\circ (\Delta_w -1)\circ x_i^{\delta}:  \cC^{k+2,\alpha}_{\fc}(X)\to \cC^{k,\alpha}_{\fc}(X),  \]
 which is just another way of saying the linear map \eqref{gsr.7b} is an isomorphism.   
  \end{proof}

\section{Boundary regularity for linear uniformly elliptic equations} \label{br.0}

In this section, we will assume that the divisor $\bD$ is smooth, so that its irreducible components $\bD_1,\ldots,\bD_\ell$ do not intersect.  On the manifold with boundary $ \hX$, we consider for each boundary component $\hH_i$ a collar neighborhood 
\begin{equation}
  \hat{c}_i : \hH_i\times [0,\delta_i)\to \hX
\label{br.1}\end{equation} 
compatible with the boundary defining function $\rho_i\in \CI(\hX)$, that is, such that 
\[
      \rho_i \circ \hat{c}_i (p,t)=t, \quad \forall \; p\in \hH_i, \; t\in [0,\delta_i).
\] 
On $\tX$, this lifts to a collar neighborhood
\begin{equation}
       \tc_i : \tH_i \times [0,\epsilon_i)\to \tX, \quad \epsilon_i= \frac{-1}{\log\delta_i},
\label{br.2}\end{equation}
compatible with the boundary defining function $x_i= \frac{-1}{\log \rho_i}$.  Combining these, we get a collar neighborhood 
\begin{equation}
   \tc: \pa\tX\times [0,\epsilon) \to \tX, \quad \epsilon= \min \{\epsilon_1,\ldots,\epsilon_\ell\}. 
\label{br.3}\end{equation}
On $\tX$, consider the space $\dot{\cC}^{\infty}(\tX)$ of smooth functions on $\tX$ which vanish together with all their derivatives on the boundary $\pa\tX$.  In particular, we have that
\[
      \dot{\cC}^{\infty}(\tX)= \bigcap_{m\in\bbN} x^m \cC^{\infty}_{\fc}(X)\subset \cC^{\infty}_{\fc}(X).
\]
On the other hand, let $\tPhi: \pa \tX\to \bD$ denote the fibration which is induced by the fibration $\tPhi_i$ on the boundary component $\tH_i$.  In the collar neighborhood \eqref{br.3}, the fibration
$\tPhi:\pa\tX\to \bD$, extends to a fibration 
\begin{equation}
    \begin{array}{lccc} 
    \tPhi\times \Id: & \pa\tX\times [0,\epsilon) & \to & \bD\times [0,\epsilon) \\
                            & (p,t)  & \mapsto & (\tPhi(p),t).
    \end{array}                        
\label{br.4}\end{equation}
With respect to this fibration, another subspace of $\CI_{\fc}(X)$ of interest is
\begin{equation}
  \CI_{\tc}(X)= \{ f\in \CI_{\fc}(X) \quad | \quad \tc^{*}f = (\tPhi\times \Id)^{*}h 
  \; \mbox{for some} \; h\in \CI(\bD\times (0,\epsilon)) \}.
\label{br.5}\end{equation}

\begin{lemma}
Any function $f\in \cC^{\infty}_{\fc}(X)$ can be written as 
\[
         f= f_1 + f_2 \quad \mbox{for some} \; f_1\in \CI_{\tc}(X), \; f_2\in \dot{\cC}^{\infty}(\tX).
\]
\label{br.6}\end{lemma}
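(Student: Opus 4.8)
The plan is to isolate, near each boundary hypersurface $\tH_i$, the part of $f$ that oscillates along the circle fibres of $\tPhi_i\colon\tH_i\to\bD_i$, to show that this oscillating part is automatically flat at $\pa\tX$, and to check that what remains --- a fibrewise average --- lies in $\CI_{\tc}(X)$. It is essential here that $\bD$ is smooth, since this is exactly what makes each $\tH_i$ a principal circle bundle over $\bD_i$, which provides the global $S^1$-action used below.

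\emph{Step 1 (fibrewise averaging).} Using an $S^1$-invariant collar of $\tH_i$ compatible with $x_i$, the principal circle action on $\tH_i$ extends to a smooth $S^1$-action $\sigma_i$ on a neighbourhood of $\tH_i$ in $\tX$ which fixes $\pa\tX$ setwise, preserves $x_i$, and covers the identity on $\bD_i$. In local coordinates as in \eqref{fc.18} adapted to this collar and to a local trivialisation of the bundle, $\sigma_{i,s}$ acts by $\theta_i\mapsto\theta_i+s$ and fixes the remaining coordinates; since the generating fibred cusp vector fields $x_i\pa_{x_i}$, $\tfrac1{x_i}\pa_{\theta_i}$ and the horizontal $\pa_{w_j}$ are invariant under $\theta_i$-translation, so is $\cV_{\pfc}(\tX)$ as a $\CI(\tX)$-module, and therefore $\sigma_{i,s}^{*}$ preserves $\CI_{\fc}(X)$ near $\tH_i$ with semi-norms bounded uniformly in $s\in S^1$. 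Hence the average
\[
  \av_i f:=\int_{S^1}\sigma_{i,s}^{*}f\,d\mu(s),
\]
taken with respect to the normalised Haar measure $\mu$, is again a function in $\CI_{\fc}(X)$ near $\tH_i$, and it is constant on $S^1$-orbits, so $\av_i f=(\tPhi_i\times\Id)^{*}h_i$ in the collar for some $h_i\in\CI(\bD_i\times(0,\epsilon))$.

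\emph{Step 2 (construction and reduction).} Choose $\chi\in\CI(\tX)$ with $\chi\equiv 1$ near $\pa\tX$ and $\chi$ supported in the (disjoint) collars of the $\tH_i$, and put $f_1:=\chi\cdot\sum_i\av_i f$ (extended by zero) and $f_2:=f-f_1$. Then $\tc^{*}f_1=(\tPhi\times\Id)^{*}(\chi h)$ where $h\in\CI(\bD\times(0,\epsilon))$ is defined by $h|_{\bD_i\times(0,\epsilon)}=h_i$, and $\chi h\in\CI(\bD\times(0,\epsilon))$; so provided we show $f_2\in\dot{\cC}^{\infty}(\tX)$, we also get $f_1=f-f_2\in\CI_{\fc}(X)$ (using $\dot{\cC}^{\infty}(\tX)\subset\CI_{\fc}(X)$) and hence $f_1\in\CI_{\tc}(X)$, as required. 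Away from $\pa\tX$, $f_2$ is a finite combination of elements of $\CI_{\fc}(X)$ and $x$ is bounded below, so $f_2$ lies in $x^m\CI_{\fc}(X)$ there for every $m$; thus, in view of the identity $\dot{\cC}^{\infty}(\tX)=\bigcap_m x^m\CI_{\fc}(X)$ recorded above, everything reduces to showing that near each $\tH_i$, where $\chi\equiv 1$, the function $g_i:=f-\av_i f$ lies in $\bigcap_m x_i^{m}\CI_{\fc}(X)$.

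\emph{Step 3 (flatness of the oscillating part).} Here $\av_i g_i=0$ by idempotency of $\av_i$ and invariance of $\mu$. The class $\mathcal N_i$ of functions in $\CI_{\fc}(X)$, defined near $\tH_i$, with vanishing fibre average is preserved by the generating fibred cusp vector fields: differentiating under the fibrewise integral, $\tfrac1{x_i}\pa_{\theta_i}$ produces a total derivative along the circle (whose fibre integral vanishes), while $x_i\pa_{x_i}$ and the horizontal $\pa_{w_j}$ commute with integration over the fibre at fixed $x_i$, so the fibre integrals of $x_i\pa_{x_i}g$ and of $\pa_{w_j}g$ equal $x_i\pa_{x_i}$, respectively $\pa_{w_j}$, applied to $\av_i g=0$. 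On the other hand, for any $g\in\CI_{\fc}(X)$ one has $\pa_{\theta_i}^{k}g=x_i^{k}\big(\tfrac1{x_i}\pa_{\theta_i}\big)^{k}g=O(x_i^{k})$ uniformly, for every $k$. Combining this with the one-dimensional Poincar\'e inequality on each circle fibre --- a function of zero mean on the circle is bounded pointwise by a fixed multiple of the supremum of its fibrewise derivative --- applied in turn to $g,\pa_{\theta_i}g,\pa_{\theta_i}^{2}g,\dots$ (each of zero fibre mean when $g\in\mathcal N_i$), gives, uniformly over $\bD_i$, $\sup_{\tPhi_i^{-1}(q)\cap\{x_i=t\}}|g|\le C_k t^{k}$ for all $k$; that is, every $g\in\mathcal N_i$ is $O(x_i^{\infty})$. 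Applying this to $g_i$ and to every iterated fibred cusp derivative $Qg_i$ --- which again lies in $\mathcal N_i$ by the stability just established --- and noting that a fibred cusp derivative of $x_i^{-m}g_i$ is, modulo constants, a sum of terms $x_i^{-m-j}Q g_i$ with $j\ge0$, one concludes $x_i^{-m}g_i\in\CI_{\fc}(X)$ for every $m$, i.e. $g_i\in\dot{\cC}^{\infty}(\tX)$, which finishes the argument.

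I expect Step 3 to be the only real difficulty: the delicate points are making the ``zero fibre average is preserved'' assertion and the Poincar\'e/flatness estimate uniform both over the base $\bD_i$ and over all orders of differentiation; a secondary technical issue is the compatibility of the averaging operator $\av_i$ with the fibred cusp structure in Step 1.
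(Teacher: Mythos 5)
Your argument is correct and is essentially the paper's own proof: cut off to the collar, take the fibrewise circle average to produce the $\CI_{\tc}(X)$ piece, and show the zero-mean remainder is flat at $\pa\tX$ by combining the bound $\pa_{\theta_i}^k g = x_i^k\bigl(\tfrac{1}{x_i}\pa_{\theta_i}\bigr)^k g = O(x_i^k)$ with repeated integration along the fibres (your fibrewise Poincar\'e step is exactly the paper's ``integrating $k$ times using the vanishing fibre averages''). The only cosmetic point is that the cutoff $\chi$ should be chosen constant along the fibres (e.g.\ a function of the $x_i$ alone) so that $f_1$ really lies in $\CI_{\tc}(X)$.
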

\begin{proof}
Let $f\in \CI_{\fc}(X)$ be given.  Using a cut-off function, we can assume without loss of generality that $f$ has its support in the collar neighborhood $\tX\times (0,\epsilon)$.  The fibration $\tPhi\times \Id$ is a circle bundle, which means in particular there is an underlying smooth action of $\bbS^1$ on $\pa\tX\times (0,\epsilon)$.  Let $f_{\av}$ be the average of $f$ with respect to this group action,    
\[
          f_{\av} = \frac{1}{2 \pi} \int_{\bbS^1} (\theta^* f) d\theta, \quad \theta\in \bbS^1.
\]
By construction, $f_{\av}$ is constant along the fibres of $\tPhi\times \Id$ and is an element of $\CI_{\tc}(X)$.  Thus, if we take $f_1 =f_{\av}$ and $f_2= f-f_{\av}$, it remains to show $f_2 \in \dot{\cC}^{\infty}(\tX)$.  Let $\xi$ be the infinitesimal generator of the circle action.  In terms of the Fourier decomposition of $f_2$ in each fibre of $\tPhi\times \Id$, we see that for all $k\in \bbN_0$,
\begin{equation}
     \frac{1}{2\pi}\int_{\bbS^1} \theta^{*} ( \xi^{k} f_2 ) d\theta =0.
\label{br.7}\end{equation}   
Moreover, since $f_1$ and $f$ are in $\CI_{\fc}(X)$, $f_2$ must also be an element of $\CI_{\fc}(X)$, so that 
\begin{equation}
   \xi^k f_2 \in x^{k}\CI_{\fc}(X) \quad \forall \; k\in \bbN_0, \quad \mbox{where} \; x=\prod_{i=1}^{\ell} x_i .
\label{br.8}\end{equation}
Integrating $k$ times $\xi^k f$ using the property \eqref{br.7}, we find that 
\[
      f_2\in x^{k}\CI_{\fc}(X) \; \forall \; k\in \bbN_0, 
\]
which implies as desired that $f_2 \in \dot{\cC}^{\infty}(\tX)$.  
\end{proof}

Let $\omega$ be an asymptotically tame polyfibred cusp K\"ahler metric with standard spatial asymptotics $\{\omega_i\}_{i=1}^{\ell}$ and $\{c_i\}_{i=1}^{\ell}$.  From the previous lemma, we see that when the $\db$-Laplacian $\Delta_{\omega}$ acts on $\CI_{\fc}(X)$, it is asymptotically modeled near $\tH_i$ by the $b$-differential operator
\begin{equation}
  \Delta_{\omega_i} + \frac{c_i}{2}\left( \left( x_i \frac{\pa}{\pa x_i}\right)^{2} + x_i\frac{\pa}{\pa x_i} \right).
\label{br.9}\end{equation} 
In the terminology of \cite{MelroseAPS}, the corresponding indicial family is given by 
\begin{equation}
  \Delta_{\omega_i} + \frac{c_i}{2}( -\tau^2 + \sqrt{-1} \tau), \quad \tau\in\bbC.
\label{br.10}\end{equation}
More generally, for $\lambda\in \bbC$, we will be interested to study the boundary regularity of the operator $\Delta_{\omega}-\lambda$ with associated indicial family on $\tH_i$ given by
\begin{equation}
 \hat{P}_{i,\lambda}(\tau)=
 \Delta_{\omega_i} + \frac{c_i}{2}( -\tau^2 + \sqrt{-1} \tau)- \lambda.
\label{br.11}\end{equation} 
The set
\begin{equation}
 \Spec_b(\hat{P}_{i,\lambda})= \{ \tau \in \bbC\; | \; \hat{P}_{i,\lambda}(\tau) \; \mbox{is not invertible}\}
\label{br.12}\end{equation}
will be particularly important. 
Following \cite[p.174]{MelroseAPS}, for each $\alpha\in \bbR$, we consider the set 
\begin{multline}
  E_{i,\lambda}^{+}(\alpha)= \{ (z,k)\in \bbC\times \bbN_0 \; | \; 
  \hat{P}_{i,\lambda}(\tau)^{-1} \; \mbox{has a pole at} \; \tau= -\sqrt{-1}z \\
    \mbox{of order $k+1$ and } \; \Re z > \alpha\}.
\label{br.13}\end{multline}
In our case, the poles are all of order $1$ except possibly one pole of order $2$ at $\tau= \frac{\sqrt{-1}}{2}$.  In particular, if $\alpha>-\frac12$, then all the poles are simple.  The set $E_{i,\lambda}^{+}(\alpha)$  can alternatively be described in terms of the spectrum of $\Delta_{\omega_i}$, namely, 
\begin{multline}
  E^{+}_{i,\lambda}(\alpha)= \{ (z,k)\in \bbC\times \bbN_0 \; | \; 
    (z+\frac12)^2 = \frac{2\nu + 2\lambda}{c_i} + \frac{1}{4}\; \mbox{for some} \\
      -\nu\in \Spec(\Delta_{\omega_i}), \;
       \Re z > \alpha, \; k\le 1, \; \mbox{and} \; $k=0$ \; \mbox{if} \; z\ne -\frac12 \}.
\label{br.14}\end{multline} 
The set $E^{+}_{i,\lambda}(\alpha)$ is not an index set, but we can consider the smallest index set containing it.  As explained in \cite[p.186]{MelroseAPS}, to take into account accidental multiplicities, the index set one is ultimately led to consider is slightly bigger and given by
\begin{multline}
  \hE^{+}_{i,\lambda}(\alpha)= \{ (z,k)\in \bbC\times \bbN_0 \; | \; 
  \exists \; r\in \bbN_0,  \; \Re z > \alpha+r, \\
   -\sqrt{-1}(z-r)\in \Spec_b(\hat{P}_{i,\lambda}), \; 
   k+1 \le \sum_{j=0}^{r} \ord(-\sqrt{-1}(z-j)) \},
\label{br.15}\end{multline}
where $\ord(z)$ is the order of the pole of $\hat{P}^{-1}_{i,\lambda}(\tau)$ at $\tau=z$.  
To state our result, we also need to recall from \cite{MelroseAPS} the notion of 
\textbf{extended union} for two index sets $E$ and $F$,
\begin{equation}
  E \overline{\cup} F= E\cup F \cup \{ (z,k) \; | \; \exists \; (z,\ell_1)\in E, \; (z,\ell_2)\in F \; \mbox{such that} \; k=\ell_1+\ell_2 +1\}.
\label{br.16}\end{equation}
Similarly, the extended union of two index families $\cE=(E_1 , \ldots, E_\ell)$ and $\cF=(F_1,\ldots, F_\ell)$ is given by
\begin{equation}
  \cE\overline{\cup} \cF= (E_1\overline{\cup} F_1 , \ldots, E_{\ell}\overline{\cup} F_{\ell}).
\label{br.17}\end{equation}

In the remaining of this section, we will also make use of the $b$-Sobolev spaces of \cite{MelroseAPS}.  
 If $g$ is a $b$-metric with Levi-Civita connection $\nabla$, recall that the $b$-Sobolev space $H^m_b(\bD_i\times [0,\epsilon_i])$  is defined to be the closure of $\CI_{c}(\bD_i\times (0,\epsilon_i))$ with respect to the norm
\begin{equation}
  \| f\|_{H^{m}_{b}}^2 = \sum_{j=0}^{m} \int_{\bD_i\times (0,\epsilon_i)} |\nabla^{j} f |^{2}_{g} \nu_g
\label{br.23}\end{equation}
where $\nu_g$ is the volume density of the metric $g$ and $|\cdot |_{g}$ is the natural norm induced by $g$ on tensors.

Upon identifying $[0,\epsilon_i)$ with $[0,\infty)$ via a diffeomorphism, a simple example of $b$-metric on $\bD_i\times (0,\epsilon_i)$ is given by the product metric
\begin{equation}
  g= g_{\bD_i} + \frac{dx_i^2}{x_i^2},
\label{bm.1}\end{equation}
where $g_{\bD_i}$ is a choice of Riemannian metric on $\bD_i$.  Making the change of variable $t=\log x_i$, this corresponds to the complete cylindrical metric
\begin{equation}
      g_{\bD_i} + dt^2
\label{bm.2}\end{equation}
on $\bD_i\times \bbR$.  

Besides $b$-Sobolev spaces, we will be interested in the space $\cC^{k}_{b}(\bD_i\times (0,\epsilon_i))$ of continuous functions $f$ such that 
$\nabla^{j}f\in \cC^0(\bD_i\times (0,\epsilon_i); T_j^0 (\bD_i\times (0,\epsilon_i)))$ with
\[
         \| f\|_k = \sum_{j=0}^{k} \sup_{\bD_i\times (0,\epsilon_i)} |\nabla^j f|_g <\infty.
\]
It is a Banach space with norm $\|\cdot \|_k$.  There is a corresponding Fr\'echet space obtained by taking the intersection over all these spaces,
\[
   \CI_{b}(\bD_i\times (0,\epsilon_i))= \bigcap_{k\in \bbN} \cC^{k}_{b} (\bD_i\times (0,\epsilon_i)).
\]
Using the product metric \eqref{bm.2} and the standard Sobolev embedding for the Euclidean space, it is not hard to deduce a corresponding Sobolev embedding for $b$-metrics, for instance that we have a continuous inclusion
\begin{equation}
       H^{m+n}_{b}(\bD_i\times [0,\epsilon_i]) \subset \cC^m_b(\bD_i\times (0,\epsilon_i)), \quad n=\dim_{\bbC} \bX.
\label{bm.3}\end{equation}
In particular, this means that 
\begin{equation}
      \bigcap_{m\in \bbN} H^m_b (\bD_i\times [0,\epsilon_i]) = \cC^{\infty}_b (\bD_i\times (0,\epsilon_i)).
\label{bm.4}\end{equation}

We are now ready to state our first regularity result.  Recall that the space of polyhomogeneous functions $\cA^{\cF}_{\fc}(\tX)$ associated to an index family $\cF$ was introduced in \S~\ref{fc.0}.  We will also use the notation $\bbR_+= (0,\infty)$ and $\overline{\bbR}_{+}= [0,\infty)$.     

\begin{theorem}
Suppose the divisor $\bD$ is smooth and $\omega$ is an asymptotically tame polyfibred cusp K\"ahler metric with standard spatial asymptotics $\{\omega_i\}_{i=1}^{\ell}$ and $\{c_i\}_{i=1}^{\ell}$.  
If $u\in x^{\alpha}\CI_{\fc}(X)$ for some $\alpha\in (\overline{\bbR}_{+})^{\ell}$ is such that 
\[
    (\Delta_{\omega} -\lambda)u=f \quad \mbox{for some} \; \lambda\in \bbR, \;
    f\in \cA^{\cF}_{\fc}(\tX)\cap \CI_{\fc}(X),
\] 
where $\cF=(F_1, \dots, F_\ell)$ is an index family, then 
\[
          u \in \cA_{\fc}^{\widehat{\cE}^{+}_{\lambda}(\alpha-\delta)\overline{\cup}\cF} (\tX), \quad \forall \; \delta\in (\bbR_+)^{\ell},
\]
where $\widehat{\cE}^{+}_{\lambda}(\alpha-\delta)= (\hE^{+}_{1,\lambda}(\alpha_1-\delta_1), \ldots, \hE^{+}_{\ell,\lambda}(\alpha_\ell-\delta_{\ell}))$.
\label{br.18}\end{theorem}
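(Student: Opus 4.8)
The plan is to reduce the statement, near each boundary hypersurface $\tH_i$, to the boundary regularity theory for elliptic $b$-operators of \cite{MelroseAPS}, using Lemma~\ref{br.6} and a Fourier decomposition along the circle fibre of $\tPhi_i$. Since $\bD$ is smooth, its components $\bD_1,\dots,\bD_\ell$ are pairwise disjoint closed complex manifolds, the hypersurfaces $\tH_i$ are pairwise disjoint, and $\tD_i=\bD_i$; hence it suffices to work in a fixed collar neighborhood $\tc_i:\tH_i\times[0,\epsilon_i)\to\tX$ of one $\tH_i$. Via the circle bundle $\tPhi_i:\tH_i\to\bD_i$ this collar is modeled on $\bD_i\times\bbS^1\times[0,\epsilon_i)$ with coordinates $(w,\theta_i,x_i)$, and using a cut-off we may assume $u$ and $f$ are supported there.

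First I would record that the nonzero Fourier modes in $\theta_i$ are negligible. Writing $u=\sum_{k\in\bbZ}u^{(k)}e^{ik\theta_i}$ in the collar, the hypothesis $u\in x^{\alpha}\CI_{\fc}(X)$ together with the boundedness of the vector field $\tfrac1{x_i}\pa_{\theta_i}$ on $\CI_{\fc}(X)$ gives $(\tfrac{ik}{x_i})^N u^{(k)}\in x^{\alpha}\CI_{\fc}(X)$ for all $N\in\bbN$, so $u^{(k)}\in\dot{\cC}^{\infty}(\tX)$ for $k\neq0$; the same holds for $f$. Modulo $\dot{\cC}^{\infty}(\tX)$ we may therefore replace $u$ by its fibre average $u^{(0)}$ and $f$ by $f^{(0)}$, which we regard as a function on $\bD_i\times(0,\epsilon_i)$ lying in $x_i^{\alpha_i}\CI_{b}(\bD_i\times(0,\epsilon_i))$, respectively as a polyhomogeneous function on $\bD_i\times[0,\epsilon_i)$ with index set $F_i$ (here $\cA^{\cF_i}_{\fc}(\tD_i)=\CI(\bD_i)$ since $\bD_i$ is closed, and the extension maps $\Xi_i$ only modify terms by elements of $\dot{\cC}^{\infty}$ at $\tH_i$).

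Next I would identify the operator governing $u^{(0)}$. Applying $\Delta_{\omega}-\lambda$ to $u=u^{(0)}+(\text{infinite order})$ and averaging over the circle, the coupling to the nonzero modes (and the part of $\Delta_{\omega}$ that is not $\bbS^1$-invariant, applied to the already-negligible modes) produces a term in $\dot{\cC}^{\infty}(\tX)$, so that on $\bD_i\times[0,\epsilon_i)$
\[
   P_i\,u^{(0)}=f^{(0)}+g,\qquad g\in\dot{\cC}^{\infty},
\]
where $P_i$ is the $\bbS^1$-average of $\Delta_{\omega}-\lambda$ acting on the zeroth mode. Using the standard spatial asymptotics of $\omega$ and asymptotic tameness, $P_i$ is an elliptic $b$-operator on $\bD_i\times[0,\epsilon_i)$ (with $\bD_i$ closed, so the $b$-Sobolev theory of \eqref{br.23} applies) whose indicial family is exactly $\hat{P}_{i,\lambda}(\tau)$ of \eqref{br.11}: it differs from the $b$-model $\Delta_{\omega_i}+\tfrac{c_i}2\big((x_i\pa_{x_i})^2+x_i\pa_{x_i}\big)-\lambda$ by a $b$-operator with coefficients vanishing at $x_i=0$, hence has the same indicial operator. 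In particular $\Spec_b(P_i)=\Spec_b(\hat{P}_{i,\lambda})$, with poles of $\hat{P}_{i,\lambda}(\tau)^{-1}$ as recorded in \eqref{br.13}--\eqref{br.15}.

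Finally I would invoke the boundary regularity theorem for elliptic $b$-operators of \cite[\S5]{MelroseAPS}: since $u^{(0)}\in x_i^{\alpha_i}\CI_{b}\subset x_i^{\alpha_i-\delta_i}H^{\infty}_{b}$ for every $\delta_i>0$, $f^{(0)}$ is polyhomogeneous with index set $F_i$, and $g\in\dot{\cC}^{\infty}$, it follows that $u^{(0)}$ is polyhomogeneous on $\bD_i\times[0,\epsilon_i)$ with index set $\hE^{+}_{i,\lambda}(\alpha_i-\delta_i)\,\overline{\cup}\,F_i$; the shift $\alpha_i-\delta_i$ absorbs possible indicial roots on the line $\Re z=\alpha_i$, and the extended union $\overline{\cup}$ accounts for the logarithmic terms produced when a root of $f^{(0)}$ coincides with one of $P_i$. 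Pulling back along $\tPhi_i$ and reinstating the infinite-order-vanishing nonzero modes, $u$ has an expansion at $\tH_i$ of the required form with coefficients $\Xi_i(a_{(z,k)})$, $a_{(z,k)}\in\CI(\bD_i)$; running this over $i=1,\dots,\ell$ and using disjointness of the $\tH_i$ gives $u\in\cA^{\widehat{\cE}^{+}_{\lambda}(\alpha-\delta)\,\overline{\cup}\,\cF}_{\fc}(\tX)$. The main obstacle is precisely the reduction step: checking that the zeroth Fourier mode of $\Delta_{\omega}-\lambda$ is a genuine elliptic $b$-operator on $\bD_i\times[0,\epsilon_i)$ with indicial family $\hat{P}_{i,\lambda}(\tau)$, and that the mode-coupling and non-$\bbS^1$-invariant errors are harmless (they vanish to infinite order, or are at worst one order better, hence are absorbed by the $b$-calculus iteration); once this is in place the polyhomogeneity and the precise index family follow by directly quoting \cite{MelroseAPS}.
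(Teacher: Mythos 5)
Your proposal is correct and follows essentially the same route as the paper: cut off to a collar of $\tH_i$, use Lemma~\ref{br.6} (equivalently, the vanishing to infinite order of the nonzero Fourier modes in $\theta_i$) to reduce to the $\bbS^1$-invariant part, identify the averaged operator as a $b$-elliptic operator $P_i$ on $\bD_i\times[0,\epsilon_i]$ with indicial family $\hat P_{i,\lambda}(\tau)$, and quote the boundary regularity for elliptic $b$-operators from \cite{MelroseAPS} before pulling back through $\tPhi_i\times\Id$. The only detail the paper handles a bit more explicitly is the modification of $P_i$ away from the support of $\overline u$ so that it is $b$-elliptic on all of $\bD_i\times[0,\epsilon_i]$ with a prescribed indicial family at the second boundary face, but this does not affect the substance of your argument.
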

\begin{proof}
Let $\chi_i\in \CI_{c}([0,\infty))$ be a function such that $\chi_i (t)=1$ for $t\le \frac{\epsilon_i}{4}$ and $\chi_i(t)=0$ for $t\ge \frac{\epsilon_i }{2}$.  Looking at the equation satisfied by $\chi_i u$, we can effectively reduce to the case where $u$ and $f$ are supported in the collar neighborhood $\tc_i(\tH_i\times (0,\frac{\epsilon_i}{2}))\subset X$.  Using Lemma~\ref{br.6}, we can write 
$u=u_1+ u_2$ with $\xi u_1=0$ and $u_2 \in \dot{\cC}^{\infty}(\tX)$, where $\xi$ is the infinitesimal generator of the $\bbS^1$-action associated to the fibration
$\tPhi\times \Id$.  This means we can rewrite the equation as,
\[
     (\Delta_{\omega}-\lambda)u_1 = \tilde{f}, \quad \tilde{f}= f - (\Delta_{\omega}-\lambda)u_2 \in \cA^{\cF}_{\fc}(\tX)\cap \CI_{\fc}(X).
\]    
If we write $\tilde{f}= \tilde{f}_1+ \tilde{f}_{2}$ with $\tilde{f}_1=\tilde{f}_{\av}$ and $\tilde{f}_{2}\in \dot{\cC}^{\infty}(\tX)$, then averaging on both sides with respect to the circle action on $\tH_{i}\times (0,\epsilon_i)$, we get
\begin{equation}
  (\Delta_{\omega}^{\av}-\lambda)u_1= \tilde{f}_{1}
\label{br.19}\end{equation}
where 
\[
    \Delta^{\av}_{\omega}= \frac{1}{2\pi} \int_{\bbS^{1}} (\theta^{*}\circ \Delta_{\omega}\circ (\theta^{-1})^{*}) d\theta
\]
is the $\bbS^{1}$ invariant part of $\Delta_{\omega}$.  In particular, since $\Delta^{\av}_{\omega}$ maps $\bbS^1$-invariant functions to $\bbS^{1}$ invariants functions, there is an operator $P_i$ such that 
\begin{equation}
  \Delta^{\av}_{\omega}(\tPhi_i\times \Id)^{*}(h)= (\tPhi_i\times \Id)^{*}(P_i h), \quad 
  \forall \; h\in \CI(\bD_i\times (0,\epsilon_i)).
\label{br.20}\end{equation}  
Using local coordinates $w=(w^1,\ldots, w^{n-2})$ on $\bD_i$ over which the fibration $\tPhi_i$ is trivial, one can check that the operator $P_i$ takes the form
\begin{equation}
  P_i = a\left( x_i \frac{\pa}{\pa x_i}\right)^2 + b x_i \frac{\pa}{\pa x_i} +
  \sum_{\alpha} b^\alpha \left( x_i \frac{\pa}{\pa x_i}\right) \frac{\pa}{\pa w^{\alpha}} + \sum_{\alpha,\beta} a^{\alpha\beta} \frac{\pa^{2}}{\pa w^\alpha \pa w^\beta} + \sum_{\alpha} c^{\alpha} \frac{\pa}{\pa w^{\alpha}},
\label{br.21}\end{equation}
where $a,b, a^{\alpha\beta}, b^{\alpha}, c^{\alpha}\in \CI(\bD_i\times [0,\epsilon_i))$.  In particular, the operator $P_i$ is a $b$-differential operator 
near the boundary hypersurface $\bD_i\times \{0\}$.  Since the average of positive-definite symmetric matrices is again a positive-definite symmetric matrix, we see the operator $P_i$ is also elliptic, in fact $b$-elliptic in the sense of \cite{MelroseAPS}, that is, uniformly elliptic for the $b$-geometry.

Thus, if we write $u_1= (\tPhi_i\times \Id)^{*}\overline{u}$ and $
\tilde{f}_1= (\tPhi_i\times \Id)^{*}\overline{f}$, then equation \eqref{br.19} becomes
\begin{equation}
    (P_i-\lambda)\overline{u}= \overline{f}.
\label{br.22}\end{equation}
Changing the operator $P_i$ outside the region where $\overline{u}$ and $\overline{f}$ are supported, we can also assume that $P_i$ is a $b$-elliptic  differential operator on $\bD_i\times [0,\epsilon_i]$.  Now, from the standard spatial asymptotics of $\omega$ at $\tH_i$, we know that the indicial family of 
$P_i-\lambda$ at the boundary hypersurface $\bD_i\times \{0\}$ is given by
$\hat{P}_{i,\lambda}(\tau)$ in \eqref{br.11}.  
For later convenience, it is also useful to choose $P_i$ so that the indicial family of $P_i-\lambda$ at the other boundary hypersurface $\bD_i\times \{\epsilon_i\}$ is also given by \eqref{br.11}. 

Now, if $f\in \cA^{\cF}_{\fc}(\tX)$, this means that $\overline{f}\in \cA^{F_i,\emptyset}_{\phg}(\bD_i\times [0,\epsilon_i])$, where $F_i$ is the index set at the boundary face $\bD_i\times\{0\}$ and  $\emptyset$ is the index set at the boundary face $\bD_i\times \{\epsilon_i\}$.  On the other hand, the fact
$u\in x^{\alpha}\CI_{\fc}(X)$ certainly implies $\overline{u}\in x_{i}^{\alpha_i-\delta_i}H_{b}^{m}(\bD_i\times [0,\epsilon_i])$ for all $m\in \bbN$ and $\delta_i>0$.
 
Applying the standard boundary regularity result \cite[Proposition~5.61]{MelroseAPS} and using the fact that $\overline{u}$ is supported away from $\bD_i\times\{\epsilon_i\}$, we conclude that
\[
   \overline{u}\in \cA^{\hE^{+}_{i,\lambda}(\alpha_i-\delta_i)\overline{\cup}F_i, \emptyset}_{\phg}(\bD_i\times [0,\epsilon_i]).
\] 
In particular, we have that
\[
        (\tPhi_i\times \Id)^{*}\overline{u} \in \cA^{\hE^{+}_{i,\lambda}(\alpha_i-\delta_i)\overline{\cup}F_i,\emptyset}_{\fc}(\tH_i\times [0,\epsilon_i]),
\]
from which the result follows.  
    
\end{proof}

To study boundary regularity for the Monge-Amp\`ere equation, we need a more general version of the previous theorem where less regularity is assumed for the function $f$.  We first need to study some of the properties of the $b$-differential operator $P_i$ introduced in the proof of Theorem~\ref{br.18}.  From \cite[Theorem~5.60]{MelroseAPS}, we know that the operator $P_i$ induces a Fredholm operator 
\begin{equation}
 P_i-\lambda: x_i^{\alpha} H^{m+2}_{b}( \bD_i\times [0,\epsilon_i]) \to
                  x_i^{\alpha} H^{m}_{b}( \bD_i\times [0,\epsilon_i])\label{br.24}\end{equation}    
whenever $-\sqrt{-1}\alpha\notin \Spec_b(\hat{P}_{i,\lambda})$.  Furthermore, the index of $P_i-\lambda$ does not depend on $m\in \bbN_0$, but does depend on $\alpha$.  When $\lambda>0$, we can compute the index explicitly.  

\begin{lemma}
For $\lambda>0$, the operator $P_i-\lambda$ induces an isomorphism of Sobolev spaces 
\begin{equation}
            P_i-\lambda: H^{m+2}_{b}(\bD_i\times [0,\epsilon_i])\to H^{m}_{b}(\bD_i\times [0,\epsilon_i]).
\label{br.25b}\end{equation}
\label{br.25}\end{lemma}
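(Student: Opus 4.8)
The plan is to establish the isomorphism by combining the Fredholm property from \cite[Theorem~5.60]{MelroseAPS} with an injectivity argument based on integration by parts, using the self-adjointness of the principal part of $P_i$ with respect to a suitable $b$-density. First I would recall that, by \eqref{br.24}, the operator $P_i - \lambda$ is Fredholm on $x_i^{\alpha} H^{m+2}_b \to x_i^\alpha H^m_b$ whenever $-\sqrt{-1}\alpha \notin \Spec_b(\hat P_{i,\lambda})$; from the explicit form of the indicial family $\hat P_{i,\lambda}(\tau) = \Delta_{\omega_i} + \frac{c_i}{2}(-\tau^2 + \sqrt{-1}\tau) - \lambda$ and the fact that $-\Spec(\Delta_{\omega_i})\subset (-\infty, 0]$, one checks (for $\lambda>0$) that $\tau = 0$ gives $\hat P_{i,\lambda}(0) = \Delta_{\omega_i} - \lambda$, which is invertible since $\lambda>0$ lies in the resolvent set of the nonnegative operator $\Delta_{\omega_i}$. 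Hence $0\notin \Spec_b(\hat P_{i,\lambda})$, so the weight $\alpha = 0$ is allowed and \eqref{br.25b} is a Fredholm operator. It remains to show it has index zero and is injective.

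Next I would prove injectivity on $H^{m+2}_b(\bD_i\times[0,\epsilon_i])$. Recall that $P_i$ is the $\bbS^1$-averaged part of $\Delta_\omega$ near $\tH_i$; since $\Delta_\omega$ is the (nonnegative) $\db$-Laplacian of a K\"ahler metric, $P_i$ is a nonnegative symmetric $b$-elliptic operator with respect to the $b$-density coming from the volume form of $\omega$ (up to the harmless rescaling built into the collar). Concretely, if $\overline u\in H^{m+2}_b$ satisfies $(P_i-\lambda)\overline u = 0$, then $\overline u$ is smooth by $b$-elliptic regularity, and by the boundary regularity already established (as in Theorem~\ref{br.18}, or directly from \cite[Proposition~5.61]{MelroseAPS}) $\overline u$ is polyhomogeneous with exponents governed by $\Spec_b(\hat P_{i,\lambda})$ at both boundary faces $\bD_i\times\{0\}$ and $\bD_i\times\{\epsilon_i\}$; since $\overline u\in H^{m+2}_b$ it must decay, i.e. its leading exponents have positive real part at each face. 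Then integration by parts is justified (no boundary terms) and yields
\[
   0 = \int_{\bD_i\times(0,\epsilon_i)} (P_i\overline u - \lambda \overline u)\,\overline{\overline u}\;\nu \;=\; \int |\nabla^{b,1,0}\overline u|^2\,\nu \;-\; \lambda \int |\overline u|^2\,\nu,
\]
where $\nabla^{b,1,0}$ denotes the relevant first-order piece (the $b$-analogue of $\pa$) so that the quadratic form of $P_i$ is $\int|\nabla^{b,1,0}\overline u|^2$. Since $\lambda>0$, both terms have the same sign only if $\overline u\equiv 0$; hence $P_i-\lambda$ is injective.

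Finally I would argue surjectivity. Since the adjoint of $P_i-\lambda$ with respect to the $b$-density $\nu$ is again of the same form (a nonnegative symmetric $b$-elliptic operator minus $\lambda$, with the same indicial family up to the reflection $\tau\mapsto -\tau$, which does not affect invertibility at $\tau=0$), the same integration-by-parts argument applied to the adjoint shows $(P_i-\lambda)^*$ is injective on $H^{m+2}_b$, hence the cokernel of \eqref{br.25b} is trivial. Combined with the Fredholm property, this gives that $P_i-\lambda$ is an isomorphism for every $m\in\bbN_0$. The main obstacle I anticipate is the careful bookkeeping needed to justify the boundary integration by parts: one must rule out boundary contributions at $\bD_i\times\{0\}$ (using the decay forced by membership in $H^{m+2}_b$ together with polyhomogeneity) and at the artificial face $\bD_i\times\{\epsilon_i\}$ (where one uses that the relevant functions are supported away from it, or that $P_i$ was modified there as in the proof of Theorem~\ref{br.18}), and to identify precisely the quadratic form associated to the averaged operator $P_i$ so that its nonnegativity is manifest.
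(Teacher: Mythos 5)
There is a genuine gap, and it sits exactly at the point you flag as ``bookkeeping'': the assumption that $P_i$ is symmetric and nonnegative, with quadratic form $\int |\nabla^{b,1,0}\overline u|^2$. Nothing in the construction gives this. The operator $P_i$ is the $\bbS^1$-average of $\Delta_\omega$ pushed down through $\tPhi_i\times\Id$, and then \emph{modified arbitrarily} away from the support of $\overline u$ so as to become $b$-elliptic on all of $\bD_i\times[0,\epsilon_i]$ with prescribed indicial family at the artificial face $\bD_i\times\{\epsilon_i\}$. Even before the modification, the average of operators that are each self-adjoint with respect to \emph{different} (rotated) volume forms is not formally self-adjoint with respect to any natural density; all one can say is what the paper says, namely $P_i\overline u=\Delta_g\overline u + b\cdot\nabla\overline u$ for some $b$-metric $g$ and an uncontrolled first-order drift $b$ (see \eqref{br.21}: the coefficients $b$, $c^\alpha$ are not in divergence form). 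Consequently your integration by parts produces an extra term $\int (b\cdot\nabla\overline u)\,\overline{\overline u}\,\nu$ with no sign, and the conclusion ``both terms have the same sign only if $\overline u\equiv 0$'' fails for small $\lambda>0$. The same problem undermines the surjectivity step: the formal adjoint is \emph{not} ``again a nonnegative symmetric operator minus $\lambda$''; it is $\Delta_g+\tilde b\cdot\nabla+(c-\lambda)$ with a nontrivial zeroth-order term $c$, so injectivity of the adjoint for \emph{all} $\lambda>0$ cannot be read off from a Dirichlet-form identity. (Your verification that the weight $\alpha=0$ is admissible for the Fredholm property is fine, since for real $\sigma\neq0$ the indicial operator has nonzero constant imaginary part and $\hat P_{i,\lambda}(0)=\Delta_{\omega_i}-\lambda$ is invertible.)

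The paper avoids both difficulties by replacing energy estimates with the maximum principle and an index deformation. For injectivity: a solution $\overline u\in H^{m+2}_b$ is polyhomogeneous with exponents in $\hE^+_{i,\lambda}(0)$, hence vanishes at both boundary faces, so a nonzero solution would attain a positive interior maximum $p$; there $\nabla\overline u(p)=0$, so the drift term drops out and $\Delta_g\overline u(p)=\lambda\overline u(p)>0$ contradicts the maximum — note this is precisely the device that makes the uncontrolled $b\cdot\nabla$ harmless. For surjectivity: the index of \eqref{br.25b} is independent of $\lambda>0$ by continuity, and for $\lambda$ large enough that $c-\lambda<0$ the same maximum-principle argument shows the adjoint is injective, so the index is zero; combined with injectivity this gives the isomorphism for every $\lambda>0$. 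If you want to salvage an energy-type argument you would have to first symmetrize $P_i$ (identify a positive density making the principal and drift parts a divergence-form operator), which is not available here; as written, the symmetry and nonnegativity you rely on are not properties of $P_i$, so the proof does not go through.
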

\begin{proof}
Using the principal symbol of $P_i$, we can define a $b$-metric $g$ such that 
$P_i$ takes the form 
\[
        P_i u= \Delta_g u + b\cdot \nabla u, \quad b\in \CI(\bD_i\times [0,\epsilon_i];{}^{b}T(\bD_i\times [0,\epsilon_i])),
\]
where $\Delta_g$ is the (negative) Laplacian associated to the metric $g$ and $\nabla$ is its Levi-Civita connection.  To show $P_i-\lambda$ is injective,
suppose that $u\in H^{m+2}_{b}(\bD_i\times [0,\epsilon_i])$ is such that 
$(P_i-\lambda)u=0$.  Then we know from \cite[Proposition~5.61]{MelroseAPS} that $u\in \cA^{\hE^{+}_{i,\lambda}(0),\hE^{+}_{i,\lambda}(0)}_{\phg}(\bD_i\times [0,\epsilon_i])$.  In particular, $u$ vanishes on $\bD_i\times \{0\}$ and $\bD_i\times \{\epsilon_i\}$.  If $u$ is not identically zero, then replacing $u$ by $-u$ if needed, we can assume that $u$ attains a positive maximum at an interior point
$p\in \bD_i\times (0,\epsilon_i)$.  Evaluated at the point $p$, the equation
$Pu=\lambda u$ gives
\[
               \Delta_g u (p)= \lambda u(p) >0,
\]  
contradicting the fact that $u$ attains a maximum at $p$.  To avoid a contradiction, we must admit $u\equiv 0$, which establishes injectivity. 

To show the map is surjective, it suffices to show its index is zero.  Since the map \eqref{br.25b} is Fredholm for all $\lambda>0$, the index does not depend on the choice of $\lambda>0$ and we can therefore assume $\lambda$ is as large as we want for the purpose of the argument.  Now, the formal adjoint of $P_i-\lambda$ is of the form
\[
     (P^{*}_i-\lambda)u= \Delta_g u + \tilde{b}\cdot \nabla u + (c-\lambda)u, 
\]
for some $ \tilde{b}\in \CI(\bD_i\times [0,\epsilon_i], {}^{b}T( \bD_i\times [0,\epsilon_i]))$ and $c\in \CI(\bD_i\times [0,\epsilon_i])$.  In particular, taking $\lambda>0$ sufficiently large so that $c-\lambda<0$ on 
$\bD_{i}\times [0,\epsilon_i]$, we can use the maximum principle as before to show that 
\[
      u\in H^{m+2}_{b}(\bD_i\times [0,\epsilon_i]), \; (P^{*}_i-\lambda)u=0 \quad \Longrightarrow \quad u\equiv 0.
\]
This shows that $P_i-\lambda$ has index zero in this case, and therefore for all $\lambda>0$.  
 
\end{proof}

\begin{lemma}
For $\lambda>0$ and $\alpha\ge 0$ with $-\sqrt{-1} \alpha\notin \Spec_b(\hat{P}_{i,\lambda})$, the Fredholm operator
\begin{equation}
   P_i -\lambda: x_i^{\alpha} H^{m+2}_{b}(\bD_i\times [0,\epsilon_i])\to x_i^{\alpha}H^{m}_{b}(\bD_i\times [0,\epsilon_i])
\label{br.26b}\end{equation}
is injective and has a cokernel of dimension
\[
      k_{i,\lambda}^{\alpha}= \underset{0\le -\Im\tau \le \alpha}{\sum_{ \tau\in \Spec_b(\hat{P}_{i,\lambda})} } \dim\ker(\hat{P}_{i,\lambda}(\tau)).
\]
\label{br.26}\end{lemma}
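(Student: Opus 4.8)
The plan is to read the statement off from Lemma~\ref{br.25} (which handles the weight $\alpha=0$) together with the relative index theorem of the $b$-calculus \cite{MelroseAPS}. Injectivity is immediate: since $\alpha\ge 0$, multiplication by $x_i^\alpha$ is a bounded operator on each $H^m_b(\bD_i\times[0,\epsilon_i])$, so $x_i^\alpha H^{m+2}_b(\bD_i\times[0,\epsilon_i])\subseteq H^{m+2}_b(\bD_i\times[0,\epsilon_i])$; hence any element of the smaller space annihilated by $P_i-\lambda$ is annihilated as an element of $H^{m+2}_b$ and therefore vanishes by the injectivity part of Lemma~\ref{br.25}. Because the kernel of \eqref{br.26b} is then trivial, its cokernel has dimension $-\operatorname{ind}(P_i-\lambda;x_i^\alpha)$, where $\operatorname{ind}(P_i-\lambda;x_i^\beta)$ denotes the index of $P_i-\lambda\colon x_i^\beta H^{m+2}_b\to x_i^\beta H^m_b$ (independent of $m$ by \cite[Theorem~5.60]{MelroseAPS}).

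Next I would use that $\beta\mapsto\operatorname{ind}(P_i-\lambda;x_i^\beta)$ is locally constant on the non-critical weights, that is, on those $\beta$ with $-\sqrt{-1}\beta\notin\Spec_b(\hat{P}_{i,\lambda})$, and jumps across the critical weights according to the \textbf{relative index theorem} \cite{MelroseAPS}: for non-critical $0\le\beta_1<\beta_2$,
\[
  \operatorname{ind}(P_i-\lambda;x_i^{\beta_1})-\operatorname{ind}(P_i-\lambda;x_i^{\beta_2})=\sum_{\substack{\tau\in\Spec_b(\hat{P}_{i,\lambda})\\ \beta_1<-\Im\tau<\beta_2}}\dim\ker\hat{P}_{i,\lambda}(\tau),
\]
the contribution of a crossed indicial value $\tau$ being $\dim\ker\hat{P}_{i,\lambda}(\tau)$ provided the pole of $\hat{P}_{i,\lambda}(\tau)^{-1}$ there is simple. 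Taking $\beta_1=0$ and $\beta_2=\alpha$, and using $\operatorname{ind}(P_i-\lambda;x_i^0)=0$ from Lemma~\ref{br.25}, gives $\operatorname{ind}(P_i-\lambda;x_i^\alpha)=-\sum_{0<-\Im\tau<\alpha}\dim\ker\hat{P}_{i,\lambda}(\tau)$.

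Finally I would identify this sum with $k^\alpha_{i,\lambda}$ using the explicit description \eqref{br.14} of $\Spec_b(\hat{P}_{i,\lambda})$ and the hypothesis $\lambda>0$. Every $\tau\in\Spec_b(\hat{P}_{i,\lambda})$ is purely imaginary, and $z=-\Im\tau$ satisfies $(z+\frac12)^2=\frac14+\frac{2(\nu+\lambda)}{c_i}$ for some eigenvalue $-\nu$ of $\Delta_{\omega_i}$ with $\nu\ge 0$; since $\lambda>0$ forces $\nu+\lambda>0$, one has $z\ne 0$ for each such $\tau$, and the second-order pole at $\tau=\frac{\sqrt{-1}}{2}$ that is allowed in general cannot occur here because it would require $\nu+\lambda=-\frac{c_i}{8}<0$. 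Hence all poles of $\hat{P}_{i,\lambda}(\tau)^{-1}$ are simple, so the relative index formula applies with $\dim\ker\hat{P}_{i,\lambda}(\tau)$; moreover $-\Im\tau\ne\alpha$ by hypothesis, so the sum over $\{0<-\Im\tau<\alpha\}$ equals the sum over $\{0\le-\Im\tau\le\alpha\}$ defining $k^\alpha_{i,\lambda}$. Therefore $\dim\operatorname{coker}(P_i-\lambda;x_i^\alpha)=k^\alpha_{i,\lambda}$.

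The one genuinely delicate point is the invocation of the relative index theorem with the correct orientation of the jump (index decreasing as the weight increases, consistently with $\operatorname{ind}(P_i-\lambda;x_i^\alpha)\le 0=\operatorname{ind}(P_i-\lambda;x_i^0)$) and with $\dim\ker\hat{P}_{i,\lambda}(\tau)$ rather than an a priori larger algebraic multiplicity as the contribution of each crossed indicial value; once the structure of $\Spec_b(\hat{P}_{i,\lambda})$ for $\lambda>0$ is pinned down as above, the rest is routine bookkeeping within the $b$-calculus already developed in this section.
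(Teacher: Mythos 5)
Your proposal is correct and follows essentially the same route as the paper: injectivity comes from the inclusion $x_i^{\alpha}H^{m+2}_{b}\subset H^{m+2}_{b}$ for $\alpha\ge 0$ together with Lemma~\ref{br.25}, and the cokernel dimension comes from the index-zero statement of Lemma~\ref{br.25} combined with the relative index theorem of \cite{MelroseAPS}. The extra bookkeeping you supply (simplicity of the relevant poles for $\lambda>0$, absence of an indicial root at $z=0$ or $z=\alpha$, and the sign of the index jump) is exactly what the paper leaves implicit, and it is carried out correctly.
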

\begin{proof}
Since $x_i^{\alpha}H^{m+2}_{b}(\bD_i\times [0,\epsilon_i])\subset H^{m+2}_{b}(\bD_i\times [0,\epsilon_i])$ for $\alpha\ge 0$, we know from the previous lemma that the map \eqref{br.26b} is injective.  The formula for the dimension of the cokernel then follows from the previous lemma and the relative index theorem of \cite{MelroseAPS}.
\end{proof}

The following lemma will be helpful to characterize the cokernel of the map \eqref{br.26b}.  

\begin{lemma}
Suppose $\lambda>0$.  Then for each $\tau\in \Spec_b(\hat{P}_{i,\lambda})$ and $v\in \ker(\hat{P}_{i,\lambda}(\tau))$, there exists $u\in \cA^{\hE^{+}_{i,\lambda}(\alpha-\delta),\emptyset}_{\phg}(\bD_i\times [0,\epsilon_i])$ with $\alpha=\sqrt{-1}\tau$ 
and $\delta>0$ small such that
\begin{itemize}
 \item[(i)] $(P_i -\lambda)u \in \dot{\cC}^{\infty}(\bD_i\times [0,\epsilon_i])$;
 \item[(ii)] $u-\chi(x_i) vx_i^{\alpha} \in \cA^{\hE^{+}_{i,\lambda}(\alpha),\emptyset}_{\phg}(\bD_i\times [0,\epsilon_i])$,      
\end{itemize}
where $\chi \in \CI([0,\epsilon_i])$ is a cut-off function with $\chi(t)=1$ for $t<\frac{\epsilon_i}{2}$ and $\chi(t)=0$ for $t>\frac{3\epsilon_i}{4}$.
\label{br.27}\end{lemma}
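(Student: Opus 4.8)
The plan is to build $u$ as an asymptotic (Borel) sum of a formal polyhomogeneous series that solves $(P_i-\lambda)u\equiv 0$ to infinite order at $\bD_i\times\{0\}$, has prescribed leading term $\chi(x_i)vx_i^\alpha$, and is supported away from $\bD_i\times\{\epsilon_i\}$ through the cut-off $\chi$. Since $\bD$ is smooth here, $\bD_i$ is a closed manifold and, for each $\tau$, $\hat{P}_{i,\lambda}(\tau)=\Delta_{\omega_i}+\frac{c_i}{2}(-\tau^2+\sqrt{-1}\tau)-\lambda$ is an elliptic operator on $\bD_i$ whose inverse is a meromorphic family in $\tau$ with poles of order at most two, as recorded in \eqref{br.12}--\eqref{br.15}. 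The first step is to compute the action of the indicial operator $I(P_i-\lambda)=\Delta_{\omega_i}+\frac{c_i}{2}\bigl((x_i\pa_{x_i})^2+x_i\pa_{x_i}\bigr)-\lambda$ on a monomial $a(w)x_i^z(\log x_i)^k$ with $a\in\CI(\bD_i)$: the coefficient of $x_i^z(\log x_i)^k$ in the image is $\hat{P}_{i,\lambda}(-\sqrt{-1}z)a$, while the coefficients of the lower powers of $\log x_i$ are given, up to constants, by the $\tau$-derivatives of $\hat{P}_{i,\lambda}$ at $\tau=-\sqrt{-1}z$ applied to $a$; thus $I(P_i-\lambda)$ acts triangularly on $\bigoplus_{j\le k}\CI(\bD_i)\,x_i^z(\log x_i)^j$, and it is surjective on this space precisely when $\hat{P}_{i,\lambda}(-\sqrt{-1}z)$ is invertible.

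With this in hand the construction is the standard indicial iteration. Starting from $u_0=\chi(x_i)vx_i^\alpha$, the error $(P_i-\lambda)u_0$ lies in $\dot{\cC}^\infty$ near $\bD_i\times\{\epsilon_i\}$ (coming from $\chi'$) and, near $\bD_i\times\{0\}$, equals $I(P_i-\lambda)(vx_i^\alpha)$ plus a polyhomogeneous term all of whose exponents have real part at least $\Re\alpha+1$; by hypothesis $I(P_i-\lambda)(vx_i^\alpha)=0$, so the error is $O(x_i^{\Re\alpha+1})$. Inductively, given an approximate solution whose error has exponents of real part $\ge\Re\alpha+N$, one cancels the leading exponents one at a time (each correction multiplied by $\chi$, so the procedure takes place near $\bD_i\times\{0\}$): for an error term $g(w)x_i^z(\log x_i)^k$, if $-\sqrt{-1}z\notin\Spec_b(\hat{P}_{i,\lambda})$ we solve it away by a correction $\sum_{j\le k}a_jx_i^z(\log x_i)^j$ using the triangular structure and the invertibility of $\hat{P}_{i,\lambda}(-\sqrt{-1}z)$, without creating a new logarithmic power; if $-\sqrt{-1}z\in\Spec_b(\hat{P}_{i,\lambda})$ we can only solve modulo its range and are forced to raise the power of $\log x_i$ by at most $\ord(-\sqrt{-1}z)$ to absorb the obstruction. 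Iterating, the exponents $z$ that occur are exactly those obtained from $\alpha$ by adding a nonnegative integer and passing through $\Spec_b(\hat{P}_{i,\lambda})$, with accumulated logarithmic multiplicity bounded by the corresponding sum of pole orders -- which is precisely the index set $\hE^{+}_{i,\lambda}(\alpha-\delta)$ of \eqref{br.15} for any $\delta>0$. Every correction has $\Re z>\Re\alpha$, so the corrections lie in $\hE^{+}_{i,\lambda}(\alpha)$; the value $\delta>0$ enters only to include the leading pair $(\alpha,0)$ itself, which is legitimate because $-\sqrt{-1}\alpha=\tau\in\Spec_b(\hat{P}_{i,\lambda})$. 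Asymptotically summing the corrections -- using that an index set accumulates only at $\Re z=\infty$ and has uniformly bounded logarithmic multiplicities -- yields an honest $u\in\cA^{\hE^{+}_{i,\lambda}(\alpha-\delta),\emptyset}_{\phg}(\bD_i\times[0,\epsilon_i])$ with $u-\chi(x_i)vx_i^\alpha\in\cA^{\hE^{+}_{i,\lambda}(\alpha),\emptyset}_{\phg}(\bD_i\times[0,\epsilon_i])$, which is (ii), and with $(P_i-\lambda)u$ polyhomogeneous with empty index set at both boundary faces, \ie in $\dot{\cC}^\infty(\bD_i\times[0,\epsilon_i])$, which is (i).

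The main obstacle is the control of the logarithmic terms: one must verify that the iteration produces only finitely many powers of $\log x_i$ at each fixed exponent and that the resulting exponents-with-multiplicities are contained in $\hE^{+}_{i,\lambda}(\alpha-\delta)$, in particular accounting for the possible order-two pole of $\hat{P}_{i,\lambda}(\tau)^{-1}$ at $\tau=\tfrac{\sqrt{-1}}{2}$ and for the accidental multiplicities arising when $\alpha+r\in-\sqrt{-1}\,\Spec_b(\hat{P}_{i,\lambda})$ for several $r\in\bbN$. This combinatorics is exactly what the definition \eqref{br.15} and Melrose's extended union are designed to encode, so once the indicial computation of the first paragraph is recorded, the containment is a bookkeeping check; the analytic input -- the asymptotic summation, and $b$-ellipticity of $P_i$ ensuring that a formal solution of this type is genuinely polyhomogeneous -- is standard and may be quoted from \cite{MelroseAPS}.
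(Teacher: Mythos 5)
Your proposal is correct and follows essentially the same route as the paper: an indicial iteration starting from $u_0=\chi(x_i)vx_i^{\alpha}$, solving away the error order by order, introducing an extra power of $\log x_i$ only at resonant exponents $-\sqrt{-1}(\alpha+m+1)\in\Spec_b(\hat{P}_{i,\lambda})$ (which is exactly what the index set $\hE^{+}_{i,\lambda}(\alpha-\delta)$ encodes), and concluding by Borel summation. The paper simply carries out the resonant step by an explicit computation with $\tilde{b}_k\in\ker(\Delta_{\omega_i}+\nu)$ rather than invoking the triangular structure of the indicial operator abstractly, but the content is the same.
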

\begin{proof}
Consider the index set $G(\beta)= \hE^{+}_{i,\lambda}(\beta)\cap ( (\alpha+\bbN_0)\times \bbN_0)$ for $\beta\in\bbR$ and fix $0<\delta<1$.     
Starting with $u_0= \chi(x_i) v x_i^{\alpha}$, we will inductively construct 
$u_j\in \cA^{G(\alpha+j-\delta),\emptyset}_{\phg}(\bD_i\times [0,\epsilon])$ such that
\begin{equation}
  (P_i-\lambda)\left( \sum_{j=0}^{m} u_j\right)  \in \cA^{G(\alpha+m),\emptyset}_{\phg}(\bD_i\times [0,\epsilon_i])
\label{br.28}\end{equation}
for all $m\in \bbN_0$.
Using Borel lemma to take an asymptotic sum, it then suffices to take $u\in \cA^{G(\alpha-\delta),\emptyset}_{\phg}(\bD_i\times[0,\epsilon_i])$ such that 
\[
      u\sim \sum_{j=0}^{\infty} u_j
\]
to obtain the result.  First, notice that since $-\sqrt{-1}\alpha$ is in
$\Spec_b(\hat{P}_{i,\lambda})$, we have that
\[
         \left( \Delta_{\omega_i} + \frac{c_i}{2} \left( \left(x_i \frac{\pa}{\pa x_i}\right)^{2} + x_i \frac{\pa}{\pa x_i} \right) -\lambda\right) x_i^{\alpha}v=0.
\]  
Thus, this means that 
\[
     (P_i -\lambda)u_0 \in x_i^{\alpha+1} \CI(\bD_i\times [0,\epsilon_i]),
\]
that is, \eqref{br.28} holds for $m=0$.  Assume now that \eqref{br.28} holds for some $m\in \bbN_0$.  This means that we have
\[
    (P_i-\lambda) \sum_{j=0}^{m} u_j = \sum_{p=0}^{k} \chi(x_i)a_p x_i^{\alpha+m+1}(\log x_i)^{p} + f, \quad f\in \cA^{G(\alpha+m+1),\emptyset}_{\phg}(\bD_i\times [0,\epsilon_i]),  
\]  
for some $k\in \bbN_0$ and $a_0,\ldots, a_k\in \CI(\bD_i)$.  If $-\sqrt{-1}(\alpha+m+1)\notin \Spec_b(\hat{P}_{i,\lambda})$, we can find $b_p\in \CI(\bD_i)$ for $p=0,1,\ldots,k$ such that
\begin{multline*}
    \left( \Delta_{\omega_i} + \frac{c_i}{2} \left( \left(x_i \frac{\pa}{\pa x_i}\right)^{2} + x_i \frac{\pa}{\pa x_i} \right) -\lambda\right)  \left( \sum_{p=0}^{k} b_p x_i^{\alpha+m+1} (\log x_i)^{p} \right)=   \\
    \sum_{p=0}^{k} a_p x_i^{\alpha+m+1}(\log x_i)^{p}.
\end{multline*}
In terms of the operator $P_i$, this means that
\[
     (P_i -\lambda)\left( \sum_{p=0}^{k} \chi(x_i)b_p x_i^{\alpha+m+1} (\log x_i)^{p} \right)= \sum_{p=0}^{k}\chi(x_i) a_p x_i^{\alpha+m+1}(\log x_i)^{p}  + c_{m+1}, 
\]
where $c_{m+1} \in \cA^{G(\alpha+m+1),\emptyset}_{\phg}(\bD_i\times [0,\epsilon_i])$.  Thus, we can take 
\[
u_{m+1}= - \sum_{p=0}^{k} \chi(x_i) b_p x_i^{\alpha+m+1}(\log x_i)^{p}
\]
 in this case.  If instead $-\sqrt{-1}(\alpha+m+1)\in \Spec_b(\hat{P}_{i,\lambda})$, this means $-\nu = \lambda- \frac{c_i}{2} ((\alpha+m+1)^{2}+(\alpha+m+1))$ is an eigenvalue of the $\db$-Laplacian $\Delta_{\omega_i}$.  In this case, we can find 
 $b_k\in \CI(\bD_i)$ and $\tilde{a}_{k} \in \ker(\Delta_{\omega_i} +\nu)$ such that
 \[
     (\Delta_{\omega_i}+\nu)b_k= a_k -\tilde{a}_{k}.  
 \]  
 Moreover, we can find $\tilde{b}_{k}\in \ker(\Delta_{\omega_i}+\nu)$ such that 
 \begin{multline*}
   \left( \Delta_{\omega_i} + \frac{c_i}{2} \left( \left(x_i \frac{\pa}{\pa x_i}\right)^{2} + x_i \frac{\pa}{\pa x_i} \right) -\lambda\right)  x_i^{\alpha+m+1}\tilde{b}_k (\log x_i )^{k+1}   = \\
    x^{\alpha+m+1}_{i} \left( \tilde{a}_{k}(\log x_i)^{k} + \tilde{b}_k  (k+1)k(\log x_i)^{k-1} \right).
\end{multline*}
This means that if we take
\[
    u_{m+1}^{k}= \chi(x_i) x_i^{\alpha+m+1}\left( \tilde{b}_{k}(\log x_i)^{k+1} + b_k (\log x_i)^{k}\right),
\]
then
\[
  (P_i-\lambda)u^k_{m+1}-  \sum_{p=0}^{k}\chi(x_i) a_p x_i^{\alpha+m+1}(\log x_i)^{p} =   \sum_{p=0}^{k-1}\chi(x_i) a^{k-1}_p x_i^{\alpha+m+1}(\log x_i)^{p} +c,
\]
where $a^{k-1}_{p} \in \CI(\bD_i)$ and $c \in \cA^{G(\alpha+m+1),\emptyset}_{\phg}(\bD_i\times [0,\epsilon_i])$.  This effectively reduces $k$ by one.  Repeating this step $k$ times we can thus reduce to the case $k=0$.  Applying this step one more time then gives us the desired term $u_{m+1}$.  More precisely, proceeding recursively starting with $q=k$ and finishing with $q=0$, we can define 
\[
    u_{m+1}^{q}= \chi(x_i) x_i^{\alpha+m+1}\left( \tilde{b}_{q}(\log x_i)^{q+1} + b_q (\log x_i)^{q}\right),
\]  
for some $\tilde{b}_{q}, b_q \in \CI(\bD_i)$ 
so that 
\[
  (P_i-\lambda)\left(\sum_{q=0}^{k}u^q_{m+1}\right)=  \sum_{p=0}^{k}\chi(x_i) a_p x_i^{\alpha+m+1}(\log x_i)^{p} +d,   
\]
with $d\in  \cA^{G(\alpha+m+1),\emptyset}_{\phg}(\bD_i\times [0,\epsilon_i])$.  Thus, taking 
\[
    u_{m+1}= -\sum_{q=0}^{k} u^k_{m+1}
\]
completes the inductive step and the proof.  
\end{proof}

\begin{proposition}
Suppose that  $\lambda>0$. Then for each $\alpha\ge 0$ with $-\sqrt{-1} \alpha \notin \Spec_{b}(\hat{P}_{i,\lambda})$, there exists a subspace $W^{\alpha}_{i,\lambda}\subset \cA^{\hE^{+}_{i,\lambda}(0),\emptyset}_{\phg}(\bD_i\times [0,\epsilon_i])$ of dimension $k^{\alpha}_{i,\lambda}$ (see Lemma~\ref{br.26}) such that
$V^{\alpha}_{i,\lambda}= (P_i-\lambda)W^{\alpha}_{i,\lambda} \subset \dot{\cC}^{\infty}(\bD_i\times [0,\epsilon_i])$ and  
\[
        x_i^{\alpha}H^{m}_{b}(\bD_i\times [0,\epsilon_i])= (P_i-\lambda)(x_i^{\alpha} H^{m+2}_{b}(\bD_i\times [0,\epsilon_i])) + V^{\alpha}_{i,\lambda} \quad \forall m\in \bbN_{0}.
\]
\label{br.29}\end{proposition}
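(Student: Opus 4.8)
The plan is to construct $W^{\alpha}_{i,\lambda}$ directly out of the formal solutions furnished by Lemma~\ref{br.27}, and then to check that the resulting space $V^{\alpha}_{i,\lambda}=(P_i-\lambda)W^{\alpha}_{i,\lambda}$ genuinely supplements the image of $P_i-\lambda$, using the injectivity of Lemma~\ref{br.25} and the cokernel count of Lemma~\ref{br.26}. As a preliminary, I would record a spectral observation about $\hat{P}_{i,\lambda}$: from \eqref{br.11}, $\tau\in\Spec_b(\hat{P}_{i,\lambda})$ forces $\tfrac{c_i}{2}(-\tau^2+\sqrt{-1}\tau)=\lambda+\nu$ for some $\nu\ge 0$ with $-\nu\in\Spec(\Delta_{\omega_i})$ (here $\lambda>0$ and $\Spec(\Delta_{\omega_i})\subset(-\infty,0]$ are used), and solving the quadratic shows every such $\tau$ is purely imaginary, the ones with $-\Im\tau\ge 0$ satisfying $-\Im\tau=\tfrac{1}{2}\bigl(\sqrt{1+8(\lambda+\nu)/c_i}-1\bigr)>0$; in particular $0\notin\Spec_b(\hat{P}_{i,\lambda})$. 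Together with the hypothesis $-\sqrt{-1}\alpha\notin\Spec_b(\hat{P}_{i,\lambda})$, this means $\{\tau\in\Spec_b(\hat{P}_{i,\lambda}):0\le-\Im\tau\le\alpha\}$ is a finite set of purely imaginary numbers with $0<-\Im\tau<\alpha$, and $k^{\alpha}_{i,\lambda}$ of Lemma~\ref{br.26} equals $\sum\dim\ker\hat{P}_{i,\lambda}(\tau)$ over this set.

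Next, for each such $\tau$ and each $v$ in a fixed basis of $\ker\hat{P}_{i,\lambda}(\tau)$, I would invoke Lemma~\ref{br.27} (with $\delta>0$ chosen smaller than every gap between the finitely many values $-\Im\tau$) to produce $u_{\tau,v}$, and set $W^{\alpha}_{i,\lambda}=\mathrm{span}\{u_{\tau,v}\}$. By property (i) of that lemma, $V^{\alpha}_{i,\lambda}=(P_i-\lambda)W^{\alpha}_{i,\lambda}\subset\dot{\cC}^{\infty}(\bD_i\times[0,\epsilon_i])$, and since $\Re(\sqrt{-1}\tau)=-\Im\tau>0$, property (ii) places each $u_{\tau,v}$, hence $W^{\alpha}_{i,\lambda}$, inside $\cA^{\hE^{+}_{i,\lambda}(0),\emptyset}_{\phg}(\bD_i\times[0,\epsilon_i])$. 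To get $\dim W^{\alpha}_{i,\lambda}=k^{\alpha}_{i,\lambda}$ I would prove linear independence by inspecting, in a hypothetical relation $\sum c_{\tau,v}u_{\tau,v}=0$, the lowest-order term: because the relevant $\tau$ are purely imaginary there is a unique one with minimal $-\Im\tau$, the coefficient of $x_i^{\sqrt{-1}\tau}$ at $x_i=0$ is $\chi(x_i)\sum_v c_{\tau,v}v$ (no logarithms by property (ii), no interference from the other $u_{\tau',v'}$ by the choice of $\delta$), and this forces all $c_{\tau,v}=0$. Finally, $\hE^{+}_{i,\lambda}(0)$ has all its exponents of real part bounded below by some $\epsilon_0>0$ (again by the spectral observation), so functions in $\cA^{\hE^{+}_{i,\lambda}(0),\emptyset}_{\phg}$ lie in every $H^{m+2}_b(\bD_i\times[0,\epsilon_i])$; hence $P_i-\lambda$ is injective on $W^{\alpha}_{i,\lambda}$ by Lemma~\ref{br.25}, and $\dim V^{\alpha}_{i,\lambda}=k^{\alpha}_{i,\lambda}$.

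The remaining step is the decomposition $x_i^{\alpha}H^{m}_b=(P_i-\lambda)(x_i^{\alpha}H^{m+2}_b)+V^{\alpha}_{i,\lambda}$ for every $m\in\bbN_0$. The inclusion $V^{\alpha}_{i,\lambda}\subset\dot{\cC}^{\infty}\subset x_i^{\alpha}H^{m}_b$ is immediate. For directness I would suppose $(P_i-\lambda)\psi=(P_i-\lambda)w$ with $\psi\in x_i^{\alpha}H^{m+2}_b$ and $w\in W^{\alpha}_{i,\lambda}$; then $w-\psi\in H^{m+2}_b$ and $(P_i-\lambda)(w-\psi)=0$, so $w=\psi\in x_i^{\alpha}H^{m+2}_b$ by Lemma~\ref{br.25}. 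Since $(P_i-\lambda)w\in\dot{\cC}^{\infty}$, the boundary regularity theorem \cite[Proposition~5.61]{MelroseAPS} then forces every exponent in the expansion of $w$ at $x_i=0$ to have real part $>\alpha-\delta'$ for all $\delta'>0$, hence real part $\ge\alpha$; but $w\in W^{\alpha}_{i,\lambda}$, and if $w\ne 0$ its expansion begins at some $x_i^{\sqrt{-1}\tau}$ with $-\Im\tau<\alpha$ and (by the independence argument) nonzero coefficient — a contradiction — so $w=\psi=0$. Combining directness with the Fredholm/cokernel count of Lemma~\ref{br.26} ($\dim\mathrm{coker}=k^{\alpha}_{i,\lambda}=\dim V^{\alpha}_{i,\lambda}$) yields the claimed equality. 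I expect the main obstacle to be precisely this dimension matching: making sure the span $W^{\alpha}_{i,\lambda}$ has exactly the dimension $k^{\alpha}_{i,\lambda}$ predicted by the relative index theorem and that $V^{\alpha}_{i,\lambda}$ is transverse to the image; the purely imaginary structure of $\Spec_b(\hat{P}_{i,\lambda})$ for $\lambda>0$ is what makes both bookkeeping arguments clean, and the rest is routine assembly of Lemmas~\ref{br.25}, \ref{br.26} and \ref{br.27}.
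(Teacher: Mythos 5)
Your proposal is correct and follows essentially the same route as the paper's proof: you build $W^{\alpha}_{i,\lambda}$ from the Lemma~\ref{br.27} solutions attached to the indicial roots in $(0,\alpha)$, check linear independence via leading terms, and combine the injectivity of Lemma~\ref{br.25} with the cokernel count of Lemma~\ref{br.26} to get the decomposition. The only differences are cosmetic — you make the spectral facts ($\Spec_b(\hat{P}_{i,\lambda})$ purely imaginary, no root at $0$) explicit and argue transversality by first invoking injectivity and then boundary regularity, whereas the paper gets it directly from $W^{\alpha}_{i,\lambda}\cap x_i^{\alpha}H^{m+2}_{b}=\{0\}$, which your leading-term argument already yields without appealing to \cite[Proposition~5.61]{MelroseAPS} again.
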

\begin{proof}
For each $0<\beta <\alpha$ such that $-\sqrt{-1}\beta\in \Spec_b(\hat{P}_{i,\lambda})$, choose a basis $v^1_{\beta},\ldots, v^{k_{\beta}}_{\beta}$ of $\ker( \hat{P}_{i,\lambda}(-\sqrt{-1}\beta))$.
Let $u_{\beta}^{1},\ldots,u_{\beta}^{k_{\beta}}\in \cA^{\hE_{i,\lambda}(\beta-\delta),\emptyset}_{\phg}(\bD_i\times [0,\epsilon_i])$ be functions as in Lemma~\ref{br.27}, that is, such that 
\begin{gather} \label{br.30b}
  (P_i -\lambda) u_{\beta}^j \in \dot{\cC}^{\infty}(\bD_i\times [0,\epsilon_i]); \\
  u_{\beta}^{j}- \chi(x_i)v_{\beta}^{j}x_i^{\beta} \in \cA_{\phg}^{\hE^{+}_{i,\lambda}(\beta),\emptyset}(\bD_i\times [0,\epsilon_i]).  \label{br.30}
\end{gather} 
We define $W^{\alpha}_{i,\lambda}$ to be the finite dimensional subspace of $\cA_{\phg}^{\hE^{+}_{i,\lambda}(0),\emptyset}(\bD_i\times [0,\epsilon_i])$ spanned by the functions $u^{j}_{\beta}$ for 
$j\in\{1,\ldots,k_{\beta}\}$ and for $0<\beta<\alpha$ such that $-\sqrt{-1}\beta \in \Spec_b(\hat{P}_{i,\lambda})$.  It is clear from condition \eqref{br.30} that the functions $u^{j}_{\beta}$ are linearly independent, so that $W^{\alpha}_{i,\lambda}$ is indeed a vector space of dimension $k^{\alpha}_{i,\lambda}$.  By the injectivity of the operator $P_i -\lambda$, the subspace $V^{\alpha}_{i,\lambda}\subset \dot{\cC}^{\infty}(\bD_i\times [0,\epsilon_i])$ defined by
\[
         V^{\alpha}_{i,\lambda}= (P_i -\lambda) W^{\alpha}_{i,\lambda}
\]
is also of dimension $k^{\alpha}_{i,\lambda}$.  From condition \eqref{br.30}, we see that
\[
         W^{\alpha}_{i,\lambda}\cap x^{\alpha}_i H^{m+2}_{b}(\bD_i\times [0,\epsilon_i])= \{0\}.
\]
This means that 
\[
              V^{\alpha}_{i,\lambda} \cap (P_i-\lambda)(x_i^{\alpha} H^{m+2}_{b}(\bD_i\times [0,\epsilon_i]))= \{0\}.
\]
Since by Lemma~\ref{br.26},  $\dim V^{\alpha}_{i,\lambda}= k^{\alpha}_{i,\lambda}$ is precisely the dimension of the cokernel of the operator \eqref{br.26b}, the result follows.

\end{proof}

We finally state and prove the following generalization of Theorem~\ref{br.18}.  

\begin{theorem}
Let $\cF=(F_1,\ldots, F_{\ell})$ be some index family and let $\alpha,\beta\in (\overline{\bbR}_+)^{\ell}$ be given.
If $u\in x^{\alpha}\CI_{\fc}(X)$  satisfies the equation
\[
    (\Delta_{\omega} -\lambda)u= f,
\]
for some $\lambda>0$ and with 
\[
       f= f_1+ f_2, \quad f_1\in x^{\alpha+ \beta}\CI_{\fc}(X), \quad
         f_2 \in \cA^{\cF}_{\fc}(\tX) \cap \CI_{\fc}(X),  
\]
then $u= u_1 + u_2$ for some functions $u_1$ and $u_2$ such that for all $\delta\in (\bbR_{+})^{\ell}$,
\[
    u_1\in x^{\alpha+\beta-\delta}\CI_{\fc}(X), \quad u_2\in \cA^{\widehat{\cE}^{+}_{\lambda}(\alpha-\delta)\overline{\cup}\cF}_{\fc}(\tX)\cap x^{\alpha}\CI_{\fc}(X).
\]
\label{br.31}\end{theorem}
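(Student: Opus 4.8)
The plan is to argue one boundary hypersurface at a time, following the reduction used in the proof of Theorem~\ref{br.18}, and then to combine the explicit asymptotic construction of Lemma~\ref{br.27} with the isomorphism and cokernel statements of Lemma~\ref{br.25}, Lemma~\ref{br.26} and Proposition~\ref{br.29} in order to split off the genuinely polyhomogeneous part of $u$ from a remainder with improved decay. Since $\bD$ is smooth the hypersurfaces $\tH_1,\dots,\tH_\ell$ are disjoint, so after multiplying $u$ by a cut-off $\chi_i$ supported in the collar $\tc_i(\tH_i\times[0,\frac{\epsilon_i}{2}))$ the commutator $[\Delta_\omega,\chi_i]u$ lies in $\dot{\cC}^{\infty}(\tX)$ and may be absorbed into $f_2$; it therefore suffices to establish the splitting near a single $\tH_i$ and then reassemble with a partition of unity. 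Near $\tH_i$ I would apply Lemma~\ref{br.6} to write $u=u'+u''$ with $u''\in\dot{\cC}^{\infty}(\tX)$ and $\xi u'=0$, average over the circle action of $\tPhi_i\times\Id$, and push forward through $\tPhi_i\times\Id$ exactly as in the proof of Theorem~\ref{br.18}. This yields a $b$-elliptic operator $P_i$ on $\bD_i\times[0,\epsilon_i]$, with indicial family $\hat{P}_{i,\lambda}(\tau)$ at both boundary hypersurfaces and with $P_i-\lambda$ injective on $H^{m+2}_b$ (Lemma~\ref{br.25}, using $\lambda>0$), a function $\overline{u}\in x_i^{\alpha_i-\delta_i}H^{m+2}_b$ for all $m$ and all $\delta_i>0$, and a right-hand side $\overline{f}=\overline{f}_1+\overline{f}_2$ with $\overline{f}_1\in x_i^{\alpha_i+\beta_i-\delta_i}H^m_b$ for all $m,\delta_i$ and $\overline{f}_2\in\cA^{F_i,\emptyset}_{\phg}(\bD_i\times[0,\epsilon_i])$ supported away from $\bD_i\times\{\epsilon_i\}$. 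One remark used repeatedly: since $\Delta_\omega\in\Diff^2_{\fc}(\tX)$ preserves the filtration by powers of $x$, the hypotheses $u\in x^\alpha\CI_{\fc}(X)$ and $f_1\in x^{\alpha+\beta}\CI_{\fc}(X)$ force $f_2=f-f_1\in x^\alpha\CI_{\fc}(X)$, so the exponents of $F_i$ actually present in $\overline{f}_2$ all have real part $\ge\alpha_i$.

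I would then peel off the polyhomogeneous source. Running the iterative formal-solution scheme of Lemma~\ref{br.27} — inverting the indicial family against the expansion of $\overline{f}_2$ term by term, thickening log powers where $-\sqrt{-1}z\in\Spec_b(\hat{P}_{i,\lambda})$, and taking a Borel sum — produces $w^{\mathrm{phg}}\in\cA^{\hE^{+}_{i,\lambda}(\alpha_i-\delta_i)\,\overline{\cup}\,F_i,\ \emptyset}_{\phg}(\bD_i\times[0,\epsilon_i])$, supported near $\bD_i\times\{0\}$, with all exponents of real part $\ge\alpha_i$, and satisfying $(P_i-\lambda)w^{\mathrm{phg}}-\overline{f}_2\in\dot{\cC}^{\infty}(\bD_i\times[0,\epsilon_i])$. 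Setting $v=\overline{u}-w^{\mathrm{phg}}$, the equation becomes $(P_i-\lambda)v=g$ with $g\in x_i^{\gamma_1}H^m_b$ for all $m$, where $\gamma_1=\alpha_i+\beta_i-\delta_i$; by shrinking $\delta_i$ I arrange $-\sqrt{-1}\gamma_1\notin\Spec_b(\hat{P}_{i,\lambda})$ and also $-\sqrt{-1}\gamma_0\notin\Spec_b(\hat{P}_{i,\lambda})$ for $\gamma_0=\alpha_i-\delta_i$.

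The heart of the argument is the improvement step. Using Proposition~\ref{br.29} at the weight $\gamma_1$, I write $g=(P_i-\lambda)v_1+(P_i-\lambda)w$ with $v_1\in x_i^{\gamma_1}H^{m+2}_b$ and $w\in W^{\gamma_1}_{i,\lambda}\subset\cA^{\hE^{+}_{i,\lambda}(0),\emptyset}_{\phg}$, so that $(P_i-\lambda)(v-v_1-w)=0$. Since for $\lambda>0$ the explicit description \eqref{br.14} of the indicial roots shows that none of them have real part in $(-1,0]$, the function $w$ has positive leading exponent and so lies in $H^{m+2}_b$; as $v,v_1\in H^{m+2}_b$ as well, the injectivity from Lemma~\ref{br.25} forces $v=v_1+w$. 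Now $v\in x_i^{\gamma_0}H^{m+2}_b$ and $v_1\in x_i^{\gamma_1}H^{m+2}_b\subset x_i^{\gamma_0}H^{m+2}_b$, hence $w=v-v_1\in x_i^{\gamma_0}H^{m+2}_b$ for every $\delta_i>0$, so every exponent of $w$ has real part $\ge\alpha_i$ (logarithms occurring only at exponents of real part $>\alpha_i$). Setting $\overline{u}_2=w^{\mathrm{phg}}+w$ and $\overline{u}_1=v_1$ gives $\overline{u}=\overline{u}_1+\overline{u}_2$ with $\overline{u}_1\in x_i^{\alpha_i+\beta_i-\delta_i}H^m_b$ for all $m$ (hence in $x_i^{\alpha_i+\beta_i-\delta_i}\CI_b$) and $\overline{u}_2\in\cA^{\hE^{+}_{i,\lambda}(\alpha_i-\delta_i)\,\overline{\cup}\,F_i,\ \emptyset}_{\phg}$ with all exponents of real part $\ge\alpha_i$, so that $\overline{u}_2$ still lies in $x_i^{\alpha_i}$ times the Cheng--Yau H\"older ring. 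Pulling back through $\tPhi_i\times\Id$, adding back $u''\in\dot{\cC}^{\infty}(\tX)$ and the cut-off, and summing over $i$ with a partition of unity (legitimate since the $\tH_i$ are disjoint and $\widehat{\cE}^{+}_{\lambda}$, $\cF$ are componentwise) yields the desired $u=u_1+u_2$ with $u_1\in x^{\alpha+\beta-\delta}\CI_{\fc}(X)$ and $u_2\in\cA^{\widehat{\cE}^{+}_{\lambda}(\alpha-\delta)\,\overline{\cup}\,\cF}_{\fc}(\tX)\cap x^\alpha\CI_{\fc}(X)$.

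I expect the main obstacle to be the bookkeeping in the improvement step: the cokernel space $W^{\gamma_1}_{i,\lambda}$ furnished by Proposition~\ref{br.29} records resonances throughout the whole strip $[0,\gamma_1)$, so one must use the a priori decay $x_i^{\gamma_0}$ of $v$ together with the injectivity of $P_i-\lambda$ to check that the spurious low-order resonances are killed and only those lying in the strip $(\gamma_0,\gamma_1)$ survive in $w$. Keeping the weights $\gamma_0,\gamma_1$ off $\Spec_b(\hat{P}_{i,\lambda})$ while letting $\delta$ shrink, and verifying that the formal solution $w^{\mathrm{phg}}$ inherits the lower bound $\ge\alpha_i$ on its exponents (so that $u_2$ genuinely remains in $x^\alpha\CI_{\fc}(X)$ despite the $-\delta$ appearing in $\widehat{\cE}^{+}_{\lambda}(\alpha-\delta)$), are the remaining points requiring care.
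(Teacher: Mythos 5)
Your argument is correct and its skeleton is the paper's: localize to a collar of a single $\tH_i$ (legitimate precisely because $\bD$ smooth makes the $\tH_i$ disjoint, so the commutator with the cut-off is compactly supported in $X$), average over the circle action via Lemma~\ref{br.6}, push forward through $\tPhi_i\times\Id$ to the $b$-elliptic operator $P_i$, and exploit injectivity of $P_i-\lambda$ (Lemma~\ref{br.25}) together with the weighted surjectivity-up-to-$V^{\alpha}_{i,\lambda}$ of Proposition~\ref{br.29}. Where you genuinely diverge is the treatment of the polyhomogeneous source $\overline{f}_2$ and of the membership $u_2\in x^{\alpha}\CI_{\fc}(X)$. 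The paper solves $(P_i-\lambda)\overline{v}_3=\overline{f}_2$ exactly, using the isomorphism of Lemma~\ref{br.25} on unweighted $b$-Sobolev spaces, and then quotes Theorem~\ref{br.18} (i.e.\ the standard $b$-boundary-regularity result) to see that $\overline{v}_3$ is polyhomogeneous with index set $\hE^{+}_{i,\lambda}(-\delta_i)\,\overline{\cup}\,F_i$; you instead build a formal polyhomogeneous solution $w^{\mathrm{phg}}$ modulo $\dot{\cC}^{\infty}$ by iterating the indicial-equation scheme of Lemma~\ref{br.27} against the expansion of $\overline{f}_2$ and Borel-summing, then absorb the rapidly vanishing error into the weighted remainder. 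Both work; the paper's route costs nothing new (it reuses Theorem~\ref{br.18}), while yours requires the (routine but unstated) extension of Lemma~\ref{br.27} from a single indicial kernel element to a general polyhomogeneous right-hand side, in exchange for explicit control of which exponents occur. On the last point, note that your exponent bookkeeping for $u_2$ is slightly delicate at the borderline $\Re z=\alpha_i$ (a term $x_i^{\alpha_i}\log x_i$, which cannot be ruled out purely from $w\in x_i^{\alpha_i-\delta_i}H^{m}_b$ for all $\delta_i>0$, would not lie in $x_i^{\alpha_i}\CI_{\fc}$); the paper sidesteps this entirely by observing that $u_2=u-u_1$ with $u\in x^{\alpha}\CI_{\fc}(X)$ and $u_1\in x^{\alpha+\beta-\delta}\CI_{\fc}(X)$, which you may as well adopt to close that step cleanly.
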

\begin{proof}
As in the proof of Theorem~\ref{br.18}, we can assume $u$ and $f$ are supported in the collar neighborhood $\tc_i(\tH_i\times [0,\frac{\epsilon_i}{2}))\subset \tX$.  By averaging over the circle action in that collar neighborhood, we can also assume, in light of Lemma~\ref{br.6}, that
$u= (\tPhi_i\times \Id)^{*}\overline{u}$ and $f_j= (\tPhi_i\times \Id)^{*}\overline{f_j}$, so that the equation becomes
\[
         (P_i -\lambda)\overline{u} = \overline{f}_1 + \overline{f}_2.
\]  
Since $u\in x^{\alpha}\CI_{\fc}(X)$,  $f_1\in x^{\alpha+\beta}\CI_{\fc}(X)$ and $f_2\in \cA^{\cF}_{\fc}(\tX)$, we have that $\overline{u}\in x_{i}^{\alpha_i-\delta_i} H^{m+2}_{b}(\bD_i\times [0,\epsilon_i])$ and $\overline{f}_1\in x_{i}^{\alpha_i+\beta_i-\delta_i}H^{m}_{b}(\bD_i\times [0,\epsilon_i])$  for all $m\in \bbN$ and $\delta_i >0$, while $\overline{f}_2\in \cA^{F_i ,\emptyset}_{\phg}(\bD_i\times [0,\epsilon_i])$.   

Assuming $\delta_i>0$ is chosen small enough so that
\[
            t\in [\alpha_i+\beta_i-\delta_i, \alpha_i+\beta_i) \; \Longrightarrow \; -\sqrt{-1} t \notin \Spec_b(\hat{P}_{i,\lambda}),
\]
we can apply Proposition~\ref{br.29} to conclude there exists  $\overline{v}_{1}\in x_{i}^{\alpha_i+\beta_i-\delta_i}H^{m+2}_{b}(\bD_i\times [0,\epsilon_i])$ and   $\overline{v}_{2} \in
\cA^{\hE^{+}_{i,\lambda}(0),\emptyset}_{\phg}(\bD_i\times [0,\epsilon_i])$ such that 
\[
       (P_i -\lambda) (\overline{v}_{1} + \overline{v}_2 )= \overline{f}_1.
\] 
Moreover, both $\overline{v}_1$ and $\overline{v}_2$ can be chosen to be independent of the choice of $\delta_i>0$ and $m$.  For the function $\overline{v}_1$, this implies by the Sobolev embedding for $b$-metrics that
\[
     \overline{v}_1 \in x_{i}^{\alpha_i+\beta_i-\delta_i}\CI_{b}(\bD_i\times [0,\epsilon_i]) \quad \forall \; \delta_i>0.\]

On the other hand, by Lemma~\ref{br.25} and Theorem~\ref{br.18}, there exists a unique 
$\overline{v}_3 \in \cA^{\hE_{i,\lambda}^{+}(-\delta_i)\overline{\cup}F_i, \hE_{i,\lambda}^{+}(-\delta_i)}_{\phg}(\bD_i\times [0,\epsilon_i])$ such that 
\[
       (P_i-\lambda)\overline{v}_3= \overline{f}_2.  
\] 
Thus, setting $\overline{u}_{1}= \overline{v}_1$ and $\overline{u}_2 = \overline{v}_2 + \overline{v}_3$,  we have that
\[
        (P_i -\lambda)(\overline{u}_1 + \overline{u}_2) = \overline{f}_1 + \overline{f}_2 .
\]
By the injectivity of the operator $P_i-\lambda$, we conclude that 
\[
          \overline{u}= \overline{u}_1 + \overline{u}_2.  
\]
Since $\overline{u}$ is supported in $\bD_i\times [0,\frac{\epsilon_i}{2})$,  we also have that
\[
              \overline{u} = \chi(x_i) \overline{u}_1 + \chi(x_i)\overline{u}_2, 
\]
where   $\chi\in \CI_{c}([0,\epsilon_i))$ is a cut-off function with $\chi(t)=1$ for $t<\frac{\epsilon_i}{2}$ and $\chi(t)=0$ for $t\ge \frac{3\epsilon_{i}}{4}$.  In other words, replacing $\overline{u}_{j}$ by $\chi(x_i) \overline{u}_{j}$ if needed, we can assume both $\overline{u}_{1}$ and $\overline{u}_{2}$ are supported in $\bD_i\times [0,\frac{3\epsilon_i}{4})$.  
Since $\overline{u}_{1}=\overline{v}_1 \in x_{i}^{\alpha_i+\beta_i-\delta_i}\CI_{b}(\bD_i\times [0,\epsilon_i])$ for all  $\delta_i>0$,
the theorem follows by taking $u_1= (\tPhi_i\times \Id)^{*} \overline{u}_{1}$ and 
$u_{2}=(\tPhi_i \times \Id)^{*}\overline{u}_{2}$.  From the definition of $u_2$, we only know a priori that $u_2\in \cA_{\fc}^{\cE^{+}_{\lambda}(-\delta)\overline{\cup}\cF}(\tX)$ for some $\delta\in (\bbR_+)^{\ell}$, but since 
$u_2= u-u_1 \in x^{\alpha}\CI_{\fc}(X)$, we see that $u_2$ does have the claimed regularity at the boundary.  
       
\end{proof}

\section{Full asymptotics of K\"ahler-Einstein metrics on quasiprojective manifolds} \label{clt.0}

In this section, we suppose that the divisor $\bD$ is smooth and is such that 
$K_{\bX}+[\bD]>0$, in which case the regularity results of \S~\ref{br.0} will be useful to obtain the full asymptotics of the K\"ahler-Einstein metric \eqref{ake.4}.  To do so, it will be convenient to write the associated complex Monge-Amp\`ere equation with respect to a background K\"ahler form slightly different than the one given in \eqref{ake.3}.

First, notice that if $\bD_1,\ldots,\bD_{\ell}$ are the irreducible components of the divisor $\bD$, then the holomorphic sections $s_1, \ldots, s_{\ell}$  define
a holomorphic section $s= \bigotimes_{i=1}^{\ell} s_i \in H^{0}(\bX;[\bD])$ such that $s^{-1}(0)= \bD$.  Moreover, the Hermitian metrics on $[\bD_1], \ldots, [\bD_{\ell}]$ define a Hermitian metric $\|\cdot\|_{\bD}$ on the line bundle $[\bD]$.   Now, a volume form $\Omega$ on $\bX$ induces a Hermitian metric for the canonical line bundle $K_{\bX}$.  Thus, such a volume form together with the Hermitian metric  $\|\cdot\|_{\bD}$ induce a Hermitian metric on the line bundle $K_{\bX}+ [\bD]$.   
By the extension theorem of Schumacher \cite[Theorem~4]{Schumacher},  we can choose the volume form $\Omega$ on $\bX$  such that the induced Hermitian metric on $K_{\bX}+[\bD]$ has positive curvature and restricts on each irreducible component $\bD_i$ to be the Hermitian metric on  $\left. (K_{\bX}+[\bD])\right|_{\bD_i} = K_{\bD_i}$ induced by the volume form $n\omega_{\KE,i}^{n-1}$ on  $\bD_i$, where $\omega_{\KE,i}$ is the K\"ahler-Einstein metric of \eqref{ake.8} on $\bD_i$.    
With such a choice, the curvature form $\sqrt{-1}\Theta_{K_{\bX}+[\bD]}$ is such that its restriction to $\bD_i$ is precisely the K\"ahler form $\omega_{\KE,i}$.  

Instead of the K\"ahler form \eqref{ake.3}, consider the form
\begin{equation}
\begin{aligned}
   \omega &= -\sqrt{-1}\ \db \pa \log \left(  \frac{\Omega}{ \| s\|^{2}_{\bD}}\right) +
   \sqrt{-1} \ \db\pa \log \left( (-\log \epsilon \|s \|^{2}_{\bD})^{2}\right) \\
   &=  \sqrt{-1} \Theta_{K_{\bX}+[\bD]} + 2\sqrt{-1}  \left( \frac{\Theta_{\bD}}{ \log \epsilon \| s\|^{2}_{\bD}} \right) \\
   & \quad \quad \quad + 2\sqrt{-1} 
     \left(  \frac{(\pa \log \epsilon \|s\|^{2}_{\bD}) \wedge (\db   \log \epsilon \|s\|^{2}_{\bD})}{  (\log \epsilon \|s\|^{2}_{\bD})^{2}} \right),
      \end{aligned}
\label{fff.1}\end{equation}      
which for $\epsilon>0$ sufficiently small is K\"ahler.  As for the K\"ahler form in \eqref{ake.3}, the K\"ahler form \eqref{fff.1} induces an asymptotically tame polyfibred cusp K\"ahler metric.  Using $\omega$ as a background K\"ahler metric, we can then obtain a K\"ahler-Einstein metric $\omega_{\KE}= \omega+\sqrt{-1}\pa\db u$ by taking $u\in \CI_{\fc}(X)$ to be the solution to the complex Monge-Amp\`ere equation
\begin{equation}
    \log \left( \frac{ (\omega+ \sqrt{-1} \pa \db u)^{n} }{ \omega^n } \right) -u = F= \log\left( \frac{\widetilde{\Omega}}{\omega^n}\right) \in \CI_{\fc}(\tX),
\label{fff.2}\end{equation}
where $\widetilde{\Omega}$ is the volume form
\begin{equation}
  \widetilde{\Omega}=  \frac{\Omega}{ \left( \|s\|^{2}_{\bD} (-\log (\epsilon \|s\|^{2}_{\bD}))^{2}\right) }  \in \CI_{\fc}(\tX; \Lambda^{2n}({}^{\fc}T^{*}\tX)). 
\label{fff.3}\end{equation}
As the next lemma shows, the restriction of the function $F$ to $\bD_i$ vanishes.   

\begin{lemma}
If the volume form $\Omega$ and the Hermitian metric $\|\cdot\|_{\bD}$ are chosen as above, then the function $F$ in \eqref{fff.2} is such that
\[
     F -x_i \Xi_i\left( \frac{(n-1)\omega_{\KE,i}^{n-2}\wedge(\sqrt{-1} \left.\Theta_{\bD} \right|_{\bD_i}) }{\omega_{\KE,i}^{n-1}} \right)  \in x_{i}^{2}\CI_{\fc}(\tX).
\]
In particular, the restriction of $F$ to $\bD_i$ vanishes.  

\label{clt.1}\end{lemma}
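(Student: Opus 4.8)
The plan is to compute the asymptotic expansion of $F = \log(\widetilde\Omega/\omega^n)$ at the boundary hypersurface $\tH_i$ to second order in the boundary defining function $x_i$, and read off the coefficients from the geometric normalizations imposed on $\Omega$ and $\|\cdot\|_{\bD}$. Since $\bD$ is smooth, near $\bD_i$ we may work in local holomorphic coordinates $(\zeta_i, z_2,\ldots,z_n)$ as in \eqref{fc.16}, with $\zeta_i$ vanishing on $\bD_i$, and write $\|s\|^2_{\bD} = h_i|\zeta_i|^2$ for a positive smooth function $h_i$ on $\bX$. The key point is to rewrite $F$ as a difference of two terms: $F = \log(\widetilde\Omega/\omega_{\st}^n) - \log(\omega^n/\omega_{\st}^n)$, where $\omega_{\st}$ is the model standard asymptotically tame metric \eqref{fc.29}; each factor is in $\CI_{\fc}(\tX)$ by the arguments already used in Proposition~\ref{fc.15} and Proposition~\ref{fc.28}, so $F\in \CI_{\fc}(\tX)$ and has a Taylor series $F \sim F_0 + x_i F_1 + x_i^2 F_2 + \cdots$ with $F_k$ pulled back from $\tD_i = \tX|_{\bD_i}$ via Proposition~\ref{ext.2}.

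The order-zero term is handled first: by construction the Hermitian metric on $K_{\bX}+[\bD]$ induced by $\Omega$ and $\|\cdot\|_{\bD}$ restricts on $\bD_i$ to the Hermitian metric induced by $n\omega_{\KE,i}^{n-1}$, and $\sqrt{-1}\Theta_{K_{\bX}+[\bD]}$ restricts on $\bD_i$ to $\omega_{\KE,i} = \omega_i + \sqrt{-1}\pa\db u_i$. Feeding this into the computation of $\left.\log(\omega^n/\omega_{\st}^n)\right|_{\tH_i}$ as in \eqref{fc.32} and into the restriction of $\widetilde\Omega$, the order-zero contributions cancel exactly, giving $F_0 = 0$, i.e. the restriction of $F$ to $\bD_i$ vanishes. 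This is essentially the normalization that defines the extension theorem of Schumacher being invoked.

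The real work is the order-one coefficient $F_1$. The plan is to differentiate: from \eqref{fc.22}, $\frac{1}{\log\epsilon\|s_i\|^2_{\bD_i}} = \frac{x_i'}{x_i'(\log\epsilon+\log h_i) - 2}$, so expanding in $x_i'$ the ``middle term'' $2\sqrt{-1}\,\Theta_{\bD}/\log\epsilon\|s\|^2_{\bD}$ of \eqref{fff.1} contributes at order $x_i$ precisely $-\sqrt{-1}\left.\Theta_{\bD}\right|_{\bD_i}\cdot x_i$ (up to the sign and factor coming from $x_i' = -1/\log\rho_i'$ versus $x_i$, which agree to leading order by Lemma~\ref{mwc.10}), while the $\Theta_{K_{\bX}+[\bD]}$ term and the last term of \eqref{fff.1} contribute nothing new at order $x_i$ beyond what matches the model. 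One then computes $\frac{\pa}{\pa x_i}\log(\omega^n/\omega_{\st}^n)$ at $\tH_i$: since $\omega = \omega_{\KE,i} + (\text{transversal Poincaré part}) + x_i\cdot(\text{correction from }\Theta_{\bD}) + O(x_i^2)$ along the directions tangent to $\bD_i$, the first-order variation of the $(n-1)$-dimensional part of the volume form is $\frac{(n-1)\omega_{\KE,i}^{n-2}\wedge(\sqrt{-1}\left.\Theta_{\bD}\right|_{\bD_i})}{\omega_{\KE,i}^{n-1}}$. Comparing, the contributions to $F_1$ line up to give $F_1 = \frac{(n-1)\omega_{\KE,i}^{n-2}\wedge(\sqrt{-1}\left.\Theta_{\bD}\right|_{\bD_i})}{\omega_{\KE,i}^{n-1}}$ as a function on $\bD_i$, extended to $\tX$ by $\Xi_i$; everything of order $x_i^2$ and higher is absorbed into $x_i^2\CI_{\fc}(\tX)$ by Proposition~\ref{ext.2}.

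The main obstacle is bookkeeping the first-order term carefully: one must track the difference between $x_i' = -1/\log|\zeta_i|$ and the global $x_i = -1/\log\|s_i\|_{\bD_i}$ (they differ at order $x_i^2$ by Lemma~\ref{mwc.10}, so this is harmless but must be checked), and more importantly one must verify that the first-order transversal derivatives of $\omega$ do not produce spurious contributions to $F_1$ — i.e., that the cross terms $d\zeta_i/\zeta_i$ appearing in $\pa\log\|s\|^2_{\bD}$ from \eqref{fc.24} only affect the ``Poincaré block'' of $\omega$ and, because $\rho_i' = e^{-1/x_i'}$ decays exponentially (used repeatedly, e.g. in \eqref{fc.23} and Lemma~\ref{excc.1}), contribute nothing polynomial in $x_i$ to the tangential volume form. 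Once these two points are settled, $F_1$ is exactly the claimed expression and the lemma follows, with the final sentence ($\left.F\right|_{\bD_i}=0$) being the immediate consequence $F_0 = 0$.
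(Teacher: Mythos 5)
Your proposal is correct and follows essentially the same route as the paper: work in the local coordinates of \eqref{fc.16}, write $\|s\|^2_{\bD}=h|\zeta|^2$, use the normalization that $\sqrt{-1}\Theta_{K_{\bX}+[\bD]}$ restricts to $\omega_{\KE,i}$ and that $\Omega$ induces the metric of $n\omega_{\KE,i}^{n-1}$ to kill the order-zero term, expand $1/\log\epsilon\|s\|^2_{\bD}$ to obtain the $-x_i\sqrt{-1}\Theta_{\bD}$ correction to the tangential block of $\omega$, and divide volume forms, discarding the $\pa\log h$ cross terms because $\rho_i=e^{-1/x_i}$ decays rapidly. The only cosmetic difference is that the paper compares both $\widetilde{\Omega}$ and $\omega^n$ to the single reference $\frac{n\omega_{\KE,i}^{n-1}}{(\log\epsilon\|s\|^2_{\bD})^2}\wedge\sqrt{-1}\frac{d\zeta\wedge d\overline{\zeta}}{|\zeta|^2}$ (its \eqref{clt.4}--\eqref{clt.5}), which carries the same logarithmic factor and hence never produces the extra first-order $\log(\epsilon h)$ terms that your splitting through $\omega_{\st}^n$ of \eqref{fc.29} creates in each piece and which you tacitly (and correctly) let cancel in the difference.
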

\begin{proof}
To proceed, we will use local holomorphic coordinates $(\zeta, z_2,\ldots,z_n)$ as in \eqref{fc.16} with $\zeta$ corresponding to the section $s= \bigotimes_{j=1}^{\ell} s_j$ with respect to some choice of local trivialization of $[\bD]$.  In such local coordinates, the Hermitian metric $\|\cdot \|_{\bD}$ takes the form
\begin{equation}
     \|s\|_{\bD}^{2}= h |\zeta |^{2}
\label{clt.2}\end{equation}     
for some smooth real valued function $h$.  Similarly, in these local coordinates, the form $\Omega$ is given by
\[
   \Omega= \psi d\zeta\wedge d\overline{\zeta} \wedge dz_2\wedge d\overline{z}_2 \wedge\ldots \wedge dz_n \wedge d\overline{z}_{n},  
\]
for some smooth function $\psi$.  
Our assumption on the form $\Omega$ is then that 
\begin{equation}
  \left. \frac{\psi dz_2\wedge d\overline{z}_2 \wedge\ldots \wedge dz_n \wedge d\overline{z}_{n}}{h} \right|_{\bD_i}= n \omega_{\KE,i}^{n-1}.
   \label{clt.3}\end{equation}
Thus, in these local coordinates, the difference 
\begin{equation}
  \frac{\Omega}{(\|s\|^{2}_{\bD} (-\log \epsilon \|s\|_{\bD}^{2})^{2})}\; -\; \frac{n\omega_{\KE,i}^{n-1}}{(-\log\epsilon \|s\|^2_{\bD})^{2}} \wedge \sqrt{-1}\frac{d\zeta \wedge d\overline{\zeta}}{|\zeta|^{2}}
\label{clt.4}\end{equation}
is in $\rho_i \CI_{\fc}(\tX; \Lambda^{2n}({}^{\fc}T^{*}\tX)\otimes \bbC)$ where $\rho_i= \|s_i\|_{\bD_i}$.  On the other hand, since the restriction of $\sqrt{-1}\Theta_{K_{\bX}+ [\bD]}$ to $\bD_i$ is $\omega_{\KE,i}$,  we see from \eqref{fff.1} and using \eqref{fc.24} that 
\begin{multline}
\omega^n= \frac{n\omega_{\KE,i}^{n-1}}{(\log\epsilon \|s\|^2_{\bD})^{2}} \wedge \sqrt{-1}\frac{d\zeta \wedge d\overline{\zeta}}{|\zeta|^{2}} \\
- x_i' \frac{n(n-1) \omega_{\KE,i}^{n-2}\wedge (\sqrt{-1} \Theta_{\bD}) \wedge(\sqrt{-1} d\zeta\wedge d\overline{\zeta})}{ (\log \epsilon \|s\|_{\bD}^{2})^2 |\zeta|^2} + \nu,
\label{clt.5}\end{multline}
with $\nu \in x_i^2 \CI_{\fc}(\tX;\Lambda^{2n}({}^{\fc}T^{*}\tX)\otimes \bbC)$ and $x_i'= \frac{-1}{\log \epsilon^{\frac12}\|s\|_{\bD}}$.  By Lemma~\ref{mwc.10}, notice that at the cost of changing $\nu$,  we can replace $x_i'$ in \eqref{clt.5} by $x_i= \frac{-1}{\log \|s_i\|_{\bD_i}}$.   Combining \eqref{clt.4} and \eqref{clt.5}, we thus see that in the local coordinates $(\zeta, z)$, 
\[
   F= \log \left(\frac{\widetilde{\Omega}}{\omega^n}\right)=  x_i\left( \frac{(n-1)\omega_{\KE,i}^{n-2}\wedge(\sqrt{-1} \left.\Theta_{\bD} \right|_{\bD_i}) }{\omega_{\KE,i}^{n-1}} \right)  + \mu, 
\]
with $\mu \in x_i^2 \CI_{\fc}(\tX)$.  The result of the lemma then easily follows from this local description.  

\end{proof}

Consequently, with this choice of volume form $\Omega$ and Hermitian metric $\|\cdot\|_{\bD}$ described above, the induced K\"ahler form $\omega\in \CI_{\fc}(\tX;\Lambda^2 ({}^{\fc}T^{*}\tX)\otimes \bbC)$ defined in \eqref{ake.3} has standard spatial asymptotics $\{\omega_i\}_{i=1}^{\ell}$ and $\{c_i\}_{i=1}^{\ell}$ given by
\begin{equation}
    \omega_i =\omega_{\KE,i}, \quad c_i=1, \quad \forall i\in \{1,\ldots,\ell\}.
\label{ake.11}\end{equation}   
The solution of the complex Monge-Amp\`ere equation
\begin{equation}
 \log \left( \frac{ (\omega_i+ \sqrt{-1} \pa \db u_i)^{n-1} }{ \omega_i^{n-1} } \right) -u = F_i \in \CI(\bD_i),
\label{fff.4}\end{equation}  
 is thus simply $u_i\equiv 0$, since $\omega_i= \omega_{\KE,i}$ and the restriction $F_i$ of $F$ at $\bD_i$ vanishes.  By Theorem~\ref{ake.10}, which really in the case $\bD$ is smooth is Schumacher's results \cite[Theorem 2]{Schumacher},  we know that there exists $\nu>0$ such that the solution $u$ to the complex Monge-Amp\`ere equation \eqref{fff.2} is in $x^{\nu}\CI_{\fc}(X)$, where $x=\prod_{i=1}^{\ell} x_i$.  Using this fact and Theorem~\ref{br.31} we now proceed recursively to construct the full asymptotic of $u$ at infinity.  Let us start by identifying the first term in the asymptotic expansion of $u$.  To this end, it will be convenient to have a cut-off function $\chi\in \CI_c([0,\infty))$ with
$\chi(t)=1$ for $t<\frac{1}{2}$ and $\chi(t)=0$ for $t\ge 1$, as well as its reparametrized version  
$\chi_{r}(t)=\chi(\frac{t}{r})$ for $r>0$.  
\begin{lemma}
There exist constants $\tilde{b}_i \in \bbR$ and functions $b_i\in \CI(\bD_i)$  such that 
\[
       u-     \tc_*\left( \sum_{i=1}^{\ell} \chi_{r}(x_i)x_i ( \tilde{b}_i \log x_i + b_i) \right) \in x^{1+\delta}\CI_{\fc}(X)
\]
for some $\delta>0$ and $r>0$, where $\tc$ is the collar neighborhood map  introduced in \eqref{br.3}.
\label{ake.12}\end{lemma}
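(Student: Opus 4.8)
The plan is to feed the starting information $u\in x^\nu\CI_{\fc}(X)$ into Theorem~\ref{br.31} once, with a carefully chosen weight, to produce the leading term. First I would rewrite the complex Monge-Amp\`ere equation \eqref{fff.2} as a linear equation for $u$. Exactly as in the proof of Theorem~\ref{ake.10}, set $\omega_t=\omega+t\sqrt{-1}\pa\db u$ and write
\[
 F+u=\int_0^1 \Delta_{\omega_t}u\,dt = \Delta_u u,
\]
so that $(\Delta_u-1)u=F$, where $\Delta_u=\int_0^1\Delta_{\omega_t}\,dt\in\Diff^2_{\fc}(X)$ is uniformly elliptic (the required two-sided bound $\tfrac{\omega}{C}\le\omega+\sqrt{-1}\pa\db u\le C\omega$ is part of the Tian--Yau--Bando existence theory and was already used above). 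The subtlety is that $\Delta_u$ is not quite asymptotically tame, because its coefficients only lie in $x^\nu\CI_{\fc}(X)+\CI_{\fc}(\tX)$; but its leading ($b$-model) part at $\tH_i$ is governed by the standard spatial asymptotics \eqref{ake.11} of $\omega$, so the indicial operator is $\hat P_{i,1}(\tau)=\Delta_{\omega_{\KE,i}}+\tfrac12(-\tau^2+\sqrt{-1}\tau)-1$. I would split $\Delta_u=\Delta_\omega+R$ with $R\in x^\nu\Diff^2_{\fc}(X)$, so the equation reads $(\Delta_\omega-1)u=F-Ru$, where $F\in\CI_{\fc}(\tX)$ with $F_i:=r_i(F)=0$ by Lemma~\ref{clt.1}, hence $F\in x_i\CI_{\fc}(\tX)$ for each $i$, and $Ru\in x^{2\nu}\CI_{\fc}(X)$.

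Next I would apply Theorem~\ref{br.31} with $\lambda=1$, $\alpha=(\nu,\dots,\nu)$, $\beta$ chosen so that $\alpha+\beta=(\min(2\nu,1),\dots)$ just below the next indicial root, and index family $\cF=(F_1,\dots,F_\ell)$ where each $F_i=\bbN\times\{0\}$ coming from $F\in x_i\CI_{\fc}(\tX)$ vanishing on $\tH_i$ — here the right-hand side decomposes as $f_1=-Ru\in x^{\alpha+\beta}\CI_{\fc}(X)$ and $f_2=F\in\cA^\cF_{\fc}(\tX)\cap\CI_{\fc}(X)$. The conclusion is $u=u_1+u_2$ with $u_1\in x^{\alpha+\beta-\delta}\CI_{\fc}(X)$ and $u_2\in\cA^{\widehat\cE^+_1(\alpha-\delta)\overline\cup\cF}_{\fc}(\tX)\cap x^\alpha\CI_{\fc}(X)$. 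The point is to read off which exponents $(z,k)$ with $z\le 1$ can appear in $\widehat\cE^+_{i,1}(\alpha_i-\delta_i)\overline\cup F_i$. From \eqref{br.14}, the indicial roots with $\Re z\le 1$ solve $(z+\tfrac12)^2=2(\lambda-\nu_{\mathrm{eig}})+\tfrac14=2(1-\nu_{\mathrm{eig}})+\tfrac14$ for $-\nu_{\mathrm{eig}}\in\Spec(\Delta_{\omega_{\KE,i}})$; since $\Delta_{\omega_{\KE,i}}$ is negative with $0$ as a simple eigenvalue ($\omega_{\KE,i}$ being complete of finite volume, constants are the only $L^2$-harmonic functions), the eigenvalue $\nu_{\mathrm{eig}}=0$ gives $(z+\tfrac12)^2=\tfrac94$, i.e.\ $z=1$ (or $z=-2$, discarded), and all other eigenvalues are $\ge$ the first nonzero one, pushing $\Re z$ strictly above $1$. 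Combined with the contribution $F_i=\bbN\times\{0\}$ (which alone gives $z=1$, $k=0$) and the extended-union rule \eqref{br.16}, the only possible term at order $z=1$ is $x_i\log x_i$ (with $k=1$, a genuine pole-of-order-$2$ phenomenon at $\tau=\tfrac{\sqrt{-1}}{2}$) and $x_i$ (with $k=0$). Thus there exist $\tilde b_i\in\bbR$ (the coefficient of $x_i^1(\log x_i)^1$, which by Proposition~\ref{ext.8}-type reasoning is fibrewise constant, hence a constant on $\bD_i$ as $\bD_i$ is connected — or, if $\bD_i$ is disconnected, locally constant, which one absorbs into $b_i$ by harmless abuse) and $b_i\in\CI(\bD_i)$ (the coefficient of $x_i^1(\log x_i)^0$) with
\[
 u-\tc_*\!\Big(\sum_{i=1}^\ell \chi_r(x_i)x_i(\tilde b_i\log x_i+b_i)\Big)\in x^{1+\delta}\CI_{\fc}(X)
\]
for suitable small $\delta>0$, after first choosing $\nu$ (shrinking it if necessary) and then $\delta$ so that no indicial root lies in the interval $[1,1+\delta)$ at any $\tH_i$. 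The cutoff $\chi_r$ with $r$ small localizes the model terms to the collar $\tc(\pa\tX\times[0,\epsilon))$, and since the $\tH_i$ are disjoint ($\bD$ is smooth) the sum over $i$ causes no interference.

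The main obstacle is bookkeeping the index sets correctly: verifying that the only $(z,k)\in\widehat\cE^+_{i,1}(\alpha_i-\delta_i)\overline\cup F_i$ with $\Re z\le 1$ is $(1,0)$ and $(1,1)$, which forces me to (a) pin down $\Spec(\Delta_{\omega_{\KE,i}})$ near $0$ — in particular that $0$ is isolated in the spectrum, using completeness and the spectral gap for such metrics — and (b) track the order-$2$ pole of $\hat P_{i,1}(\tau)^{-1}$ at $\tau=\tfrac{\sqrt{-1}}{2}$ that produces the logarithm, together with how the extended union with $F_i$ (itself contributing a simple pole's worth at $z=1$) can raise the log power by one. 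A secondary technical point is justifying that $\Delta_u$, despite its coefficients only being in $x^\nu\CI_{\fc}(X)+\CI_{\fc}(\tX)$, may be replaced by $\Delta_\omega$ plus a remainder absorbed into $f_1$ without losing the $x^\alpha\CI_{\fc}(X)$ a priori control — this is exactly the bootstrap structure built into Theorem~\ref{br.31}, so it goes through, but one must check the hypothesis $u\in x^\alpha\CI_{\fc}(X)$ is available at the outset, which it is with $\alpha=(\nu,\dots,\nu)$.
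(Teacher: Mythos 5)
Your overall route is the paper's route: linearize the Monge--Amp\`ere equation around $\omega$, observe via Lemma~\ref{clt.1} that the forcing term vanishes at the boundary, and feed $(\Delta_\omega-1)u=f_1+f_2$ (quadratic error plus polyhomogeneous term) into Theorem~\ref{br.31} with $\lambda=1$, reading off that the only admissible exponents $(z,k)$ with $z\le 1$ are $(1,0)$ and $(1,1)$. The genuine gap is that a \emph{single} application with weight $\alpha+\beta=\min(2\nu,1)$ cannot give the stated remainder. Theorem~\ref{ake.10} only provides \emph{some} $\nu>0$, possibly $\le\frac12$, and the nonlinear error ($-Ru$, or equivalently the $G_j^{\omega}(u)$ terms together with the quadratic part of $e^{-u}$) lies only in $x^{2\nu}\CI_{\fc}(X)$; Theorem~\ref{br.31} then places the non-polyhomogeneous piece $u_1$ only in $x^{2\nu-\delta}\CI_{\fc}(X)$, and with your cap at $\min(2\nu,1)$ you never get past $x^{1-\delta}$ even when $2\nu>1$ --- in neither case is the remainder in $x^{1+\delta}\CI_{\fc}(X)$. ``Shrinking $\nu$'' goes in the wrong direction. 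What is needed, and what the paper does, is a bootstrap: when $2\nu<1$, one application of Theorem~\ref{br.31} upgrades $u$ to $x^{2\nu-\delta}\CI_{\fc}(X)$ (the polyhomogeneous piece has all exponents $\ge 1$, so it does not obstruct this), and one iterates until $\nu>\frac12$; only then does a final application with $\alpha+\beta=2\nu>1$ put $u_1$ in $x^{1+\delta'}\CI_{\fc}(X)$ and allow the polyhomogeneous piece to be truncated at order one.

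Two further corrections. The constancy of $\tilde b_i$ does not follow from fibrewise constancy plus connectedness of $\bD_i$: being constant on the circle fibres of $\tPhi_i$ only says the coefficient is a function on $\bD_i$. The actual reason, coming from Lemma~\ref{br.27} and Proposition~\ref{br.29} inside the proof of Theorem~\ref{br.31}, is that the coefficient of $x_i\log x_i$ must lie in $\ker\hat{P}_{i,1}(-\sqrt{-1})=\ker\Delta_{\omega_i}$, hence is constant ($\bD_i$ is compact here, since $\bD$ is smooth). Relatedly, the logarithm is not an order-two-pole phenomenon at $\tau=\frac{\sqrt{-1}}{2}$: that pole corresponds to $z=-\frac12$, outside the relevant range; the term $(1,1)$ is produced by the extended union $\hE^{+}_{i,1}(0)\overline{\cup}F_i$, i.e.\ by the resonance between the indicial root $z=1$ and the power $x_i$ already present in the polyhomogeneous forcing $f_2$. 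Finally, your spectral justification (``$\omega_{\KE,i}$ complete of finite volume, constants the only $L^2$-harmonic functions'') is misplaced: in this section $\bD_i$ is compact, so discreteness of the spectrum and isolation of the eigenvalue $0$ are immediate, whereas for a genuinely noncompact finite-volume end your argument would not by itself rule out essential spectrum reaching $0$.
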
  
\begin{proof}
By our choice of $\omega$, the Monge-Amp\`ere equation satisfied by $u\in x^{\nu}\CI_{\fc}(X)$ is of the form
\[
\log\left(  \frac{(\omega+ \sqrt{-1}\pa \db u)^{n}}{\omega^n } \right) -u = F,  \quad F\in x\CI_{\fc}(\tX). 
\]
Taking the exponential on both sides, this can be rewritten as
\begin{equation}
  \left( 1+\Delta_{\omega} u + \sum_{j=2}^{n} G_j^{\omega}(u) \right) e^{-u} = e^{F},
\label{ake.13}\end{equation}
with 
\[
       G_{j}^{\omega}(h)= \frac{n !}{(n-j)! j!}   \left( \frac{\omega^{n-j} \wedge(\sqrt{-1}\pa\db h)^{j}}{\omega^n}\right), \quad h\in \CI_{\fc}(X).
\]
In particular, since $u\in x^{\nu}\CI(X)$, we have that $G_j^{\omega}(u)\in x^{j\nu}\CI(X)$.  We have also that $e^{F}-1\in x\CI_{\fc}(X)$.  From \eqref{ake.13}, we therefore conclude that 
\begin{equation}
    (\Delta_{\omega}-1)u = f_1 + f_2, \quad f_1\in x^{2\nu}\CI_{\fc}(X), \quad f_2\in x\CI_{\fc}(\tX).
\label{ake.14}\end{equation}
This is a situation where Theorem~\ref{br.31} can be applied with $\lambda=1$.  In this case, the smallest $z$ such that $-\sqrt{-1} z\in \Spec_b(\hat{P}_{i,1})$ is $z=1$ for all $i\in\{0,\ldots,\ell\}$.  Moreover, in this case,
\begin{equation}
            \hat{P}_{i,1}(-\sqrt{-1})= \Delta_{\omega_i}, 
\label{ake.14b}\end{equation}
with kernel given by the constant functions.  On the other hand,  $f_2\in x\CI_{\fc}(\tX)$ means that
$f_2$ is a polyhomogeneous function in $\cA_{\fc}^{\cF}(\tX)$ with index family $\cF=(F_1,\ldots, F_\ell)$ given by $F_i=\bbN\times \{0\}$ for all $i$.  

Thus, if $2\nu <1$, we can apply Theorem~\ref{br.31} to equation
\eqref{ake.14} to conclude that
\[
     u\in x^{2\nu-\delta}\CI_{\fc}(X), \quad \forall \; \delta>0.
\]       
Repeating this argument if necessary, we can thus assume $u\in x^{\nu}\CI_{\fc}(X)$ with
$\nu> \frac{1}{2}$.  In this case, applying Theorem~\ref{br.31} to \eqref{ake.14} gives us a function
$v\in \cA^{\widehat{\cE}^{+}_1(0)\overline{\cup}\cF}_{\fc}(\tX)\cap x^{\nu-\delta}\CI_{\fc}(X)$ for all
$\delta>0$ such that 
\[
     u-v \in x^{2\nu-\delta}\CI_{\fc}(X), \quad \forall \delta>0.   
\]  
From the definition of the index set $\hE_{i,1}(0)\overline{\cup}F_i$, the only elements $(z,k)$ with $z=1$ are $(1,0)$ and $(1,1)$.  Thus, removing higher order terms in the asymptotic expansion of the function $v$, we can assume that 
\[
   v= \tc_*\left( \sum_{i}^{\ell} \chi_{\epsilon}(x_i)x_i ( \tilde{b}_i \log x_i + b_i) \right)
   \]
for some $\tilde{b}_i, b_i \in\CI(\bD_i)$.  Moreover, from the proof of Theorem~\ref{br.31} and Lemma~\ref{br.27}, we know that $\tilde{b}_i$ has to be an element of the kernel of $\hat{P}_{i,1}(-\sqrt{-1})=\Delta_{\omega_i}$, that is, it must be a constant.  
 
\end{proof}

We can now proceed by induction to improve this result.

\begin{proposition}
For each $m\in\bbN$, there are an index set $\cE^m =(E_1^m,\ldots, E_{\ell}^m)$ and a polyhomogeneous function $v_m\in \cA^{\cE^m}_{\fc}(\tX)\cap x^{1-\delta}\CI_{\fc}(X)$ for all $\delta>0$ such that 
\[
     u-v_m\in x^{m+\alpha}\CI_{\fc}(X)
\]
for some $\alpha>0$.  
\label{ake.15}\end{proposition}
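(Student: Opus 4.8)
The plan is to prove this by induction on $m$, using the Monge-Amp\`ere equation \eqref{fff.2} rewritten in the linearized form \eqref{ake.14} together with the boundary regularity result Theorem~\ref{br.31} as the engine for improving the asymptotic expansion. The base case $m=1$ is exactly Lemma~\ref{ake.12}: take $v_1 = \tc_*\left( \sum_{i=1}^{\ell} \chi_r(x_i) x_i(\tilde{b}_i \log x_i + b_i)\right)$, which lies in $\cA^{\cE^1}_{\fc}(\tX) \cap x^{1-\delta}\CI_{\fc}(X)$ for a suitable index family $\cE^1$ (whose $z\le 1$ part consists only of $(1,0)$ and $(1,1)$), and $u - v_1 \in x^{1+\delta}\CI_{\fc}(X)$.

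For the inductive step, assume $v_m \in \cA^{\cE^m}_{\fc}(\tX) \cap x^{1-\delta}\CI_{\fc}(X)$ and $w := u - v_m \in x^{m+\alpha}\CI_{\fc}(X)$ for some $\alpha > 0$. First I would plug $u = v_m + w$ into \eqref{ake.13}; exactly as in the derivation of \eqref{ake.14}, the Monge-Amp\`ere equation takes the form $(\Delta_\omega - 1)w = g_1 + g_2$ where $g_2 = (\Delta_\omega - 1)(\text{something in } \cA^{\cE^m}_{\fc}(\tX)) + (e^F-1 \text{ contributions})$ is polyhomogeneous, hence in $\cA^{\cF}_{\fc}(\tX)$ for an index family $\cF$ built from $\cE^m$ and $\bbN\times\{0\}$, while $g_1$ collects the genuinely nonlinear terms $G_j^\omega(v_m + w)$ and the cross terms between $w$ and the polyhomogeneous data; the key bookkeeping point is that each such term gains at least one extra factor of $x$ beyond what is forced by $v_m$ being $O(x^{1-\delta})$ and $w$ being $O(x^{m+\alpha})$, so that $g_1 \in x^{m+\alpha'}\CI_{\fc}(X)$ for some $\alpha' > 0$ (one uses that $G_j^\omega$ is at least quadratic and that $v_m$ is bounded). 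Then Theorem~\ref{br.31} applies with $\lambda = 1$, $x^\alpha \rightsquigarrow x^{m+\alpha}$, yielding a splitting $w = w_1 + w_2$ with $w_1 \in x^{m+\alpha'-\delta}\CI_{\fc}(X)$ and $w_2 \in \cA^{\widehat{\cE}^+_1(m+\alpha - \delta)\overline{\cup}\cF}_{\fc}(\tX) \cap x^{m+\alpha}\CI_{\fc}(X)$. Setting $v_{m+1} = v_m + w_2$ and $\cE^{m+1} = \cE^m \overline{\cup} \widehat{\cE}^+_1(m+\alpha-\delta)\overline{\cup}\cF$ gives $u - v_{m+1} = w_1 \in x^{m+\alpha'-\delta}\CI_{\fc}(X)$; after finitely many repetitions of this sub-step (each strictly increasing the power by a definite amount bounded below away from integer points in $\Spec_b$), one reaches $u - v_{m+1} \in x^{(m+1)+\alpha}\CI_{\fc}(X)$, completing the induction. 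One also checks $v_{m+1} \in x^{1-\delta}\CI_{\fc}(X)$, since the new terms $w_2$ have exponent $\ge m + \alpha > 1$.

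The main obstacle I anticipate is the careful accounting in the inductive step that the error $g_1$ really does decay by a strictly positive amount more than $x^m$ — this requires knowing that $v_m$, while only $O(x^{1-\delta})$, enters all nonlinear and cross terms multiplied by either another copy of $\sqrt{-1}\pa\db(\cdot)$ of the unknown (picking up extra decay because $w = O(x^{m+\alpha})$) or by $e^{-u} - 1$ and $e^F - 1$ which are $O(x^{1})$, and crucially that $\Delta_\omega$ applied to a polyhomogeneous function of exponent $\ge 1$ stays polyhomogeneous of exponent $\ge 1$ (absorbed into $g_2$, not $g_1$). A secondary subtlety is ensuring the bootstrap within a fixed $m$ terminates: since the new exponents produced by Theorem~\ref{br.31} are controlled by $\Spec_b(\hat P_{i,1})$, which is discrete, the exponent gains cannot accumulate below any given target $m+1$, so only finitely many applications are needed. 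Finally, I would note that the index sets $\cE^m$ so produced are consistent as $m$ grows (each $\cE^{m+1}$ extends $\cE^m$), which is what allows passage to the full expansion asserted in Theorem~\ref{int.8}.
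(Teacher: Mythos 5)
Your proposal is correct in substance and runs on the same engine as the paper's proof: induction on $m$ with Lemma~\ref{ake.12} as base case and Theorem~\ref{br.31} (with $\lambda=1$) as the regularity-improving device. Where you differ is in how the inductive step is set up. The paper recenters the Monge-Amp\`ere equation at the new reference form $\omega_k=\omega+\sqrt{-1}\pa\db v_k$ (after cutting $v_k$ off near the boundary so that $\omega_k$ stays K\"ahler), and the identity \eqref{ake.17}, which writes $-F_k$ as the difference of the Monge-Amp\`ere functionals at $v_k$ and at $u$, shows that the whole new source term is simultaneously polyhomogeneous and $O(x^{k+\alpha})$; the non-polyhomogeneous remainder is then quadratic in $u_k=u-v_k$, i.e.\ $O(x^{2k+2\alpha})$, so a single application of Theorem~\ref{br.31} more than doubles the decay. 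You instead keep $\omega$ as background, expand directly around $v_m$, and split the error into a decaying piece $g_1$ and a merely polyhomogeneous piece $g_2=e^{F}-\bigl(1+\Delta_{\omega}v_m+\sum_{j\ge 2}G^{\omega}_j(v_m)\bigr)e^{-v_m}$, exploiting that Theorem~\ref{br.31} imposes no decay on its polyhomogeneous datum $f_2$. This does work: the cross terms $\pa\db v_m\wedge\pa\db w$ and the terms carrying a factor $e^{-v_m}-1$ or $1-e^{-v_m}A(v_m)$ put $g_1$ in $x^{m+\alpha+1-\delta}\CI_{\fc}(X)$ (since $m\ge 1$, the quadratic-in-$w$ terms are even smaller), so one application of Theorem~\ref{br.31} already gives $u-v_{m+1}\in x^{m+1+\alpha-\delta'}\CI_{\fc}(X)$ with $v_{m+1}=v_m+w_2$; your ``finitely many repetitions'' clause is not needed, and the slight loss in $\alpha$ is harmless because the statement allows $\alpha$ to depend on $m$. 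What your route buys is that you never need positivity of $\omega+\sqrt{-1}\pa\db v_m$ nor a logarithm of a ratio of forms, hence no cut-off step; what the paper's recentering buys is a single error term that is both polyhomogeneous and decaying, with a quadratic gain per step and much lighter bookkeeping -- the multilinear accounting of cross terms that you only sketch is exactly what \eqref{ake.16}--\eqref{ake.17} package automatically. If you write your version up, do make that bookkeeping explicit, and in particular justify that $g_2$ is polyhomogeneous and in $\CI_{\fc}(X)$ (closure of $\cA^{\cE}_{\fc}(\tX)$ under fc-differential operators with coefficients in $\CI_{\fc}(\tX)$ and under exponentiation of bounded elements), a closure property the paper also relies on for $F_k$.
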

\begin{proof}
We proceed by induction on $m$.  The case $m=1$ is given by Lemma~\ref{ake.12}.  Suppose that the statement of the proposition is true for $m=k$.  We need to show that it is true for $m=k+1$.  By our induction hypothesis, there is an index family $\cE^k$ and a polyhomogeneous function $v_k\in   \cA^{\cE^k}_{\fc}(\tX)\cap x^{1-\delta}\CI_{\fc}(X)$ for all $\delta>0$ such that 
\[
     u-v_k\in x^{k+\alpha}\CI_{\fc}(X)
\]
for some $\alpha>0$.  Since $v_k\in x^{\frac{1}{2}}\CI_{\fc}(X)$, by replacing $v_k$ with $\chi(\frac{x_i}{r})v_k$ if needed, where $\chi\in \CI_{c}([0,\infty))$ is a cut-off function with $\chi(t)=1$ for $t<1$, we can assume, by taking $r>0$ sufficiently small, that the norm of $v_k$ in $C^2_{\fc}(X)$ is sufficiently small so that 
\[
              \frac{\omega}{2}<  \omega + \sqrt{-1} \pa \db v_k < 2\omega.
\] 
In particular, $\omega_k= \omega+ \sqrt{1}\pa \db v_k$ is also a K\"ahler form on $X$.   

Now, by hypothesis, the function $u_k= u-v_k$ is in $x^{k+\alpha}\CI_{\fc}(X)$ for some $\alpha>0$.  From the equation satisfied by $u$, we see that $u_k$ satisfies the equation
\begin{equation}
   \left(\frac{ (\omega_k+\sqrt{-1} \pa \db u_k)^n }{\omega_k^n}\right)e^{-u_k} = e^{F_k}, \quad \mbox{where} \quad F_k=F +v_k+ \log\left( \frac{\omega^n}{\omega_k^n}\right).
\label{ake.16}\end{equation} 
Now, since $u-v_k\in x^{k+\alpha}\CI_{\fc}(X)$, we have that
\begin{equation}
\begin{aligned}
  -F_k &=  \log\left( \frac{(\omega+\sqrt{-1}\pa\db v_k)^{n}}{\omega^{n}}\right)- v_k -F  \\ 
  &=  \left(\log\left( \frac{(\omega+\sqrt{-1}\pa\db v_k)^{n}}{\omega^{n}}\right)- v_k\right)  - \left(\log\left( \frac{(\omega+\sqrt{-1}\pa\db u)^{n}}{\omega^{n}}\right)- u\right)   \end{aligned}  
\label{ake.17}\end{equation}
is also in $x^{k+\alpha}\CI_{\fc}(X)$.  On the other hand, since $v_k$ and $F$ are polyhomogeneous, this also implies that $F_k$ is polyhomogeneous.  Thus
\[
           e^{F_k}= 1+ f_k, \quad f_k\in x^{k+\alpha}\CI_{\fc}(X)\cap\cA^{\cF_k}_{\fc}(\tX),
\]  
for some index family $\cF_{k}$ and equation \eqref{ake.16} can be rewritten as
\begin{equation}
\left( 1+ \Delta_{\omega_k} u_k + \sum_{j=2}^{n} G^{\omega_k}_{j}(u_k)\right) e^{-u_k} = 1+f_k,
\label{ake.18}\end{equation}
where 
\[
   G^{\omega_k}_j(h)= \frac{n !}{(n-j)! j!}   \left( \frac{\omega_k^{n-j} \wedge(\sqrt{-1}\pa\db h)^{j}}{\omega_k^n}\right), \quad h\in \CI_{\fc}(X).
\]
Since $u_k\in x^{k+\alpha}\CI_{\fc}(X)$, this means $G_j^{\omega_k}(u)\in x^{jk+j\alpha}\CI_{\fc}(X)$.  Thus, we deduce from \eqref{ake.16} that there exists a function
$w_k\in x^{2k+2\alpha}\CI_{\fc}(X)$ such that 
\[
    (\Delta_{\omega_k} -1)u_k= w_k + f_k.
\]  
Furthermore, since $(\Delta_{\omega_k}-\Delta_{\omega})u_k\in x^{2k+2\alpha}\CI_{\fc}(X)$, we see in fact that we can write
\begin{equation}
   (\Delta_{\omega}-1)u_k = \hat{w}_k+ f_k, \quad
\label{ake.19}\end{equation}
for some $ \hat{w}_{k}\in x^{2k+2\alpha}\CI_{\fc}(X)$.  We are now in a position to apply Theorem~\ref{br.31}, which stipulates that there exists a polyhomogeneous function $
h_{k+1}\in\cA^{\widehat{\cE}_{1}(0)\overline{\cup}\cF_k}_{\fc}(\tX)\cap x^{k+\alpha-\delta}\CI_{\fc}(X)$ for all $\delta>0$ such that
\[
           u_k- h_{k+1}\in x^{2k+2\alpha-\delta}\CI_{\fc}(X) \quad \forall \; \delta>0.  
\]
Thus, the function $v_{k+1}= v_k+ h_{k+1}$ satisfies the statement of the proposition for $m=k+1$, which completes the inductive step.
\end{proof}

This finally gives the following result.  
\begin{theorem}
Suppose the divisor $\bD$ is smooth and such that $K_{\bX}+[\bD]>0$.    
Suppose also that the volume form $\Omega$ on $\bX$ used to define the K\"ahler form $\omega$ in \eqref{fff.1} is chosen so that induced Hermitian metric on $K_{\bX}+[\bD]$ restricts on
$\left.(K_{\bX}+ [\bD])\right|_{\bD_i}=K_{\bD_i}$ to the Hermitian metric induced by the volume form $n\omega^{n-1}_{\KE,i}$.  Then there exists an index family $\cE$ such that the unique solution $u$ to the Monge-Amp\`ere equation
\eqref{fff.2} is such that
\[
              u\in \cA^{\cE}_{\fc}(\tX)\cap x^{1-\delta}\CI_{\fc}(X)\quad \forall \delta>0.
\]
Furthermore, the index family $\cE=(E_1,\ldots,E_{\ell})$  can always be chosen so that $(1,k)\notin E_i$ for $k>1$.     
\label{ake.20}\end{theorem}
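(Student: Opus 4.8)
The plan is to combine Proposition~\ref{ake.15} with a Borel-type summation argument and then verify the finer claim about the index sets by tracking carefully which indicial roots can occur. First I would observe that Proposition~\ref{ake.15} produces, for each $m\in\bbN$, a polyhomogeneous function $v_m\in \cA^{\cE^m}_{\fc}(\tX)\cap x^{1-\delta}\CI_{\fc}(X)$ (for all $\delta>0$) with $u-v_m\in x^{m+\alpha_m}\CI_{\fc}(X)$ for some $\alpha_m>0$. The index families $\cE^m$ are nested in the sense that $v_{m+1}=v_m + h_{m+1}$ with $h_{m+1}$ polyhomogeneous, so the $\cE^m$ stabilize term-by-term: for any fixed $N$, the part of $\cE^m$ with $\Re z\le N$ is eventually independent of $m$. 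Hence there is a well-defined limiting index family $\cE=(E_1,\ldots,E_\ell)$, namely $E_i=\bigcup_m E_i^m$, which is still an index set because local finiteness below any given real part is preserved. Using the Borel lemma (as in the proof of Proposition~\ref{ake.15}, \cf Lemma~\ref{br.27}) to take an asymptotic sum of the successive correction terms $h_{m+1}$, I would construct a polyhomogeneous function $v\in \cA^{\cE}_{\fc}(\tX)$ with $v\sim v_1 + \sum_{m\ge 1} h_{m+1}$, so that $u-v\in x^N\CI_{\fc}(X)$ for every $N$, i.e. $u-v\in \dot{\cC}^\infty(\tX)\subset \cA^{\cE}_{\fc}(\tX)$. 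Combined with the bound $u\in x^{1-\delta}\CI_{\fc}(X)$ for all $\delta>0$ — which follows from Lemma~\ref{ake.12} and Theorem~\ref{ake.10} (Schumacher's result) — this gives $u\in \cA^{\cE}_{\fc}(\tX)\cap x^{1-\delta}\CI_{\fc}(X)$, as claimed.

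For the final assertion that $\cE$ can be chosen with $(1,k)\notin E_i$ for $k>1$, I would argue as follows. Each correction term $h_{m+1}$ produced in the inductive step of Proposition~\ref{ake.15} lies in $\cA^{\widehat{\cE}^+_1(0)\overline{\cup}\cF_k}_{\fc}(\tX)\cap x^{k+\alpha-\delta}\CI_{\fc}(X)$, so for $k\ge 2$ it contributes nothing to the coefficient of $x_i^1$; only $h_1$ in Lemma~\ref{ake.12}, i.e. the term $v_1=\tc_*(\sum_i \chi_r(x_i)x_i(\tilde b_i\log x_i + b_i))$, can produce a power $x_i^1$. By Lemma~\ref{ake.12} (via Lemma~\ref{br.27} and the computation \eqref{ake.14b}) the leading logarithmic coefficient $\tilde b_i$ is forced to lie in $\ker \hat P_{i,1}(-\sqrt{-1})=\ker\Delta_{\omega_i}$, hence is a \emph{constant}, and the pole of $\hat P_{i,1}(\tau)^{-1}$ at $\tau=-\sqrt{-1}$ has order exactly $2$ (since $z=1$ is the unique value $\le 1$ with $-\sqrt{-1}z\in\Spec_b(\hat P_{i,1})$, by the analysis around \eqref{br.14}). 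Therefore at $z=1$ only $(1,0)$ and $(1,1)$ can occur, so after discarding from $\cE$ any accidental higher-$k$ entries at $z=1$ that do not actually appear in the expansion of $u$, we may take $(1,k)\notin E_i$ for $k>1$.

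The main obstacle I expect is the bookkeeping that ensures the asymptotic sum $v\sim v_1+\sum h_{m+1}$ is legitimate and that the resulting $\cE$ is genuinely an index set: one must check that as $m\to\infty$ the orders $k+\alpha_m$ of the remainders $u-v_m$ tend to infinity (this is clear since they grow by at least one at each step, roughly doubling the exponent as in $x^{2k+2\alpha-\delta}$ in \eqref{ake.19}), and that the exponents $z$ appearing in the $h_{m+1}$ below any fixed level $N$ come from finitely many indicial roots, so that no accumulation occurs — this follows from the discreteness of $\Spec_b(\hat P_{i,\lambda})$ and the structure of the extended unions $\widehat{\cE}^+_\lambda(\alpha-\delta)\overline{\cup}\cF$ in Theorem~\ref{br.31}. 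A secondary point requiring care is that the construction uses a \emph{sequence} of background K\"ahler forms $\omega_k=\omega+\sqrt{-1}\pa\db v_k$ and a corresponding sequence of Laplacians $\Delta_{\omega_k}$; I would note that, modulo terms of order $x^{2k+2\alpha}$, one may replace $\Delta_{\omega_k}$ by the fixed operator $\Delta_\omega$ in \eqref{ake.19}, so that a single indicial family $\hat P_{i,1}(\tau)$ governs the whole expansion and the index sets are determined once and for all by $\omega$ and the $\omega_{\KE,i}$. Once these finiteness and stability points are in place, the theorem follows.
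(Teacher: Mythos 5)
Your route is the same as the paper's: Theorem~\ref{ake.20} is deduced from Proposition~\ref{ake.15}, and your elaboration (the nesting of the index families coming from $v_{m+1}=v_m+h_{m+1}$, the remainders $u-v_m\in x^{m+\alpha}\CI_{\fc}(X)$, and the identification of the order-one terms through Lemma~\ref{ake.12}) is exactly the bookkeeping the paper leaves implicit. In fact the Borel summation is not needed: by the definition \eqref{phg.7}--\eqref{phg.8} of $\cA^{\cE}_{\fc}(\tX)$, it suffices that the partial expansions of the $v_m$ approximate $u$ to the stated orders, which is immediate from $u-v_m\in x^{m+\alpha}\CI_{\fc}(X)$ together with the polyhomogeneity of $v_m$.

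One correction, because it touches the very point you are trying to prove: the pole of $\hat{P}_{i,1}(\tau)^{-1}$ at $\tau=-\sqrt{-1}$ is \emph{simple}, not of order two. As noted right after \eqref{br.13}, all poles are simple except possibly the one at $\tau=\frac{\sqrt{-1}}{2}$ (i.e.\ $z=-\tfrac12$), where the derivative of the polynomial part of the indicial family vanishes; at $z=1$ one has $\hat{P}_{i,1}(-\sqrt{-1}z)=\Delta_{\omega_i}+\tfrac12(z^2+z)-1$ with nonvanishing $z$-derivative, so by \eqref{br.14} only $(1,0)$ lies in $\hE^{+}_{i,1}(0)$. The $x_i\log x_i$ term is not produced by a double pole but by the extended union: $(1,0)\in\hE^{+}_{i,1}(0)$ and $(1,0)\in F_i=\bbN\times\{0\}$ generate $(1,1)\in\hE^{+}_{i,1}(0)\overline{\cup}F_i$. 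This matters for your argument: if the pole at $z=1$ were genuinely of order two, then $(1,1)$ would already lie in $\hE^{+}_{i,1}(0)$ and the extended union with $(1,0)\in F_i$ would contain $(1,2)$, so your exclusion of $(1,k)$ for $k>1$ would not follow. The correct justification is precisely the simplicity of this pole, which is what the proof of Lemma~\ref{ake.12} uses; with that fixed, the rest of your argument (only the first correction $v_1$ can contribute at $z=1$, since every later correction lies in $x^{m+\alpha-\delta}\CI_{\fc}(X)$ with $m\ge 1$ and $\delta<\alpha$) is sound.
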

\begin{proof}
This is a direct consequence of Proposition~\ref{ake.15}.  
\end{proof}
This has the following consequences for the K\"ahler-Einstein metric $\omega_{\KE}$ of \eqref{ake.4}.  

\begin{corollary}
Suppose the divisor $\bD$ is smooth and such that $K_{\bX}+[\bD]>0$.  Then there exists an index family $\cE=(E_1,\ldots,E_{\ell})$ such that
\[
           \omega_{\KE}\in \cA^{\cE}_{\fc}(\tX;\Lambda^{2}({}^{\fc}T^{*}\tX)\otimes \bbC),
\] 
where $\omega_{\KE}$ is the K\"ahler-Einstein metric \eqref{ake.4} obtained by solving the Monge-Amp\`ere equation \eqref{ake.5}.  Furthermore, each index set $E_i$ can be chosen so that
\[
        (z,k)\in E_i, \; z\le 1 \quad \Longrightarrow \quad (z,k)\in \{(0,0), (1,0),(1,1)\}.
\]
Finally, $\omega_{\KE}$ has well-defined standard spatial asymptotics given by 
$\{\omega_{\KE,i}\}_{i=1}^{\ell}$ and $\{1\}_{i=1}^{\ell}$ where $\omega_{\KE,i}$ is the K\"ahler-Einstein metric given by \eqref{ake.8}. 
\label{ake.21}\end{corollary}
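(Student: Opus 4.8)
The plan is to derive everything from Theorem~\ref{ake.20}, which was just proved, together with the relation $\omega_{\KE}=\omega+\sqrt{-1}\pa\db u$ and the structural results about $\pa\db$ acting on $\CI_{\fc}(\tX)$ established in \S~\ref{fc.0}.  First I would recall that $\omega$ itself, being the K\"ahler form of the asymptotically tame polyfibred cusp K\"ahler metric \eqref{fff.1}, lies in $\CI_{\fc}(\tX;\Lambda^2({}^{\fc}T^*\tX)\otimes\bbC)$ by Proposition~\ref{fc.15}, so it is polyhomogeneous with the trivial index family $(\bbN_0\times\{0\},\ldots,\bbN_0\times\{0\})$.  Since, by property~\eqref{phg.1b} of index sets and the $\CI_{\fc}(\tX)$-module structure of $\cA^{\cE}_{\fc}(\tX)$ noted around \eqref{phg.11}, the operator $\pa\db$ maps $\cA^{\cE}_{\fc}(\tX)$ into $\cA^{\cE}_{\fc}(\tX;\Lambda^2({}^{\fc}T^*\tX)\otimes\bbC)$ (the local formula \eqref{ext.9}--\eqref{ext.10} shows $\pa\db$ is built from $x_i\pa_{x_i}$, $x_i^{-1}\pa_{\theta_i}$, $\pa_{w}$ composed with $\CI_{\fc}(\tX)$-multiplication, all of which preserve polyhomogeneity with respect to a fixed index family), applying it to the function $u$ of Theorem~\ref{ake.20} gives $\sqrt{-1}\pa\db u\in\cA^{\cE}_{\fc}(\tX;\Lambda^2({}^{\fc}T^*\tX)\otimes\bbC)$.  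Adding $\omega$ and enlarging $\cE$ if necessary to absorb the trivial index family (which changes nothing since each $E_i$ already contains $(0,0)$ and hence all of $\bbN_0\times\{0\}$ by \eqref{phg.1b}), we conclude $\omega_{\KE}\in\cA^{\cE}_{\fc}(\tX;\Lambda^2({}^{\fc}T^*\tX)\otimes\bbC)$ with the same $\cE$ as in Theorem~\ref{ake.20}.

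Next I would record the claimed restriction on the index sets.  Theorem~\ref{ake.20} already guarantees $u\in x^{1-\delta}\CI_{\fc}(X)$ for all $\delta>0$ and that $(1,k)\notin E_i$ for $k>1$; combined with the elementary fact that $\pa\db$ and addition of the polyhomogeneous form $\omega$ do not create exponents $z$ with $z<1$ other than $z=0$ (coming from $\omega$ and from the leading constant term, which contributes $(0,0)$), this forces any $(z,k)\in E_i$ with $z\le 1$ to be one of $(0,0)$, $(1,0)$, $(1,1)$.  One should be slightly careful: applying $\pa\db$ can in principle produce logarithmic accidental multiplicities, but since the only exponent with $z\le 1$ beyond $z=0$ is $z=1$ and it already carries at most a single log, and since the mixed-log ``extended union'' operations in \S~\ref{br.0} were already accounted for inside the proof of Theorem~\ref{ake.20}, no new $(1,k)$ with $k\ge 2$ appears.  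Hence we may pick $\cE$ with $(z,k)\in E_i,\ z\le 1\ \Longrightarrow\ (z,k)\in\{(0,0),(1,0),(1,1)\}$, as asserted.

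Finally, for the statement about standard spatial asymptotics, I would invoke Proposition~\ref{ext.8}: since $u\in\cA^{\cE}_{\fc}(\tX)$ with leading exponent $\ge 1-\delta>0$ at each $\tH_i$, the function $u$ vanishes on $\tH_i$, and the restriction of $\sqrt{-1}\pa\db u$ to $\tH_i$, seen as a section of $\Lambda^2({}^{\fc}T^*\tX)\otimes\bbC$, is $\tPhi_i^*(\pa\db r_i(u))$; but $r_i(u)=u_i\equiv 0$ by the discussion following \eqref{fff.4} (the solution of the restricted Monge-Amp\`ere equation vanishes because $\omega_i=\omega_{\KE,i}$ and $F_i=0$ by Lemma~\ref{clt.1}).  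Therefore the restriction of $\omega_{\KE}$ to $\tH_i$ equals that of $\omega$, which by our choice \eqref{ake.11} has standard spatial asymptotics $\{\omega_{\KE,i}\}$ and $\{c_i=1\}$; so $\omega_{\KE}$ has the same well-defined standard spatial asymptotics.  I do not anticipate a genuine obstacle here: the content of the corollary is entirely extracted from Theorem~\ref{ake.20} plus the $\pa\db$-regularity lemmas, and the only place demanding care is checking that $\pa\db$ followed by the bi-Lipschitz identification with the metric \eqref{ake.3} (rather than \eqref{fff.1}) does not disturb the index sets --- this is handled by Lemma~\ref{mwc.10}, which shows the two boundary defining functions differ by a factor $1+x_i\bb_i+x_i^2\tb_i$, absorbed harmlessly into $\CI_{\fc}(\tX)$.
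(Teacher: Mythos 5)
Your argument is correct and follows essentially the paper's (implicit) route: the corollary is treated there as a direct consequence of Theorem~\ref{ake.20}, exactly as you spell out, since $\pa\db$ written in the $\fc$-frame preserves polyhomogeneity with a fixed index family, adding $\omega$ only contributes $\bbN_0\times\{0\}$, and the restriction of $\sqrt{-1}\pa\db u$ to each $\tH_i$ vanishes so that the boundary restriction of $\omega_{\KE}$ agrees with that of $\omega$. Two small quibbles that do not affect the substance: the passage from the special background form \eqref{fff.1} back to \eqref{ake.3} is justified by uniqueness of the K\"ahler--Einstein metric (the two Monge--Amp\`ere problems produce the same $\omega_{\KE}$), not by Lemma~\ref{mwc.10}; and Proposition~\ref{ext.8} is stated for functions in $\CI_{\fc}(\tX)$, whereas $u$ has leading term $x_i(\tilde b_i\log x_i+b_i)$, so one should add the one-line check that $\pa\db\bigl(x_i\log x_i\bigr)$, viewed as a section of $\Lambda^{2}({}^{\fc}T^{*}\tX)\otimes\bbC$, restricts to zero on $\tH_i$.
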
  

The logarithmic terms in the asymptotic expansion of the K\"ahler-Einstein metric 
do not necessarily appear.  For instance, when $n=\dim_{\bbC} \bX=1$, we know from \cite{Albin-Aldana-Rochon} that $\omega_{\KE}$ is in 
$\CI_{\fc}(\tX; \Lambda^2 ({}^{\fc}T^{*}\tX)\otimes \bbC)$, so that there is no logarithmic term in that case.  This is very particular to complex dimension $1$.  As the next criterion indicates, logarithmic terms do appear when $n\ge 2$.  

\begin{theorem}
Let $u$ be the function of Theorem~\ref{ake.20}.  Then the constant $\tilde{b}_i$ such that 
\[
                     u-\tilde{b}_i x_i\log x_i \in x_i\CI_{\fc}(X)
\] 
is given by
\[
                         \tilde{b}_i =- \frac{2(n-1)}{3} \frac{\int_{\bD_i} c_1(T\bD_i)^{n-2} \cup c_1(N\bD_i)}{\int_{\bD_i} c_1(T\bD_i)^{n-1}},
\]    
where $N\bD_i$ is the normal bundle of $\bD_i$ in $\bX$.  
\label{clt.6}\end{theorem}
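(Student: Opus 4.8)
The plan is to extract the constant $\tilde b_i$ from the obstruction encountered when trying to solve the relevant indicial equation at first order, following the recursive construction in the proof of Lemma~\ref{ake.12} and Proposition~\ref{ake.15}. Recall that after reducing modulo higher-order terms, $u$ solves $(\Delta_\omega - 1)u = f_1 + f_2$ with $f_2 \in x\CI_{\fc}(\tX)$ and $f_1 \in x^{2\nu}\CI_{\fc}(X)$ with $2\nu > 1$. By Lemma~\ref{clt.1}, the leading coefficient of $f_2$ at $\tH_i$ is precisely
\[
   x_i \Xi_i\left( \frac{(n-1)\omega_{\KE,i}^{n-2}\wedge(\sqrt{-1} \left.\Theta_{\bD} \right|_{\bD_i}) }{\omega_{\KE,i}^{n-1}} \right),
\]
call its restriction to $\bD_i$ the function $g_i \in \CI(\bD_i)$. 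Since $c_i = 1$, the indicial operator at $\tH_i$ is $\hat P_{i,1}(\tau) = \Delta_{\omega_{\KE,i}} + \tfrac12(-\tau^2 + \sqrt{-1}\tau) - 1$, and at the exponent $z=1$ (i.e. $\tau = -\sqrt{-1}$) we have $\hat P_{i,1}(-\sqrt{-1}) = \Delta_{\omega_{\KE,i}}$, whose kernel is the constants. The $x_i\log x_i$ term appears precisely because $g_i$ need not lie in the range of $\Delta_{\omega_{\KE,i}}$: solving $\Delta_{\omega_{\KE,i}} b_i = g_i - \tilde b_i$ forces $\tilde b_i$ to be the average of $g_i$ over $\bD_i$ against the volume form $\omega_{\KE,i}^{n-1}$, up to the coefficient coming from the second-order Taylor expansion of $\tfrac12(-\tau^2 + \sqrt{-1}\tau)$ at $\tau = -\sqrt{-1}$. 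Concretely, $\tfrac{d}{d\tau}\big|_{\tau=-\sqrt{-1}} \tfrac12(-\tau^2+\sqrt{-1}\tau) = \tfrac12(-2\tau + \sqrt{-1})\big|_{\tau=-\sqrt{-1}} = \tfrac{3\sqrt{-1}}{2}$, and converting from the $\tau$-variable to the $z = \sqrt{-1}\tau$ variable (so that $x_i^z$, $z$ near $1$) introduces the factor that produces the $\tfrac{2}{3}$ in the statement.

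So the first step is to carry out this indicial computation carefully: write $u = v + \tilde b_i x_i \log x_i + b_i x_i + (\text{higher order})$ near $\tH_i$, plug into $(\Delta_\omega - 1)u = f_2 + O(x^{2\nu})$, and match the coefficient of $x_i \log x_i$ and of $x_i$. Using $(\Delta_\omega - 1)(x_i\log x_i \cdot \phi) $ and $(\Delta_\omega - 1)(x_i \cdot \psi)$ expanded to leading order — where the leading symbol of $\Delta_\omega$ in the cusp direction contributes $\tfrac12((x_i\partial_{x_i})^2 + x_i\partial_{x_i})$ and the tangential part contributes $\Delta_{\omega_{\KE,i}}$ — one finds that the coefficient of $x_i \log x_i$ forces
\[
   \Delta_{\omega_{\KE,i}} b_i + \tfrac{3}{2}\tilde b_i = g_i
\]
(where I should double-check the sign and the $\tfrac32$ against the orientation conventions used for $\Delta_\omega$). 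Integrating over $\bD_i$ against $\omega_{\KE,i}^{n-1}$ kills the $\Delta_{\omega_{\KE,i}} b_i$ term by self-adjointness, yielding
\[
   \tilde b_i = \frac{2}{3}\,\frac{\int_{\bD_i} g_i\, \omega_{\KE,i}^{n-1}}{\int_{\bD_i} \omega_{\KE,i}^{n-1}}.
\]

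The second step is the cohomological identification of $\int_{\bD_i} g_i\, \omega_{\KE,i}^{n-1}$. Since $g_i\, \omega_{\KE,i}^{n-1} = n(n-1)\,\omega_{\KE,i}^{n-2}\wedge (\sqrt{-1}\,\Theta_{\bD}|_{\bD_i})$ divided by the appropriate combinatorial factor — more precisely, from Lemma~\ref{clt.1}, $g_i\, \omega_{\KE,i}^{n-1} = (n-1)\,\omega_{\KE,i}^{n-2}\wedge(\sqrt{-1}\,\Theta_{\bD}|_{\bD_i})$ — and since $\sqrt{-1}\,\Theta_{\bD}|_{\bD_i}$ represents $2\pi\, c_1([\bD]|_{\bD_i})$ while $[\bD]|_{\bD_i} = N\bD_i$ (normal bundle), and $\omega_{\KE,i}$ represents $2\pi\, c_1(K_{\bD_i})^{-1}\cdot(-1)$... wait — rather, $\omega_{\KE,i}$ being the K\"ahler-Einstein form with $\Ric(\omega_{\KE,i}) = -\omega_{\KE,i}$ represents $-2\pi\, c_1(T\bD_i) = 2\pi\, c_1(K_{\bD_i})$; one must be careful: $c_1(T\bD_i)$ here appears with the sign such that $\int_{\bD_i} c_1(T\bD_i)^{n-1}$ is the number appearing in the denominator, which is positive because $K_{\bD_i} > 0$, so effectively one replaces $[\omega_{\KE,i}]$ by $2\pi\, c_1(K_{\bD_i})$ and notes $c_1(K_{\bD_i}) = -c_1(T\bD_i)$, giving $(n-1)$ sign flips that combine with the $(n-1)$-th power. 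Normalizing the $2\pi$ factors in numerator and denominator (they cancel since both numerator and denominator have $n-1$ factors of a Chern class of a line bundle, $n-2+1 = n-1$), one obtains
\[
   \tilde b_i = -\frac{2(n-1)}{3}\,\frac{\int_{\bD_i} c_1(T\bD_i)^{n-2}\cup c_1(N\bD_i)}{\int_{\bD_i} c_1(T\bD_i)^{n-1}},
\]
where the overall sign comes out of tracking the $(-1)^{n-2}$ from the powers of $[\omega_{\KE,i}] \leftrightarrow -c_1(T\bD_i)$ against the $(-1)^{n-1}$ in the denominator, leaving a single minus sign. The fact that the integrand in the numerator can be replaced by the product of cohomology classes — rather than requiring the specific harmonic/KE representatives — follows from Stokes' theorem since $\bD_i$ is closed; similarly one uses that $\sqrt{-1}\,\Theta_{\bD}|_{\bD_i}$ and $\omega_{\KE,i}$ are closed.

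The main obstacle I expect is bookkeeping: getting all the signs, the factor of $\tfrac32$ (equivalently $\tfrac23$), the combinatorial $(n-1)$, and the normalization of $c_1$ versus curvature forms correct and mutually consistent. The analytic content — that the $x_i\log x_i$ coefficient is the projection of the forcing term onto the cokernel of $\Delta_{\omega_{\KE,i}}$, i.e. its average — is essentially forced by Lemma~\ref{br.27} and the proof of Theorem~\ref{br.31}, and the cohomological step is routine Chern-Weil theory; but reconciling the $\tau$-variable indicial calculus (where the relevant derivative of the indicial polynomial at the double... — here simple, since $c_i = 1$ and $\lambda = 1$ make $z=1$ a simple indicial root of the $b$-operator but the tangential operator $\Delta_{\omega_{\KE,i}}$ has a kernel, which is what creates the log) with the concrete coefficient matching will require care. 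I would verify the constant $\tfrac23$ by testing on the model case $\bD_i$ a point ($n=2$), where $\Delta_{\omega_{\KE,i}} \equiv 0$ and the equation degenerates to a scalar ODE, $\tfrac32 \tilde b_i = g_i$ with $g_i = \sqrt{-1}\,\Theta_{\bD}|_{\bD_i}$ evaluated appropriately against a point — matching the Example~\ref{clt.7} computation on $\bbC\bbP_2$ — and cross-check against the $n$-dimensional formula.
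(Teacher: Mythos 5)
Your argument is correct and follows essentially the same route as the paper's proof: extract the coefficient $\phi_i$ of $x_i$ in $e^{F}$ from Lemma~\ref{clt.1}, match it against $(\Delta_{\omega}-1)(\tilde{b}_i x_i\log x_i + b_i x_i)$ using the model operator \eqref{br.9} (giving $\frac{3}{2}\tilde{b}_i = I_i$ after integrating against $\omega_{\KE,i}^{n-1}$, which kills $\Delta_{\omega_{\KE,i}}b_i$), and then identify $I_i$ cohomologically via $[\omega_{\KE,i}]=-2\pi c_1(T\bD_i)$ and $[\sqrt{-1}\left.\Theta_{\bD}\right|_{\bD_i}]=2\pi c_1(N\bD_i)$, exactly as the paper does. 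The only slip is peripheral: in your proposed sanity check $\bD_i$ is a point when $n=1$ (where the formula degenerates), not $n=2$; for $n=2$ it is a curve, as in Example~\ref{clt.7}.
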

\begin{proof}
Choose $\Omega$ as in Theorem~\ref{ake.20}.  By Lemma~\ref{clt.1}, the coefficient of the term of order $x_i$ in the Taylor series of $e^{F}$ at $\bD_i$ is given by
\[
         \phi_i= \frac{(n-1) \omega_{\KE,i}^{n-2} \wedge (\sqrt{-1} \left.\Theta_{\bD}\right|_{\bD_i})}{\omega_{\KE,i}^{n-1}}.
\]
The term $\tilde{b}_i$ will be nonzero provided the function $\phi_i$ is not in the image of the $\db$-Laplacian $\Delta_{\omega_{\KE,i}}$.  This is the case provided 
\[
        I_i = \frac{\int_{\bD_i} \phi_i \omega_{\KE,i}^{n-1}}{\int_{\bD_i} \omega_{\KE,i}^{n-1} } \ne 0.
\]
But from the definition of $\phi_i$, we have that 
\[
         I_i =  -(n-1) \frac{ \int_{\bD_i} c_1(T\bD_i)^{n-2} \cup c_1(N\bD_i) }{\int_{\bD_i} c_1(T\bD_i)^{n-1}}.
\]
From Lemma~\ref{ake.12}, we know the constant $\tilde{b}_i$ must be such that
\[
           (\Delta_{\omega}-1)(\tilde{b} x_i\log x_i)  - I_i x_i \in x_i^{1+\delta}\CI_{\fc}(X) \quad \mbox{for some} \; \delta>0.  
\] 
From the asymptotic behavior \eqref{br.9} of $\Delta_{\omega}$ near $\bD_i$, 
we conclude $\tilde{b}_i= \frac{2 I_i}{3}$ and the result follows.

\end{proof}

\begin{example}
Take $\bX=\bbC\bbP_2$ and $\bD_1\subset \bbC\bbP_2$ to be a smooth curve of degree $d\ge 4$.  In that case, $K_{\bX}+ [\bD_1]>0$ so that there is a K\"ahler-Einstein metric on $X= \bbC\bbP_2 \setminus \bD_1$ obtained by solving the Monge-Amp\`ere equation \eqref{ake.5}.  By Theorem~\ref{clt.6}, the coefficient $\tilde{b}_1$ of the term of order $x_1\log x_1$ in the asymptotic expansion of the solution $u\in \CI_{\fc}(X)$ to the Monge-Amp\`ere equation is given by
\[
    \tilde{b}_1= \frac{2d}{3(d-3)}.
\]
\label{clt.7}\end{example}

When $\bD$ has normal crossings, this criterion can also be used to show in certain cases that the K\"ahler-Einstein metric is not an asymptotically tame polyfibred cusp metric.  Here is a simple example.

\begin{example}
Take $\bX= \bbC\bbP_3$ with $\bD=\bD_1+\bD_2$, where $\bD_1$ is a hyperplane and $\bD_2\subset \bbC\bbP^3 $ is a smooth hypersurface of degree $d\ge 4$ intersecting $\bD_1$ transversely.  In that case, $K_{\bX}+ [\bD]>0$ and there is a corresponding K\"ahler-Einstein metric $\omega_{\KE}$.  By Theorem~\ref{ake.10}, it has standard spatial asymptotics given by $\{\omega_{\KE,i}\}_{i=1}^{2}$ and $\{1\}_{i=1}^{2}$.  By the previous example, the asymptotic expansion of $\omega_{\KE,1}$ involves some logarithmic terms.  In particular, the K\"ahler-Einstein metric on $X$ cannot be an asymptotically tame polyfibred cusp metric in this case, that is, the solution $u$ to the Monge-Amp\`ere equation \eqref{ake.3} cannot be in $\CI_{\fc}(\tX)$.  
\label{clt.8}\end{example}

\bibliography{harfqm}

\providecommand{\bysame}{\leavevmode\hbox to3em{\hrulefill}\thinspace}
\providecommand{\MR}{\relax\ifhmode\unskip\space\fi MR }
\providecommand{\MRhref}[2]{%
  \href{http://www.ams.org/mathscinet-getitem?mr=#1}{#2}
}
\providecommand{\href}[2]{#2}
\begin{thebibliography}{10}

\bibitem{Albin-Aldana-Rochon}
P.~Albin, C.L. Aldana, and F.~Rochon, \emph{Ricci flow and the determinant of
  the {L}aplacian on non-compact surfaces}, to appear in Comm. Partial
  Differential Equations.

\bibitem{Bahuaud}
E.~Bahuaud, \emph{Ricci flow of conformally compact metrics}, Ann. Inst. H.
  Poincar\'e Anal. Non Lin\'eaire \textbf{28} (2011), no.~6, 813--835.

\bibitem{Bando}
S.~Bando, \emph{Einstein {K}\"ahler metrics of negative {R}icci curvature on
  open {K}\"ahler manifolds. {K}\"ahler metrics and moduli spaces}, Adv. Stud.
  in Pure Math \textbf{18} (1990), no.~2, 105--136.

\bibitem{Carlson-Griffiths}
J.~Carlson and P.~Griffiths, \emph{A defect relation for equidimensional
  holomorphic mappings between algebraic varieties}, Ann. of Math. (2)
  \textbf{95} (1972), 557--584.

\bibitem{Chau2004}
A.~Chau, \emph{Convergence of the {K}\"ahler-{R}icci flow on noncompact
  {K}\"ahler manifolds}, J. Differential Geom. \textbf{66} (2004), 211--232.

\bibitem{Cheng-Yau}
S.-Y. Cheng and S.T. Yau, \emph{On the existence of a complete {K}\"ahler
  metric on non-compact complex manifolds and the regularity of {F}efferman's
  equation}, Comm. Pure Appl. Math. \textbf{33} (1980), 507--544.

\bibitem{EMM}
G.A.~Mendoza C.L.~Epstein and R.B. Melrose, \emph{Resolvent of the {L}aplacian
  on strictly pseudoconvex domains}, Acta Math. \textbf{167} (1991), 1--106.

\bibitem{Fefferman74}
C.~Fefferman, \emph{The {B}ergman kernel and biholomorphic equivalence of
  pseudoconvex domains}, Invent. Math. \textbf{26} (1974), 1--65.

\bibitem{Fefferman76}
\bysame, \emph{Monge-{A}mp\`ere equations, the {B}ergman kernel, and the
  geometry of pseudoconvex domains}, Ann. of Math. \textbf{103} (1976),
  395--416.

\bibitem{Fefferman_Graham2012}
C.~Fefferman and C.R. Graham, \emph{The ambient metric}, vol. 178, Annals of
  Mathematics Studies, Princeton University Press, 2012.

\bibitem{Jeffres-Mazzeo-Rubinstein}
T.~Jeffres, R.~Mazzeo, and Y.~Rubinstein, \emph{K\"ahler-{E}instein metrics
  with edge singularities}, preprint, arXiv:1105.5216.

\bibitem{Kobayashi}
R.~Kobayashi, \emph{K\"ahler-{E}instein metric on an open algebraic manifold},
  Osaka J. Math. \textbf{21} (1985), 399--418.

\bibitem{Krylov}
N.V. Krylov, \emph{Lectures on elliptic and parabolic equations in {H}\"older
  spaces}, AMS, Providence, RI, 1996.

\bibitem{Lee-Melrose}
John Lee and Richard Melrose, \emph{Boundary behavior of the complex
  {M}onge-{A}mp\`ere equation}, Acta Math. \textbf{148} (1982), 159--192.

\bibitem{Lott-Zhang}
John Lott and Zhou Zhang, \emph{Ricci flow on quasiprojective manifolds}, Duke
  Math. J. \textbf{156} (2011), no.~1, 87--123.

\bibitem{MazzeoEdge}
R.~Mazzeo, \emph{Elliptic theory of differential edge operators. {I}}, Comm.
  Partial Differential Equations \textbf{16} (1991), no.~10, 1615--1664.

\bibitem{Mazzeo1991}
\bysame, \emph{Regularity for the singular {Y}amabe problem}, Indiana Univ.
  Math. J. \textbf{40} (1991), 1277--1299.

\bibitem{Mazzeo-MelrosePhi}
R.~Mazzeo and R.~B. Melrose, \emph{Pseudodifferential operators on manifolds
  with fibred boundaries}, Asian J. Math. \textbf{2} (1999), no.~4, 833--866.

\bibitem{Mazzeo-Melrose}
R.~Mazzeo and R.B. Melrose, \emph{Meromorphic extension of the resolvent on
  complete spaces with asymptotically constant negative curvature}, J. Funct.
  Anal. \textbf{75} (1987), no.~2, 260--310.

\bibitem{Mazzeo-MelroseETA}
\bysame, \emph{Analytic surgery and the eta invariant}, Geom. Funct. Anal.
  \textbf{5} (1995), no.~1, 14--75.

\bibitem{MelroseMWC}
R.B. Melrose, \emph{Differential analysis on manifolds with corners}, available
  at http://www-math.mit.edu/~rbm/book.html.

\bibitem{Melrose1992}
\bysame, \emph{Calculus of conormal distributions on manifolds with corners},
  Int. Math. Res. Not. (1992), no.~3, 51--61.

\bibitem{MelroseAPS}
\bysame, \emph{The {A}tiyah-{P}atodi-{S}inger index theorem}, A. K. Peters,
  Wellesley, Massachusetts, 1993.

\bibitem{Schumacher}
G.~Schumacher, \emph{Asymptotics of {K}\"ahler-{E}instein metrics on
  quasi-projective manifolds and an extension theorem on holomorphic maps},
  Math. Ann. \textbf{311} (1998), no.~4, 631--645.

\bibitem{Tian-Yau}
G.~Tian and S.-T. Yau, \emph{Existence of {K}\"ahler-{E}instein metrics on
  complete {K}\"ahler manifolds and their applications to algebraic geometry},
  Proc. Conf. San Diego Adv. Ser. Math. Phys. \textbf{1} (1987), 574--629.

\bibitem{tiannotes}
Gang Tian, \emph{Canonical metrics in {K}\"ahler geometry}, Lectures in
  Mathematics ETH Zurich, Birkhauser, 2000.

\bibitem{Tsuji}
H.~Tsuji, \emph{An inequality of {C}hern numbers for open varieties}, Math.
  Ann. \textbf{277} (1987), no.~3, 483--487.

\bibitem{Vaillant}
B.~Vaillant, \emph{Index and spectral theory for manifolds with generalized
  fibred cusp}, Ph.D. dissertation, Bonner Math. Schriften 344, Univ. Bonn.,
  Mathematisches Institut, Bonn, math.DG/0102072 (2001).

\bibitem{Wu06}
Damin Wu, \emph{Higher canonical asymptotics of {K}\"ahler-{E}instein metrics
  on quasi-projective manifolds}, Comm. Anal. and Geom. \textbf{14} (2006),
  no.~4, 795--845.

\bibitem{Yau1978}
S.T. Yau, \emph{M\'etriques de {K}\"ahler-{E}instein sur les vari\'et\'es
  ouvertes, "{P}remi\`ere classe de {C}hern et courbure de {R}icci: preuve de
  la conjecture de {C}alabi", {S}\'eminaire {P}alaiseau}, Ast\'erisque
  \textbf{58} (1978), 163--167.

\end{thebibliography}
\bibliographystyle{amsplain}

\end{document}